\newtheorem{theo}{Theorem}[section]
\newtheorem{prop}[theo]{Proposition}
\newtheorem{lem}[theo]{Lemma}
\newtheorem{cor}[theo]{Corollary}
\theoremstyle{definition}
\newtheorem{defi}[theo]{Definition}
\theoremstyle{remark}
\newtheorem*{rem}{Remark}
\renewcommand\leq{\leqslant}
\renewcommand\geq{\geqslant}
\newcommand\N{\mathbb N}
\newcommand\Z{\mathbb Z}
\newcommand\R{\mathbb R}
\newcommand\C{\mathbb C}
\newcommand{\supp}{\operatorname{supp}}
\newcommand{\ds}{\displaystyle}
\newcommand{\abs}[1]{\left\lvert#1\right\rvert}
\newcommand{\norme}[1]{\left\lVert#1\right\rVert}
\newcommand{\pare}[1]{\left(#1\right)}
\newcommand{\prods}[2]{\left\langle#1,#2\right\rangle}
\newcommand{\harm}{\pare{s+\frac{1}{2}}}
\DeclareMathAlphabet{\mathonebb}{U}{bbold}{m}{n}
\numberwithin{equation}{section}
\apptocmd{\sloppy}{\hbadness 10000\relax}{}{}
\begin{document}

\title{Scattering theory for the Dirac equation in Schwarzschild-Anti-de Sitter space-time}
\author{Guillaume \textsc{Idelon-Riton}}

\maketitle

\abstract{We show asymptotic completeness for linear massive Dirac fields on the Schwarzschild-Anti-de Sitter spacetime. The proof is based on a Mourre estimate. We also construct an asymptotic velocity for this field.}

\setcounter{tocdepth}{2} \tableofcontents

\section{Introduction}

The aim of this paper is to show asymptotic completeness for the massive Dirac equation on the Anti-de Sitter Schwarzschild space-time.\\
\indent When studying a physical system for which the dynamics is described by a Hamiltonian, one of the fundamental properties we want to prove is asymptotic completeness. Roughly speaking, it states that, for large time, our dynamics behave, modulo possible eigenvalues, like the well-understood dynamics described by what we call a free Hamiltonian.  \\
The first asymptotic completeness results in General Relativity were obtained by J. Dimock and B. Kay in $1986$ and $1987$ (\cite{DimKay86b},\cite{DimKay86a}, \cite{DimKay87}) for classical and quantum scalar fields. This study was pursued in the $1990$'s by A. Bachelot for classical fields. He obtains scattering theories for Maxwell fields in $1991$ \cite{Bachelot2} and Klein-Gordon fields in $1994$ \cite{Bachelot3}. After that, J-P. Nicolas obtained a scattering theory for massless Dirac fields in $1995$ \cite{JPN} and F. Melnyk obtained a complete scattering for massive charged Dirac fields \cite{Me03} in $2003$. In all these works, the authors used trace class perturbation methods. On the other hand, new techniques, using Mourre estimates, were applied to the wave equation on the Schwarzschild space-time in $1992$ by S. De Bièvre, P. Hislop and I.M Sigal \cite{DeBHS}. Using this method, a complete scattering theory for the wave equation on stationary asymptotically flat space-times was obtained by D. Häfner in $2001$ \cite{Ha01} and D. Häfner and J-P. Nicolas obtained a scattering theory for massless Dirac fields outside slowly rotating Kerr black holes in $2004$ \cite{HaNi04}, making use of a positive conserved quantity which exists for the Dirac equation and not for the Klein-Gordon equation. In $2004$, T. Daudé obtains a scattering theory for Dirac fields on Reissner-Nordström black holes \cite{Dau10} and on Kerr-Newman black holes in \cite{Dau04}. Using an integral representation for the Dirac propagator, D. Batic gives a new approach to the time-dependent scattering for massive Dirac fields on the Kerr metric in $2007$. Recently, V. Georgescu, C. Gérard and D. Häfner obtained an asymptotic completeness result for the Klein-Gordon equation in the De-Sitter Kerr black hole, see \cite{GGH14}. See also M. Dafermos, G. Holzegel and I. Rodnianski for scattering results for the Einstein equations \cite{DaHoRod14} and M. Dafermos, I. Rodnianski and Y. Shlapentokh-Rothman for a scattering theory for the wave equation on Kerr black holes exteriors \cite{DaRodRoth14}. One of the principal motivation for all these works is the study of the Hawking effect. That kind of results are needed to give a mathematically rigorous description of the Hawking effect, see \cite{Ba99} and \cite{Ha09}.\\
\indent In our work, we are concerned with problems that arise from the Anti-de Sitter background. Indeed, the Schwarzschild Anti-de Sitter space-time is a solution of the Einstein vacuum equations with cosmological constant $\Lambda <0$ containing a spherically symmetric black hole. This space-time has a non-trivial causality. In fact, it is not globally hyperbolic, that is to say, Cauchy data defined on a slice $\{t=constant\}\times ]r_{SAdS},+\infty[ \times \mathbb{S}^{2}$ (where $r_{SAdS}$ correspond to the horizon) do not uniquely determine the evolution of the field in all the space-time. So, first of all, there's a difficulty in defining the dynamic. This is due to the fact that, when studying the geodesics in Boyer-Lindquist coordinates, null geodesics can reach timelike infinity in finite time. This suggests that we will need to put asymptotic conditions as $r \to + \infty$ in order to determine the dynamic uniquely. This problem was first studied by Breitenlohner and Freedman (\cite{BrFr82b},\cite{BrFr82a}) for scalar fields. They showed that the need to put boundary conditions depends on the comparison between the mass of the field and the cosmological constant and discovered two critical values known as B-F bounds. More recently, A. Bachelot (\cite{Bachelot}) showed a similar bound for the Dirac equation in the Anti-de Sitter space-time using a spectral approach. This approach uses the fact that, in an appropriate coordinate system, the equation can be written as $i\partial_{t} \psi = i H_{m} \psi$ with $H_{m}$ independent of $t$. We thus have to construct a self-adjoint extension of $H_{m}$. In order to put the right boundary condition, we will understand the asymptotic behavior of the states in the natural domain of $H_{m}$. This kind of method was also used by Ishibashi and Wald (\cite{IshWa03},\cite{IshWa04}) for integer spin fields.\\
\indent Using other techniques, there has been some recent advances concerning scalar fields. We first mention the works of G. Holzegel and J. Smulevici who proved, using vectorfield methods, a result of asymptotic stability of the Schwarzschild-AdS space-time with respect to spherically symmetric perturbations thanks to an exponential decay rate of the local energy \cite{Holsmu}. However, looking at the solutions of the linear wave equation on the Schwarzschild-AdS black hole with arbitrary angular momentum $l$, resonances with imaginary part $e^{-\frac{C}{l}}$ appear (see \cite{Gan14} for details) and the local energy only decays logarithmically. The same phenomenon appear in the Kerr-AdS space-time, see \cite{Holsmu2}. Thus Kerr-AdS is supposed to be unstable. In these papers, it was supposed that the Dirichlet boundary condition holds. More recently, G. Holzegel and C.M. Warnick considered other boundary conditions for the wave equation on asymptotically AdS black hole \cite{HolWar}. This includes some boundary conditions considered in the context of AdS-CFT correspondence. This correspondence was also in mind of A. Bachelot in his paper about the Klein-Gordon equation in the $\text{AdS}^{5}$ space-time \cite{Bachelot7} and of A. Enciso and N. Kamran when they study the Klein-Gordon equation in $\text{AdS}^{5} \times Y^{p,q}$ where $Y^{p,q}$ is a Sasaki-Einstein $5$-manifold \cite{EncKam}.

~\\
\indent We now present our results. We denote the natural domain of $H_{m}$ by
\begin{equation*}
D(H_{m}) = \left \{ \phi \in \mathcal{H} ; \hspace{2mm} H_{m}\phi \in \mathcal{H} \right \},
\end{equation*}
and we will use $l^{2} = -\frac{3}{\Lambda}$ where $\Lambda<0$ is the cosmological constant. We obtain:
\begin{prop} 
For $2ml \geq 1$, the operator $H_{m}$ is self-adjoint on $D(H_{m})$.
\end{prop}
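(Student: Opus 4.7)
The plan is to establish essential self-adjointness of $H_{m}$ on a natural core of smooth spinors compactly supported away from the horizon $r = r_{SAdS}$ and from spatial infinity. Since $D(H_{m})$ as defined is the maximal domain of $H_{m}$ (on which $H_{m}$ is automatically closed), showing that the minimal operator has trivial deficiency indices will give self-adjointness on $D(H_{m})$. Symmetry of $H_{m}$ on this core follows from a direct integration by parts once the explicit form of $H_{m}$ in Boyer--Lindquist-type coordinates has been written down.

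By the spherical symmetry of the Schwarzschild--AdS metric, I would expand any $\phi \in \mathcal{H}$ in a basis of generalized spinor spherical harmonics, producing a decomposition $H_{m} = \bigoplus_{n} h_{m,n}$ into one-dimensional radial Dirac operators acting on an interval $(r_{SAdS}, +\infty)$. Each $h_{m,n}$ is a first-order symmetric $2\times 2$ system to which Weyl's limit-point / limit-circle alternative applies at the two endpoints independently, so the problem reduces to checking that both endpoints are of limit-point type for every $n$.

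At the horizon, the change to a tortoise coordinate $r_{*} \to -\infty$ as $r \to r_{SAdS}$ turns $h_{m,n}$ into a perturbation of a constant-coefficient Dirac operator on a half-line by an exponentially decaying potential; this puts the horizon in the limit-point case independently of $m$ and $n$. At spatial infinity, the AdS scaling gives a leading behavior of the form $i\Gamma\partial_{r} + \frac{m r}{l} A + O(1)$, with $A$ a constant matrix of eigenvalues $\pm 1$. A Frobenius-type analysis of the indicial system produces two candidate asymptotic behaviors whose exponents depend linearly on $ml$; under the hypothesis $2ml \geq 1$, the slower of the two is borderline non-square-integrable against the physical measure, so at most one solution of $(h_{m,n} \pm i) u = 0$ survives in $L^{2}$ near infinity. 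Hence infinity is also limit-point, and no boundary condition need be imposed there.

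The main obstacle is the asymptotic analysis at $r = +\infty$: one must rigorously upgrade the formal Frobenius expansions to genuine solutions of $(h_{m,n} \pm i)u = 0$ in a weighted $L^{2}$ space, and control the subleading corrections uniformly enough to conclude non-integrability of the would-be second solution precisely when $2ml \geq 1$. This is the step where the fermionic Breitenlohner--Freedman threshold discovered by Bachelot \cite{Bachelot} enters; once it is in place, both endpoints are limit-point, the deficiency indices vanish, and self-adjointness of $H_{m}$ on $D(H_{m})$ follows.
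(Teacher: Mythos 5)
Your strategy is sound and leads to the correct conclusion, but it is genuinely different from the route taken in the paper, so let me compare the two. You reduce to radial Dirac operators via spinoidal harmonics (as the paper does), then invoke the Weyl limit-point/limit-circle alternative for first-order systems at the two endpoints: the horizon $x\to -\infty$ is handled by exponential decay of the potential, and AdS infinity $x\to 0^{-}$ by a Frobenius analysis of the regular singular point $x=0$ (where the mass term behaves like $-m\gamma^{0}\,l/(-x)$), giving indicial exponents $\pm ml$, each of multiplicity two; for $2ml\geq 1$ the $(-x)^{-ml}$ branch fails to be square-integrable, so only half the solutions are $L^{2}$ at each endpoint, both endpoints are limit-point, the minimal operator has vanishing deficiency indices, and the maximal operator $H_{m}$ on $D(H_{m})$ (being the adjoint of the minimal one) is self-adjoint. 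The paper instead transports the problem, via a unitary change of Dirac matrices and a cutoff near $x=0$, to the operator studied by Bachelot in \cite{Bachelot}, imports his characterization of the maximal-domain asymptotics (Theorem V.1), uses these asymptotics to show that all boundary terms in the Green's identity vanish (symmetry of the maximal operator), and concludes self-adjointness by the standard trick that a symmetric maximal extension of a symmetric minimal operator must be self-adjoint; then sums carefully over harmonics. What your approach buys is a more self-contained and classical argument for the case $2ml\geq 1$, relying only on standard ODE asymptotics rather than on the detailed domain description. What the paper's approach buys is precisely that domain description, which it reuses immediately afterwards to formulate and analyze the MIT boundary condition in the regime $2ml<1$, to establish the $H^{1}_{0}$ characterization of the domain via the elliptic and Hardy inequalities, and to prove absence of eigenvalues. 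Two small remarks: the radial system here is $4\times 4$ rather than $2\times 2$ (with $\Gamma^{1}=\mathrm{diag}(1,-1,-1,1)$ and each indicial exponent $\pm ml$ of multiplicity two), so "limit-point" at an endpoint means two, not one, $L^{2}$ solutions; and your final step should make explicit the summation over harmonics — showing that $D(H_{m})$ coincides with the direct sum of the $D(h_{m,n})$ with the natural square-summability condition, which is the content of the paper's Proposition~\ref{proposition2.6} and is needed before one may conclude self-adjointness of the full operator from that of each $h_{m,n}$.
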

For the case $2ml<1$, we will put MIT boundary conditions. This defines an operator $H_{m}^{MIT}$ with natural domain $D\pare{H_{m}^{MIT}}$. Then we obtain:
\begin{prop}
The operator $H_{m}^{MIT}$ is self-adjoint on $D\pare{H_{m}^{MIT}}$.
\end{prop}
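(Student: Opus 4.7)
The plan is to realize $H_m^{MIT}$ as a boundary-condition restriction of the maximal operator and then verify self-adjointness through a boundary pairing. Let $H_{m,\min}$ denote the closure of $H_m$ on smooth compactly supported sections supported away from $r = +\infty$, so that its adjoint is the maximal operator, which agrees with $H_m$ on $D(H_m)$. In the regime $2ml<1$, $H_{m,\min}$ is not essentially self-adjoint, because the radial ODE at $r = +\infty$ possesses two linearly independent asymptotic modes that are both square-integrable near the conformal boundary. Hence the quotient $D(H_m)/D(H_{m,\min})$ is nontrivial, and self-adjoint extensions are classified by Lagrangian subspaces of the boundary sesquilinear form.

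The first main step is to obtain a precise asymptotic description of elements of $D(H_m)$ as $r\to +\infty$. Performing a Frobenius/WKB analysis of the radial system (as in Bachelot's study of Dirac in pure AdS) I would identify the two asymptotic behaviors, whose powers of $(1/r)$ depend on $ml$, and show that any $\phi \in D(H_m)$ admits a well-defined trace $\phi_\infty$ in a finite dimensional trace space $T_\infty$. The second step is to derive Green's formula: integration by parts from a compact region out to infinity yields
\begin{equation*}
\prods{H_m \phi}{\psi} - \prods{\phi}{H_m \psi} = \mathcal{B}(\phi_\infty, \psi_\infty), \qquad \phi, \psi \in D(H_m),
\end{equation*}
with $\mathcal{B}$ an explicit non-degenerate sesquilinear form on $T_\infty$. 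The MIT condition at infinity, imposed through a projector of the form $i\, n_\mu \gamma^\mu\, \phi_\infty = \phi_\infty$, singles out a subspace $L_{MIT} \subset T_\infty$ on which $\mathcal{B}$ vanishes identically, giving symmetry of $H_m^{MIT}$ at once.

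For self-adjointness, the key point is to verify that $L_{MIT}$ is a maximal isotropic subspace of $\mathcal{B}$, equivalently that its annihilator with respect to $\mathcal{B}$ coincides with $L_{MIT}$ itself; this is an algebraic computation with the Clifford relations once the boundary form is written explicitly. Granting this, the abstract boundary-triple (or von Neumann) criterion forces $D(H_m^{MIT}) = \{\phi \in D(H_m) : \phi_\infty \in L_{MIT}\}$ to be a self-adjoint domain. The principal obstacle is the first step: establishing the Frobenius asymptotics uniformly for $\phi \in D(H_m)$ in the critical range $2ml<1$, controlling error terms coming from the Schwarzschild-AdS corrections to the pure-AdS model, and verifying that the MIT projector splits $T_\infty$ into two halves of equal dimension so that it is truly Lagrangian rather than merely isotropic. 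Once the boundary pairing is made fully explicit, the rest is formal.
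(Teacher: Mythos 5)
Your overall strategy — describe the domain of the maximal operator via asymptotics at the conformal boundary, set up a Green's-formula boundary pairing, and check that the MIT condition cuts out a maximal isotropic (Lagrangian) subspace — is exactly the right philosophy, and it matches what the paper does in spirit. However, there is a genuine gap in the way you have set it up: you assert that the traces $\phi_\infty$ of elements of $D(H_m)$ live in a \emph{finite-dimensional} space $T_\infty$, and you then invoke the abstract von Neumann/boundary-triple criterion on $T_\infty$. That claim is false for $H_m$ acting on the full Hilbert space $\mathcal{H}$: the angular variable produces infinitely many harmonics $(s,n)$, and each one contributes its own pair of square-integrable endpoint solutions $(-x)^{\pm ml}$. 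The deficiency indices of the minimal operator are therefore infinite, and the boundary sesquilinear form $\mathcal{B}$ lives on an infinite-dimensional trace space. In that setting, exhibiting an isotropic subspace and "checking dimensions" is not enough; one has to worry about closedness of the restriction and about surjectivity of $H_m^{MIT}\pm i$, neither of which follows formally.

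The paper deals with this by first diagonalizing $\cancel{D}_{\mathbb{S}^2}$ with the spinoidal spherical harmonics, so that the problem reduces to a family of genuinely one-dimensional radial operators $\tilde{H}_m^{s,n}$. For each fixed $(s,n)$ the trace space \emph{is} finite-dimensional, and the boundary-pairing argument you describe is carried out there (using, via a unitary transform, Bachelot's Theorem V.1 asymptotics rather than a fresh Frobenius analysis — a convenience, not an essential difference). But the step you have implicitly skipped is the passage back to the full operator: the paper proves closedness of $\tilde{H}_m^{MIT}$ and surjectivity of $\tilde{H}_m^{MIT}\pm i$ by summing the resolvents over $(s,n)$, and crucially it needs a \emph{uniform} estimate to verify that the assembled element $\psi = \sum_{(s,n)} \psi_{s,n}$ still satisfies the MIT boundary condition (this is where the bound from Lemma \ref{LemDomain}, summable in $(s+\tfrac12)^2$, earns its keep). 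Without such a summability argument, knowing the per-harmonic extension is self-adjoint does not by itself yield self-adjointness of the direct sum with the claimed domain. So: the boundary-pairing idea and the isotropy computation are on target, but the reduction to a finite-dimensional trace space is incorrect, and the (nontrivial) reassembly over infinitely many harmonics is the missing piece.
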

The Cauchy problem is then well-posed by Stone's theorem.\\
We then turn our attention to the scattering theory. By means of a Mourre estimate, we are able to prove velocity estimates. We then introduce the comparison operator $H_{c}=i \gamma^{0} \gamma^{1} \partial_{x}$ with domain $D\pare{H_{c}}= \left \{ \varphi \in \mathcal{H}_{s,n}; H_{c} \varphi \in \mathcal{H}_{s,n}, \hspace{1mm} \varphi_{1} \pare{0} = - \varphi_{3} \pare{0}, \hspace{1mm} \varphi_{2} \pare{0} = \varphi_{4} \pare{0} \right \}$. Making use of the velocity estimates, we obtain the following asymptotic completeness result:
\begin{theo}[Asymptotic completeness] 
For all $m>0$ and all $\varphi \in \mathcal{H}$, the limits:
\begin{flalign}
\hphantom{A} & \lim_{t \to \infty} e^{it H_{c}} e^{-itH_{m}} \varphi \\
& \lim_{t \to \infty} e^{itH_{m}} e^{-itH_{c}} \varphi
\end{flalign}
exist. If we denote these limits by $\Omega \varphi$ and $W \varphi$ respectively, then we have $\Omega^{*} = W$.
\end{theo}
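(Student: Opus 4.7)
The plan is to apply Cook's method in both time directions, using the propagation (velocity) estimates already obtained from the Mourre estimate as the key input for the hard direction, and then to read off $\Omega^{*} = W$ from a standard duality computation. Since $e^{itH_c}e^{-itH_m}$ and $e^{itH_m}e^{-itH_c}$ are uniformly bounded of norm $1$, it suffices to establish existence of each strong limit on a dense subspace and to extend to the whole of $\mathcal{H}$ by an $\varepsilon/3$ argument.

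For the wave operator $W$, Cook's lemma reduces existence to the integrability on $\R^{+}$ of $t \mapsto \norme{(H_m - H_c) e^{-itH_c} \varphi}$ for $\varphi$ in a dense subspace. Since $H_c = i\gamma^{0}\gamma^{1}\partial_{x}$ is free transport by $\pm \partial_{x}$ with the prescribed boundary condition at $x=0$, for $\varphi$ with compactly supported components the state $e^{-itH_c}\varphi$ has, after at most one reflection at $x=0$, support contained in $\{x \leq -t + c\}$ for large $t$; the perturbation $H_m - H_c$, consisting of the mass term and the geometric connection, decays exponentially as $x \to -\infty$ (the horizon sits at $x=-\infty$ in the Regge--Wheeler variable), so the integrand is dominated by an exponentially decaying function of $t$ and the Cook integral converges.

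For $\Omega$, the Cook integral to bound is $\int_{0}^{\infty} \norme{(H_m - H_c) e^{-itH_m} \varphi}\, dt$, and this is where the Mourre machinery enters. The minimal velocity estimate established earlier ensures that, for $\varphi$ spectrally localized in a Mourre interval, $e^{-itH_m}\varphi$ is, up to errors of integrable norm, supported in $\{x \leq -ct\}$, so the exponential decay of $H_m-H_c$ at the horizon again yields integrability. The main obstacle is the clause ``for all $\varphi \in \mathcal{H}$'': the velocity estimates only control the continuous spectral subspace, so one must invoke absence of embedded eigenvalues of $H_m$ outside a discrete threshold set (a consequence of the strict Mourre estimate via the Virial theorem) and then use a spectral partition of unity subordinate to a covering of $\R$ by Mourre intervals to treat every vector in $\mathcal{H}$.

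Once both strong limits are established on $\mathcal{H}$, the identity $\Omega^{*}=W$ follows from
\[
\prods{\Omega \varphi}{\psi} = \lim_{t \to \infty} \prods{e^{itH_c} e^{-itH_m} \varphi}{\psi} = \lim_{t \to \infty} \prods{\varphi}{e^{itH_m} e^{-itH_c} \psi} = \prods{\varphi}{W\psi},
\]
valid for all $\varphi, \psi \in \mathcal{H}$ by self-adjointness of $H_m$ and $H_c$ and the existence of the two limits, together with the boundedness of $W$.
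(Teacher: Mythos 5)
Your plan is to use Cook's method in both directions, whereas the paper never forms a Cook integral. Instead, after noting that by Proposition \ref{prop62} and Lemma \ref{lemVitMin} only the contribution of a cutoff $J_0(\mathcal{A}/t)$ localized near $\mathcal{A}/t = 1$ matters, the paper sets $\Phi(t) = \chi(H_c)J_0(\mathcal{A}/t)\chi(H_m^{s,n})$ and controls the two-sided Heisenberg derivative $\mathbb{D}\Phi(t) = \tfrac{d}{dt}\Phi(t) + i(H_c\Phi(t) - \Phi(t)H_m^{s,n})$ via item (ii) of the Dereziński--Gérard ``basic principle'' in Section~\ref{Estpropaabs}. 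The localization $J_0(-x/t)$ forces $x \in [-(1+2\epsilon)t, -(1-2\epsilon)t]$, so the potential term $J_0(-x/t)M_0 V(x)$ is of operator-norm $O(e^{-c t})$, and the $J_0' J_0$ term is controlled by Cauchy--Schwarz against the $L^2(dt/t)$ propagation estimates. That factorization is what makes the $L^2$-in-time bounds usable.

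This is where your argument for $\Omega$ has a genuine gap. You reduce the existence of $\Omega\varphi$ to finiteness of $\int_0^\infty \|(H_m - H_c)e^{-itH_m}\varphi\|\,dt$, an $L^1(dt)$ condition, and invoke the minimal velocity estimate to claim that the ``error'' is of integrable norm. But the Mourre machinery (Proposition \ref{61}, Lemma \ref{lemVitMin}) yields only $\int_1^\infty \|J_-(\mathcal{A}/t)\chi(H_m^{s,n})e^{-itH_m^{s,n}}u\|^2\,\frac{dt}{t} \leq C\|u\|^2$, which is consistent with, say, $\|J_-(\mathcal{A}/t)\chi(H_m^{s,n})e^{-itH_m^{s,n}}u\| \sim (\log t)^{-1}$ --- a decay rate for which your Cook integral diverges. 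There is no step in your argument that upgrades the $L^2(dt/t)$ estimate to the $L^1(dt)$ estimate Cook requires; Sigal--Soffer/Graf/Dereziński--Gérard-style propagation estimates were designed precisely to sidestep this obstruction, and the paper uses them. A related difficulty you do not address is that $H_m - H_c$ contains $-m\gamma^0 B(x)$ with $B(x) \sim l/(-x)$ near $x=0$, so $H_m - H_c$ is an \emph{unbounded} perturbation; the paper handles this automatically because $J_0(\mathcal{A}/t)$ is supported away from $x = 0$ for $t$ large, but a bare Cook integrand $\|(H_m - H_c)e^{-itH_m}\varphi\|$ needs this controlled separately. Finally, you should make explicit the reduction to fixed spinoidal harmonics $(s,n)$ --- the conjugate operator, the Mourre estimate, and the propagation estimates are all established per-harmonic in Sections 5 and 6, and the paper first proves asymptotic completeness for fixed $(s,n)$ and then sums, by unitarity and dominated convergence, to get Theorem \ref{Th�or�me6.5}.

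Your treatment of $W$ is closer to sound (the $H_c$ evolution is explicit, and for $C_0^\infty$ data the reflection at $x=0$ causes no blowup because the smooth vanishing of the data dominates the $1/(-x)$ singularity of $B$), and your duality argument for $\Omega^* = W$ is correct and the same as the paper's. But the $\Omega$ direction needs the two-sided Heisenberg-derivative argument, not Cook.
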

We eventually study the asymptotic velocity. We will say that $B =\mathrm{s}- \mathrm{C_{\infty}}-\underset{n \to \infty}{\lim} B_{n}$ if, for all $J \in C_{\infty}\pare{\R}$, we have $J\pare{B} = \mathrm{s}-\underset{t \to \infty}{\lim} J\pare{B_{n}}$ (where $C_{\infty}\pare{\R}$ is the set of continuous functions which go to $0$ at $\pm \infty$). Then, we obtain the following:
\begin{theo}[Asymptotic velocity for $H_{m}$]
Let $J \in C_{\infty} \pare{\R}$ and $A= -\gamma^{0} \gamma^{1}x$ where $\gamma^{0}$, $\gamma^{1}$ are Dirac matrices. Then, for all $m >0$, the limit:
\begin{equation}
\mathrm{s}-\underset{t \to \infty}{\lim} e^{itH_{m}} J\pare{\frac{A}{t}} e^{-itH_{m}}
\end{equation}
exists. Moreover, if $J\pare{0} = 1$, then
\begin{equation}
\mathrm{s}- \underset{R\to \infty}{\lim} \pare{\mathrm{s} - \underset{t \to \infty}{\lim} e ^{itH_{m}} J\pare{\frac{A}{Rt}} e^{-itH_{m}}} = \mathds{1}.
\end{equation}
If we define
\begin{equation}
\mathrm{s}- \mathrm{C_{\infty}}-\underset{t \to \infty}{\lim} e^{itH_{m}} \frac{A}{t} e^{-itH_{m}} =: P^{+}_{m},
\end{equation}
then the self-adjoint operator $P^{+}_{m}$ is densely defined and commute with $H_{m}$. The operator $P^{+}_{m}$ is called the asymptotic velocity and is in fact the identity operator.
\end{theo}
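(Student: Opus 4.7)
The strategy is to transfer the question from $H_{m}$ to the comparison dynamics $H_{c}$, use the explicit structure of $H_{c}$ to compute the asymptotic velocity there, and pull the result back through the wave operators supplied by the previous theorem. Concretely, I would start from the identity
\begin{equation*}
e^{itH_{m}} J\pare{A/t} e^{-itH_{m}} = \pare{e^{itH_{m}} e^{-itH_{c}}} \pare{e^{itH_{c}} J\pare{A/t} e^{-itH_{c}}} \pare{e^{itH_{c}} e^{-itH_{m}}},
\end{equation*}
apply asymptotic completeness to send the outer factors strongly to $W$ and $\Omega$, note that the middle factor is uniformly bounded by $\norme{J}_{\infty}$ via spectral calculus, and reduce the problem to computing the strong limit of $e^{itH_{c}} J\pare{A/t} e^{-itH_{c}}$ as $t \to \infty$.

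For that middle limit, the key input is the canonical commutation relation $[H_{c}, A] = -i\, \mathds{1}$, which follows on a suitable core from $\pare{\gamma^{0}\gamma^{1}}^{2} = \mathds{1}$ and $[\partial_{x}, x] = \mathds{1}$, with no boundary contribution at $x = 0$ since $A$ vanishes there and therefore preserves the MIT gluing. Promoting this to the operator identity
\begin{equation*}
e^{itH_{c}} A e^{-itH_{c}} = A + t\, \mathds{1}
\end{equation*}
would then give $e^{itH_{c}} J\pare{A/t} e^{-itH_{c}} = J\pare{A/t + \mathds{1}}$ by the spectral theorem. Since $A\varphi/t \to 0$ in norm for $\varphi \in D(A)$ and $\pare{A/t + 1 - z}^{-1}$ is uniformly bounded for $\Im z \neq 0$, a standard density argument yields strong convergence of the resolvents to $\pare{1-z}^{-1} \mathds{1}$, which extends by Stone--Weierstrass to $J\pare{A/t+1} \to J\pare{1}\mathds{1}$ strongly for every $J \in C_{\infty}\pare{\R}$.

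Combining the two reductions, $\lim_{t \to \infty} e^{itH_{m}} J\pare{A/t} e^{-itH_{m}} = J\pare{1}\, W\Omega = J\pare{1}\, \mathds{1}$, using $W = \Omega^{*}$ and the unitarity of $\Omega$ forced by the existence of both wave operators. The rescaled statement with $J\pare{0}=1$ is obtained by the same computation applied to $A/\pare{Rt}$: the inner limit in $t$ is $J\pare{1/R}\mathds{1}$, and then $R \to \infty$ gives $J\pare{0}\mathds{1} = \mathds{1}$. Since the identity $J\pare{P^{+}_{m}} = J\pare{1}\mathds{1}$ holds for every $J \in C_{\infty}\pare{\R}$, the spectral theorem forces $\sigma\pare{P^{+}_{m}} = \{1\}$, hence $P^{+}_{m} = \mathds{1}$; it is then trivially densely defined and commutes with $H_{m}$.

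The hard part will be promoting $[H_{c}, A] = -i\, \mathds{1}$ to the operator identity $e^{itH_{c}} A e^{-itH_{c}} = A + t\, \mathds{1}$ in the presence of the MIT boundary: the MIT condition couples the $\pm 1$-eigenspaces of $\gamma^{0}\gamma^{1}$ at $x=0$, so the Stone--von Neumann decoupling into free half-line translations is not clean and one must verify that the reflection encoded by the boundary condition is compatible with the Heisenberg flow of $A$. I expect to handle this either by exhibiting a dense subspace of $D(H_{c}) \cap D(A)$ invariant under $e^{itH_{c}}$ and integrating the Heisenberg equation there, or by writing $e^{-itH_{c}}$ explicitly in terms of the chirality decomposition with reflection at $x=0$ and tracking the multiplication by $-\gamma^{0}\gamma^{1} x$ along the way. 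Once this is secured, the functional-calculus convergence and the passage through the wave operators are routine.
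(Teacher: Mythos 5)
Your overall strategy matches the paper's: factor $e^{itH_m}J\pare{\mathcal{A}/t}e^{-itH_m}$ through the comparison dynamics via the wave operators, compute the limit for $H_c$, and transfer via $J\pare{P^+_m}=W\,J\pare{P^+_c}\,\Omega$. The divergence is in how the $H_c$ limit is obtained. The paper solves the free half-line Dirac equation explicitly (with the reflection imposed by the boundary condition), computes $e^{itH_c}J\pare{\mathcal{A}/t}e^{-itH_c}\psi^0$ componentwise, and applies dominated convergence; you instead propose to derive the Weyl-type relation $e^{itH_c}\mathcal{A}e^{-itH_c}=\mathcal{A}+t\,\mathds{1}$ from $\bigl[H_c,\mathcal{A}\bigr]=-i\,\mathds{1}$ and then invoke strong resolvent convergence of $\mathcal{A}/t+\mathds{1}\to\mathds{1}$. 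This relation is in fact true and not merely formal: because $\gamma^0\gamma^1$, $\mathcal{A}$ and $H_c$ are all diagonal, the MIT gluing $\varphi_1\pare{0}=-\varphi_3\pare{0}$, $\varphi_2\pare{0}=\varphi_4\pare{0}$ unfolds each coupled pair of half-line components into a single full-line translation, under which $\mathcal{A}$ becomes multiplication by $\pm x$ on the whole line with the sign matching the direction of $H_c$; the explicit solution in the paper's $H_c$ proof is exactly this unfolding written out. So your ``Plan B'' is precisely the paper's argument, and your ``Plan A'' (integrating the Heisenberg derivative on a core) also closes: since $\mathcal{A}$ vanishes at $x=0$ it preserves $D\pare{H_c}$, $\bigl[iH_c,\mathcal{A}\bigr]=\mathds{1}$ is bounded, and the same $C^1$/$C^2$ bookkeeping carried out for $H_m^{s,n}$ in Section 5 applies verbatim to $H_c$. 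One small caution on the transfer step: you use $W\Omega=\mathds{1}$; this follows because $\Omega$ and $W$ are each isometries (strong limits of unitaries) and $W=\Omega^*$, so both $\Omega W$ and $W\Omega$ equal the identity. With that noted, your route is correct, and arguably more transparent about \emph{why} the asymptotic velocity is exactly $\mathds{1}$, at the modest cost of having to justify the Weyl relation rather than reading the answer off an explicit propagator.
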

The paper is organized as follows.\\
\indent In section $2$, we present the Schwarzschild-AdS geometry and, due to the lack of global hyperbolicity, the fact that radial null geodesics go to infinity in finite time. Using the Newman-Penrose formalism, we then obtain the Dirac equation on this space-time and give a spectral formulation of this equation for a coordinate system $\pare{t,x,\theta,\varphi}$ where the horizon corresponds to $x$ goes to $-\infty$ and the Anti-de Sitter infinity corresponds to $x=0$. We eventually generalize this equation by giving asymptotic behaviors of the potentials and we ensure that the Dirac equation in the Schwarzschild-AdS space-time is part of our generalization. In the rest of the paper, we will work with this generalization.

\indent In section $3$, we obtain the self-adjointness of our operator for all $m>0$. First, we present the spinoidal spherical harmonics and then we use this tool to decompose our operator (in fact, we diagonalize the Dirac operator on the sphere) which leads us to a $1+1$ dimensional problem for the operator now denoted $H_{m}^{s,n}$. Then we study the states in the natural domain $D\pare{H_{m}^{s,n}} = \{ \varphi \in \mathcal{H}_{s,n} \lvert H_{m}^{s,n} \varphi \in \mathcal{H}_{s,n} \}$. The problem is coming from the Anti-de Sitter infinity where the potential behaves badly. Nevertheless, the potential behaves like in the result of A. Bachelot on the Anti-de Sitter space. After a unitary transform we can use his result. In this way, we see that the states behave well when $2ml\geq 1$ but it degenerates at $0$ when $2ml<1$. When $2ml\geq 1$, we prove that our operator is essentially self-adjoint on $C_{0}^{\infty}\pare{]-\infty,0[}$ and, using an elliptic estimate and a Hardy-type inequality, we give a precise description of the domain. In the case $2ml<1$, we need to put a boundary condition to obtain the self-adjointness of our operator. In this paper, we have chosen the MIT boundary condition. This allows us to solve the Cauchy problem. We finally prove the absence of eigenvalues for this operator.

\indent In section $4$, we prove a compactness result. We use an approximation of our resolvent, separating the behavior close to the black hole horizon and close to $x=0$. We then obtain that $f\pare{x}\pare{H_{m}^{s,n}-\lambda}^{-1}$ is compact if $f$ goes to $0$ at the horizon and has a finite limit at $x=0$.

\indent In section $5$, we obtain a Mourre estimate for $H_{m}^{s,n}$ using $A = \Gamma_{1} x$, where $\Gamma_{1}$ is the matrix $\text{diag}\pare{1,-1,-1,1}$, as conjugate operator.

\indent In section $6$, we obtain some propagation estimates. First, making use of the Mourre estimate and of an abstract result about minimal velocity estimates, we prove that the minimal velocity is $1$. Then, using a standard observable and a general result which uses Heisenberg derivative to obtain velocity estimates, we prove that the maximal velocity is also $1$.

\indent In section $7$, we are now able to prove asymptotic completeness for our hamiltonian. This result is first proved for fixed harmonics and then we prove that we can sum over all harmonics. It is proved by making use of the two velocity estimates and a similar reasoning as in the propagation estimates.

\indent In section $8$, we first prove the existence of the asymptotic velocity for $H_{c}$ and then deduce the same result for $H_{m}$ using the wave operators. We see that the asymptotic velocity operator is the identity.

\begin{center}
\bf{Aknowledgments}
\end{center}

This work was partially supported by the ANR project AARG.

\section{The Schwarzschild Anti-de Sitter space-time and the Dirac equation}

\indent In this section, we present the Schwarzschild Anti-de Sitter space-time and give the coordinate system that we will work with in the rest of the paper. We quickly study the radial null geodesics and then formulate the Dirac equation as a system of partial differential equations which are derived from the two spinor component expression of this equation by use of the Newman-Penrose formalism. We finally give a generalization of our equation by just considering a potential that have the same asymptotic behavior as in the case of the Schwarzschild Anti-de Sitter space-time.

\subsection{The Schwarzschild Anti-de Sitter space-time}

Let $\Lambda <0$. We define $l^{2}= \frac{-3}{\Lambda}$. We denote by $M$ the black hole mass.

In Boyer-Lindquist coordinates, the Schwarzschild-Anti-de Sitter metric is given by:
\begin{equation}
g_{ab}=\pare{1-\frac{2M}{r}+\frac{r^{2}}{l^{2}}} dt^{2} - \pare{1-\frac{2M}{r}+\frac{r^{2}}{l^{2}}}^{-1} dr^{2}- r^{2} \pare{d\theta^{2} + \sin^{2} \theta d\varphi^{2}}
\end{equation}
We define $F(r)= 1- \frac{2M}{r}+ \frac{r^{2}}{l^{2}}$. We can see that $F$ admits two complex conjugate roots and one real root $r=r_{SAdS}$. We deduce that the singularities of the metric are at $r=0$ and $r=r_{SAdS}=p_{+}+p_{-}$ where $p_{\pm}=\pare{Ml^{2} \pm \pare{M^{2}l^{4} + \frac{l^{6}}{27}}^{\frac{1}{2}}}^{\frac{1}{3}}$. (See \cite{Holsmu}) The exterior of the black hole will be the region $r \geq r_{SAdS}$ and our spacetime is then seen as $\R_{t} \times ]r_{SAdS},+\infty[ \times S^{2}$. It is well-know that the metric can be extended for $r \leq r_{SAdS}$ by a coordinate change which gives the maximally extended Schwarschild-Anti-de Sitter spacetime. In this paper, we are only interested in the exterior region.

In order to have a better understanding of this geometry, we study the outgoing (respectively ingoing) radial null geodesics (that is to say for which $\frac{dr}{dt}>0$ (respectively $\frac{dr}{dt}<0$)). Using the form of the metric we can see that along such geodesics, we have:
\begin{equation}
\frac{dt}{dr} = \pm F\pare{r}^{-1}.
\end{equation}
We thus introduce a new coordinate $r_{*}$ such that $t-r_{*}$ (respectively $t+r_{*}$) is constant along outgoing (respectively ingoing) radial null geodesics. In other words:
\begin{equation}
\frac{\mathrm dr_{*}}{\mathrm dr}= F(r)^{-1}.
\end{equation}
The coordinate system $\pare{t,r_{*},\theta,\varphi}$ is called Regge-Wheeler coordinates. $r_{*}$ is given by:
\begin{flalign}
r_{*}(r)= \notag & ln\pare{\pare{r-r_{SAdS}}^{\alpha_{1}}\pare{r^{2}+r_{SAdS}r+r^{2}_{SAdS}+l^{2}}^{-\frac{\alpha_{1}}{2}}}\\
\hphantom{A} &
+C\arctan\pare{\frac{2r+r_{SAdS}}{\pare{3r^{2}_{SAdS}+4l^{2}}^{\frac{1}{2}}}}.
\end{flalign}
where:
\begin{equation}
\alpha_{1}=\frac{r_{SAdS}l^{2}}{3r^{2}_{SAdS}+l^{2}} = \frac{1}{2\kappa}; \hspace{5mm} C=\frac{l^{2}\pare{3r^{2}_{SAdS}+2l^{2}}}{\pare{3r^{2}_{SAdS}+l^{2}}\pare{3r^{2}_{SAdS}+4l^{2}}^{\frac{1}{2}}}
\end{equation}
We obtain $ \lim_{r \to r_{SAdS}} r_{*}(r)=- \infty $ and $ \lim_{r \to \infty} r_{*}(r)= C \frac{\pi}{2}$.
We will consider the coordinate $x= r_{*} - C \frac{\pi}{2}$ rather than $r_{*}$. We thus have:
\begin{flalign}
\hphantom{A}& \lim_{r \to r_{SAdS}} x\pare{r}=- \infty \\
\hphantom{A}& \lim_{r \to \infty} x\pare{r}= 0.
\end{flalign}
This limit proves that, along radial null geodesic, a particle goes to timelike infinity in finite Boyer-Lindquist time (recall that along these geodesic, $t-r_{*}$ and $t+r_{*}$ are constants). This geometric property will be a major issue in our problem. This implies that our space-time is not globally hyperbolic, so that we cannot use the standard result by Leray about the global existence of solution of hyperbolic equations. A similar situation has been encountered by A.Bachelot in his article \cite{Bachelot} concerning the Dirac equation on the Anti-de Sitter space-time. We expect to do a similar study concerning the self-adjoint extension.

\subsection{The Dirac equation on Schwarzschild Anti-de Sitter space-time}

In the two components spinor notation, the Dirac equation takes the following form:
\begin{flalign} \label{eqDirSpin}
\hphantom{A} & \begin{cases}
\nabla_{AA'} \phi^{A} = -\mu \chi_{A'}\\
\nabla_{AA'} \chi^{A'} = -\mu \phi_{A}
\end{cases}
\end{flalign}
where $\nabla_{AA'}$ is the Levi-Civita connection, $\phi^{A}$ is a two-spinor, $\mu = \frac{m}{\sqrt{2}}$ and $m \geq 0$ is the mass of the field. \\
Thanks to the Newman-Penrose formalism, we can obtain the equation in the form of a system of partial differential equations. In this formalism, we introduce a null tetrad $\pare{l^{a},n^{a},m^{a},\bar{m}^{a}}$, that is
\begin{equation}
l_{a}l^{a}=n_{a}n^{a}=m_{a}m^{a}=\bar{m}_{a}\bar{m}^{a}= l_{a}m^{a} = n_{a}m^{a} =0,
\end{equation}
which is a basis of the complexified of the tangent space. We'll say that the tetrad is normalized if:
\begin{equation}
l_{a}n^{a}=1 \hspace{3mm} m_{a}\bar{m}^{a}=-1.
\end{equation}
The two vectors $l^{a}$ and $n^{a}$ correspond to the directions along which the light goes to infinity (we can choose $l^{a}$ as an outgoing null vector and $n^{a}$ as an ingoing null vector). The vector $m^{a}$ admits bounded integral curves. The vectors $m^{a}$ and $\bar{m}^{a}$ will generate rotations. In our case, we will consider:
\begin{flalign*}
l^{a} \partial x_{a} = \frac{1}{\sqrt{2}} F(r)^{- \frac{1}{2}} \pare{\partial_{t} + \partial_{x}}, & \hspace{3mm} n^{a} \partial x_{a} =  \frac{1}{\sqrt{2}} F(r)^{- \frac{1}{2}} \pare{\partial_{t} - \partial_{x}}\\
m^{a} \partial x_{a} = \frac{1}{\sqrt{2} r} \pare{\partial_{\theta}- \frac{i}{\sin \theta} \partial_{\varphi}}, & \hspace{3mm} \bar{m}^{a} \partial x_{a} = \frac{1}{\sqrt{2} r} \pare{\partial_{\theta}+ \frac{i}{\sin \theta} \partial_{\varphi}}.
\end{flalign*}
We remark that this tetrad is normalized and since $t \pm x$ is constant along null geodesics, the vector $l^{a} \partial x_{a}$ and $n^{a} \partial x_{a}$ are null. Moreover, using the equation of radial null geodesics with $\lambda$ as our affine parameter, we deduce that $\frac{dt}{dr}= \frac{dt}{d\lambda}\frac{d\lambda}{dr} = F\pare{r}^{-1}$ which gives us an outgoing real null vector. We see as well that $m^{a}$ is linked to rotations. We give the associated dual vectors:
\begin{flalign*}
l_{a} dx^{a} = \frac{1}{\sqrt{2}} F(r)^{\frac{1}{2}} \pare{dt - dx}, & \hspace{3mm} n_{a} dx^{a} = \frac{1}{\sqrt{2}} F(r)^{\frac{1}{2}} \pare{dt + dx} \\
m_{a} dx^{a} = \frac{r}{\sqrt{2}} \pare{-d\theta + i \sin(\theta) d\varphi}, & \hspace{3mm} \bar{m}_{a} dx^{a} = \frac{r}{\sqrt{2}} \pare{-d\theta - i \sin(\theta) d\varphi}.
\end{flalign*}
Using this tetrad, it is then possible to decompose the covariant derivative in directional derivatives along these directions. We introduce the following symbols:
\begin{equation*}
D = l^{a} \nabla_{a}, \hspace{2mm} D'=n^{a}\nabla_{a}, \hspace{2mm} \delta = m^{a} \nabla_{a}, \hspace{2mm} \delta' = \bar{m}^{a} \nabla_{a}.
\end{equation*}
We have twelve spin coefficients that are defined by the following expressions:
\begin{flalign*}
\hphantom{A} & \hat{\kappa} = m^{a} Dl_{a}, \hspace{5mm} \rho = m^{a} \delta' l_{a}, \hspace{5mm} \sigma = m^{a} \delta l_{a}, \hspace{5mm} \tau = m^{a} D' l_{a},\\
& \epsilon = \frac{1}{2} \pare{n^{a} D l_{a} + m^{a} D \bar{m}_{a}}, \hspace{5mm} \alpha = \frac{1}{2} \pare{n^{a} \delta' l_{a} + m^{a} \delta' \bar{m}^{a}}, \\
& \beta = \frac{1}{2} \pare{n^{a} \delta l_{a} + m^{a} \delta \bar{m}^{a}}, \hspace{5mm} \gamma = \frac{1}{2} \pare{n^{a} D' l_{a} + m^{a} D' \bar{m}_{a}},\\
& \pi = - \bar{m}^{a} D n_{a}, \hspace{5mm} \lambda = - \bar{m}^{a} \delta' n_{a}, \hspace{5mm} \mu = - \bar{m}^{a} \delta n_{a}, \hspace{5mm} \nu = - \bar{m}^{a} D' n_{a},
\end{flalign*}
where $\hat{\kappa}$ is the spin coefficient usually denoted $\kappa$, since $\kappa$ is the surface gravity in our convention. We can now give the equation \eqref{eqDirSpin} as a system of partial differential equations. These equations act on the components of the spinor $\phi^{A},\chi^{A'}$ in a normalized spinorial basis $(o^{A},\iota^{A})$ (that is such that $o_{A}\iota^{A}=1$). To choose our spinorial basis, we use the null tetrad above. Indeed, we can define the spinorial basis $(o^{A},\iota^{A})$, uniquely up to an overall sign, using the following conditions:
\begin{equation*}
o^{A}\bar{o}^{A'} = l^{a}, \hspace{3mm} \iota^{A} \bar{\iota}^{A'} = n^{a}, \hspace{3mm} o^{A} \bar{\iota}^{A'} = m^{a}, \hspace{3mm} \iota^{A} \bar{o}^{A'} = \bar{m}^{a}, \hspace{3mm} o_{A}\iota^{A} =1.
\end{equation*}
The dual basis is $\epsilon_{A}^{0}=- \iota_{A}$, $\epsilon_{A}^{1}= o_{A}$. Let $\phi^{0},\phi^{1},\chi^{0'},\chi^{1'}$ such that $\phi^{A} = \phi^{0} o^{A} + \phi^{1} \iota^{A} $ and $\chi^{A'} = \chi^{0'} o^{A'} + \chi^{1'} \iota^{A'}$ where $(o^{A'},\iota^{A'})$ is the conjugate basis of $(o^{A},\iota^{A})$. In this basis, the components of $\phi_{A}$ and $\chi_{A'}$ are respectively:
\begin{equation*}
\phi_{0}= -\phi^{1}, \hspace{3mm} \phi_{1}=\phi^{0}, \hspace{3mm} \chi_{0'}=-\chi^{1'}, \hspace{3mm} \chi_{1'}=\chi^{0'}.
\end{equation*}
We obtain the following system of partial differential equations:
\begin{equation} \label{eqDirComp}
\begin{cases}
l^{a} \partial x_{a} \phi_{1} - \bar{m}^{a} \partial x_{a} \phi_{0} + \pare{\epsilon - \rho} \phi_{1} - \pare{\pi - \alpha} \phi_{0} = \frac{m}{\sqrt{2}} \chi^{1'} \\
m^{a} \partial x_{a} \phi_{1} - n^{a} \partial x_{a} \phi_{0} + \pare{\beta - \tau} \phi_{1} - \pare{\mu - \gamma} \phi_{0} = - \frac{m}{\sqrt{2}} \chi^{0'} \\
l^{a} \partial x_{a} \chi^{0'} + m^{a} \partial x_{a} \chi^{1'} + \pare{\bar{\epsilon} - \bar{\rho}} \chi^{0'} + \pare{\bar{\pi} - \bar{\alpha}} \chi^{1'} = - \frac{m}{\sqrt{2}} \phi_{0} \\
\bar{m}^{a} \partial x_{a} \chi^{0'} + n^{a} \partial x_{a} \chi^{1'} + \pare{\bar{\beta} - \bar{\tau}} \chi^{0'} + \pare{\bar{\mu} - \bar{\gamma}}\chi^{1'} = - \frac{m}{\sqrt{2}} \phi_{1}.
\end{cases}
\end{equation}
Using the 4-component spinor $
\psi=\begin{pmatrix}
\phi_{A}\\
\chi^{A'}
\end{pmatrix}$, we obtain:
\begin{equation} \label{opé1}
\left( \partial_{t} + \gamma^{0}\gamma^{1} \pare{F(r)\partial_{r} + \frac{F\pare{r}}{r} + \frac{F'\pare{r}}{4}} +\frac{F(r)^{\frac{1}{2}}}{r} \cancel{D}_{\mathbb{S}^{2}} + im \gamma^{0} F(r)^{\frac{1}{2}}\right) \psi = 0. 
\end{equation}
where $m$ is the mass of the field and $\cancel{D}_{\mathbb{S}^{2}}$ is the Dirac operator on the sphere. In the coordinate system given by $\pare{\theta,\varphi}\in [0;2\pi]\times [0;\pi]$, we obtain: $\cancel{D}_{\mathbb{S}^{2}}= \gamma^{0} \gamma^{2} \pare{ \partial_{\theta} + \frac{1}{2} \cot \theta} + \gamma^{0} \gamma^{3} \frac{1}{\sin \theta} \partial_{\varphi}$ where singularities appear, but we just have to change our chart in this case. We will now work in these coordinates.\\
Recall that Dirac matrices $\gamma^{\mu}$, $0\leq \mu \leq 3$, unique up to unitary transform, are given by the following relations:
\begin{equation}
\gamma^{0^{*}} = \gamma^{0}; \hspace{3mm} \gamma^{j^{*}} = -\gamma^{j},\hspace{3mm} 1\leq j \leq 3;\hspace{3mm} \gamma^{\mu} \gamma^{\nu} + \gamma^{\nu} \gamma^{\mu} = 2 g^{\mu \nu}\mathbf{1},\hspace{3mm} 0\leq \mu, \nu \leq 3.
\end{equation}
In our representation, the matrices take the form:
\begin{equation} \label{MatDir}
\gamma^{0} = i\begin{pmatrix}
0 & \sigma^{0} \\
-\sigma^{0} & 0
\end{pmatrix}, \hspace{2mm} 
\gamma^{k} = i \begin{pmatrix}
0 & \sigma^{k} \\
\sigma^{k} & 0
\end{pmatrix}, \hspace{2mm} k=1,2,3
\end{equation}
where the Pauli matrices are given by:
\begin{equation}
\sigma^{0}=\begin{pmatrix}
1&0 \\
0 & 1
\end{pmatrix}, \hspace{1mm} 
\sigma^{1}= \begin{pmatrix}
1 & 0 \\
0 & -1
\end{pmatrix}, \hspace{1mm}
\sigma^{2}= \begin{pmatrix}
0 & 1 \\
1 & 0
\end{pmatrix}, \hspace{1mm}
\sigma^{3}= \begin{pmatrix}
0 & -i \\
i & 0
\end{pmatrix}.
\end{equation}
We thus obtain:
\begin{equation}
\gamma^{0}\gamma^{1} = \begin{pmatrix} 
-\sigma^{1}& 0 \\
0& \sigma^{1} \\
\end{pmatrix}; \hspace{2mm} \gamma^{0}\gamma^{2} = \begin{pmatrix}
-\sigma^{2}& 0 \\
0 &\sigma^{2}
\end{pmatrix};\hspace{2mm} \gamma^{0} \gamma^{3} = \begin{pmatrix}
-\sigma^{3}& 0 \\
0& \sigma^{3}
\end{pmatrix}.\label{refGamma0,1,2,3}
\end{equation}
We introduce the matrix:
\begin{equation}
\gamma^{5} = -i\gamma^{0}\gamma^{1}\gamma^{2} \gamma^{3}
\end{equation}
which satisfies the relations:
\begin{equation}
\gamma^{5}\gamma^{\mu} + \gamma^{\mu}\gamma^{5} = 0,\hspace{5mm} 0\leq \mu \leq 3. \label{relGamma5}
\end{equation}
We make the change of spinor $\phi(t,x,\theta,\varphi) = r F(r)^{\frac{1}{4}} \psi (t,r,\theta,\varphi)$ and obtain the following equation:
\begin{equation}
\partial_{t} \phi = i \left(i\gamma^{0}\gamma^{1} \partial_{x}  + i\frac{F(r)^{\frac{1}{2}}}{r} \cancel{D}_{\mathbb{S}^{2}} -m \gamma^{0} F(r)^{\frac{1}{2}} \right) \phi.
\end{equation}
We set:
\begin{equation}
H_{m} = i\gamma^{0}\gamma^{1} \partial_{x}  + i\frac{F(r)^{\frac{1}{2}}}{r} \cancel{D}_{\mathbb{S}^{2}} -m \gamma^{0} F(r)^{\frac{1}{2}}.
\end{equation}
We introduce the Hilbert space:
\begin{equation}
\mathcal{H} := \left [L^{2}\pare{\left]-\infty,0\right[_{x} \times S^{2}_{\omega}, dx d\omega} \right ]^{4}
\end{equation}

\subsection{Generalization}

Let $q \in \R$ and $n \in \N$, and define the spaces $T^{q,n}$ by:
\begin{equation}
T^{q,n}= \left \{ f \in C^{\infty}\pare{]-\infty;0[} \hspace{1.5mm} \vert \hspace{1.5mm} \forall \alpha \in \N, \hspace{1mm} \abs{\partial^{\alpha}_{x} f(x)} \lesssim
\begin{cases}
e^{qx} & ,\hspace{1mm} \text{when} \hspace{2mm} x \to - \infty \\
\pare{-x}^{n} & , \hspace{1mm} \text{when} \hspace{2mm} x \to 0
\end{cases} \right \}
\end{equation}
We consider two smooth functions $A_{0},B_{0}$ such that:
\begin{equation*}
A_{0} = \begin{cases}
0 & \text{if} \hspace{2mm} x \leq -2  \\
\frac{1}{l} & \text{if} \hspace{2mm} x \geq -1
\end{cases}; \hspace{5mm} B_{0} = \begin{cases}
0 & \text{if} \hspace{2mm} x \leq -2 \\
\frac{l}{-x} & \text{if} \hspace{2mm} x \geq -1.
\end{cases}
\end{equation*}
We will consider the following operator:
\begin{equation} \label{ExprHmDir}
H_{m} = \Gamma^{1} D_{x} + A(x) \cancel{D}_{\mathbb{S}^{2}} -m \gamma^{0} B(x)
\end{equation}
where $m$ is the mass of the field and, for two positive numbers $\vartheta,\beta$:
\begin{flalign}
\hphantom{A} & A-A_{0} \in T^{\vartheta,2}  \label{CompAsymptA}\\
& B- B_{0} \in T^{\beta,1}. \label{CompAsymptB}
\end{flalign} 
We also recall that $\Gamma^{1} = - \gamma^{0} \gamma^{1} = \text{diag} (1,-1,-1,1)$ and $D_{x} = \frac{1}{i} \partial_{x}$.\\ \indent We then check that the Schwarzschild Anti-de Sitter case enters in our abstract model. For $x$ going to $- \infty$, we have:
\begin{flalign*}
\hphantom{A} & r- r_{SAdS}  = \pare{3r^{2}_{SAdS} +  l^{2}}^{\frac{1}{2}}e^{- 2\kappa C \arctan \pare{\frac{3 r_{SAdS}}{\pare{3 \pare{r_{SAdS}}^{2} + 4 l^{2} }}}+C\pi \kappa}e^{2\kappa x} - C_{1}e^{4\kappa x} + o\pare{e^{4\kappa x}} \\
& F\pare{r}^{\frac{1}{2}}  = \frac{\pare{3r^{2}_{SAdS} + l^{2}}^{\frac{3}{4}}D_{4}^{\frac{1}{2}}}{r^{\frac{1}{2}}_{SAdS}l} e^{\kappa x} + C_{2}e^{3\kappa x}+ o \pare{e^{3\kappa x}},\\
& \frac{F\pare{r}^{\frac{1}{2}}}{r}  = \frac{\pare{3r^{2}_{SAdS} + l^{2}}^{\frac{3}{4}}D_{4}^{\frac{1}{2}}}{r^{\frac{3}{2}}_{SAdS}l} e^{\kappa x} + C_{3} e^{3\kappa x}  + o\pare{e^{3\kappa x} }
\end{flalign*}
where $C_{1},C_{2},C_{3}$ are constants. Then, for $x$ in a neighbourhood of $0$, we have:
\begin{flalign*}
\hphantom{A} & r = -\frac{l^{2}}{x} + \frac{1}{3} \pare{x} + o \pare{-x} \\
& F\pare{r}^{\frac{1}{2}} = -\frac{l}{x}-\frac{x}{6l}+ o\pare{x} \\
& \frac{F\pare{r}^{\frac{1}{2}}}{r} = \frac{1}{l} + \frac{x^{2}}{2 l^{3}} + o \pare{x^{2}}.
\end{flalign*}
The Schwarzschild Anti-de Sitter model is thus a particular case of our generalized model with $A= \frac{F(r)^{\frac{1}{2}}}{r}$ and $B=F(r)^{\frac{1}{2}}$.

\section{Study of the hamiltonian}

\indent In this section, we first present the spinoidal spherical harmonics. This allows us to reduce our problem to the study of a $1+1$ dimensional equation with a new hamiltonian denoted $H_{m}^{s,n}$. We then use the fact that, at AdS infinity, the potential looks like the one considered by A. Bachelot in \cite{Bachelot}. By means of a unitary transform and a cut-off near AdS infinity, we are able to make use of his result and obtain the asymptotic behavior of the elements in the natural domain of our operator. As in \cite{Bachelot}, the need or not to put a boundary condition is linked to the comparison between the mass of the field and the cosmological constant. For $2ml \geq 1$ (where $m$ is the mass of the field and $l$ is linked to the cosmological constant), there's no need to put boundary conditions. When $2ml <1$, we consider the generalized MIT-bag boundary condition in order to determine the dynamic uniquely. We then prove the self-adjointness of our operators. Using an elliptic inequality, we are able to give the domain of our operator for $2ml > 1$. Using Stone's theorem, we can solve the Cauchy problem for our equation. At last, we give a proof of the absence of eigenvalue for all $m>0$ which will be useful for the propagation estimates.

\subsection{Description of the domain}

\subsubsection{The spinoidal spherical harmonics}

In the rest of this paper, we will often make use of spinoidal spherical harmonics (we can refer to \cite{Bachelot} for a more complete presentation of these harmonics) which will permit us to decompose $\mathcal{H}$ as follows:
\begin{equation}
\mathcal{H} = \underset{(s,n) \in I}{\bigoplus} \pare{ \pare{L^{2}(x,dx)}^{4} \otimes \begin{pmatrix}
T^{s}_{-\frac{1}{2}, n} \\
T^{s}_{\frac{1}{2}, n} \\
T^{s}_{-\frac{1}{2}, n} \\
T^{s}_{\frac{1}{2}, n}
\end{pmatrix}} \label{decompS2}
\end{equation}
where: 
\begin{equation}
I := \left \{ (s,n); \hspace{2mm} s \in \N +\frac{1}{2}, \hspace{2mm} n \in \Z + \frac{1}{2}, \hspace{2mm} s - \abs{n} \in \N \right \}.
\end{equation}
These functions satisfy the following relations:
\begin{flalign}
\left (\frac{\partial}{\partial \theta} + \frac{1}{2 \tan \theta} \right ) T^{s}_{\pm \frac{1}{2},n}& = \pm \frac{n}{\sin \theta} T^{s}_{\pm \frac{1}{2},n} - i \left ( s+ \frac{1}{2} \right) T^{s}_{\mp \frac{1}{2},n}, \\
\frac{\partial}{\partial \varphi} T^{s}_{\pm \frac{1}{2},n} &= -in T^{s}_{\pm \frac{1}{2},n}. 
\end{flalign}
Since $\pare{T^{s}_{\frac{1}{2}, n}}_{(s,n)\in I}$ and $\pare{T^{s}_{-\frac{1}{2}, n}}_{(s,n)\in I}$ both span $L^{2}\pare{\mathcal{S}^{2}}$, we can decompose $f\in L^2(S^2)$ as follows:
\begin{equation*}
f(\theta,\varphi) = \underset{(s,n)\in I}{\sum} u^{s}_{\pm,n}(f) T^{s}_{\pm \frac{1}{2},n} (\theta,\varphi), \hspace{2mm} u^{s}_{\pm ,n}(f) \in \C.
\end{equation*}
Let us introduce the Hilbert spaces $W^{d}_{\pm}$ for $d \in \R$ as the closure of the space:
\begin{equation}
W^{\pm}_{f} := \left \{ \underset{finite}{\sum} u^{s}_{\pm,n} T^{s}_{\pm \frac{1}{2},n} ; \hspace{2mm} u^{s}_{\pm,n} \in \C \right \} \label{402}
\end{equation}
for the norm
\begin{equation*}
||f||^2_{W^{d}_{\pm}} := \underset{(s,n) \in I}{\sum} \left ( s + \frac{1}{2} \right )^{2d} |u^{s}_{\pm,n}(f)|^2.
\end{equation*}
Using Plancherel's formula, $L^{2}\pare{S^{2}}$ is just $W^{0}$. We give some properties of these spaces (for a more complete presentation, we refer to \cite{Bachelot}). We have: 
\begin{flalign*}
\hphantom{A}& d \geq 0  \Longrightarrow W^{d}_{\pm} = \left \{ f\in L^2 \left (S^2 \right) ; \hspace{2mm} ||f||_{W^{d}_{\pm}} < \infty \right \},\\
& \pare{W^{d}_{\pm}}' = W^{-d}_{\pm} \hspace{3mm} \text{and} \hspace{3mm} C^{\infty}_{0}\pare{]0,\pi[_{\theta} \times ]0,2 \pi[_{\varphi}} \subset W^{d}_{\pm}.
\end{flalign*}
We must remark that $T^{s}_{\pm \frac{1}{2},n}(\theta,2 \pi) = - T^{s}_{\pm \frac{1}{2},n} (\theta,0) \neq 0$. Consequently, these functions are not smooth on the sphere $S^2$. In correspondence with the decomposition \eqref{decompS2}, we introduce the Hilbert spaces:
\begin{equation}
\mathcal{W}^{d} = W^{d}_{-} \times W^{d}_{+} \times W^{d}_{-} \times W^{d}_{+}
\end{equation}
equipped with the norm:
\begin{equation}
\norme{\Phi}^{2}_{\mathcal{W}^{d}} = \overset{4}{\underset{j=1}{\sum}} \underset{\pare{s,n}\in I}{\sum} \pare{s+\frac{1}{2}}^{2d} \abs{u^{s}_{j,n}}^{2}
\end{equation}
where:
\begin{equation*}
\Phi\pare{\theta,\varphi} = \underset{\pare{s,n}\in I}{\sum} \begin{pmatrix}
u^{s}_{1,n} T^{s}_{-\frac{1}{2}, n} \pare{\theta,\varphi} \\
u^{s}_{2,n} T^{s}_{+\frac{1}{2}, n} \pare{\theta,\varphi} \\
u^{s}_{3,n} T^{s}_{-\frac{1}{2}, n} \pare{\theta,\varphi}\\
u^{s}_{4,n} T^{s}_{+\frac{1}{2}, n} \pare{\theta,\varphi}
\end{pmatrix}.
\end{equation*}

\subsubsection{A result due to A.Bachelot}

We recall a result obtained by A.Bachelot (see \cite{Bachelot}). In this article, the hamiltonian considered was:
\begin{equation} \label{OpDirBach}
H_m^{B} = i\gamma_{B}^{0}\gamma_{B}^{1} \pare{F_{B}(r)\partial_{r} + \frac{F_{B}\pare{r}}{r} + \frac{F_{B}'\pare{r}}{4}} +i\frac{F_{B}(r)^{\frac{1}{2}}}{r} \cancel{D}_{\mathbb{S}^{2}} -m \gamma_{B}^{0} F_{B}(r)^{\frac{1}{2}}
\end{equation}
in $\pare{r,\theta,\varphi}$ coordinates where $F_{B}\pare{r} = 1 + \frac{r{2}}{l^{2}}$. Here, $m$ is $\tilde{m} \sqrt{\frac{3}{\Lambda}}$ with $\tilde{m}$ the mass of the field and $-\Lambda$ the cosmological constant. Moreover, the space $\mathcal{L}^{2}$ is defined by $\mathcal{L}^2 := \left[L^2\left([0,\frac{\pi}{2}[_\zeta \times [0,\pi]_{\theta} \times [0,2 \pi[_{\varphi}, \sin \theta \mathrm d\zeta \mathrm d\theta \mathrm d\varphi \right) \right ]^4 $ where $\zeta = \arctan\pare{\sqrt{\frac{\Lambda}{3}}r}$.. Using a change of spinor and a change of coordinates such that $\phi(t,\zeta,\theta,\varphi) = r F_{B}(r)^{\frac{1}{4}} \psi (t,r,\theta,\varphi)$, he obtains:
\begin{equation}
H_m^{B}  :=  i \gamma^0_{B} \gamma^1_{B} \frac{\partial}{\partial \zeta} + \frac{i}{\sin \zeta} \left [ \gamma^0_{B} \gamma^2_{B} \left( \frac{\partial}{\partial \theta} + \frac{1}{2 \tan \theta} \right) + \frac{1}{\sin \theta} \gamma^0_{B} \gamma^3_{B} \frac{\partial}{\partial \varphi} \right] - \frac{m}{\cos \zeta} \gamma^0_{B}. \label{337}
\end{equation}
where he uses the natural domain:
\begin{equation}
D(H_m^{B}) := \left \{ \Phi \in \mathcal{L}^2; H_m^{B} \Phi \in \mathcal{L}^2 \right \}. \label{501}
\end{equation}
At last, we recall that the Dirac matrices $\gamma^0_{B},\gamma^{1}_{B},\gamma^{2}_{B},\gamma^{3}_{B}$ take the form:
\begin{equation} \label{MatDirBach}
\gamma^{0}_{B} = \begin{pmatrix}
I & 0\\
0 & -I
\end{pmatrix}, \hspace{2mm} 
\gamma^{k}_{B} =  \begin{pmatrix}
0 & \sigma^{k}_{B} \\
-\sigma^{k}_{B} & 0
\end{pmatrix}, \hspace{2mm} k=1,2,3
\end{equation}
where the Pauli matrices are given by:
\begin{equation*}
I = \begin{pmatrix}
1 & 0 \\
0 & 1
\end{pmatrix}, \hspace{1mm}
\sigma^{1}_{B}= \begin{pmatrix}
1 & 0 \\
0 & -1
\end{pmatrix}, \hspace{1mm}
\sigma^{2}_{B}= \begin{pmatrix}
0 & 1 \\
1 & 0
\end{pmatrix}, \hspace{1mm}
\sigma^{3}_{B} = \begin{pmatrix}
0 & -i \\
i & 0
\end{pmatrix}
\end{equation*}
The result is then the following (see Theorem V.1 in \cite{Bachelot}):
\begin{theo}\label{5.1}
For all $\Phi \in D(H_m^{B})$, we have:
\begin{equation}
\Phi \in C^0 \left ( \left [0,\frac{\pi}{2} \right [_\zeta ; \mathcal{W}^{\frac{1}{2}} \right )\hspace{3mm} \text{with} \hspace{3mm} || \Phi(\zeta,.) ||_{\mathcal{W}^{\frac{1}{2}}} = O (\sqrt{\zeta}), \hspace{2mm} \zeta \to 0,
\end{equation} 
and for $m >0$, we have 
\begin{equation}
\int^{\frac{\pi}{2}}_0 ||\Phi(\zeta,.)||^2_{\mathcal{W}^1} \frac{d\zeta}{\sin \zeta} \leq ||H_m \Phi ||^2_{\mathcal{L}^2}. \label{504}
\end{equation}
\\For $m > \frac{1}{2}$, we have 
\begin{equation}
|| \Phi(\zeta,.)||_{L^2(S^2)} = O \left( \sqrt{\frac{\pi}{2} -\zeta} \right), \hspace{2mm} \zeta \to \frac{\pi}{2}.\label{505}
\end{equation}
\\For $m= \frac{1}{2}$, we have 
\begin{equation}
||\Phi(\zeta,.)||_{L^2(S^2)} = O \left(\sqrt{\left(\zeta-\frac{\pi}{2} \right)ln\left( \frac{\pi}{2} -\zeta\right)} \right), \hspace{2mm} \zeta \to \frac{\pi}{2}. \label{506}
\end{equation}
\\For $0 < m < \frac{1}{2}$, there exist functions $\psi_{-} \in W^{\frac{1}{2}}_{-}$, $\chi_{-} \in W^{\frac{1}{2}}_{+}$, $\psi_{+},\hspace{0.5mm} \chi_{+} \in L^2(S^2)$ and $\phi \in C^0 \left([0,\frac{\pi}{2}]_{\zeta}; L^2(S^2;\C^4) \right)$ satisfying
\begin{equation}
\Phi(\zeta,\theta,\varphi) = \left(\frac{\pi}{2}-\zeta \right)^{-m}
\begin{pmatrix}
\psi_{-}(\theta,\varphi)\\
\chi_{-}(\theta,\varphi)\\
-i\psi_{-}(\theta,\varphi)\\
i \chi_{-}(\theta,\varphi)
\end{pmatrix}  + \left(\frac{\pi}{2} - \zeta \right)^m
\begin{pmatrix}
\psi_{+}(\theta,\varphi)\\
\chi_{+}(\theta,\varphi)\\
i\psi_{+}(\theta,\varphi)\\
-i\chi_{+}(\theta,\varphi)
\end{pmatrix} +\phi(\zeta,\theta,\varphi),\label{507}
\end{equation}
\begin{equation}
||\phi(\zeta,.)||_{L^2(S^2)}=o \left(\sqrt{\frac{\pi}{2}-\zeta} \right), \hspace{2mm} x \to \frac{\pi}{2}. \label{508}
\end{equation}
\\Conversely, for all $\psi_{-} \in W^{\frac{1}{2}+m}_{-}$, $\chi_{-} \in W^{\frac{1}{2}+m}_{+}$, $\psi_{+} \in W^{\frac{1}{2}-m}_{-}$, $\chi_{+} \in W^{\frac{1}{2}-m}_{+}$ there exists $\Phi \in D(H_m^{B})$ satisfying \eqref{507} and \eqref{508}.
\end{theo}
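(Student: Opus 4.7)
The plan is to reduce the full four-dimensional problem to a family of one-dimensional singular ODE problems by decomposing on the spinoidal spherical harmonics introduced in \eqref{decompS2}, and then analyze separately the behaviour near the two singular endpoints $\zeta=0$ (horizon-like, where $1/\sin\zeta$ in \eqref{337} blows up) and $\zeta=\pi/2$ (AdS infinity, where $m/\cos\zeta$ blows up). For fixed $(s,n)\in I$, applying the relations between $T^{s}_{\pm 1/2,n}$ recalled just before \eqref{402} turns $H_m^{B}$ into a $4\times 4$ matrix-valued ODE in $\zeta$ of the form $\Gamma^{1} D_{\zeta}\Phi_{s,n}+\harm A_{s,n}\pare{\zeta}\Phi_{s,n}-\tfrac{m}{\cos\zeta}\gamma^{0}_{B}\Phi_{s,n}=(H_m^{B}\Phi)_{s,n}$, with $A_{s,n}$ of size $O(1/\sin\zeta)$.

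Near $\zeta=0$ I would establish the $\mathcal{W}^{1/2}$ continuity and the $O(\sqrt{\zeta})$ bound by a Hardy-type argument: pairing the equation with $\Phi_{s,n}$ and integrating against a suitable weight absorbs the $\harm /\sin\zeta$ term and yields, summed over $(s,n)$, the integral inequality \eqref{504}; once the sum $\sum_{s,n}\harm ^{2}\int_{0}^{\pi/2}|\Phi_{s,n}|^{2}\,\mathrm d\zeta/\sin\zeta$ is finite, a standard trace argument at $\zeta=0$ forces $\Phi(0,\cdot)=0$ and gives the vanishing rate $\sqrt{\zeta}$. The angular regularity gain encoded in $\mathcal{W}^{1/2}$ comes from reading the conserved quantity for $H_m^{B}$ after commutation with $\cancel{D}_{\mathbb{S}^{2}}$, which is a bounded operation on each harmonic subspace.

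The main obstacle is the analysis at $\zeta=\pi/2$, because there the mass potential $m/\cos\zeta$ is not a bounded perturbation of the free transport operator $\Gamma^{1}D_{\zeta}$ and the critical exponents $\pm m$ in \eqref{507} must be produced. Setting $\rho=\pi/2-\zeta$, the ODE has a regular singular point at $\rho=0$ with principal part $\Gamma^{1}D_{\rho}\Phi-\tfrac{m}{\rho}\gamma^{0}_{B}\Phi=0$; the indicial matrix $-i\Gamma^{1}\gamma^{0}_{B}m$ has eigenvalues $\pm m$ with the explicit eigenvectors $(\psi,\chi,\mp i\psi,\pm i\chi)^{t}$ written in \eqref{507}, so a Frobenius/contraction argument on a Volterra integral formulation produces the decomposition \eqref{507} together with the remainder estimate \eqref{508}. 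The trichotomy $m>\tfrac12$, $m=\tfrac12$, $0<m<\tfrac12$ then comes from comparing the growth rate $\rho^{-m}$ of the singular mode against the $L^{2}$ threshold $\rho^{-1/2}$: for $m>\tfrac12$ membership in $\mathcal{L}^{2}$ forces the singular mode to vanish, for $m=\tfrac12$ one gets the logarithmic borderline behaviour \eqref{506}, and for $m<\tfrac12$ both modes are $L^{2}$ and must be kept, whence the necessity of a boundary condition later on.

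For the converse statement I would prescribe the four boundary data $\psi_{\pm},\chi_{\pm}$ on the sphere in the stated Sobolev spaces $W^{1/2\pm m}_{\mp}$, define the leading ansatz by the two explicit vectors in \eqref{507}, and solve a fixed point equation for the remainder $\phi$ using the same Volterra kernel as above; the harmonic-by-harmonic solutions can then be summed since the loss of $\harm $ coming from $\cancel{D}_{\mathbb{S}^{2}}$ is exactly compensated by the weight $\harm^{2(1/2\pm m)}$ in the norm of $W^{1/2\pm m}_{\mp}$. Combining both endpoints and checking that the resulting $\Phi$ satisfies $H_m^{B}\Phi\in\mathcal{L}^{2}$ globally (rather than only locally away from the singularities) completes the proof; the compatibility between the two endpoint analyses is ensured by the fact that on a compact subinterval of $]0,\pi/2[$ the operator is uniformly elliptic and standard interior regularity applies.
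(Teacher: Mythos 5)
This statement is not proved in the present paper. It is quoted verbatim from A.~Bachelot's work (Theorem~V.1 of \cite{Bachelot}) and is used here only as an external input, after a unitary transform and cut-off, to establish Lemma~\ref{LemDomain} and Proposition~\ref{prop3.3}. There is therefore no internal proof to compare your outline against; what follows is a reading of your sketch on its own merits.

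Your plan matches, at a high level, the strategy one expects in Bachelot's paper: reduce to a one-dimensional ODE in $\zeta$ on each spinoidal harmonic subspace, obtain the integrated estimate~\eqref{504} by an energy/Hardy argument (paralleling the $\Im$-part computation \eqref{2.48} used in Lemma~\ref{LemDomain}), and treat $\zeta=\pi/2$ as a regular singular point with indicial exponents $\pm m$, which directly produces the trichotomy at $m=\tfrac12$ by comparison with the $L^{2}$ threshold $\rho^{-1/2}$. The Volterra/fixed-point construction for the converse, with the $\harm^{2(1/2\pm m)}$ weight of $W^{1/2\pm m}_{\mp}$ compensating the loss of one power of $\harm$ from the angular operator, is also consistent with the formulation of the converse clause of the theorem.

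Two points in your sketch deserve attention. First, $\zeta=0$ is not ``horizon-like'': Bachelot works on pure AdS, where $\zeta=\arctan(r/l)$ and $\zeta=0$ is the regular centre $r=0$; the $1/\sin\zeta$ factor is a coordinate artefact of writing the spherical Dirac operator in polar coordinates, not a geometric singularity, so the analysis there is a regularity/trace statement at an interior-type point rather than a horizon estimate. Second, at $m=\tfrac12$ the two indicial exponents $\pm\tfrac12$ differ by an integer, so the Frobenius ansatz alone does not immediately yield the precise $O\bigl(\sqrt{(\pi/2-\zeta)\,\ln(\pi/2-\zeta)}\bigr)$ rate of \eqref{506}; one needs to track the borderline case in the integrated estimate \eqref{504} (the log emerges from the near-divergence of $\int_{0}\rho^{-1}\,d\rho$) rather than from the formal series alone. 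Neither issue derails the overall scheme, but both would have to be filled in for a complete proof.
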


\begin{rem} This result concerning the asymptotic behavior of elements in the domain of the operator $H_{m}^{B}$ is first proved for fixed harmonics (i.e fixed $\pare{s,n} \in I$). In the next sections, we will often make use of the result obtained for fixed harmonics.
\end{rem}
The condition on the mass is a consequence of the fact that the states in the natural domain of our operator have to be in $L^{2}$. When the mass is sufficiently large, the term $\left(\frac{\pi}{2}-\zeta \right)^{-m}$ in \eqref{507} is not in $L^{2}$ so it cannot appear in the development of the states near $\frac{\pi}{2}$. In this case, we do not need to put boundary conditions to obtain the self-adjointness of this operator and well-posedness of the Cauchy problem.\\
Unfortunately, for a mass too small compared to the cosmological constant, we see that the term $\left(\frac{\pi}{2}-\zeta \right)^{-m}$ in \eqref{507} is in $L^{2}$ which is problematic for the symmetry of our operator. We thus need to put boundary conditions to get rid of this term and solve the Cauchy problem.

\subsubsection{Unitary transform of \texorpdfstring{$H_{m}$}{Hm}}

Let us introduce the following domains:
\begin{enumerate}
\item[-] If $ 2ml \geq 1$:
\begin{equation}
D(H_{m}) = \left \{ \phi \in \mathcal{H} ; \hspace{2mm} H_{m}\phi \in \mathcal{H} \right \}.
\end{equation}
\item[-] If $2ml < 1$, we consider the operator equipped with the domain whose elements satisfy a generalized MIT-bag condition (where $\alpha \in \R$ is called the Chiral angle and $\gamma^{5} = -i \gamma^{0} \gamma^{1} \gamma^{2} \gamma^{3}$ (see \cite{Bachelot})):
\begin{equation}
D(H_{m}) = \left \{ \phi \in \mathcal{H} ; \hspace{2mm} H_{m}\phi \in \mathcal{H}, \hspace{1mm} \norme{\pare{\gamma^{1} + i e^{i \alpha \gamma^{5}}} \phi}_{2} = o\pare{\sqrt{- x}}, \hspace{1mm} x \to 0 \right \}.
\end{equation}
\end{enumerate}
First, we'll try to remove $\alpha$ in the case $2ml < 1$. We introduce the following operator:
\begin{equation}
H^{\alpha}_{m} = e^{i \frac{\alpha}{2} \gamma^{5}} H_{m} e^{-i \frac{\alpha}{2} \gamma^{5}}.
\end{equation}
Since $e^{i \alpha \gamma^{5}}$ is unitary and $e^{i \alpha \gamma^{5}} \gamma^{1} =\gamma^{1} e^{-i \alpha \gamma^{5}}$, we see that $\varphi \in D\pare{H_{m}}$ if and only if $e^{i \frac{\alpha}{2} \gamma^{5}} \varphi \in D\pare{H_{m}^{\alpha}}$ where:
\begin{equation*}
D\pare{H_{m}^{\alpha}} = \left \{ \phi \in \mathcal{H} ; \hspace{2mm} H_{m}^{\alpha}\phi \in \mathcal{H}, \hspace{1mm} \norme{\pare{\gamma^{1} + i} \phi}_{2} = o\pare{\sqrt{-x}}, \hspace{1mm} x \to 0 \right \}.
\end{equation*}
So we can restrict to the case $\alpha = 0$ which we will do in the following. \vspace{2mm} \\
We will now modify our hamiltonian in order to exploit the result of A.Bachelot. We introduce a new time variable $\tilde{t} = -t$ (and we will continue to denote by $t$) which gives:
\begin{equation}
\partial_{t} \phi = i \pare{- H_{m}} \phi. \label{2}
\end{equation}
Let:
\begin{equation}
\tilde{H}_{m} = \gamma^{5}_{B} P^{-1} (-H_{m}) P \gamma^{5}_{B} \label{UnitTransH}
\end{equation}
where:
\begin{flalign*}
\hphantom{A}& P = \frac{1}{\sqrt{2}} e^{i \frac{\pi}{4}} \begin{pmatrix}
\mathrm{Id} & \mathrm{Id} \\
-i \mathrm{Id} & i \mathrm{Id}
\end{pmatrix}, \hspace{2mm}
 P^{*} = P^{-1} = \frac{1}{\sqrt{2}} e^{-i \frac{\pi}{4}} \begin{pmatrix}
\mathrm{Id} & i \mathrm{Id} \\
\mathrm{Id} & -i \mathrm{Id}
\end{pmatrix}, \hspace{2mm} \gamma^{5}_{B} = \begin{pmatrix}
0 & \mathrm{Id} \\
\mathrm{Id} & 0
\end{pmatrix},
\end{flalign*}
and $\mathrm{Id}$ is the identity matrix of order $2$. The matrix $P$ satisfies the following relations:
\begin{equation} \label{relP}
\gamma^{0} = P \gamma^{0}_{B} P^{-1}; \hspace{5mm} \gamma^{j} = - P \gamma^{j}_{B} P^{-1}, \hspace{3mm} 1 \leq j \leq 3.
\end{equation}
where the Dirac matrices are defined by \eqref{MatDirBach} and \eqref{MatDir}. The matrix $\gamma^{5}_{B}$ satisfies the same relations as $\gamma^{5}$ in \eqref{relGamma5}. We obtain:
\begin{equation}
\tilde{H}_{m} = i \gamma^{0}_{B} \gamma^{1}_{B} \partial_{x} + i \gamma^{0}_{B} \gamma^{2}_{B} A(x) \pare{\partial_{\theta} + \frac{1}{2} \cot \theta} + i \gamma^{0}_{B} \gamma^{3}_{B} A(x) \frac{1}{\sin \theta} \partial_{\varphi}- m \gamma^{0}_{B} B(x).
\end{equation}

\subsubsection{Asymptotic behavior of elements of the domain}

We introduce the projection $P_{s,n}$ from $\mathcal{H}$ to $\mathcal{H}_{s,n}$ and the operators $\tilde{H}_{m}^{s,n} = \tilde{H}_{m|\mathcal{H}_{s,n}}$, $H^{s,n,B}_{m} = H^{B}_{m|\mathcal{H}_{s,n}}$ for $(s,n) \in I$. We denote $\psi_{s,n} = P_{s,n}(\psi)$ with components $\psi^{s}_{i,n}$ for $i= 1,\cdots,4$. Furthermore, the domain of $H_{m}^{s,n,B}$ is given by:
\begin{enumerate}
\item[-] If $2ml \geq 1$:
\begin{equation*}
D\pare{H_{m}^{s,n,B}} = \left \{ \varphi_{s,n} \in \mathcal{H}_{s,n}; \hspace{1mm} H_{m}^{s,n,B} \varphi_{s,n} \in \mathcal{H}_{s,n} \right \}
\end{equation*}
\item[-] If $2ml < 1$, we add the condition that $\norme{\pare{\gamma^{1}_{B}+i} \varphi_{s,n} (x,.)}_{\mathcal{W}^{0}} = o\pare{\sqrt{-x}}$ when $x$ goes to $0$.
\end{enumerate}
We then have the:
\begin{lem}\label{LemDomain}
Let $\psi \in D\pare{\tilde{H}_{m}}$ and $\chi \in C^{\infty}_{0}\pare{\left ] -2\epsilon, 0 \right ]}$ such that $\chi = 1$ on $\left ]-\epsilon,0 \right]$ with $\epsilon>0$. Then $\chi \psi \in D\pare{H^{B}_{m}}$.
\end{lem}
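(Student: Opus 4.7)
The plan is to check the two conditions defining $D(H_m^B)$: that $\chi\psi \in \mathcal{L}^2$, and that $H_m^B(\chi\psi) \in \mathcal{L}^2$. The guiding idea is that near the AdS boundary $x=0$, after a suitable identification of our coordinate $x$ with Bachelot's coordinate $\zeta$ (via a smooth diffeomorphism matching the leading singularities of the potentials), the difference $H_m^B - \tilde{H}_m$ reduces to terms whose coefficients vanish at the boundary, thanks to the asymptotic conditions \eqref{CompAsymptA} and \eqref{CompAsymptB}.

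First I would verify $\chi\psi \in \mathcal{L}^2$. Since $\chi$ is compactly supported in $]-2\epsilon,0]$ and $\psi \in \mathcal{H}$, the function $\chi\psi$ is supported in a neighbourhood of $x=0$; transferred to $\zeta$ via the diffeomorphism and extended by zero elsewhere, it inherits $L^2$-membership because the Jacobian is smooth and bounded on the overlap and the angular measure $\sin\theta\, d\theta\, d\varphi$ is common to both spaces.

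Next I would apply the Leibniz rule and reorganize
\begin{equation*}
H_m^B(\chi\psi) = \chi\,\tilde{H}_m\psi + [\tilde{H}_m,\chi]\psi + (H_m^B - \tilde{H}_m)(\chi\psi).
\end{equation*}
The first term lies in $\mathcal{H}$ because $\psi \in D(\tilde{H}_m)$. The commutator equals (up to a constant matrix) multiplication by $\chi'$, which is bounded with compact support, so the second term is in $\mathcal{H}$ as well; both transfer to $\mathcal{L}^2$ through the coordinate identification. The difference $H_m^B - \tilde{H}_m$ has an angular piece of coefficient $\frac{1}{\sin\zeta} - A(x)$ multiplying $\cancel{D}_{\mathbb{S}^2}$, and a mass piece of coefficient $m\pare{\frac{1}{\cos\zeta} - B(x)}$. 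Under the identification, $\frac{1}{\cos\zeta}$ has the same leading singularity $\sim \frac{l}{-x}$ as $B_0$, so combined with $B - B_0 \in T^{\beta,1}$ one obtains a bounded coefficient, and the mass piece applied to $\chi\psi$ therefore lies in $\mathcal{L}^2$. Similarly $\frac{1}{\sin\zeta} - 1 = O(x^2)$ together with $A - A_0 \in T^{\vartheta,2}$ gives $\abs{\frac{1}{\sin\zeta} - A(x)} = O(x^2)$.

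The principal obstacle is controlling the angular contribution $\pare{\frac{1}{\sin\zeta} - A(x)} \cancel{D}_{\mathbb{S}^2}(\chi\psi)$ in $\mathcal{L}^2$, since $\cancel{D}_{\mathbb{S}^2}\psi$ is not a priori in $\mathcal{H}$. I would factor the coefficient as $\pare{\frac{1}{A(x)\sin\zeta} - 1}A(x)$: the prefactor is bounded near $x=0$ (with the earlier asymptotics), while $A(x)\cancel{D}_{\mathbb{S}^2}(\chi\psi)$ can be recovered from $\tilde{H}_m(\chi\psi) \in \mathcal{H}$ by subtracting the $x$-derivative and mass parts, both of which, on the support of $\chi$, are controlled by a local elliptic inequality and a Hardy-type estimate near $x=0$ available for elements of the natural domain. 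Combining the three contributions yields $H_m^B(\chi\psi) \in \mathcal{L}^2$, whence $\chi\psi \in D(H_m^B)$.
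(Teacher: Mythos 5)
You correctly identify the crux: after the easy reduction of the $x$-derivative and mass pieces, the only real obstacle in showing $H_m^B(\chi\psi)\in\mathcal{L}^2$ is the angular contribution involving $\cancel{D}_{\mathbb{S}^2}(\chi\psi)$, which is not a priori controlled. Unfortunately the resolution you sketch is circular. You propose to recover $A(x)\cancel{D}_{\mathbb{S}^2}(\chi\psi)$ by subtracting $\Gamma^1 D_x(\chi\psi)$ and $m\gamma^0 B(\chi\psi)$ from $\tilde{H}_m(\chi\psi)$, controlling those two terms by ``a local elliptic inequality and a Hardy-type estimate near $x=0$ available for elements of the natural domain.'' But in the paper those two estimates are proved \emph{after} this lemma and logically depend on it: the elliptic estimate's proof explicitly invokes Lemma~\ref{LemDomain} to pass to Bachelot's Theorem~III.4 (``it is possible by lemma~\ref{LemDomain}''), and the Hardy-type inequality in the proof of Proposition~\ref{PropHaa2mlGr} is obtained the same way. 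There is no independent source for an $L^2$ bound on $D_x(\chi\psi)$ or $B(\chi\psi)$ at this stage, and in fact obtaining such a bound is essentially as hard as the lemma you are trying to prove.

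The idea that is missing, and that the paper uses, is to work harmonic by harmonic. On each $\mathcal{H}_{s,n}$ the angular operator is just multiplication by $\pm\harm$ times a constant matrix, so after pulling back $H_m^B$ via $\frac{dx}{dr}=F(r)^{-1}$ one writes $H_m^{s,n,B}P_{s,n}(\chi\psi) = g(x)\,\tilde H_m^{s,n}P_{s,n}(\chi\psi) + (\text{terms with bounded coefficients near }0)$, which immediately gives $\chi\psi_{s,n}\in D(H_m^{s,n,B})$ for each fixed $(s,n)$ with no elliptic or Hardy input. The genuinely nontrivial step, which your proposal has no substitute for, is the summability over harmonics: one must show $\sum_{(s,n)}\harm^2\norme{\chi\psi_{s,n}}^2_{L^2(-\frac12,0)}<\infty$ because the bounded correction contains the factor $(A_B-gA)\harm$. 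The paper obtains this by pairing the four scalar ODEs coming from $\tilde H_m^{s,n}(\chi\psi_{s,n})=\chi f_{s,n}+[\tilde H_m^{s,n},\chi]\psi_{s,n}$ against $\overline{\chi\psi^s_{j,n}}$, adding, taking the real part, and integrating; the boundary term vanishes, the potential $2lA(x)-1\ge 1$ has a favorable sign on $\supp\chi$, and one lands on $\harm^2\int\sum_j\abs{\chi\psi^s_{j,n}}^2\,dx\le l^2\int\sum_j\abs{f^s_{j,n}}^2\,dx$, whose right-hand side is summable since $f=\tilde H_m\psi\in\mathcal H$. Without this (or an equivalent uniform-in-$(s,n)$ estimate), your argument does not close.
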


\begin{proof}
Recall that the operator obtained by A. Bachelot in \cite{Bachelot} is given by \eqref{OpDirBach} where $F_{B}\pare{r} = 1 + \frac{r^{2}}{l^{2}}$. This operator has the same form as in \eqref{opé1}. Moreover, when $r>>r_{SAdS}$, $F_{B}$ and $F$ have the same behavior ($F$ is defined by $F\pare{r}=1 + \frac{r^{2}}{l^{2}} - \frac{2M}{r}$). We make the change of variable $r \to x $ where $\frac{dx}{dr} = F\pare{r}^{-1}$ and $F$ is defined on $]r_{SAdS},+\infty[$. We obtain:
\begin{equation*}
H_{m}^{B} = i \gamma^{0}_{B} \gamma^{1}_{B} g\pare{x} \partial_{x} + i \gamma^{0}_{B} \gamma^{1}_{B} \pare{\frac{F\pare{r}}{r} + \frac{F'\pare{r}}{4}} + \frac{3M}{2r^{2}} + A_{B}\pare{x} D_{S^{2}} - m \gamma^{0}_{B}B_{B}\pare{x}
\end{equation*}
where $r$ is understood as a function of $x$ and:
\begin{flalign*}
\hphantom{A} & g\pare{x} = 1 + \frac{2M}{l^{4}}\pare{-x}^{3} + o\pare{\pare{- x}^{3}}, \hspace{3mm} A_{B}\pare{x} = \frac{1}{l} + \frac{1}{2l^{3}} \pare{-x}^{2}+ o\pare{\pare{-x}^{2}}  \\
& B_{B}\pare{x} = \frac{l}{-x} + \frac{1}{6l} \pare{-x} + o \pare{-x}, \hspace{3mm} \frac{F\pare{r}}{r} = \frac{1}{-x} + \frac{2}{3l^{2}}\pare{-x} + o\pare{-x} \\
& F'\pare{r} = \frac{2}{-x} - \frac{2}{3l^{2}}\pare{-x} + o\pare{-x}
\end{flalign*}
when $x$ goes to $0$. Since $P_{s,n}(\chi \psi) = \chi \psi_{s,n}$, we have:
\begin{flalign}
H_{m}^{s,n,B} P_{s,n}(\chi \psi) & = g(x) \tilde{H}_{m}^{s,n} P_{s,n}(\chi \psi) + i \gamma^{0}_{B} \gamma^{1}_{B} \pare{\frac{F\pare{r}}{r} + \frac{F'\pare{r}}{4}} \pare{1-g\pare{x}} \chi \psi_{s,n} \notag \\
& \quad + \frac{3M}{2r^{2}} \chi \psi_{s,n} +  \gamma^{0}_{B} \gamma^{2}_{B} \pare{A_{B}(x)-g(x)A(x)} \pare{s + \frac{1}{2}} \chi \psi_{s,n} \notag \\
& \quad - m \gamma^{0}_{B}  \pare{B_{B}(x)-g(x)B(x)}\chi\psi_{s,n}  \label{2.42}
\end{flalign}
Since $\psi \in D(\tilde{H}_{m})$, $g$ is bounded in a neighborhood of $0$ and $\chi \in C^{\infty}_{0}\pare{]-1, 0]_{x}}$, the first term is in $L^{2}(x, dx)$. Using the behavior at $0$ of $g$, the terms $ A_{B}(x)-g(x)A(x)$, $B_{B}(x)-g(x)B(x) $ and $\pare{\frac{F\pare{r}}{r} + \frac{F'\pare{r}}{4}} \pare{1-g\pare{x}}$ are bounded near $0$. We deduce that $H_{m}^{s,n,B} P_{s,n}(\chi \psi) \in \mathcal{H}_{s,n}$. In particular, $\chi\psi_{s,n} \in D\pare{H_{m}^{s,n,B}}$.\\
\indent To be able to sum over $\pare{s,n}$, we need to know that $\pare{s+\frac{1}{2}}^{2} \norme{\pare{\chi \psi_{s,n}}}^{2}_{L^{2}\pare{-\frac{1}{2},0}}$ is summable. Since $\psi \in D\pare{\tilde{H}_{m}}$, $f=\tilde{H}_{m}\psi$ admits a decomposition $f = \underset{\pare{s,n}\in I}{\ds \sum} f^{s}_{n}$. We denote $f^{s}_{i,n}$ ($i=1,\cdots,4$) the components of $f^{s}_{n}$. We obtain four differential equations: 
\begin{flalign*}
 \hphantom{A} & i\overline{\chi \psi^{s}_{4,n}}\left(\chi\psi^s_{3,n} \right)'+ \left(s+\frac{1}{2} \right)A\pare{x} \abs{\chi\psi^s_{4,n}}^{2} - B\pare{x}\overline{\chi\psi^{s}_{4,n}}\chi\psi^s_{1,n}  = \overline{\chi\psi^{s}_{4,n}}f^{s}_{1,n}, \\
& -i \overline{\chi\psi^{s}_{3,n}}\left(\chi\psi^s_{4,n} \right)'+ \left(s+\frac{1}{2} \right)A\pare{x} \abs{\chi\psi^s_{3,n}}^{2}- B\pare{x} \overline{\chi\psi^{s}_{3,n}}\chi\psi^s_{2,n} =  \overline{\chi\psi^{s}_{3,n}}f^{s}_{2,n}, \\
& i \overline{\chi\psi^{s}_{2,n}} \left(\chi\psi^s_{1,n} \right)' + \left(s+\frac{1}{2} \right)A\pare{x} \abs{\chi\psi^s_{2,n}}^{2} + B\pare{x} \overline{\chi\psi^{s}_{2,n}} \chi\psi^s_{3,n} =  \overline{\chi\psi^{s}_{2,n}}f^{s}_{3,n}, \\
&-i\overline{\chi\psi^{s}_{1,n}}\left( \chi\psi^s_{2,n} \right)' + \left(s+\frac{1}{2} \right)A\pare{x} \abs{\chi\psi^s_{2,n}}^{2} + B\pare{x} \overline{\chi\psi^{s}_{1,n}}\chi\psi^s_{4,n} = \overline{\chi\psi^{s}_{1,n}}f^{s}_{4,n}.
\end{flalign*}
where we have multiply by $\overline{\chi\psi^{s}_{j,n}}$ for $j=1,\cdots,4$. Adding these equations and taking the real part, we obtain:
\begin{flalign}
\hphantom{A} & \frac{d}{dx} \Im \pare{\chi\psi^{s}_{1,n}\overline{\chi\psi^{s}_{2,n}} + \chi\psi^{s}_{3,n}\overline{\chi\psi^{s}_{4,n}}}
+ \pare{s +\frac{1}{2}} A \pare{x} \underset{j=1}{\overset{4}{\ds \sum}} \abs{\chi\psi^s_{j,n}}^{2} \notag \\
& = \Re \pare{\overline{\chi\psi^{s}_{4,n}}f^{s}_{1,n} + \overline{\chi\psi^{s}_{3,n}}f^{s}_{2,n} + \overline{\chi\psi^{s}_{2,n}}f^{s}_{3,n} + \overline{\chi\psi^{s}_{1,n}}f^{s}_{4,n}}. \label{2.48}
\end{flalign}
Using that:
\begin{equation}
\underset{x \to 0}{\lim} \Im \pare{\chi\psi^{s}_{1,n}\overline{\chi\psi^{s}_{2,n}} + \chi\psi^{s}_{3,n}\overline{\chi\psi^{s}_{4,n}}} = 0.
\end{equation}
and that $\chi\psi^{s}_{j,n}$ is $0$ at $1$ for all $j=1,\cdots,4$, we obtain:
\begin{flalign*}
\pare{s+\frac{1}{2}} \ds \int_{-\frac{1}{2}}^{0} A \pare{x} \underset{j=1}{\overset{4}{\ds \sum}} \abs{\chi\psi^s_{j,n}}^{2} dx & = \ds \int_{-\frac{1}{2}}^{0}  \Re \left ( \overline{\chi\psi^{s}_{4,n}}f^{s}_{1,n} + \overline{\chi\psi^{s}_{3,n}}f^{s}_{2,n} + \overline{\chi\psi^{s}_{2,n}}f^{s}_{3,n} \right. \\
& \quad \left . + \overline{\chi\psi^{s}_{1,n}}f^{s}_{4,n} \right ) dx.
\end{flalign*}
After some calculations, this gives:
\begin{equation*}
\pare{s+\frac{1}{2}}^{2} \ds \int_{-\frac{1}{2}}^{0} \pare{2 l A \pare{x}-1} \underset{j=1}{\overset{4}{\ds \sum}} \abs{\chi\psi^s_{j,n}}^{2} dx \leq \ds \int_{-\frac{1}{2}}^{0} \underset{j=1}{\overset{4}{\ds \sum}} l^{2} \abs{f^{s}_{j,n}}^{2} dx.
\end{equation*}
Using the asymptotic behavior of $A$ (see \eqref{CompAsymptA}), we can prove that $2lA\pare{x} - 1 \geq 1$ on the support of $\chi$ (for $\epsilon$ sufficiently small). Finally, we obtain:
\begin{equation}
\pare{s+\frac{1}{2}}^{2} \ds \int_{-\frac{1}{2}}^{0}  \underset{j=1}{\overset{4}{\ds \sum}} \abs{\chi\psi^s_{j,n}}^{2} dx \leq l^{2} \ds \int_{-\frac{1}{2}}^{0} \underset{j=1}{\overset{4}{\ds \sum}}  \abs{f^{s}_{j,n}}^{2} dx
\end{equation}
and the right hand side is summable because $f \in \mathcal{H}$. This gives the lemma.
\end{proof}
We can know apply Theorem \ref{5.1} to $\chi \psi$ and obtain the asymptotic behavior of $\psi$:
\begin{prop} \label{prop3.3}
If $2ml > 1$, we have: 
\begin{equation}
|| \psi(\zeta,.)||_{L^2(S^2)} = O \left( \sqrt{-x} \right), \hspace{2mm} x \to 0.\label{705}
\end{equation}
\\If $2ml= 1$, we have:
\begin{equation}
||\psi(x,.)||_{L^2(S^2)} = O \left(\sqrt{\pare{-x}ln\pare{-x}} \right), \hspace{2mm} x \to 0. \label{706}
\end{equation}
\\If $0 < 2ml < 1$, there exists functions $\psi_{-} \in W^{\frac{1}{2}}_{-}$, $\chi_{-} \in W^{\frac{1}{2}}_{+}$, $\psi_{+},\hspace{0.5mm} \chi_{+} \in L^2(S^2)$ and $\phi \in C^0 \left(]-\infty,0]_x; L^2(S^2;\C^4) \right)$ satisfying \eqref{507} and \eqref{508} with $\frac{\pi}{2} - \zeta$ replaced by $\pare{-x}^{l}$.
\\Conversely, for all $\psi_{-} \in W^{\frac{1}{2}+m}_{-}$, $\chi_{-} \in W^{\frac{1}{2}+m}_{+}$, $\psi_{+} \in W^{\frac{1}{2}-m}_{-}$, $\chi_{+} \in W^{\frac{1}{2}-m}_{+}$, there exists $\psi \in D(H_m)$ satisfying \eqref{507} and \eqref{508} with the same replacement as before.
\end{prop}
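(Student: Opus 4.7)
The plan is to reduce the statement to Bachelot's Theorem \ref{5.1} by combining the unitary transformation $\tilde H_m = \gamma^5_B P^{-1}(-H_m) P \gamma^5_B$ with the cut-off technique of Lemma \ref{LemDomain}. The three regimes in the proposition match Bachelot's three cases exactly: his mass parameter equals $\tilde m \sqrt{-3/\Lambda} = ml$ in our notation, so his conditions $m > \frac{1}{2}$, $m = \frac{1}{2}$, $0 < m < \frac{1}{2}$ translate into $2ml > 1$, $2ml = 1$, $0 < 2ml < 1$ respectively.

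For the forward direction, I would take $\psi \in D(H_m)$, form $\tilde\psi = \gamma^5_B P^{-1} \psi \in D(\tilde H_m)$, and fix a cut-off $\chi \in C_0^\infty(]-2\epsilon, 0])$ equal to $1$ on $]-\epsilon, 0]$. By Lemma \ref{LemDomain}, $\chi\tilde\psi \in D(H_m^B)$, so Theorem \ref{5.1} applies and delivers the asymptotic behaviour as $\zeta \to \pi/2$. I would then translate the result to the $x$ coordinate using the expansions of Section 2: from $r \sim -l^2/x$ and $\zeta = \arctan(r/l)$ one finds $\pi/2 - \zeta \sim -x/l$ as $x \to 0$, so that Bachelot's powers $(\pi/2 - \zeta)^{\pm m_B}$ become proportional to $(-x)^{\pm ml}$, the multiplicative constants being absorbed into the spinor coefficients. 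This is precisely the symbolic substitution $\pi/2 - \zeta \leftrightarrow (-x)^l$ appearing in the statement. Since $\chi \equiv 1$ near $x = 0$, the asymptotics of $\chi\tilde\psi$ coincide with those of $\tilde\psi$, and applying the inverse pointwise unitary matrix $P\gamma^5_B$ recovers $\psi$ with a decomposition of the same shape. The estimates \eqref{705}, \eqref{706}, and the $0 < 2ml < 1$ decomposition then follow from \eqref{505}, \eqref{506}, and \eqref{507}--\eqref{508} respectively.

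For the converse direction, given data $\psi_\pm \in W^{1/2 \pm ml}_-$ and $\chi_\pm \in W^{1/2 \pm ml}_+$, I would first apply $\gamma^5_B P^{-1}$ pointwise to transport the data into Bachelot's spinorial convention. The converse part of Theorem \ref{5.1} then produces some $\tilde\Phi \in D(H_m^B)$ realizing this leading behaviour near $\zeta = \pi/2$. Localizing by a cut-off $\chi$ near $x = 0$ and extending by zero yields a compactly supported approximation which, thanks to the asymptotic agreement between the coefficients of $\tilde H_m$ and $H_m^B$ used in the proof of Lemma \ref{LemDomain} (the error terms $A_B - gA$, $B_B - gB$ and the analogous differences are bounded near $0$), belongs to $D(\tilde H_m)$. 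Applying $P\gamma^5_B$ finally produces the desired $\psi \in D(H_m)$ with the prescribed expansion.

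The main obstacle is tracking how the pointwise unitary $P\gamma^5_B$ acts on the spinorial structure of \eqref{507}: the two leading vectors are eigenvectors of $\gamma^1_B$ with eigenvalues $\mp i$, and one must verify that conjugating by $P\gamma^5_B$ permutes these eigenspaces so that the decomposition, the regularity of the components in the spaces $W^{1/2 \pm ml}_\pm$, and (in the subcritical case $0 < 2ml < 1$) the compatibility with the MIT boundary condition built into $D(H_m)$ are all preserved. This verification is purely algebraic, relying on the commutation relations \eqref{relP} between the two sets of Dirac matrices and on the explicit form of $P$.
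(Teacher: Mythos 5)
Your proposal is correct and is essentially the paper's own argument: after Lemma \ref{LemDomain}, the paper simply observes that one can now apply Theorem \ref{5.1} to $\chi\psi$ and read off the asymptotics, and your elaboration (matching Bachelot's mass parameter $m_B = ml$, the coordinate asymptotic $\frac{\pi}{2}-\zeta \sim -x/l$, conjugation by the pointwise unitary $P\gamma^5_B$ on the leading spinors, and the algebraic check of how it permutes the $\gamma^1_B$-eigenspaces) fills in precisely the details the paper leaves implicit. One small remark: your derivation correctly yields $(\pi/2-\zeta)^{\pm m_B}\propto(-x)^{\pm ml}$, i.e.\ the effective substitution is $\pi/2-\zeta\propto -x$, whereas the statement's literal replacement $\pi/2-\zeta\to(-x)^{l}$ would produce $(-x)^{\pm ml^{2}}$, so this is a notational slip in the paper that your computation in fact exposes rather than reproduces.
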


\begin{rem}
By restriction to $\mathcal{H}_{s,n}$, we obtain the same result for $s,n$ fixed. Moreover, if $\varphi_{s,n} \in D\pare{H_{m}^{s,n}}$, then it is in $H^{1}\pare{]-\infty,-c[}$ for a constant $c>0$. We conclude that $\varphi_{s,n} \in C^{0}\pare{]-\infty,-c[} \cap L^{2}\pare{]-\infty,-c[}$ and:
\begin{equation}
\norme{\varphi_{s,n}\pare{x,.}}_{\mathcal{W}^{0}} \to 0, \hspace{2mm} x \to - \infty.
\end{equation}
\end{rem}

\subsubsection{Description of the domain}

We now give a description of the domain of $H_{m}$ for fixed $\pare{s,n} \in I$. Recall that $H_{m}$ and $\tilde{H}_{m}$ are linked by a unitary transform, so it does not change the norm of the observables. We obtain:
\begin{flalign}
-D\pare{H_{m}^{s,n}}&= \left \{ \psi_{s,n} \in \mathcal{H}_{s,n} ; \hspace{1mm} H_{m}^{s,n}\psi_{s,n} \in \mathcal{H}_{s,n} \right \}, \hspace{2mm} \text{if} \hspace{2mm} 2ml \geq 1;\\
-D\pare{H_{m}^{s,n}} & =\left \{ \psi_{s,n} \in \mathcal{H}_{s,n} ; \hspace{1mm} H_{m}^{s,n} \psi_{s,n} \in \mathcal{H}_{s,n}, \hspace{1mm}\psi_{s,n}  =\pare{-x}^{-ml} \begin{pmatrix}
\psi^{s}_{-,n} (\theta,\varphi) \\
i\chi^{s}_{-,n} (\theta,\varphi) \\
-\psi^{s}_{-,n} (\theta,\varphi)\\
i\chi^{s}_{-,n} (\theta,\varphi)
\end{pmatrix} \right .  \notag \\
& \quad \left . + \phi^{s}_{n}\pare{x,\theta,\varphi}, \norme{\phi^{s}_{n}\pare{x,.,.}}_{\mathcal{W}^{0}}= o\pare{\sqrt{-x}} \right \}, \hspace{2mm} \text{if} \hspace{2mm} 2ml < 1. \label{Dom2mlPetit}
\end{flalign}

\subsection{Self-adjointness for fixed harmonic}

In this section, $s$ and $n$ are fixed.

\subsubsection{The case \texorpdfstring{$2ml\geq 1$}{2ml>1}}

\begin{lem}[Elliptic estimate]
We suppose that $2ml>1$. Then, there exists a constant $C>0$ such that, for all $\varphi \in C^{\infty}_{0}\pare{]-\infty,0[}$, we have:
\begin{equation}
\norme{-i \partial_{x} \varphi}^{2} \leq C \pare{\norme{H_{m}^{s,n} \varphi}^{2} + \norme{\varphi}^{2}}
\end{equation}
\end{lem}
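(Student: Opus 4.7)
The plan is to square the operator and derive a Schr\"odinger-type inequality that can be closed by Hardy's inequality. Write $H_m^{s,n} = \Gamma^1 D_x + R$ with $R = A(x)\cancel{D}_{\mathbb{S}^2} - m\gamma^0 B(x)$, and exploit two anticommutation relations inherited from the Dirac algebra. First, since $\Gamma^1 = -\gamma^0\gamma^1$ anticommutes with both $\gamma^0$ and the angular matrices $\gamma^0\gamma^k$ ($k=2,3$), one has $\{\Gamma^1,R\}=0$, so
\begin{equation*}
(H_m^{s,n})^2 = D_x^2 + R^2 - i\Gamma^1 R'(x).
\end{equation*}
Second, $\gamma^0$ anticommutes with $\cancel{D}_{\mathbb{S}^2}$, which makes the cross term in $R^2$ vanish: $R^2 = A^2 \cancel{D}_{\mathbb{S}^2}^2 + m^2 B^2$. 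On the harmonic $(s,n)$ this is $A^2(s+\tfrac12)^2 + m^2 B^2$. Taking $\langle\varphi,\cdot\,\varphi\rangle$ (legitimate after integration by parts on $C_0^\infty$) gives
\begin{equation*}
\|H_m^{s,n}\varphi\|^2 = \|D_x\varphi\|^2 + \int_{-\infty}^0\!\bigl[A^2(s+\tfrac12)^2 + m^2 B^2\bigr]\|\varphi(x,\cdot)\|_{L^2(S^2)}^2\,dx - i\langle\varphi,\Gamma^1 R'\varphi\rangle.
\end{equation*}

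Next I would estimate the cross term. Using the identity $\Gamma^1\gamma^0 = \gamma^1$, we have $\Gamma^1 R' = A'(x)\Gamma^1\cancel{D}_{\mathbb{S}^2} - mB'(x)\gamma^1$. The angular piece is bounded by $(s+\tfrac12)\|A'\|_\infty\|\varphi\|^2$, which is absorbed into $C\|\varphi\|^2$. For the mass piece, $i\gamma^1$ is self-adjoint with operator norm $1$, so
\begin{equation*}
\Bigl|im\!\int B'(x)\langle\varphi,\gamma^1\varphi\rangle_{L^2(S^2)}\,dx\Bigr| \leq m\!\int |B'(x)|\,\|\varphi(x,\cdot)\|_{L^2(S^2)}^2\,dx.
\end{equation*}
Combining with the asymptotics $B(x) = -l/x + O(-x)$ and $B'(x) = l/x^2 + O(1)$ near $x=0$ (together with the boundedness of $B$ and $B'$ away from $0$), one gets
\begin{equation*}
\|H_m^{s,n}\varphi\|^2 \geq \|D_x\varphi\|^2 + \int_{-\infty}^0 \bigl[m^2 B^2 - m|B'|\bigr]\,\|\varphi\|_{L^2(S^2)}^2\,dx - C\|\varphi\|^2.
\end{equation*}

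The integrand expands as $ml(ml-1)/x^2 + O(1/(-x))$ near $0$, the error being harmless. If $ml\geq 1$ the leading coefficient is nonnegative and the elliptic estimate is immediate. If $\tfrac12 < ml < 1$ the main term is negative but of Hardy type; I would then invoke the classical Hardy inequality $\int_{-\infty}^0 |\varphi|^2/x^2\,dx \leq 4\|D_x\varphi\|^2$ (applicable because $\varphi\in C_0^\infty(]-\infty,0[)$ vanishes at both endpoints) to obtain
\begin{equation*}
\|H_m^{s,n}\varphi\|^2 \geq \bigl(1 - 4ml(1-ml)\bigr)\|D_x\varphi\|^2 - C\|\varphi\|^2 = (2ml-1)^2\|D_x\varphi\|^2 - C\|\varphi\|^2.
\end{equation*}
Since $2ml>1$, $(2ml-1)^2>0$, and dividing gives the claim.

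The main obstacle is the delicate balance between the positive Hardy-type term $m^2B^2\sim m^2l^2/x^2$ produced by $\|R\varphi\|^2$ and the negative term $m|B'|\sim ml/x^2$ coming from the IBP commutator: their difference has coefficient $ml(ml-1)/x^2$, and the sharp Hardy constant $4$ converts $1-4ml(1-ml)$ into the remarkable $(2ml-1)^2$. It is precisely this algebra that explains why the threshold $2ml=1$ is critical throughout this section: at the boundary value the elliptic constant vanishes, which is compatible with the appearance of logarithms in the asymptotic behavior \eqref{706} and with the necessity of imposing a boundary condition for $2ml<1$.
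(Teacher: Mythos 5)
Your proof is correct, and it takes a genuinely different route from the paper's. The paper proves this lemma by a partition of unity $\chi_1+\chi_2=1$ separating the horizon from AdS infinity, applying the unitary transform $\tilde{H}_m^{s,n} = \gamma^5_B P^{-1}(-H_m^{s,n})P\gamma^5_B$ to the piece supported near $x=0$, and citing the estimate (III.32) of Theorem III.4 in Bachelot's paper for the transformed operator, while the piece supported near $-\infty$ is handled trivially because the potential is bounded there. Your argument instead squares $H_m^{s,n}$ directly and closes via the sharp one-dimensional Hardy inequality, with no cut-off and no appeal to Bachelot. The algebra is correct: $\Gamma^1$ anticommutes with $R=(s+\tfrac12)A\gamma^0\gamma^2 - mB\gamma^0$ so $(H_m^{s,n})^2 = D_x^2 + R^2 - i\Gamma^1 R'$, the $\gamma^0$--$\gamma^0\gamma^2$ anticommutation kills the cross term in $R^2$, and $i\gamma^1$, $i\gamma^1\gamma^2$ are self-adjoint contractions, so the commutator term is bounded pointwise by $(s+\tfrac12)|A'| + m|B'|$. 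Since $B - B_0 \in T^{\beta,1}$ with $B_0 = l/(-x)$ near $0$, one indeed gets $m^2B^2 - m|B'| = ml(ml-1)/x^2 + O(1)$ near $0$ and bounded elsewhere, and the sharp Hardy constant $4$ yields $1+4ml(ml-1) = (2ml-1)^2 > 0$. What your route buys is that it is self-contained, avoids the unitary conjugation and localization entirely, and makes the criticality of $2ml=1$ visible in the elliptic constant $(2ml-1)^2$; what the paper's route buys is that the same cut-off machinery is reused in the subsequent Hardy-type inequality and in the compactness section, so the cost of setting it up is amortized. One cosmetic remark: the remainder in $m^2B^2 - m|B'|$ near $0$ is in fact $O(1)$ (since $B-B_0$ and its derivative are $O(-x)$), so you do not even need to invoke the $1/(-x)$--absorption; but as you note, even $O(1/(-x))$ would be harmless for $2ml>1$.
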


\begin{proof}
We write $D_{x} = -i\partial_{x}$ and $\Gamma^{1} = - \gamma^{0}\gamma^{1}$. Recall that:
\begin{equation*}
H_{m}^{s,n} = \Gamma^{1} D_{x} +\pare{s+\frac{1}{2}} A\pare{x} \gamma^{0}\gamma^{2} - m B\pare{x} \gamma^{0}.
\end{equation*}
We will often denote $V\pare{x} = \pare{s+\frac{1}{2}} A\pare{x} \gamma^{0}\gamma^{2} - m B\pare{x} \gamma^{0}$. Choose a partition of unity $\chi_{1},\chi_{2}$ such that $\chi_{1}+\chi_{2} = 1$, $\supp \pare{\chi_{1}} \subset ]-\infty,-\epsilon[$ and $\chi_{1} =1$ on $]-\infty,-2\epsilon[$, $\supp \pare{\chi_{2}} \subset ]-2\epsilon,0[$ and $\chi_{2} =1$ on $]-\epsilon,0[$. We choose $\epsilon >0$ sufficiently small so that, if $\gamma^{5}_{B}$ and $P$ are unitary matrices defined as in \eqref{UnitTransH}, $\gamma^{5} P^{-1}\chi_{2} \varphi \in D\pare{H_{m}^{B}}$ when $\varphi \in D\pare{H_{m}^{s,n}}$ (it is possible by lemma \ref{LemDomain}). Recall that $m$ is the mass of the field and $l$ correspond to the cosmological constant. Using equation $III.32$ in theorem $III.4$ of \cite{Bachelot}, \eqref{UnitTransH} and \eqref{2.42}, we obtain:
\begin{equation*}
\norme{D_{x}  \pare{\gamma^{5}_{B} P^{-1} \chi_{2} \varphi}} \leq C_{m,l} \norme{g\pare{x}H_{m}^{s,n} \pare{\gamma^{5}_{B} P^{-1}\chi_{2} \varphi}} + \tilde{C}_{m,l} \norme{\chi_{2} \varphi},
\end{equation*}
where $C_{m,l}$ and $\tilde{C}_{m,l}$ are constants depending on $m$ and $l$. Since $\gamma^{5}_{B} P^{-1}$ is unitary and commute with $D_{x}$ and $g$ is bounded near $0$, we obtain:
\begin{equation}
\norme{D_{x} \pare{\chi_{2} \varphi}} \leq C_{m,l,\epsilon} \norme{H_{m}^{s,n} \pare{\chi_{2} \varphi}} + \tilde{C}_{m,l} \norme{\chi_{2} \varphi}.
\end{equation}
On the other hand, with $C_{V,\epsilon}$ constant, we have:
\begin{equation*}
\norme{D_{x}  \pare{ \chi_{1} \varphi}}  \leq \norme{H_{m}^{s,n} \pare{ \chi_{1} \varphi}} + C_{V,\epsilon} \norme{\varphi}.
\end{equation*}
Since $\chi_{1},\chi_{2}$ commute with $V$ and are bounded as are their derivatives, we obtain:
\begin{flalign*}
\norme{D_{x} \varphi}^{2} & \leq C \pare{\norme{H_{m}^{s,n} \pare{ \chi_{1} \varphi}}^{2} + \norme{H_{m}^{s,n} \pare{\chi_{2} \varphi}}^{2}} + C'\norme{\varphi}^{2} \\
& \leq \tilde{C} \norme{H_{m}^{s,n} \varphi}^{2} + \tilde{C}' \norme{\varphi}^{2}.
\end{flalign*}

\end{proof}

\begin{prop} \label{PropHaa2mlGr}
For $2ml \geq 1$, the operator $\tilde{H}_{m}^{s,n}$ is essentially self-adjoint on \\
$C^{\infty}_{0}\pare{\left ] -\infty, 0 \right [}$. Moreover, if $2ml>1$, the domain of this operator is given by $H^{1}_{0}\pare{]-\infty,0[}$.
\end{prop}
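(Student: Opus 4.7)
The plan is to prove essential self-adjointness by showing that $C_{0}^{\infty}\pare{]-\infty,0[}$ is dense, in the graph norm, in the natural domain $D\pare{H_m^{s,n}}$; this natural domain is precisely the domain of the adjoint of $H_m^{s,n}|_{C_{0}^{\infty}}$, and symmetry on $C_0^\infty$ follows from a straightforward integration by parts. Since $\tilde H_m^{s,n}$ and $-H_m^{s,n}$ are related by a unitary conjugation (see \eqref{UnitTransH}), it suffices to carry out the argument for $H_m^{s,n}$. Given $\psi \in D\pare{H_m^{s,n}}$ I would build $\psi_{n} = \chi_{n}^{-}\chi_{n}^{0}\psi$, then mollify, where $\chi_{n}^{-}$ is a standard polynomial cutoff at $-\infty$ whose convergence is justified by $\norme{\psi_{s,n}\pare{x,\cdot}}_{\mathcal{W}^{0}} \to 0$ as $x\to-\infty$ noted in the remark following Proposition \ref{prop3.3}.

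The delicate ingredient is the cutoff $\chi_n^0$ near $0$: since $[H_m^{s,n},\chi_n^0]$ reduces to multiplication by $-i\Gamma^{1}\pare{\chi_{n}^{0}}'$, one needs $\pare{\chi_{n}^{0}}' \psi \to 0$ in $\mathcal{H}_{s,n}$. The decay $\norme{\psi\pare{x,\cdot}}_{\mathcal{W}^{0}}^{2} = O\pare{-x}$ from Proposition \ref{prop3.3} for $2ml>1$ sits exactly at the borderline where a linear cutoff fails, so I would take the logarithmic cutoff $\chi_n^0\pare{x} = \chi\pare{-\ln\pare{-x}/n}$. Then $\abs{(\chi_n^0)'(x)} \lesssim 1/(n(-x))$ on the support $-x \in [e^{-2n},e^{-n}]$, and
\begin{equation*}
\norme{(\chi_n^0)' \psi}_{\mathcal{H}_{s,n}}^{2} \lesssim \frac{1}{n^{2}}\int_{-e^{-n}}^{-e^{-2n}} \frac{-x}{x^{2}}\, dx = \frac{1}{n} \to 0.
\end{equation*}
For the borderline case $2ml = 1$, where $\norme{\psi\pare{x,\cdot}}_{\mathcal{W}^{0}}^{2} = O\pare{-x\abs{\ln\pare{-x}}}$, one extra logarithm must be absorbed, so the same scheme goes through with the doubly logarithmic cutoff $\chi_n^0\pare{x} = \chi\pare{\ln\abs{\ln\pare{-x}}/n}$.

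For the description of the domain in the case $2ml > 1$, the elliptic estimate of the preceding lemma, extended by density to $D\pare{H_m^{s,n}}$ via the core property just proved, gives $D\pare{H_m^{s,n}} \subset H^{1}\pare{]-\infty,0[}$. The trace at $0$ vanishes because $\norme{\psi\pare{x,\cdot}}_{\mathcal{W}^{0}} = O\pare{\sqrt{-x}}$ by Proposition \ref{prop3.3}, and the trace at $-\infty$ vanishes by the decay from the same remark, giving $D\pare{H_m^{s,n}} \subset H^{1}_{0}\pare{]-\infty,0[}$. The reverse inclusion follows by direct inspection of $H_m^{s,n}\varphi$ for $\varphi \in H_{0}^{1}$: the term $\pare{s+\frac{1}{2}}A\pare{x}\gamma^{0}\gamma^{2}\varphi$ lies in $L^{2}$ since $A$ is bounded, while the singular term $mB\pare{x}\gamma^{0}\varphi$, with $B\pare{x} \sim l/\pare{-x}$ near $0$, is in $L^{2}$ by Hardy's inequality $\norme{\varphi/\pare{-x}}_{2} \lesssim \norme{\partial_{x}\varphi}_{2}$ valid for $\varphi \in H_0^1$. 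The main obstacle is this cutoff at $0$: the decay from Proposition \ref{prop3.3} sits precisely at the critical rate which defeats a polynomial cutoff, so the (doubly) logarithmic scaling is essential to make the commutator vanish in $\mathcal{H}_{s,n}$.
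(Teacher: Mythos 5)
Your argument is correct, but the route to essential self-adjointness is genuinely different from the paper's. The paper first shows that $\tilde H_m^{s,n}$ equipped with its \emph{natural} domain is symmetric (using the boundary decay from Proposition~\ref{prop3.3} to kill the boundary term in the integration by parts), then observes that $H^*$ (where $H = \tilde H_m^{s,n}|_{C_0^\infty}$) coincides with this natural-domain operator, so for $\phi_\pm \in \ker(H^*\pm i)$ the symmetry forces $0 = \langle H^*\phi_\pm,\phi_\pm\rangle - \langle\phi_\pm,H^*\phi_\pm\rangle = \mp 2i\|\phi_\pm\|^2$; the deficiency spaces are trivial and essential self-adjointness follows by von Neumann's criterion. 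You instead prove directly that $C_0^\infty$ is a core, i.e.\ dense in $D(H^*)$ in the graph norm, by constructing an explicit approximating sequence via cutoffs and mollification. Your identification of the delicate point is exactly right: the decay $\|\psi(x,\cdot)\|^2 = O(-x)$ at the AdS end sits precisely at the rate where a scale-invariant linear cutoff gives a bounded (non-vanishing) commutator, and your logarithmic cutoff $\chi(\ln(-x)/n)$ produces $|(\chi_n^0)'|\lesssim 1/(n(-x))$ with $\int (-x)/x^2\,dx = n$ over the support, giving $1/n\to 0$; the doubly logarithmic scaling likewise absorbs the extra $|\ln(-x)|$ in the $2ml=1$ case. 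Each approach buys something: the paper's argument is shorter and needs only a single integration by parts, whereas yours is constructive and makes the dependence on the borderline decay rate completely explicit, at the cost of the more delicate cutoff analysis. Both hinge on the asymptotics of Proposition~\ref{prop3.3}. For the domain description when $2ml>1$, your argument (elliptic estimate for $D(H_m^{s,n})\subset H_0^1$, boundedness of $A$ and Hardy's inequality for the reverse inclusion) is essentially the paper's.
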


\begin{proof}
Recall that:
\begin{equation*}
\tilde{H}_{m}^{s,n} = i \gamma^{0}_{B} \gamma^{1}_{B} \partial_{x} + \gamma^{0}_{B} \gamma^{2}_{B} \harm A(x) - m \gamma^{0}_{B} B(x) 
\end{equation*}
with domain $D\pare{\tilde{H}_{m}^{s,n}} = \left \{ \psi_{s,n} \in \mathcal{H}_{s,n} ; \hspace{1mm} \tilde{H}_{m}^{s,n} \psi_{s,n} \in \mathcal{H}_{s,n} \right \}$ and if $\psi_{s,n} \in D\pare{\tilde{H}_{m}^{s,n}}$, then we have:
\begin{flalign}
\hphantom{A} & \norme{\psi_{s,n}(x,.)}_{L^{2}(S^{2})} = O\pare{\sqrt{\pare{-x}}} , \hspace{2mm} x \to 0, \hspace{2mm} \text{if} \hspace{2mm} 2ml >1;\\
& \norme{\psi_{s,n}(x,.)}_{L^{2}(S^{2})} = O \pare{\sqrt{x \ln \pare{-x}}}, \hspace{2mm} x \to 0, \hspace{2mm} \text{if} \hspace{2mm} 2ml=1;\label{Asymp2mlGr}\\
& \norme{\psi_{s,n}\pare{x,.}}_{\mathcal{W}^{0}} \to 0, \hspace{2mm} x \to - \infty.
\end{flalign}
Let us prove that $\tilde{H}_{m}^{s,n}$ is symmetric on its domain. We remark that $\pare{\gamma^{0}_{B} \gamma^{2}_{B}}^{*}= \gamma^{0}_{B}\gamma^{2}_{B}$, $\pare{\gamma^{0}_{B} \gamma^{1}_{B}}^{*} = \gamma^{0}_{B} \gamma^{1}_{B}$ and $\pare{\gamma^{0}_{B}}^{*} = \gamma^{0}_{B}$. So:
\begin{flalign*}
 \prods{ \gamma^{0}_{B} \gamma^{2}_{B} A\pare{x} \pare{s + \frac{1}{2}} \phi_{s,n}}{\psi_{s,n}}_{\mathcal{H}_{s,n}} &= \prods{  \phi_{s,n}}{\pare{\gamma^{0}_{B} \gamma^{2}_{B}} A\pare{x} \pare{s + \frac{1}{2}}\psi_{s,n}}_{\mathcal{H}_{s,n}}, \\
 \prods{\gamma^{0}_{B} B\pare{x} \phi_{s,n}}{\psi_{s,n}}_{\mathcal{H}_{s,n}} & =  \prods{ \phi_{s,n}}{\gamma^{0}_{B} B\pare{x} \psi_{s,n}}_{\mathcal{H}_{s,n}}.
\end{flalign*}
Thus, in the calculation of $\prods{\tilde{H}_{m}^{s,n} \phi_{s,n}}{\psi_{s,n}}_{\mathcal{H}_{s,n}} - \prods{\phi_{s,n}}{\tilde{H}_{m}^{s,n} \psi_{s,n}}_{\mathcal{H}_{s,n}} $, it remains only the boundary term due to integration by parts. Using \eqref{Asymp2mlGr}, this gives the symmetry of our operator on its domain.
\\ \indent We then use the same trick as in \cite{Bachelot}. Let us consider a new operator $H$ with the same expression as $\tilde{H}_{m}^{s,n}$ but defined on $D(H) = C^{\infty}_{0}\pare{\left ] - \infty, 0 \right [}$. Then $H^{*}$ is $\tilde{H}_{m}^{s,n}$ with domain $D(H^{*})$ included in $D\pare{ \tilde{H}_{m}^{s,n}}$. Let $\phi_{\pm} \in \ker \pare{H^{*} \pm i Id}$. Then, using the symmetry of $\tilde{H}_{m}^{s,n}$ and that $H^{*}=\tilde{H}_{m}^{s,n}$, we have:
\begin{equation}
0 = \prods{\tilde{H}_{m}^{s,n} \phi_{\pm}}{\phi_{\pm}} - \prods{\phi_{\pm}}{\tilde{H}_{m}^{s,n} \phi_{\pm}} = \prods{H^{*} \phi_{\pm}}{\phi_{\pm}} - \prods{\phi_{\pm}}{H^{*} \phi_{\pm}} = \mp 2i \norme{\phi_{\pm}}^{2}_{\mathcal{H}_{s,n}}.
\end{equation}
We conclude that $\phi_{\pm} = 0$. This proves that $\tilde{H}_{m}^{s,n}$ is essentially self-adjoint on $C^{\infty}_{0}\pare{\left ] -\infty, 0 \right [}$.\\
For the last part, using the last lemma, we see that, for $2ml > 1$, we have: $D\pare{H_{m}^{s,n}} \subset H^{1}_{0}\pare{]-\infty,0[}$. Indeed, if we take $\varphi \in D\pare{H_{m}^{s,n}}$, it is the limit of a sequence $\pare{\varphi_{n}}_{n\in \N} \in \pare{C^{\infty}_{0}}^{\N}$ for the graph norm. The last lemma gives that $\partial_{x}\varphi_{n}$ is a Cauchy sequence so that it converges in $H_{0}^{1}$. A distribution argument gives that this limit is $\partial_{x} \varphi$ which is in $L^{2}$ by the lemma.\\
Moreover, we have $H_{m}^{s,n} = i \gamma^{0} \gamma^{1} \partial_{x} + \gamma^{0} \gamma^{2} \pare{s+\frac{1}{2}} A\pare{x} - m \gamma^{0} B\pare{x}$ with $A$ having the behavior as in \eqref{CompAsymptA} and $B$ as in \eqref{CompAsymptB}. Using the fact that $B$ and $B_{B}$ have the same behavior when $x \to 0$ and the unitary transform, we can use the proof of Theorem $III.4$ in \cite{Bachelot} to prove a Hardy type inequality of the form:
\begin{equation}
\norme{B \chi_{2}^{2} \varphi}  \leq c \pare{ \norme{\varphi} + \norme{-i \partial_{x} \varphi}}.
\end{equation}
Using the fact that $A$ is bounded, we have a similar estimate for $\gamma^{0} \gamma^{2} \pare{s+\frac{1}{2}} A\pare{x} - m \gamma^{0} B\pare{x}$. Thus $H^{1}_{0} \subset D\pare{H_{m}^{s,n}}$. This proves the proposition.
\end{proof}

\subsubsection{The case \texorpdfstring{$2ml<1$}{2ml<1}}

Recall that if $0<2ml<1$, then, for all $\psi_{s,n} \in D\pare{\tilde{H}_{m}^{s,n}}$, there exists functions $\psi_{-} \in W^{\frac{1}{2}}_{-}$, $\chi_{-} \in W^{\frac{1}{2}}_{+}$, $\psi_{+},\hspace{0.5mm} \chi_{+} \in L^2(S^2)$ and $\phi \in C^0 \left([0,\frac{\pi}{2}]_x; L^2(S^2;\C^4) \right)$ such that $\norme{\sigma^{s}_{n}(x,\theta,\varphi)}_{\mathcal{W}^{0}} = o \pare{\sqrt{\pare{-x}}}$ as $x$ goes to $0$ and:
\begin{flalign}
 \psi_{s,n}\pare{x,\theta,\varphi} & = \pare{-x}^{-ml} \begin{pmatrix}
\psi^{s}_{-,n} (\theta,\varphi) \\
\chi^{s}_{-,n} (\theta,\varphi) \\
-i\psi^{s}_{-,n} (\theta,\varphi)\\
i\chi^{s}_{-,n} (\theta,\varphi)
\end{pmatrix} + \pare{-x}^{ml} \begin{pmatrix}
\psi^{s}_{+,n} (\theta,\varphi) \\
\chi^{s}_{+,n} (\theta,\varphi) \\
i\psi^{s}_{+,n} (\theta,\varphi)\\
-i\chi^{s}_{+,n} (\theta,\varphi)
\end{pmatrix}  + \sigma^{s}_{n}(x,\theta,\varphi), \notag \\
& := \pare{-x}^{-ml} \Psi^{s}_{-,n}(\theta,\varphi) + \pare{-x}^{ml} \Psi^{s}_{+,n}(\theta,\varphi) + \sigma^{s}_{n}(x,\theta,\varphi) \label{ExprPsi2mlPetit}
\end{flalign}
We denote by $\tilde{H}^{MIT}_{s,n}$ the operator $\tilde{H}_{m}^{s,n}$ with domain:
\begin{equation}
D(\tilde{H}^{MIT}_{s,n}) =\left \{ \psi_{s,n} \in \mathcal{H}_{s,n}; \hspace{2mm} \tilde{H}_{m}^{s,n} \psi_{s,n} \in \mathcal{H}_{s,n}, \hspace{2mm} \psi^{s}_{+,n} = \chi^{s}_{+,n}=0 \right \}.
\end{equation}
which is a consequence of the discussion after proposition $VI.2$ in \cite{Bachelot}. We have the:

\begin{prop}
The operator $\tilde{H}^{MIT}_{s,n}$ is self-adjoint on $D\pare{\tilde{H}^{MIT}_{s,n}}$.
\end{prop}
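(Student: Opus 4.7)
The plan is to verify symmetry of $\tilde{H}^{MIT}_{s,n}$ on its domain and then to show $D\pare{\pare{\tilde{H}^{MIT}_{s,n}}^{*}}\subset D\pare{\tilde{H}^{MIT}_{s,n}}$. The potential terms of $\tilde{H}_{m}^{s,n}$ are pointwise Hermitian, so the only contribution to $\prods{\tilde{H}_{m}^{s,n}\varphi}{\phi}-\prods{\varphi}{\tilde{H}_{m}^{s,n}\phi}$ comes from integrating $i\gamma^{0}_{B}\gamma^{1}_{B}\partial_{x}$ by parts, producing the boundary form
\begin{equation*}
-i\lim_{x\to 0^{-}}\prods{\varphi\pare{x,\cdot}}{\gamma^{0}_{B}\gamma^{1}_{B}\phi\pare{x,\cdot}}_{\mathcal{W}^{0}}+i\lim_{x\to -\infty}\prods{\varphi\pare{x,\cdot}}{\gamma^{0}_{B}\gamma^{1}_{B}\phi\pare{x,\cdot}}_{\mathcal{W}^{0}}.
\end{equation*}
The limit at $-\infty$ vanishes by the decay remark following Proposition \ref{prop3.3}. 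Inserting the expansion \eqref{ExprPsi2mlPetit} into the limit at $0$, a direct $4\times 4$ computation using the explicit spinor structure of $\Psi^{s}_{\pm,n}$ gives
\begin{equation*}
\prods{\Psi^{s}_{-,n}\pare{\varphi}}{\gamma^{0}_{B}\gamma^{1}_{B}\Psi^{s}_{-,n}\pare{\phi}}_{\C^{4}}=\prods{\Psi^{s}_{+,n}\pare{\varphi}}{\gamma^{0}_{B}\gamma^{1}_{B}\Psi^{s}_{+,n}\pare{\phi}}_{\C^{4}}=0,
\end{equation*}
so the divergent self-pairings of order $\pare{-x}^{-2ml}$ vanish identically. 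The remaining bounded cross-pairings $\prods{\Psi^{s}_{\mp,n}\pare{\varphi}}{\gamma^{0}_{B}\gamma^{1}_{B}\Psi^{s}_{\pm,n}\pare{\phi}}$ are killed by the MIT condition $\Psi^{s}_{+,n}\pare{\varphi}=\Psi^{s}_{+,n}\pare{\phi}=0$, and the pairings involving the $L^{2}$-remainder $\sigma^{s}_{n}$ decay as explained below. This yields symmetry.

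For self-adjointness, let $\phi\in D\pare{\pare{\tilde{H}^{MIT}_{s,n}}^{*}}$ with adjoint image $\eta\in\mathcal{H}_{s,n}$. Testing against $\varphi\in C^{\infty}_{0}\pare{]-\infty,0[}\subset D\pare{\tilde{H}^{MIT}_{s,n}}$ gives $\tilde{H}_{m}^{s,n}\phi=\eta$ in the distributional sense, so $\phi$ belongs to the maximal domain and admits the decomposition \eqref{ExprPsi2mlPetit}. Repeating the boundary-form computation now with $\varphi$ an arbitrary element of $D\pare{\tilde{H}^{MIT}_{s,n}}$---so $\Psi^{s}_{+,n}\pare{\varphi}=0$ while $\Psi^{s}_{+,n}\pare{\phi}$ is unconstrained---only one contribution survives:
\begin{equation*}
0=-i\prods{\Psi^{s}_{-,n}\pare{\varphi}}{\gamma^{0}_{B}\gamma^{1}_{B}\Psi^{s}_{+,n}\pare{\phi}}_{\C^{4}}.
\end{equation*}
The converse half of Proposition \ref{prop3.3} realises any prescribed $\Psi^{s}_{-,n}\pare{\varphi}$ as the singular profile of some $\varphi\in D\pare{\tilde{H}^{MIT}_{s,n}}$; combined with the non-degeneracy of the pairing $\prods{\,\cdot\,}{\gamma^{0}_{B}\gamma^{1}_{B}\,\cdot\,}$ between the $\Psi^{s}_{-,n}$ and $\Psi^{s}_{+,n}$ blocks---explicit from the cross-term calculation---this forces $\Psi^{s}_{+,n}\pare{\phi}=0$, hence $\phi\in D\pare{\tilde{H}^{MIT}_{s,n}}$.

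The main technical obstacle is controlling the remainder $\sigma^{s}_{n}$ in the boundary form. Cauchy--Schwarz on the sphere bounds each cross-pairing of $\sigma^{s}_{n}$ with $\pare{-x}^{-ml}\Psi^{s}_{-,n}$ by $\pare{-x}^{-ml}\cdot o\pare{\sqrt{-x}}=o\pare{\pare{-x}^{1/2-ml}}$, which is $o\pare{1}$ precisely because $2ml<1$. This interplay between the singular exponent $ml$ and the decay rate of $\sigma^{s}_{n}$---the very interplay that singles out the MIT boundary condition in this regime---is the delicate point of the argument.
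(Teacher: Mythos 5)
Your proposal is correct and follows essentially the same route as the paper's own proof: integration by parts reduces the question to the boundary form at $x=0$, where the explicit $4\times 4$ computation showing $\prods{\Psi^{s}_{\pm,n}}{\gamma^{0}_{B}\gamma^{1}_{B}\Psi^{s}_{\pm,n}}_{\C^{4}}=0$ kills the divergent self-pairings, the MIT condition kills the rest, and the adjoint inclusion is obtained by the converse half of Proposition \ref{prop3.3} together with non-degeneracy of the surviving cross-pairing. The paper is somewhat terser on the remainder estimates, which you spell out; that is a presentational difference, not a different argument.
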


\begin{proof}
Let $\phi_{s,n},\psi_{s,n} \in D(\tilde{H}^{MIT}_{s,n})$. As in the proof of proposition \ref{PropHaa2mlGr}, when calculating 
\begin{equation*}
\prods{\tilde{H}^{MIT}_{s,n} \phi_{s,n}}{\psi_{s,n}}_{\mathcal{H}_{s,n}} - \prods{\phi}{\tilde{H}^{MIT}_{s,n} \psi}_{\mathcal{H}_{s,n}},
\end{equation*}
only boundary values of $\phi_{s,n},\psi_{s,n}$ are left. Using that
\begin{flalign*}
\hphantom{A} & \phi_{s,n}\pare{x,\theta,\varphi}= \pare{-x}^{-ml} \begin{pmatrix}
\phi^{s}_{-,n} (\theta,\varphi) \\
\xi^{s}_{-,n} (\theta,\varphi) \\
-i\phi^{s}_{-,n} (\theta,\varphi)\\
i\xi^{s}_{-,n} (\theta,\varphi)
\end{pmatrix} 
+ \varphi^{s}_{n}\pare{x,\theta,\varphi}:= \pare{-x}^{-ml} \Phi_{-,n}^{s}\pare{\theta,\varphi} + \varphi^{s}_{n}\\
& \norme{\varphi_{s,n}}_{L^{2}(S^{2})} = o\pare{\sqrt{\pare{-x}}}, \hspace{1mm} x \to 0,
\end{flalign*}
and a similar formula for $\psi_{s,n}$ with $\Phi^{s}_{-,n},\varphi^{s}_{n}$ replaced by $\Psi^{s}_{-,n},\sigma^{s}_{n}$ respectively, we can calculate these boundary values in a neighbourhood of $0$ (with the scalar product being the one of $L^{2}\pare{S^{2}}$ and we write $\phi_{s,n}\pare{x}$ for $\phi_{s,n}\pare{x,.}$):
\begin{equation*}
\prods{\phi_{s,n}\pare{x}}{\gamma^{0}_{B} \gamma^{1}_{B}\psi_{s,n}\pare{x}} = \pare{-x}^{-ml} \pare{ \prods{\Phi^{s}_{-,n} }{\sigma^{s}_{n}\pare{x}} + \prods{\varphi^{s}_{n}\pare{x}}{\Psi^{s}_{-,n}}} +\prods{\varphi^{s}_{n}\pare{x}}{\sigma^{s}_{n}\pare{x}}_{\mathcal{W}^{0}}.
\end{equation*}
Indeed, $\gamma^{0}_{B} \gamma^{1}_{B}$ arranges the terms such that $\prods{\pare{-x}^{-ml} \Phi^{s}_{-,n} }{\pare{-x}^{-ml} \Psi^{s}_{-,n}}_{\mathcal{W}^{0}} = 0$. Using the behavior at $0$ of $\varphi^{s}_{n},\sigma^{s}_{n}$ and at $-\infty$ of $\phi_{s,n},\psi_{s,n}$, we deduce that $\tilde{H}^{MIT}_{s,n}$ is symmetric. \\
\indent Let $\psi_{s,n} \in D(\tilde{H}_{m}^{s,n,MIT,*})$. Then, since $D(\tilde{H}_{m}^{s,n,MIT,*}) \subset D(\tilde{H}_{m}^{s,n})$, $\psi$ admits a decomposition, in a neighbourhood of $0$, as in \eqref{ExprPsi2mlPetit}. Moreover, $\tilde{H}_{m}^{s,n,MIT,*} = \tilde{H}_{m}^{s,n}$ on $D\pare{\tilde{H}_{m}^{s,n,MIT,*}}$ (using distributions). We have:
\begin{equation*}
0 = \prods{\tilde{H}_{m}^{s,n,MIT}\phi_{s,n}}{\psi_{s,n}} - \prods{\phi_{s,n}}{\tilde{H}_{m}^{s,n,MIT,*} \psi_{s,n}} = \underset{x \to 0}{\lim} \prods{\pare{-x}^{-ml}\Phi^{s}_{-,n} }{\pare{-x}^{ml} \Psi^{s}_{+,n}},
\end{equation*}
for all $\phi_{s,n} \in D\pare{\tilde{H}_{m}^{s,n,MIT}}$ and $\psi_{s,n} \in D\pare{\tilde{H}_{m}^{s,n,MIT,*}}$. In other words, we have:
\begin{equation}
2 \prods{\begin{pmatrix}
\phi^{s}_{-,n}\\
\xi^{s}_{-,n}
\end{pmatrix}}{\begin{pmatrix}
\psi^{s}_{+,n} \\
\chi^{s}_{+,n}
\end{pmatrix}}=0.
\end{equation}
But, for all $\phi^{s}_{-,n},\xi^{s}_{-,n} \in C^{\infty}_{0} ( Y_{s,n})$, we can find $\phi \in D(\tilde{H}_{m}^{s,n,MIT})$ admitting these components as coordinates. Thus $\psi^{s}_{+,n}=\chi^{s}_{+,n}=0$. We conclude that $D\pare{\tilde{H}_{m}^{s,n,MIT,*}} \subset D\pare{\tilde{H}_{m}^{s,n,MIT}}$ and that $\tilde{H}_{m}^{s,n,MIT}$ is self-adjoint on his domain.

\end{proof}

\subsection{Self-adjointness of \texorpdfstring{$\tilde{H}_{m}$}{Hm}}

\subsubsection{The case \texorpdfstring{$2ml\geq 1$}{2ml>1}}

We equip $\tilde{H}_{m}$ with the domain:
\begin{flalign}
D(\tilde{H}_{m}) & = \left \{u \in \mathcal{H}; \hspace{2mm} \tilde{H}_{m} u \in \mathcal{H} \right \} \notag \\
& = \left \{ \underset{(s,n)\in I}{\sum} \begin{pmatrix}
u^{s}_{1,n} T^{s}_{-\frac{1}{2},n} \\
u^{s}_{2,n} T^{s}_{\frac{1}{2},n} \\
u^{s}_{3,n} T^{s}_{-\frac{1}{2},n} \\
u^{s}_{4,n} T^{s}_{\frac{1}{2},n}
\end{pmatrix}; \hspace{2mm} \forall (s,n) \in I, \hspace{2mm} u^{s}_{n} \in L^{2}\pare{\left ]-\infty, 0 \right [_{x},dx}, \right . \notag \\
& \quad \left . \hspace{1mm} \tilde{H}_{m}^{s,n} \begin{pmatrix}
u^{s}_{1,n} T^{s}_{-\frac{1}{2},n} \\
u^{s}_{2,n} T^{s}_{\frac{1}{2},n} \\
u^{s}_{3,n} T^{s}_{-\frac{1}{2},n} \\
u^{s}_{4,n} T^{s}_{\frac{1}{2},n}
\end{pmatrix} \in L^{2},
\hspace{1mm} \underset{(s,n) \in I}{\sum} \norme{(\tilde{H}_{m}^{s,n} \pm i) \begin{pmatrix}
u^{s}_{1,n} T^{s}_{-\frac{1}{2},n} \\
u^{s}_{2,n} T^{s}_{\frac{1}{2},n} \\
u^{s}_{3,n} T^{s}_{-\frac{1}{2},n} \\
u^{s}_{4,n} T^{s}_{\frac{1}{2},n}
\end{pmatrix}}^{2}_{L^{2}} < \infty \right \}.
\end{flalign}
We then have:
\begin{prop} \label{proposition2.6}
Suppose that $2ml \geq 1$. Then the operator $\tilde{H}_{m}$ is self-adjoint on its domain.
\end{prop}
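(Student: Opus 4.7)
The plan is to reduce to the fixed-harmonic self-adjointness already proved in Proposition \ref{PropHaa2mlGr} via a direct-sum argument. By the decomposition \eqref{decompS2} of $\mathcal{H}$ into the harmonic subspaces $\mathcal{H}_{s,n}$, and by the explicit form of $\tilde{H}_m$ (involving only $\partial_x$, multiplication by functions of $x$, and the spherical Dirac operator, all of which preserve the harmonic decomposition), one has the orthogonal direct-sum representation $\tilde{H}_m = \bigoplus_{(s,n) \in I} \tilde{H}_m^{s,n}$. The domain written in the statement is precisely the direct-sum domain: $u \in D(\tilde{H}_m)$ if and only if each component $u_{s,n}$ lies in $D(\tilde{H}_m^{s,n})$ and $\sum_{(s,n)} \norme{\tilde{H}_m^{s,n} u_{s,n}}^2 < \infty$, as follows from the Plancherel-type identity $\norme{\tilde{H}_m u}^2_{\mathcal{H}} = \sum_{(s,n)} \norme{\tilde{H}_m^{s,n} u_{s,n}}^2_{\mathcal{H}_{s,n}}$ granted by the blockwise action.

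For $2ml \geq 1$, Proposition \ref{PropHaa2mlGr} asserts that each $\tilde{H}_m^{s,n}$ is essentially self-adjoint on $C_0^\infty(]-\infty,0[)$, and hence self-adjoint on its natural domain $D(\tilde{H}_m^{s,n})$. Symmetry of $\tilde{H}_m$ is then immediate from the symmetry of the harmonic pieces: for $u,v \in D(\tilde{H}_m)$, the identity $\prods{\tilde{H}_m u}{v}_{\mathcal{H}} = \sum_{(s,n)} \prods{\tilde{H}_m^{s,n} u_{s,n}}{v_{s,n}}_{\mathcal{H}_{s,n}}$ converges absolutely by Cauchy--Schwarz, and transferring the operator to the right side blockwise yields $\prods{u}{\tilde{H}_m v}_{\mathcal{H}}$.

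To upgrade symmetry to self-adjointness I would verify that $\mathrm{Ran}(\tilde{H}_m \pm i) = \mathcal{H}$. Given $f \in \mathcal{H}$ with components $f_{s,n}$, set $u_{s,n} := (\tilde{H}_m^{s,n} + i)^{-1} f_{s,n} \in D(\tilde{H}_m^{s,n})$. The spectral-theorem bounds $\norme{(T+i)^{-1}} \leq 1$ and $\norme{T(T+i)^{-1}} \leq 1$ valid for any self-adjoint $T$ give $\norme{u_{s,n}} \leq \norme{f_{s,n}}$ and $\norme{\tilde{H}_m^{s,n} u_{s,n}} \leq \norme{f_{s,n}}$. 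Summing,
\begin{equation*}
\sum_{(s,n)\in I} \norme{u_{s,n}}^2 \leq \norme{f}^2, \qquad \sum_{(s,n)\in I} \norme{\tilde{H}_m^{s,n} u_{s,n}}^2 \leq \norme{f}^2,
\end{equation*}
so $u := \sum u_{s,n}$ lies in $D(\tilde{H}_m)$ and $(\tilde{H}_m + i) u = f$. The argument with $-i$ is identical, and Von Neumann's criterion then delivers self-adjointness.

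The argument has no serious obstacle: all the analytic content sits in the fixed-harmonic case, and the rest is bookkeeping of the direct sum. The only point that needs explicit verification is the compatibility between the natural domain $\{u \in \mathcal{H} : \tilde{H}_m u \in \mathcal{H}\}$ and its explicit harmonic description, which is immediate from the orthogonality of the decomposition under the action of $\tilde{H}_m$.
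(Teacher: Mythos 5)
Your proof is correct and follows essentially the same route as the paper: reduce to the fixed-harmonic self-adjointness of $\tilde{H}_m^{s,n}$, verify symmetry termwise, and then solve the resolvent equation $(\tilde{H}_m \pm i) u = f$ harmonic-by-harmonic, using uniform norm bounds to check that the solution lies in the direct-sum domain. The only cosmetic difference is that the paper uses the exact identity $\norme{\tilde{H}_m^{s,n} y_{s,n}}^2 + \norme{y_{s,n}}^2 = \norme{(\tilde{H}_m^{s,n} \pm i) y_{s,n}}^2$ where you invoke the two spectral bounds $\norme{(T\pm i)^{-1}} \leq 1$ and $\norme{T(T\pm i)^{-1}} \leq 1$, which amount to the same estimate.
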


\begin{proof}
$\tilde{H}_{m}$ is symmetric. Indeed, let $\varphi,\psi \in D\pare{\tilde{H}_{m}}$. We can decompose $\varphi = \underset{(s,n) \in I}{\sum} \varphi_{s,n}$ and the same for $\psi$. Then:
\begin{equation}
\prods{\tilde{H}_{m} \varphi}{\psi} = \underset{(s,n) \in I}{\sum} \prods{\tilde{H}_{m}^{s,n} \varphi_{s,n}}{\psi_{s,n}} = \underset{(s,n) \in I}{\sum} \prods{\varphi_{s,n}}{\tilde{H}_{m}^{s,n} \psi_{s,n}} 
= \prods{\varphi}{\tilde{H}_{m} \psi}
\end{equation}
since $\tilde{H}_{m}^{s,n}$ is symmetric. We can prove that $\tilde{H}_{m}$ is closed in the same way.
Let $x = \underset{(s,n) \in I}{\sum} x_{s,n} \in \mathcal{H}$. Since $ \tilde{H}_{m}^{s,n}$ is self-adjoint, there exists $y_{s,n} \in D\pare{\tilde{H}_{m}^{s,n}}$ such that $(\tilde{H}_{m} \pm i) y_{s,n} = (\tilde{H}_{m}^{s,n}\pm i)y_{s,n}= x_{s,n}$. Thus $x = \underset{(s,n) \in I}{\sum} (\tilde{H}_{m} \pm i) y_{s,n}=\pare{\tilde{H}_{m} \pm i} y$ where $y=\underset{(s,n) \in I}{\sum} y_{s,n} \in D\pare{\tilde{H}_{m}}$ since:
\begin{equation*}
\underset{(s,n) \in I}{\sum} \norme{\tilde{H}_{m}^{s,n}y_{s,n}}^{2} + \norme{y_{s,n}}^{2} = \underset{(s,n) \in I}{\sum} \norme{(\tilde{H}_{m}^{s,n}\pm i)y_{s,n}}^{2} = \underset{(s,n) \in I}{\sum}\norme{x_{s,n}}^{2} < \infty.
\end{equation*}
Consequently, $(y_{s,n})_{(s,n) \in I}$ is summable and $x \in Im(\tilde{H}_{m} \pm i)$ so $Im(\tilde{H}_{m} \pm i) = \mathcal{H}$ and $\tilde{H}_{m}$ is self-adjoint.

\end{proof}

\subsubsection{The case \texorpdfstring{$2ml<1$}{2ml<1}}

Let us denote $\tilde{H}_{m}^{MIT}$ the operator $\tilde{H}_{m}$ with domain:
\begin{equation*}
D\pare{\tilde{H}_{m}^{MIT}} = \left \{ \underset{(s,n)\in I}{\sum} \begin{pmatrix}
u^{s}_{1,n} T^{s}_{-\frac{1}{2},n} \\
u^{s}_{2,n} T^{s}_{\frac{1}{2},n} \\
u^{s}_{3,n} T^{s}_{-\frac{1}{2},n} \\
u^{s}_{4,n} T^{s}_{\frac{1}{2},n}
\end{pmatrix}; \hspace{2mm} \forall (s,n) \in I, \hspace{2mm} u^{s}_{n} \in L^{2}\pare{\left ]-\infty, 0 \right [_{x},dx}, \right .
\end{equation*} 
\begin{flalign}
\hphantom{A} & \left . \tilde{H}_{m}^{s,n} \begin{pmatrix}
u^{s}_{1,n} T^{s}_{-\frac{1}{2},n} \\
u^{s}_{2,n} T^{s}_{\frac{1}{2},n} \\
u^{s}_{3,n} T^{s}_{-\frac{1}{2},n} \\
u^{s}_{4,n} T^{s}_{\frac{1}{2},n}
\end{pmatrix} \in L^{2},
\hspace{1mm} \underset{(s,n) \in I}{\sum} \norme{(\tilde{H}_{m}^{s,n} \pm i) \begin{pmatrix}
u^{s}_{1,n} T^{s}_{-\frac{1}{2},n} \\
u^{s}_{2,n} T^{s}_{\frac{1}{2},n} \\
u^{s}_{3,n} T^{s}_{-\frac{1}{2},n} \\
u^{s}_{4,n} T^{s}_{\frac{1}{2},n}
\end{pmatrix}}^{2}_{L^{2}} < \infty \right . \notag \\
& \quad \left .  \underset{(s,n) \in I}{\sum} \norme{ \pare{\gamma^{1}_{B} + i} \begin{pmatrix}
u^{s}_{1,n} T^{s}_{-\frac{1}{2},n} \\
u^{s}_{2,n} T^{s}_{\frac{1}{2},n} \\
u^{s}_{3,n} T^{s}_{-\frac{1}{2},n} \\
u^{s}_{4,n} T^{s}_{\frac{1}{2},n}
\end{pmatrix}}^{2}_{L^{2}}= o\pare{\sqrt{-x}}, \hspace{1mm} x \to 0  \right \} 
\end{flalign}

\begin{prop}
Suppose that $2ml < 1$. Then the operator $\tilde{H}_{m}^{MIT}$ is self-adjoint with domain $D\pare{\tilde{H}_{m}^{MIT}}$.
\end{prop}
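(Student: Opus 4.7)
The argument mirrors the scheme of Proposition~\ref{proposition2.6}, now in the MIT setting, reducing self-adjointness to the fixed-harmonic self-adjointness established in the previous subsection together with the orthogonal decomposition $\mathcal{H} = \bigoplus_{(s,n)\in I} \mathcal{H}_{s,n}$.

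\emph{Symmetry.} For $\varphi,\psi \in D(\tilde{H}_m^{MIT})$ with harmonic decompositions $\varphi = \sum \varphi_{s,n}$, $\psi = \sum \psi_{s,n}$, each component $\varphi_{s,n},\psi_{s,n}$ belongs to $D(\tilde{H}_m^{s,n,MIT})$ by construction of the domain. Summing the per-harmonic symmetry relations $\langle \tilde{H}_m^{s,n,MIT}\varphi_{s,n},\psi_{s,n}\rangle = \langle \varphi_{s,n}, \tilde{H}_m^{s,n,MIT}\psi_{s,n}\rangle$ and exploiting orthogonality of the spinoidal spherical harmonics across different $(s,n)$ yields symmetry of $\tilde{H}_m^{MIT}$.

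\emph{Range of $\tilde{H}_m^{MIT} \pm i$.} Given $x = \sum x_{s,n} \in \mathcal{H}$, the self-adjointness of each $\tilde{H}_m^{s,n,MIT}$ furnishes a unique $y_{s,n} \in D(\tilde{H}_m^{s,n,MIT})$ with $(\tilde{H}_m^{s,n,MIT}\pm i) y_{s,n} = x_{s,n}$. The isometric identity $\|y_{s,n}\|^2 + \|\tilde{H}_m^{s,n,MIT}y_{s,n}\|^2 = \|x_{s,n}\|^2$ gives summability of $(y_{s,n})_{(s,n)}$ in the graph norm, so $y = \sum y_{s,n}$ satisfies $y \in \mathcal{H}$, $\tilde{H}_m y \in \mathcal{H}$, and $(\tilde{H}_m \pm i) y = x$.

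\emph{MIT boundary condition.} The remaining step, which I expect to be the main obstacle, is to verify that the candidate pre-image $y$ actually satisfies the MIT condition in the definition of $D(\tilde{H}_m^{MIT})$, namely $\|(\gamma^1_B + i) y(x,\cdot)\|^2_{\mathcal{W}^0} = o(\sqrt{-x})$ as $x \to 0$. For each fixed harmonic, the constraints $\psi^s_{+,n}=\chi^s_{+,n}=0$ combined with the expansion \eqref{ExprPsi2mlPetit} give $y_{s,n}(x,\cdot) = (-x)^{-ml} \Psi^s_{-,n} + \sigma^s_n$ with $\|\sigma^s_n(x,\cdot)\|_{\mathcal{W}^0} = o(\sqrt{-x})$; a short matrix computation using the explicit form of $\gamma^1_B$ in \eqref{MatDirBach} shows that $(\gamma^1_B + i)\Psi^s_{-,n} = 0$, so per-harmonic $(\gamma^1_B + i)y_{s,n} = (\gamma^1_B + i)\sigma^s_n$. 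The difficulty is to promote these per-harmonic $o(\sqrt{-x})$ bounds to a uniform bound on the full sum over $(s,n) \in I$. I would handle this by quantitatively controlling the remainders $\sigma^s_n$ through the converse part of Theorem~\ref{5.1} by an $x$-dependent estimate of the form $\|\sigma^s_n(x,\cdot)\|_{\mathcal{W}^0}^2 \leq \eta(x) \|x_{s,n}\|^2$ with $\eta(x) = o(\sqrt{-x})$, then summing against $\sum \|x_{s,n}\|^2 < \infty$ and interchanging the limit in $x$ with the summation by dominated convergence. Once this uniform bound is obtained, $(\tilde{H}_m^{MIT} \pm i)$ is surjective and the basic criterion yields self-adjointness.
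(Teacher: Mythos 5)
Your overall structure (symmetry $+$ surjectivity of $\tilde{H}_m^{MIT}\pm i$, via the fixed-harmonic self-adjointness and the orthogonal decomposition) is the same scheme the paper follows, and you have correctly isolated the one non-trivial point: checking that the candidate preimage $y=\sum_{(s,n)}y_{s,n}$ satisfies the MIT boundary condition, i.e.\ that the per-harmonic remainder estimates sum to a uniform $o$-bound. The sketch you give for that point, however, has a genuine gap, in two respects.

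First, the tool you invoke is the wrong one. You propose to control the remainders $\sigma_n^s$ ``through the converse part of Theorem~\ref{5.1}.'' The converse part of that theorem constructs an element of the domain from prescribed boundary data; it says nothing about estimating the remainder of a given element. What is actually needed — and what the paper uses — is the \emph{forward} content of Theorem~\ref{5.1} (more precisely, the estimates contained in the proof of Theorem~V.1 in \cite{Bachelot}), applied directly to the full function $y$, which lies in the natural domain $D(\tilde H_m)$ by your graph-norm summability. That theorem is stated for the full operator, not harmonic by harmonic, so the decomposition it produces for $y$ already carries a remainder whose $L^2(S^2)$-norm is $o(\sqrt{-x})$ \emph{as a sum over all $(s,n)$}; no interchange of limit and sum with a postulated per-harmonic constant $\eta(x)$ is required. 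Your proposed pointwise bound $\|\sigma_n^s(x,\cdot)\|^2\leq\eta(x)\|x_{s,n}\|^2$ uniform in $(s,n)$ is not something the cited results supply, and you do not prove it. Second, even accepting such a bound, your $\eta(x)=o(\sqrt{-x})$ has the wrong power: since you are bounding \emph{squared} norms, you need $\eta(x)=o(-x)$ to conclude $\|(\gamma^1_B+i)y(x,\cdot)\|_{L^2(S^2)}=o(\sqrt{-x})$.

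For comparison, the paper's proof also explicitly establishes closedness of $\tilde H_m^{MIT}$ with a Cauchy-sequence argument, and it is precisely in that argument that the uniform remainder estimate (obtained from the proof of Theorem~V.1 in \cite{Bachelot} applied to a function in the natural domain) is carried out; the surjectivity step then simply reuses it. Your route — omitting the separate closedness step and relying on the basic criterion ``symmetric $+$ $\operatorname{Ran}(T\pm i)=\mathcal H$'' — is legitimate and slightly leaner, but it still requires the same quantitative control on the sum of remainders, which your proposal acknowledges as the obstacle without actually closing it.
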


\begin{proof}
Let us remark that, if the boundary condition is fulfilled for $\phi \in D\pare{\tilde{H}_{m}^{MIT}}$, then it is fulfilled for $\phi_{s,n} \in D\pare{\tilde{H}_{m}^{s,n,MIT}}$. We can now prove, as in the proof of proposition \ref{proposition2.6}, that $\tilde{H}_{m}^{MIT}$ is symmetric on its domain. Show that $\tilde{H}_{m}^{MIT}$ is closed will require more effort. Choose a sequence $\pare{\psi_{j}}_{j\in\N}$ of elements of $D\pare{\tilde{H}_{m}^{MIT}}$ such that $\psi_{j} \to \psi$ and $\tilde{H}_{m}^{MIT} \psi_{j} \to \varphi $ where $\psi,\varphi \in \mathcal{H}$ and the convergence is understood in the norm of $\mathcal{H}$. Using distributions, we have $\tilde{H}_{m}^{MIT} \psi = \varphi \in \mathcal{H}$ and we have to show that $\psi$ satisfies the boundary condition. We can write:
\begin{equation}
\psi_{j} = \underset{(s,n)\in I}{\sum} \psi_{j}^{s,n}, \hspace{5mm} \psi =  \underset{(s,n)\in I}{\sum} \psi^{s,n}, \hspace{5mm} \varphi =  \underset{(s,n)\in I}{\sum} \varphi^{s,n},
\end{equation}
and we obtain:
\begin{equation*}
\psi_{j}^{s,n} \to \psi^{s,n}; \hspace{5mm} \tilde{H}_{m}^{s,n,MIT} \psi_{j}^{s,n} \to \varphi^{s,n} 
\end{equation*}
in the norm of $\mathcal{H}_{s,n}$. Thus, $\psi^{s,n} \in D\pare{\tilde{H}_{m}^{s,n,MIT}}$ since $\tilde{H}_{m}^{s,n,MIT}$ is closed and $\psi^{s,n}$ admits a decomposition as in \eqref{Dom2mlPetit} where:
\begin{equation*}
 \underset{(s,n)\in I}{\sum} \pare{\norme{\phi_{1,n}^{s}}^{2}_{\mathcal{W}^{0}} + \norme{\phi_{2,n}^{s}}^{2}_{\mathcal{W}^{0}}+ \norme{\phi_{3,n}^{s}}^{2}_{\mathcal{W}^{0}}+ \norme{\phi_{4,n}^{s}}^{2}_{\mathcal{W}^{0}}} = o\pare{-x}
\end{equation*}
when $x$ goes to $0$, using the proof of theorem $V.1$ in \cite{Bachelot} and the fact that $\varphi$ is in the natural domain of $H_{m}$. Since $\gamma^{1}_{B}+i$ eliminates the terms containing $\pare{-x}^{-ml}$, we have:
\begin{equation}
\norme{\pare{\gamma^{1}_{B}+i} \varphi\pare{x,.}}^{2}_{L^{2}\pare{S^{2}}} \leq C \underset{(s,n)\in I}{\sum} \pare{\norme{\phi_{1,n}^{s}}^{2}_{\mathcal{W}^{0}} + \norme{\phi_{2,n}^{s}}^{2}_{\mathcal{W}^{0}}+ \norme{\phi_{3,n}^{s}}^{2}_{\mathcal{W}^{0}}+ \norme{\phi_{4,n}^{s}}^{2}_{\mathcal{W}^{0}} }
\end{equation}
where the last term is $o\pare{-x}$. This proves that the boundary condition is fulfilled and that the operator $\tilde{H}_{m}^{MIT}$ is closed. To prove the self-adjointness of $\tilde{H}_{m}^{MIT}$, we follow the same argument as in proposition \ref{proposition2.6} where we have to prove that $y=\underset{(s,n) \in I}{\sum} y_{s,n} \in D\pare{\tilde{H}_{m}^{MIT}}$. The only difference is that the boundary condition has to be fulfilled. Since $y_{s,n} \in D\pare{\tilde{H}_{m}^{s,n,MIT}}$, we can decompose $y_{s,n}$ as for $\varphi^{s,n}$ just above. A similar argument shows that $y$ satisfies the boundary condition. Thus $\tilde{H}_{m}^{MIT}$ is self-adjoint on $D\pare{\tilde{H}_{m}^{MIT}}$.

\end{proof}

\subsubsection{Self-adjointness of \texorpdfstring{$H_{m}$}{Hm}}

Recall that the domain of $H_{m}$ is:
\begin{enumerate}
\item[-] If $ 2ml \geq 1$:
\begin{equation*}
D(H_{m}) = \left \{ \phi \in \mathcal{H} ; \hspace{2mm} H_{m}\phi \in \mathcal{H} \right \}.
\end{equation*}
\item[-] If $0<m < \frac{1}{2l}$, we will denote by $H_{m}^{MIT}$ the operator $H_{m}$ with domain:
\begin{equation*}
D(H_{m}^{MIT}) = \left \{ \phi \in \mathcal{H} ; \hspace{2mm} H_{m}\phi \in \mathcal{H}, \hspace{1mm} \norme{\pare{\gamma^{1} + i} \phi\pare{x,.}}_{L^{2}\pare{\mathbb{S}^{2}}} = o\pare{\sqrt{- x}}, \hspace{1mm} x \to 0 \right \}.
\end{equation*}
\end{enumerate}
We obtain the following theorem:
\begin{theo}
\begin{enumerate} 
\item[-] For all $m \geq \frac{1}{2l}$, the operator $H_{m}$ with domain $D\pare{H_{m}}$ is self-adjoint.
\item[-] For all $m < \frac{1}{2l}$, the operator $H_{m}^{MIT}$ with domain $D(H_{m}^{MIT})$ is self-adjoint.
\end{enumerate}
\end{theo}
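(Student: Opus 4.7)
The plan is to transfer the self-adjointness results already obtained for $\tilde{H}_{m}$ and $\tilde{H}_{m}^{MIT}$ to $H_{m}$ and $H_{m}^{MIT}$ via the unitary transform \eqref{UnitTransH}. Setting $U := P\gamma^{5}_{B}$, the matrices $P$ and $\gamma^{5}_{B}$ being unitary, $U$ defines a unitary operator on $\mathcal{H}$ acting fiber-wise on the spinor components. Equation \eqref{UnitTransH} then rewrites as $H_{m} = -U\tilde{H}_{m}U^{-1}$. Since self-adjointness is preserved under unitary conjugation and under multiplication by $-1$, the only task is to check that $D(H_{m}) = U\,D(\tilde{H}_{m})$, respectively $D(H_{m}^{MIT}) = U\,D(\tilde{H}_{m}^{MIT})$.

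In the case $2ml \geq 1$, the domain $D(H_{m})$ is defined only by the natural condition $H_{m}\phi \in \mathcal{H}$. Writing $\phi = U\psi$ and using $H_{m} = -U\tilde{H}_{m}U^{-1}$, we have $\phi \in \mathcal{H}$ and $H_{m}\phi \in \mathcal{H}$ if and only if $\psi \in \mathcal{H}$ and $\tilde{H}_{m}\psi \in \mathcal{H}$, i.e. $\psi \in D(\tilde{H}_{m})$. Thus $D(H_{m}) = U\,D(\tilde{H}_{m})$, and the self-adjointness established in Proposition \ref{proposition2.6} transfers directly to $H_{m}$.

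In the case $2ml < 1$, the natural part of the domain transforms as above, and it remains to verify that conjugation by $U$ intertwines the two versions of the MIT boundary condition. Using the identity $\gamma^{1} = -P\gamma^{1}_{B}P^{-1}$ from \eqref{relP} together with the anticommutation $\gamma^{1}_{B}\gamma^{5}_{B} = -\gamma^{5}_{B}\gamma^{1}_{B}$ (which follows from \eqref{relGamma5} applied to $\gamma^{5}_{B}$), I compute
\begin{equation*}
(\gamma^{1} + i)U = (\gamma^{1} + i)P\gamma^{5}_{B} = P(-\gamma^{1}_{B} + i)\gamma^{5}_{B} = P\gamma^{5}_{B}(\gamma^{1}_{B} + i) = U(\gamma^{1}_{B} + i).
\end{equation*}
Since $U$ acts as a pointwise unitary on the spinor fiber, this gives $\|(\gamma^{1} + i)\phi(x,\cdot)\|_{L^{2}(S^{2})} = \|(\gamma^{1}_{B} + i)\psi(x,\cdot)\|_{L^{2}(S^{2})}$, so the MIT condition on $\phi$ is equivalent to the corresponding condition on $\psi = U^{-1}\phi$. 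Hence $D(H_{m}^{MIT}) = U\,D(\tilde{H}_{m}^{MIT})$, and self-adjointness transfers as well.

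The only delicate step is this bookkeeping for the MIT condition: one must verify that the matrix $U = P\gamma^{5}_{B}$, which mixes the four spinor components nontrivially, nevertheless intertwines the projector $\gamma^{1} + i$ with its Bachelot-representation counterpart $\gamma^{1}_{B} + i$. This hinges on the fact that $\gamma^{5}_{B}$ anticommutes with $\gamma^{1}_{B}$, producing the sign needed to compensate for $\gamma^{1} = -P\gamma^{1}_{B}P^{-1}$. Once this algebraic identity is in place, the theorem follows immediately from the preceding propositions.
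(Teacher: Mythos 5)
Your proposal is correct and takes essentially the same approach as the paper: conjugation by the unitary $U = P\gamma^{5}_{B}$ linking $H_{m}$ to $-\tilde{H}_{m}$, and the intertwining identity $(\gamma^{1}+i)U = U(\gamma^{1}_{B}+i)$ (which the paper states as the norm identity $\norme{(\gamma^{1}+i)\psi} = \norme{(\gamma^{1}_{B}+i)\gamma^{5}_{B}P^{-1}\psi}$) to show that the MIT boundary conditions correspond under $U$.
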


\begin{proof}
Recall that $\tilde{H}_{m} = \gamma^{5}_{B} P^{-1} \pare{-H_{m}} P \gamma^{5}_{B}$ where $\gamma^{5}_{B}$ and $P$ are unitary matrices. Thus $H_{m} = P \gamma^{5}_{B} \pare{-\tilde{H}_{m}} \gamma^{5}_{B} P^{-1} $. This is clear that $\psi \in D\pare{H_{m}}$ if and only if $ \gamma^{5}_{B} P^{-1}\psi \in D\pare{\tilde{H}_{m}}$ for $m \geq \frac{1}{2l}$. Moreover, recall that $\gamma^{1} = - P \gamma^{1}_{B} P^{-1}$ and $\gamma^{1}_{B} \gamma^{5}_{B} = - \gamma^{5}_{B}\gamma^{1}_{B}$ using \eqref{relP} and \eqref{relGamma5}. We then obtain:
\begin{equation*}
\norme{\pare{\gamma^{1} + i} \psi} = \norme{\pare{\gamma^{1}_{B} + i} \gamma^{5}_{B} P^{-1} \psi}.
\end{equation*}
Thus $\psi \in D\pare{H_{m}}$ if and only if $ \gamma^{5}_{B} P^{-1}\psi \in D\pare{\tilde{H}_{m}}$ for all $m>0$. This shows that $H_{m}$ is self-adjoint equipped with the convenient domain.
\end{proof}

\subsection{The Cauchy problem}

Using Stone theorem, we obtain:

\begin{theo}
Let $\psi_{0} \in \mathcal{H}$, there exists a unique solution $\psi$ to the equation:
\begin{equation}
\partial_{t} \psi = i H_{m} \psi
\end{equation}
such that
\begin{equation}
\psi \in C^{0}\pare{\R_{t};\mathcal{H}}
\end{equation}
and satisfying:
\begin{flalign}
\hphantom{A} & \psi\pare{t=0,.} = \psi_{0} \pare{.} \\
& \forall t\in \R, \hspace{1mm} \norme{\psi\pare{t,.}}_{\mathcal{H}} = \norme{\psi_{0}(.)}_{\mathcal{H}}.
\end{flalign}
\end{theo}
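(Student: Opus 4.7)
The plan is to invoke Stone's theorem directly, since all the hard analytic work has already been done in establishing the self-adjointness of $H_m$ (for $2ml \geq 1$) and $H_m^{MIT}$ (for $2ml < 1$) in the preceding subsection. Let $\mathcal{A}$ denote whichever of these two self-adjoint operators is appropriate for the given mass. Stone's theorem then produces a strongly continuous one-parameter unitary group $\pare{U(t)}_{t \in \R}$ on $\mathcal{H}$, written formally as $U(t) = e^{it\mathcal{A}}$, whose infinitesimal generator is $i\mathcal{A}$ with domain $D(\mathcal{A})$.

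Define $\psi(t,\cdot) := U(t)\psi_{0}$. I would then verify the four required properties in turn. First, strong continuity of $U(\cdot)$ immediately gives $\psi \in C^{0}\pare{\R_{t};\mathcal{H}}$. Second, $U(0) = \mathrm{Id}$ gives $\psi(0,\cdot) = \psi_{0}$. Third, each $U(t)$ being unitary gives the norm conservation $\norme{\psi(t,\cdot)}_{\mathcal{H}} = \norme{\psi_{0}}_{\mathcal{H}}$. Fourth, if $\psi_{0} \in D(\mathcal{A})$, then $U(t)\psi_{0} \in D(\mathcal{A})$ for every $t$ and $\frac{d}{dt}U(t)\psi_{0} = i\mathcal{A}U(t)\psi_{0}$ in the strong sense, which is exactly the PDE. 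For general $\psi_{0}\in \mathcal{H}$, the equation is understood in the sense of distributions (or equivalently, as the limit of classical solutions obtained from an approximating sequence $\pare{\psi_{0,n}} \subset D(\mathcal{A})$ with $\psi_{0,n}\to \psi_{0}$, using the uniform boundedness of $U(t)$).

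For uniqueness, suppose $\psi_{1},\psi_{2}$ are two solutions with the same initial data; then $w = \psi_{1}-\psi_{2}$ satisfies $\partial_{t}w = i\mathcal{A}w$ with $w(0,\cdot)=0$. Pairing against a test function in $D(\mathcal{A})$ and using the self-adjointness of $\mathcal{A}$ yields $\frac{d}{dt}\prods{w(t)}{\varphi} = \prods{w(t)}{i\mathcal{A}\varphi}$ in the distributional sense, from which one concludes $w \equiv 0$ by a standard semigroup uniqueness argument (or by applying $U(-t)$ and checking that $U(-t)w(t)$ is constant in $t$).

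There is no real obstacle here: the theorem is a formal consequence of self-adjointness. The only subtlety worth mentioning is that in the case $2ml<1$ the MIT boundary condition is preserved by the flow precisely because $U(t)$ leaves $D(H_m^{MIT})$ invariant, which is automatic from the fact that the boundary condition is built into the domain of the self-adjoint generator; thus solutions with MIT-type initial data remain of MIT type for all $t$, reflecting the physical interpretation of the boundary condition as a non-dissipative reflection at AdS infinity.
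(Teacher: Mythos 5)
Your proposal is correct and takes essentially the same approach as the paper: the paper's proof consists of the single line ``Using Stone theorem, we obtain:'' and the theorem statement, relying exactly as you do on the self-adjointness of $H_m$ (resp.\ $H_m^{MIT}$) established just before. You have merely spelled out the standard details of Stone's theorem (strong continuity, unitarity, invariance of the domain, uniqueness) that the paper leaves implicit.
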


\subsection{Absence of eigenvalues}

\begin{prop}\label{27}
For all $m>0$ , the Dirac operator $H_{m}$, defined in \eqref{ExprHmDir}, does not admit any eigenvalues.
\end{prop}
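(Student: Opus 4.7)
My approach is to reduce to a fixed harmonic $(s,n)\in I$ and then analyze the resulting first-order ODE at the horizon. Since the decomposition $\mathcal{H} = \bigoplus_{(s,n)\in I}\mathcal{H}_{s,n}$ reduces $H_{m}$, any eigenfunction of $H_{m}$ projects onto an eigenfunction of each $H_{m}^{s,n}$ for the same eigenvalue, so it suffices to show that no $H_{m}^{s,n}$ admits an eigenvalue. Fix $(s,n)\in I$ and $\lambda\in\R$ (real by self-adjointness) and assume $\psi \in D(H_{m}^{s,n})$ with $H_{m}^{s,n}\psi = \lambda \psi$. Using $H_{m}^{s,n} = \Gamma^{1} D_{x} + V(x)$ with $V(x) = \pare{s+\tfrac{1}{2}}A(x)\gamma^{0}\gamma^{2} - m B(x)\gamma^{0}$, the eigenvalue equation rewrites as the ODE
\begin{equation*}
\partial_{x}\psi(x) = i\Gamma^{1}\pare{\lambda - V(x)}\psi(x), \qquad x\in\left]-\infty,0\right[,
\end{equation*}
whose potential $V$ decays exponentially as $x\to -\infty$ by \eqref{CompAsymptA}--\eqref{CompAsymptB}.

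I would next peel off the free oscillation by introducing $\phi(x) := e^{-i\lambda\Gamma^{1}x}\psi(x)$. Since $\Gamma^{1}$ is Hermitian and $\lambda\in\R$, the prefactor is unitary, so $|\phi(x)| = |\psi(x)|$ pointwise, and a direct computation gives
\begin{equation*}
\partial_{x}\phi(x) = W(x)\phi(x), \qquad W(x) := -i\, e^{-i\lambda\Gamma^{1}x}\Gamma^{1}V(x)e^{i\lambda\Gamma^{1}x},
\end{equation*}
where $\norme{W(x)} = \norme{\Gamma^{1}V(x)}$ is exponentially decaying, hence $W\in L^{1}\pare{\left]-\infty,-1\right[}$. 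I would then invoke the remark following Proposition \ref{prop3.3}: for $\psi \in D(H_{m}^{s,n})$, $\norme{\psi(x,\cdot)}_{\mathcal{W}^{0}}\to 0$ as $x\to -\infty$, and at fixed harmonic this norm coincides with the Euclidean norm $|\psi(x)|$ of the $\C^{4}$-valued coefficient function. Consequently $|\phi(x)|\to 0$ at $-\infty$, so integrating the ODE for $\phi$ from $-\infty$ yields the Volterra estimate
\begin{equation*}
|\phi(x)| \leq \int_{-\infty}^{x}\norme{W(y)}\,|\phi(y)|\,dy.
\end{equation*}
A standard Gronwall argument (using $W\in L^{1}$ near $-\infty$ to make the integral kernel subunital on a far left half-line) then forces $\phi\equiv 0$ on some $\left]-\infty,-R\right[$, and Cauchy--Lipschitz uniqueness for the ODE, whose coefficients are locally bounded on $\left]-\infty,0\right[$, propagates this to $\psi\equiv 0$ throughout $\left]-\infty,0\right[$.

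The main obstacle is not the ODE analysis at the horizon, which is a routine perturbation of the free Dirac equation, but rather upgrading the $L^{2}$ control of $\psi$ to the pointwise decay $\psi(x)\to 0$ needed to launch the Volterra/Gronwall scheme. This is exactly the content of the remark after Proposition \ref{prop3.3}, which relies on the $H^{1}$-regularity of elements of $D(H_{m}^{s,n})$ away from $x=0$. Once that pointwise decay is available, the singular behavior of the potential at the AdS boundary $x=0$ plays no role in the argument, since both the perturbative ODE step and the Gronwall step are carried out in a neighbourhood of the horizon; ODE uniqueness then closes the proof.
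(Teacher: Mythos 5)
Your proof is correct and follows essentially the same route as the paper: reduction to a fixed harmonic, the unitary conjugation by $e^{\pm i\lambda\Gamma^{1}x}$, pointwise decay of $\psi$ at $-\infty$ coming from the remark after Proposition \ref{prop3.3}, and $L^{1}$ integrability of the transformed potential near the horizon to force vanishing there, followed by ODE uniqueness. The only technical difference is the final step, where the paper writes the solution as $w(x)=e^{\int_{T}^{x}W(t)\,dt}w(T)$ (which, taken literally, presumes the matrices $W(t)$ commute at different times), whereas your Volterra/Gronwall argument cleanly sidesteps that point and is a slightly more careful implementation of the same idea.
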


\begin{proof}
Let us first show the absence of eigenvalues for $H_{m}^{s,n}$ for all $m>0$ and all $\pare{s,n} \in I$.
Since $H_{m}^{s,n}$ is self-adjoint on its domain, the eigenvalues (if they exist) are all real. So, suppose that there exists $\lambda \in \R$ and $\varphi \in D\pare{H_{m}^{s,n}}$ such that $H_{m}^{s,n} \varphi = \lambda \varphi$.

We define:
\begin{equation*}
w(x) = e^{i \lambda \gamma^{0}\gamma^{1} x} \varphi(x)
\end{equation*}
such that
\begin{equation*}
w'(x) = i\lambda \gamma^{0} \gamma^{1} w(x) + e^{i \lambda \gamma^{0} \gamma^{1} x} \varphi'(x).
\end{equation*}
But, with $ V(x) = \gamma^{0} \gamma^{2} A(x) \pare{s + \frac{1}{2}} - m \gamma^{0} B(x)$, we have:
\begin{equation*}
H_{m}^{s,n} \varphi - \lambda \varphi = 0  \Leftrightarrow i\gamma^{0}\gamma^{1} \varphi'(x)= \pare{\lambda - V(x)} \varphi(x)  \Leftrightarrow \varphi'(x) = i \gamma^{0} \gamma^{1} \pare{V(x) - \lambda} \varphi(x)
\end{equation*}
So, we obtain:
\begin{equation}
w'(x) = i\gamma^{0} \gamma^{1} e^{i \lambda \gamma^{0} \gamma^{1} x} V(x) e^{-i\lambda \gamma^{0} \gamma^{1} x} w(x).
\end{equation}
Write: $W(x) = i\gamma^{0} \gamma^{1} e^{i \lambda \gamma^{0} \gamma^{1} x} V(x) e^{-i\lambda \gamma^{0} \gamma^{1} x}$.
Let $T \in ]-\infty,0[$, we can then solve the preceding equation by:
\begin{equation*}
w(x) = e^{\int_{T}^{x} W(t) dt} w(T).
\end{equation*}
As in the remark after proposition \ref{prop3.3}, each component of $\varphi$ goes to $0$ at $-\infty$. Consequently, $w(x) \underset{x \to -\infty}{\to} 0$.\\
On the other hand, for all $x<0$, $\ds \int_{-\infty}^{x} \abs{W(t)} dt < \infty$ so:
\begin{equation*}
\underset{T \to -\infty}{\lim} e^{\int_{T}^{x} W(t) dt}= e^{\int_{-\infty}^{x} W(t) dt}
\end{equation*}
exists and is finished. As a consequence, we have:
\begin{equation*}
\underset{T \to -\infty}{\lim} e^{\int_{T}^{x} W(t) dt} w(T) = 0.
\end{equation*}
We then deduce that $w(x) = 0$ for all $x<0$ so it is the same for $\varphi$. Consequently, $H_{m}^{s,n}$ admits no eigenvalues.\\
We can now consider $H_{m}$. If $\lambda \in \R$ is an eigenvalue of $H_{m}$ then there exists $\varphi \in D\pare{H_{m}}$ such that $\pare{H_{m}-\lambda} \varphi = 0$. Using the decomposition of $\varphi$ in spherical harmonics, if $\varphi$ is non zero, there exists $\pare{s,n} \in I$ such that $\varphi_{s,n} \neq 0$ and $\varphi_{s,n}$ satisfies $\pare{H_{m}^{s,n}-\lambda} \varphi_{s,n} =0$. This is impossible since $H_{m}^{s,n}$ does not admit eigenvalues. Thus $\varphi$ is identically $0$. We deduce that $H_{m}$ does not admit any eigenvalue for all $m>0$.

\end{proof}

\section{Compactness results}
The purpose of this section is to prove that, for a well chosen function $f$, the operator $f\pare{x}\pare{H_{m}^{s,n}-z}^{-1}$ is compact for all $z \in \rho \pare{H_{m}^{s,n}}$. We will make use of this result for proving Mourre estimates in the following section. The key point here for the Mourre estimate is that $f$ only admits a finite limit at $0$.\\
This result is proved by separating our operator in two operators denoted $H_{+}$ and $H_{-}$. The operator $H_{+}$ has a potential which behaves like the one in $H_{m}^{s,n}$ at $0$ and is extended so that the potential becomes confining. Hence the resolvent of this operator is itself compact. For $H_{-}$, we preserve the behavior near the horizon of the black hole and extend it so that it decreases to $0$ at $0$. By extending the states and the potential, we are thus able to view the resolvent as the restriction of a resolvent on the entire line. For this last resolvent, we are able to use standard results about Hilbert-Schmidt operators.

~\\
We now enter into the details. We have:
\begin{equation}
H_{m}^{s,n}= \Gamma^{1} D_{x} + (s+\frac{1}{2})A(x)\gamma^{0} \gamma^{2} - m \gamma^{0} B(x).
\end{equation}
where $A$ and $B$ behave like:
\begin{equation*}
A-A_{0} \in T^{\kappa,2} ; \hspace{3mm} B- B_{0} \in T^{\tilde{\kappa},1}
\end{equation*}
with $\kappa,\tilde{\kappa} >0$. Moreover, $\Gamma^{1} = - \gamma^{0} \gamma^{1}$ where $\gamma^{0} \gamma^{1}$ is given in \eqref{refGamma0,1,2,3}.
The main result of this section is:

\begin{prop}\label{3.1}
Let $f\in C\pare{]-\infty,0]}$ such that $f$ goes to $0$ at $-\infty$. Let $z \in \rho(H_{m}^{s,n})$ where
$\rho(H_{m}^{s,n})$ is the resolvent set of $H_{m}^{s,n}$. Then the operator $f(x)\pare{H_{m}^{s,n}-z}^{-1}$ is compact on $\mathcal{H}$ for all $m>0$.
\end{prop}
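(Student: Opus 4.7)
The plan is to implement the two-auxiliary-operator decomposition suggested at the beginning of this section. First, fix a smooth partition of unity $\chi_- + \chi_+ = \one$ on $]-\infty,0[$ with $\supp \chi_+ \subset [-\epsilon,0[$ and $\supp \chi_- \subset \left]-\infty,-\epsilon/2\right]$, so that
\begin{equation*}
f(x)\pare{H_m^{s,n} - z}^{-1} = f\chi_+(x)\pare{H_m^{s,n} - z}^{-1} + f\chi_-(x)\pare{H_m^{s,n} - z}^{-1},
\end{equation*}
and it will suffice to show that each summand is compact by comparing $H_m^{s,n}$ with an auxiliary operator on each side.

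For the first summand, I would introduce $H_+$ on $\mathcal{H}_{s,n}$ whose coefficients agree with $A, B$ on $]-\epsilon, 0[$ but are modified for $x \leq -\epsilon$ so that the mass term $B$ grows to $+\infty$ as $x \to -\infty$. Since $B$ already blows up like $\frac{l}{-x}$ near $0$, the resulting potential is confining at both ends. A Hardy-type inequality of the form $\norme{Bu} \leq C\pare{\norme{H_+u}+\norme{u}}$, obtained exactly as in the self-adjointness proof for $\tilde{H}_m^{s,n}$, together with the elliptic estimate $\norme{\partial_x u} \leq C\pare{\norme{H_+u}+\norme{u}}$, shows by a standard Rellich argument that $D(H_+) \hookrightarrow \mathcal{H}_{s,n}$ compactly. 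Hence $(H_+-z)^{-1}$ is compact, and so is $\chi_+(H_+-z)^{-1}$. The resolvent identity
\begin{equation*}
\chi_+\pare{H_m^{s,n}-z}^{-1} - \chi_+\pare{H_+-z}^{-1} = \chi_+\pare{H_+-z}^{-1}\pare{H_+-H_m^{s,n}}\pare{H_m^{s,n}-z}^{-1}
\end{equation*}
then transfers compactness to $\chi_+(H_m^{s,n}-z)^{-1}$, provided one verifies that $\pare{H_+-H_m^{s,n}}\pare{H_m^{s,n}-z}^{-1}$ is bounded, which is again a consequence of the Hardy control.

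For the second summand, I would introduce $H_-$ on $L^2\pare{\R;\C^4}$ obtained from $H_m^{s,n}$ by smoothly switching off $A, B$ for $x \geq -\epsilon/2$ and extending the coefficients by $0$ to $[0,+\infty[$. The resulting operator is a bounded perturbation of the free one-dimensional Dirac operator $\Gamma^1 D_x$ on the full line, whose resolvent has an explicit integral kernel that is uniformly bounded and exponentially decaying in $|x-y|$. Since the zero-extension of $f\chi_-$ belongs to $C_0(\R)$, approximating it in sup-norm by compactly supported functions and using that $(H_-- z)^{-1}$ maps $L^2(\R)$ into $H^1_{\mathrm{loc}}(\R)$, a combination of Rellich compactness and Hilbert--Schmidt arguments shows that $f\chi_-(H_- - z)^{-1}$ is compact on $L^2(\R;\C^4)$. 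Since $H_-$ and $H_m^{s,n}$ coincide on $\supp \chi_-$, a resolvent identity analogous to the one above transfers compactness back to $f\chi_-(H_m^{s,n}-z)^{-1}$. The principal technical difficulty I anticipate lies in these two transfer steps: the multiplication operator $H_\pm - H_m^{s,n}$ is unbounded at one end, and one must check that it is effectively bounded once composed with the compact factor, which is where the Hardy inequality and the decay of the free Dirac resolvent come in, respectively.
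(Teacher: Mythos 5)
Your overall architecture --- introduce a confining auxiliary operator $H_+$ capturing the blow-up near $0$ and a whole-line-extendable auxiliary operator $H_-$ capturing the horizon end, then glue --- is exactly the paper's. But the gluing step is where you diverge, and as written it does not close.

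The transfer identity you propose for the $\chi_+$ summand,
\begin{equation*}
\chi_+\pare{H_m^{s,n}-z}^{-1} - \chi_+\pare{H_+-z}^{-1} = \chi_+\pare{H_+-z}^{-1}\pare{H_+-H_m^{s,n}}\pare{H_m^{s,n}-z}^{-1},
\end{equation*}
requires the right-hand factor $\pare{H_+-H_m^{s,n}}\pare{H_m^{s,n}-z}^{-1}$ to make sense as a bounded operator. By construction the potential of $H_+$ grows without bound as $x\to-\infty$ (it is deliberately confining), whereas the potential $V$ of $H_m^{s,n}$ decays exponentially there; thus $H_+-H_m^{s,n}$ is a multiplication operator that is unbounded at $-\infty$, not near $0$. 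The Hardy control you invoke, $\norme{Bu}\leq C\pare{\norme{H_m^{s,n}u}+\norme{u}}$, controls only the singularity of $V$ near $0$ (i.e.\ the $1/(-x)$ blow-up of $B$); it gives no information whatsoever about the growth you have \emph{added} to $H_+$ at $-\infty$. In fact $D(H_m^{s,n})\subset H^1_0$ contains functions with no extra polynomial decay at $-\infty$, so $\pare{H_+-H_m^{s,n}}\pare{H_m^{s,n}-z}^{-1}$ is genuinely unbounded (and the composition is not even well-defined on all of $\mathcal{H}_{s,n}$). Putting $\chi_+$ on the far left of the formula does not save you, since $\pare{H_+-z}^{-1}$ is nonlocal. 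The $\chi_-$ transfer has the opposite structure (the difference $H_--H_m^{s,n}$ blows up near $0$, which Hardy does control), so that half is morally fine; but the $\chi_+$ half has a real gap.

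The paper sidesteps this entirely by building a two-sided parametrix, $Q(z) = j_-(x)\pare{H_--z}^{-1}j_-(x) + j_+(x)\pare{H_+-z}^{-1}j_+(x)$ with $j_-^2+j_+^2=1$, and computing $\pare{H_m^{s,n}-z}Q(z)=1-w(z)$. Because $j_\pm$ appears on \emph{both} sides of the auxiliary resolvents and $H_m^{s,n}=H_\pm$ on $\supp j_\pm$, the error $w(z)$ involves only the first-order commutators $\left[H_m^{s,n},j_\pm\right]=i\gamma^0\gamma^1 j_\pm'$, which are bounded, compactly supported multiplication operators; the unbounded potential mismatch never appears. Compactness of $w(z)$ then follows from Lemmas \ref{H-Comp} and \ref{H+Comp}, analytic Fredholm theory inverts $1-w(z)$ off a discrete set, and analyticity in $z$ extends the conclusion to all of $\rho\pare{H_m^{s,n}}$. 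If you want to keep the flavor of your write-up, replace the one-sided resolvent identity with this sandwiched parametrix; the rest of your analysis of $\pare{H_+-z}^{-1}$ and $\pare{H_--z}^{-1}$ is compatible with that scheme.
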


\subsection{Asymptotic operators}

\subsubsection{Operator \texorpdfstring{$H_{-}$}{H-}}

Let us first introduce the operator $H_{c} = i \gamma^{0} \gamma^{1} \partial_{x}$ where $\gamma^{0} \gamma^{1}= \text{diag}\pare{-1,1,1,-1}$. We can thus prove the:
\begin{prop} \label{propH0aa}
The operator $H_{c} = i \gamma^{0} \gamma^{1} \partial_{x}$ is self-adjoint on the domain defined by:
\begin{equation*}
D\pare{H_{c}}= \left \{ \varphi \in \mathcal{H}_{s,n}; H_{c} \varphi \in \mathcal{H}_{s,n}, \hspace{1mm} \varphi_{1} \pare{0} = - \varphi_{3} \pare{0}, \hspace{1mm} \varphi_{2} \pare{0} = \varphi_{4} \pare{0} \right \}
\end{equation*}
\end{prop}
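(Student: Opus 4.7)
The plan is to show directly that $H_c$ is symmetric on $D(H_c)$ and then that $D(H_c^{*}) \subseteq D(H_c)$, which together give $H_c = H_c^{*}$. Since $\gamma^{0}\gamma^{1} = \mathrm{diag}(-1,1,1,-1)$, the operator decouples into four scalar first-order operators $\pm i \partial_x$ acting on each component, and all the analysis can be done componentwise.

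First I would verify that any $\varphi \in D(H_c)$ has components in $H^{1}(]-\infty,0[)$. This is immediate: $\varphi \in \mathcal{H}_{s,n}$ means each component is in $L^{2}$, and $H_c\varphi \in \mathcal{H}_{s,n}$ means each $\partial_x \varphi_j$ is in $L^{2}$. Hence each $\varphi_j$ is continuous, has a well-defined trace at $0$, and satisfies $\varphi_j(x) \to 0$ as $x \to -\infty$ (this last point being the standard $H^{1}$-embedding on a half-line combined with $L^{2}$-integrability). The boundary conditions $\varphi_1(0) = -\varphi_3(0)$, $\varphi_2(0) = \varphi_4(0)$ therefore make pointwise sense.

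Next I would compute, for $\varphi,\psi \in D(H_c)$, the quantity $\langle H_c\varphi,\psi\rangle - \langle\varphi,H_c\psi\rangle$ by integration by parts componentwise. The only contribution is the boundary term
\begin{equation*}
-i\bigl[-\overline{\varphi_1(0)}\psi_1(0) + \overline{\varphi_2(0)}\psi_2(0) + \overline{\varphi_3(0)}\psi_3(0) - \overline{\varphi_4(0)}\psi_4(0)\bigr],
\end{equation*}
the term at $-\infty$ vanishing by the previous paragraph. Substituting the MIT-type relations $\varphi_3(0) = -\varphi_1(0)$, $\varphi_4(0) = \varphi_2(0)$ (and the same for $\psi$) one checks this expression is identically zero. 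Hence $H_c$ is symmetric, which also shows $H_c$ is closed since $D(H_c)$ is defined by closed conditions (the natural graph norm together with pointwise identities at $0$, which pass to the limit by continuity of the trace on $H^{1}$).

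For the reverse inclusion, let $\psi \in D(H_c^{*})$. Testing against $C_{0}^{\infty}(]-\infty,0[)^{4}$ gives $H_c\psi \in \mathcal{H}_{s,n}$ in the distributional sense, so again each component is in $H^{1}$ and admits a trace at $0$. Repeating the integration by parts for arbitrary $\varphi \in D(H_c)$, and using only the boundary relations on $\varphi$, the identity $\langle H_c\varphi,\psi\rangle = \langle\varphi,H_c\psi\rangle$ reduces to
\begin{equation*}
\overline{\varphi_1(0)}\bigl[\psi_1(0)+\psi_3(0)\bigr] - \overline{\varphi_2(0)}\bigl[\psi_2(0)-\psi_4(0)\bigr] = 0.
\end{equation*}
The main (and only) subtlety is to exhibit enough test elements in $D(H_c)$: given any $a,b \in \C$ one constructs a smooth compactly supported $\varphi$ with $\varphi_1(0)=a$, $\varphi_3(0)=-a$, $\varphi_2(0)=b$, $\varphi_4(0)=b$, which clearly lies in $D(H_c)$. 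Varying $a$ and $b$ independently forces $\psi_1(0)+\psi_3(0) = 0$ and $\psi_2(0)-\psi_4(0) = 0$, so $\psi \in D(H_c)$. This proves $H_c^{*} \subseteq H_c$ and thus self-adjointness. The whole argument is elementary; the only step that requires a little care is the construction of the separating test functions, but this is routine given the decoupled nature of $\gamma^{0}\gamma^{1}$.
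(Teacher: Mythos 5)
Your proof is correct. It differs from the paper's in one organizational respect: the paper verifies the standard criterion symmetric $+$ closed $+$ trivial deficiency indices, explicitly solving the ODE $H_c^*\psi=\mp i\psi$ componentwise, discarding the non-$L^2$ solutions $e^{-x}$, and then using the integration-by-parts boundary term to kill the remaining candidates $(e^x,0,0,0)$ and $(0,0,0,e^x)$; you instead prove $D(H_c^*)\subseteq D(H_c)$ directly, showing by the same boundary-term computation that any $\psi \in D(H_c^*)$ must itself satisfy $\psi_1(0)+\psi_3(0)=0$ and $\psi_2(0)-\psi_4(0)=0$. The two routes share the same essential ingredients — the $H^1$ embedding and decay at $-\infty$, the vanishing boundary term, and the construction of $\varphi \in D(H_c)$ with independently prescribed $\varphi_1(0),\varphi_2(0)$ — so the difference is purely one of packaging. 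Your version skips the explicit kernel computation and also renders the closedness remark superfluous, since once $H_c \subseteq H_c^*$ and $D(H_c^*)\subseteq D(H_c)$ are established, $H_c = H_c^*$ and closedness follows for free. One cosmetic note: your boundary-term formula puts the complex conjugate on $\varphi$ rather than $\psi$, which is a different convention from the paper's; it does not affect the argument.
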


\begin{proof}
Since $D\pare{H_{c}} \subset H^{1}\pare{]-\infty,0[} \subset C^{0}\pare{]-\infty;0[}$, we can deduce that the elements of $D\pare{H_{c}}$ go to $0$ at $-\infty$ and from the boundary condition, we deduce the symmetry of $H_{c}$ on $D\pare{H_{c}}$. The closedness is also proven using the fact that $D\pare{H_{c}} \subset C^{0}\pare{]-\infty;0[}$.  \\
\indent On the other hand, since $C^{\infty}_{0}\pare{]-\infty,0[} \subset D\pare{H_{c}}$, we can prove (using distribution) that $H_{c}^{*} = H_{c}$ on $D\pare{H_{c}^{*}}$. We then study the default spaces. Let $\psi \in \ker \pare{H_{c}^{*} +i}$. Since $x \to e^{-x}$ is not in $L^{2}\pare{]-\infty,0[}$, we obtain:
\begin{equation*}
\ker \pare{H_{c}^{*} +i} = \text{vect} \left \{ \begin{pmatrix}
e^{x} \\
0 \\
0 \\ 
0
\end{pmatrix}, \begin{pmatrix}
0 \\
0 \\
0 \\ 
e^{x}
\end{pmatrix} \right \} \cap D\pare{H_{c}^{*}}.
\end{equation*}
But, if $\psi \in D\pare{H_{c}^{*}}$, then, for all $\varphi \in D\pare{H_{c}}$, we have:
\begin{flalign*}
0=\prods{H_{c} \varphi}{\psi} - \prods{\varphi}{H_{c}^{*} \psi}& = \underset{x\to 0}{\lim} \left ( -i\varphi_{1}\pare{x} \overline{\psi_{1}}\pare{x} + i \varphi_{2}\pare{x} \overline{\psi_{2}}\pare{x}  \right . \\
& \quad \left . + i \varphi_{3}\pare{x} \overline{\psi_{3}}\pare{x} - i \varphi_{4}\pare{x} \overline{\psi_{4}}\pare{x} \right).
\end{flalign*}
Choosing $\varphi$ such that $\varphi_{1}\pare{0} \neq 0$, we see that $\ker \pare{H_{c}^{*} +i} = \{0 \}$. The same is true for $H_{c}^{*} -i = \{0 \}$.
This shows that $H_{c}$ is self-adjoint on $D\pare{H_{c}}$.
 
\end{proof}
Now, let us define the operator $H_{-}$ by:
\begin{equation}
H_{-} = H_{c} + V_{-}(x)
\end{equation}
where
\begin{equation}
V_{-} (x) = \begin{cases}
x Id& , \hspace{1mm} for \hspace{1mm} x \geq d \\
\gamma^{0} \gamma^{2} A(x) \pare{s+\frac{1}{2}} - m \gamma^{0} B(x) &, \hspace{1mm} for \hspace{1mm} x \leq c.
\end{cases}
\end{equation}
with $c,d$ two negative constants such that $c\leq d$. We remark that $V_{-}$ is bounded on $\R_{-}^{*}$. Using the Kato-Rellich theorem, we obtain:
\begin{cor}
The operator $H_{-}$ equipped with $D\pare{H_{c}}$ is self-adjoint.
\end{cor}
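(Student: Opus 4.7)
The plan is to apply the Kato--Rellich theorem directly. Since $H_{c}$ was just shown to be self-adjoint on $D(H_{c})$, it suffices to verify that the multiplication operator by the matrix-valued function $V_{-}$ is bounded and symmetric on $\mathcal{H}_{s,n}$: any bounded symmetric operator has relative bound $0$ with respect to any self-adjoint operator, and Kato--Rellich then gives self-adjointness of $H_{c}+V_{-}$ on the unchanged domain $D(H_{c})$.

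First I would check boundedness. On $]d,0[$ we have $V_{-}(x)=x\,\mathrm{Id}$, which is bounded by $|d|$. On $]-\infty,c]$, the matrix coefficients are built from $A$ and $B$; from the asymptotic conditions \eqref{CompAsymptA}--\eqref{CompAsymptB}, $A$ and $B$ are smooth on $]-\infty,0[$ and, outside a neighbourhood of $0$, agree with (or differ by exponentially decaying terms from) the bounded functions $A_{0},B_{0}$. In particular, restricted to $]-\infty,c]$ with $c<0$, both $A$ and $B$ are bounded. Assuming $V_{-}$ is extended by a smooth bounded interpolation on $[c,d]$, we conclude $V_{-}\in L^{\infty}(]-\infty,0[;\M_{4}(\C))$.

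Next I would verify the pointwise symmetry $V_{-}(x)^{*}=V_{-}(x)$. On $]d,0[$ this is obvious. On $]-\infty,c]$, from \eqref{MatDir} one has $(\gamma^{0})^{*}=\gamma^{0}$, and from \eqref{refGamma0,1,2,3} the matrix $\gamma^{0}\gamma^{2}=\mathrm{diag}(-\sigma^{2},\sigma^{2})$ is Hermitian since $\sigma^{2}$ is. Thus both terms $(s+\tfrac{1}{2})A(x)\gamma^{0}\gamma^{2}$ and $-mB(x)\gamma^{0}$ are Hermitian matrices for every $x$ (and $A,B$ are real-valued). Consequently the multiplication operator by $V_{-}$ is bounded and self-adjoint on $\mathcal{H}_{s,n}$, and Kato--Rellich delivers the claim.

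There is no real obstacle here: the only point requiring care is the Hermiticity of the matrix part, which reduces to observing that $\gamma^{0}$ and $\gamma^{0}\gamma^{2}$ are Hermitian in the chosen representation. The boundedness of $V_{-}$ is built into the construction, the whole purpose of cutting off the original potential near $x=0$ being precisely to avoid the singular behavior $B_{0}\sim l/(-x)$ that would otherwise destroy this bound.
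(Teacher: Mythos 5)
Your proof is correct and follows exactly the route the paper takes: the paper states that $V_{-}$ is bounded on $\R_{-}^{*}$ and then invokes Kato--Rellich. You have supplied the small details left implicit there, namely the pointwise Hermiticity of $V_{-}(x)$ (from the Hermiticity of $\gamma^{0}$ and $\gamma^{0}\gamma^{2}$) and the observation that boundedness of $B$ away from $x=0$ is what makes the cutoff at $d<0$ work.
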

\begin{rem}
Note that the potential of $H_{-}$ equals the potential of $H_{m}^{s,n}$ for $x$ negative and $\abs{x}$ large.
\end{rem}

\subsubsection{Operator \texorpdfstring{$H_{+}$}{H+}}
Let us define the operator $H_{+}$ by:
\begin{equation}
H_{+} = \Gamma^{1} D_{x} + V_{+}(x)
\end{equation}
where
\begin{equation}
V_{+}(x) = \begin{cases}
\gamma^{0} \gamma^{2} A(x) \pare{s+\frac{1}{2}} - m \gamma^{0} B(x)& , \hspace{1mm} for \hspace{1mm} x\geq b. \\
-x^{2} \gamma^{0} & , \hspace{1mm} for \hspace{1mm} x \leq a.
\end{cases}
\end{equation}
This time, the potential behaves like the potential in $H_{m}^{s,n}$ at $0$ and increases at $-\infty$. We then have a confining potential. This type of potential has been encountered in the article of A.Bachelot \cite{Bachelot}. For proving the self-adjointness of his operator, he uses the method we have recovered for proving the self-adjointness of our operator $H_{m}$. We just indicate the differents stages of the proof. We introduce the domain:
\begin{equation*}
 D(H_{+}) = \left \{\varphi \in L^{2}(\R_{-}^{*},\C^{4});H_{+} \varphi \in L^{2}(\R_{-}^{*}), \hspace{1mm} \norme{(\gamma^{1}+i) \varphi(x,.)}_{L^{2}\pare{S^{2}}}= o \pare{\sqrt{x}}, \hspace{1mm} x \to 0 \right \}
\end{equation*}
if $2ml<1$ and we remove the boundary condition for $2ml \geq 1$. In the following proof of compactness of $\pare{H_{+}-z}^{-1}$, we obtain estimates that allow us to prove the symmetry of this operator for $ml \geq\frac{1}{2}$. As before, we can do a unitary transform and obtain a result similar as lemma \ref{LemDomain}. We then obtain the asymptotic behavior of $\varphi$. This allows us to conclude in the case $ml\geq \frac{1}{2}$. If $ml<\frac{1}{2}$, we introduce the MIT boundary condition and a suitable partition of unity in order to separate the behavior at $0$ from the one at $-\infty$. We then obtain:
\begin{prop} \label{H+aa}
The operator $H_{+}$ equipped with $D\pare{H_{+}}$ is self-adjoint.
\end{prop}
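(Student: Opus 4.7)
The plan is to mimic the self-adjointness arguments already carried out for $H_m$ (Propositions 2.6 and the subsequent theorem), exploiting two simplifying features of $H_+$: the potential coincides with the original one in a neighbourhood of $0$, so Bachelot's asymptotic description from Theorem 5.1 applies unchanged near AdS infinity, and for $x\leq a$ the potential is the confining $-x^{2}\gamma^{0}$, which eliminates any issue at $-\infty$. I would first reduce to a fixed harmonic sector, writing
\begin{equation*}
H_{+}^{s,n}=\Gamma^{1}D_{x}+\tilde V_{+}(x),
\end{equation*}
where $\tilde V_{+}$ equals the potential of $H_{m}^{s,n}$ on $[b,0[$ and equals $-x^{2}\gamma^{0}$ on $]-\infty,a]$. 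A natural domain $D(H_{+}^{s,n})$ is obtained by restriction, and after the argument for $H_{m}$ is run on each sector, the sum over $(s,n)\in I$ is handled exactly as in Proposition \ref{proposition2.6}.

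On a fixed sector, the strategy is the one the author sketches. First, apply the unitary transform \eqref{UnitTransH} and, via the analogue of Lemma \ref{LemDomain} (multiplying by a cut-off $\chi\in C_{0}^{\infty}(]-2\varepsilon,0])$ equal to $1$ on a neighbourhood of $0$), transfer the local problem near $0$ to Bachelot's operator $H_{m}^{B}$. Theorem \ref{5.1} then gives the precise asymptotic expansion near $0$ of any element of $D(H_{+}^{s,n})$: pure decay $O(\sqrt{-x})$ when $2ml>1$, logarithmic behaviour when $2ml=1$, and the two-term expansion \eqref{507} when $2ml<1$. Second, near $-\infty$ the confining potential $-x^{2}\gamma^{0}$ forces fast decay of eigenfunction candidates; an ODE argument identical to that of Proposition \ref{27}, combined with the fact that $|W(x)|$ grows like $x^{2}$, will show that any $\varphi\in D(H_{+}^{s,n})$ tends to $0$ as $x\to-\infty$, and that the off-diagonal Green's formula produces no boundary term at $-\infty$.

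Symmetry then follows from integration by parts: the boundary term at $0$ is killed either by the $O(\sqrt{-x})$ decay in the case $2ml\geq 1$ or, in the case $2ml<1$, by the MIT condition, which (as in the proof following \eqref{ExprPsi2mlPetit}) annihilates the diverging component $(-x)^{-ml}$ against itself when paired with $\gamma^{0}\gamma^{1}$. To upgrade symmetry to self-adjointness I would compute $\ker(H_{+}^{s,n,*}\pm i)$: any element of the kernel belongs to $D(H_{+}^{s,n})$ by the same unitary-transform-plus-cut-off argument (since $H_{+}^{s,n,*}=H_{+}^{s,n}$ on its domain by a distributional computation), and then pairing with an element of $D(H_{+}^{s,n})$ whose MIT components are prescribed arbitrarily forces the ``$+$'' components of the kernel element to vanish; the same identity $0=\langle H_{+}\phi,\phi\rangle-\langle\phi,H_{+}\phi\rangle=\mp 2i\|\phi\|^{2}$ used in Proposition \ref{PropHaa2mlGr} then closes the argument.

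The main obstacle, as in the $H_{m}$ case, is the subcritical range $2ml<1$, where the domain contains the singular branch $(-x)^{-ml}$ and one must show that the MIT boundary condition is stable under sector decomposition and under taking limits of Cauchy sequences in the graph norm, so that the operator is both closed and maximal symmetric. A partition of unity $\chi_{1}+\chi_{2}=1$ with $\supp\chi_{2}$ concentrated near $0$ isolates the Bachelot contribution (where \eqref{504}--\eqref{508} provide the uniform-in-$(s,n)$ bounds needed to sum the asymptotic expansions), while $\chi_{1}$ reduces the remaining piece to a harmless problem on a set where the confining potential is coercive; summing as in the proof of the self-adjointness of $\tilde H_{m}^{MIT}$ yields the stated result.
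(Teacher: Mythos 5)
Your roadmap essentially coincides with the one the paper sketches: apply the unitary transform and a cut-off to transfer the $x\to 0$ behaviour to Bachelot's operator so that Theorem~\ref{5.1} gives the asymptotics there, observe that the confining branch $-x^{2}\gamma^{0}$ makes the endpoint $-\infty$ in the limit-point case, establish symmetry by integration by parts, and handle the subcritical range $2ml<1$ via the MIT condition together with a partition of unity separating the two ends. The paper keeps this as a sketch and delegates the crucial estimates near $-\infty$ to the explicit ODE solutions that it writes out in the forthcoming Lemma~\ref{H+Comp} (where one sees that the coefficients of the non-$L^{2}$ homogeneous solutions $e^{-\int_{-1}^{x}W}$ must vanish), whereas you invoke the mechanism more loosely through an analogue of Proposition~\ref{27}; these amount to the same technical fact.

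One concrete misreading: the operator $H_{+}$ is \emph{already} a $1+1$ dimensional operator at a single fixed $(s,n)$ — the factor $(s+\frac12)$ is hard-wired into $V_{+}$, and its Hilbert space is $\bigl[L^{2}(\R_{-}^{*})\bigr]^{4}$, not $\mathcal{H}$. So the proposed opening step (``reduce to a fixed harmonic sector'') and the closing step (``the sum over $(s,n)\in I$ is handled exactly as in Proposition~\ref{proposition2.6}'') are vacuous; there is no harmonic decomposition to perform and no summability issue to resolve. Stripping those out, the remaining argument is a faithful expansion of the paper's sketch, though you should note that the uniform-in-$(s,n)$ estimates you mention from \eqref{504}--\eqref{508} are not needed here for the same reason. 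It would also be worth stating explicitly (as the paper does implicitly through the ODE solution formulas) why no boundary term can appear at $-\infty$ for elements of $D(H_{+})$: an element of the natural domain must exclude the exponentially growing branch $e^{-\int W}$, and what remains decays; this is the substance behind your appeal to the ``confining potential.''
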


\subsection{Compactness of \texorpdfstring{$f\pare{x}\pare{H_{-}-z}^{-1}$}{f(x)(H--z)-1}}

\begin{lem} \label{H-Comp}
Let $f\in C^{0}\pare{]-\infty,0]}$ such that $\underset{x\to -\infty}{\lim} f\pare{x} = 0$ and $z\in \rho\pare{H_{-}}$. Then $f\pare{.}\pare{H_{-}-z}^{-1}$ is compact.
\end{lem}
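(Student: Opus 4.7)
The plan is to reduce the statement to a compactness property of the free operator $H_c$ and then exploit local $H^1$-regularity together with Rellich's theorem. First, I would note that $V_{-} := H_{-} - H_c$ is a \emph{bounded} multiplication operator on $\mathcal{H}_{s,n}$: on $[d,0[$ it equals $x\,\mathrm{Id}$, hence is bounded because $x$ stays in a compact interval; on $]-\infty,c]$ it reads $\gamma^0\gamma^2 A(x)\harm - m\gamma^0 B(x)$, and both $A$ and $B$ are continuous and tend to a finite limit (in fact to $0$) as $x \to -\infty$. Since $H_c$ is self-adjoint by Proposition \ref{propH0aa}, any $z' \in \C \setminus \R$ belongs to $\rho(H_c)$, and a direct algebraic manipulation gives
\begin{equation*}
f(x)(H_{-}-z)^{-1} \;=\; \bigl[f(x)(H_c - z')^{-1}\bigr]\,\bigl[1 + (z - z' - V_{-})(H_{-}-z)^{-1}\bigr].
\end{equation*}
The right-hand bracket is bounded because $V_{-}$ is bounded and $z \in \rho(H_{-})$, so it suffices to prove that $f(x)(H_c - z')^{-1}$ is compact.

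Next, I would verify this compactness sequentially. Let $(u_n) \subset \mathcal{H}_{s,n}$ converge weakly to $0$ and set $v_n := (H_c - z')^{-1} u_n$, so $v_n \rightharpoonup 0$ as well. From $\Gamma^{1} D_x v_n = H_c v_n = u_n + z' v_n$ one reads that $D_x v_n$ is bounded in $L^2$, hence $(v_n)$ is bounded in $H^1(]-\infty,0[;\C^4)$. By the Rellich--Kondrachov embedding, for every $R > 0$ the restrictions $v_{n|[-R,0]}$ converge to $0$ strongly in $L^2([-R,0];\C^4)$.

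To conclude, given $\varepsilon > 0$, the hypothesis that $f \in C^0(]-\infty,0])$ and $f(x) \to 0$ at $-\infty$ (which also forces $f$ to be bounded on the whole half-line) allows me to pick $R$ so large that $|f(x)| < \varepsilon$ for all $x \leq -R$. Then
\begin{equation*}
\|f v_n\|^{2} \;\leq\; \|f\|_{\infty}^{2}\, \|v_n\|^{2}_{L^2([-R,0];\C^4)} \;+\; \varepsilon^{2}\, \|v_n\|^{2}.
\end{equation*}
The first term tends to $0$ by the Rellich step, and the second is bounded uniformly in $n$. Letting $n \to \infty$ and then $\varepsilon \to 0$ yields $\|f v_n\| \to 0$, which proves the required compactness.

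The only delicate point is verifying that $D(H_c) \hookrightarrow H^1(]-\infty,0[;\C^4)$, i.e.\ that membership in the natural domain plus the boundary condition at $0$ implies $H^1$-regularity; this is immediate since $H_c = \Gamma^{1} D_x$ is a first-order elliptic operator and the boundary condition at $0$ is of MIT type, exactly the situation handled in Proposition \ref{propH0aa}. No difficulty arises at the endpoint $x = 0$ because the Rellich embedding is applied on the compact interval $[-R,0]$, where $H^1 \hookrightarrow L^2$ compactly regardless of boundary traces, and the decay of $f$ at $-\infty$ takes care of the non-compact end.
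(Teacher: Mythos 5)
Your proof is correct, but it takes a genuinely different route from the paper's, and the difference is worth recording. The paper establishes compactness of $f(x)(H_c-z)^{-1}$ by exploiting the specific MIT-type boundary condition at $x=0$: elements of $D(H_c)$ are extended to all of $\R$ by the symmetry $\tilde{\varphi}_{1}(x)=-\varphi_{3}(-x)$, etc., so that $H_c$ is recognized as a restriction of the free Dirac operator $\tilde{H}^{c}$ on $\R$; compactness then follows from the Reed--Simon criterion that $f(x)g(-i\nabla)$ is compact whenever $f,g\in L^{\infty}$ vanish at infinity. Your argument instead stays on the half-line, reads off the $H^{1}$-bound directly from $\Gamma^{1}D_{x}v_{n}=u_{n}+z'v_{n}$, invokes Rellich--Kondrachov compactness on each bounded interval $[-R,0]$, and kills the tail at $-\infty$ with the decay of $f$. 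This is more elementary (no Fourier-multiplier theorem) and somewhat more robust, since it applies to any self-adjoint realization of $\Gamma^{1}D_{x}$ whose domain embeds in $H^{1}(]-\infty,0[)$, without relying on the reflection structure particular to the MIT condition. Finally, your resolvent identity $f(x)(H_{-}-z)^{-1}=\bigl[f(x)(H_{c}-z')^{-1}\bigr]\bigl[1+(z-z'-V_{-})(H_{-}-z)^{-1}\bigr]$ expresses the target operator as (compact)$\times$(bounded) in a single stroke, whereas the paper's second resolvent identity leaves $f(x)(H_{-}-z)^{-1}$ on the right and then has to observe separately that $V_{-}(H_{c}-z)^{-1}$ is compact (because $V_{-}$ itself is continuous and vanishes at $-\infty$). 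Both versions close the argument; yours is slightly cleaner at that point.
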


\begin{proof}
Let $\varphi \in D\pare{H_{c}}$ and $g = \pare{H_{c}-z} \varphi$ be defined on $]-\infty,0[$. Denote by $\varphi_{i}$ and $g_{i}$, $i=1,\cdots,4$, their components. We will extend these functions to $\R$ in the following way:
\begin{flalign*}
\tilde{\varphi_{1}} (x) = \begin{cases} 
\varphi_{1} (x) \hspace{1mm} if \hspace{1mm} x\leq 0,\\
- \varphi_{3}(-x) \hspace{1mm} if \hspace{1mm} x \geq 0
\end{cases} 
;& \hspace{5mm} \tilde{\varphi_{2}} (x) = \begin{cases} 
\varphi_{2} (x) \hspace{1mm} if \hspace{1mm} x\leq 0,\\
\varphi_{4}(-x) \hspace{1mm} if \hspace{1mm} x \geq 0
\end{cases}\\
\tilde{\varphi_{3}} (x) = \begin{cases} 
\varphi_{3} (x) \hspace{1mm} if \hspace{1mm} x\leq 0,\\
-\varphi_{1}(-x) \hspace{1mm} if \hspace{1mm} x \geq 0
\end{cases}
; &\hspace{5mm} \tilde{\varphi_{4}} (x) = \begin{cases} 
\varphi_{4} (x) \hspace{1mm} if \hspace{1mm} x\leq 0,\\
\varphi_{2}(-x) \hspace{1mm} if \hspace{1mm} x \geq 0.
\end{cases}
\end{flalign*}
The components are thus in $H^{1}\pare{\R}$. We also extend $g$ into $\tilde{g} \in \left [ L^{2}\pare{\R} \right]^{4}$ in the same way. Here, we have put $\tilde{H}^{c}$ for the operator with the same formula as $H_{c}$ but acting on functions defined on $\R$. Some calculation gives that $\pare{H_{c}-z}\varphi = g$ if and only if $\pare{\tilde{H}^{c} - z} \tilde{\varphi} = \tilde{g}$ for all $z$ in the resolvent set of $H_{c}$. \\
Let $f\in C^{0}\pare{]-\infty,0]}$ such that $\underset{x\to -\infty}{\lim} f\pare{x} = 0$. We consider a sequence $\pare{g_{n}}_{n \in \N} \in \pare{L^{2}\pare{\R_{-}^{*}}}^{\N}$ such that $g_{n} \rightharpoonup 0$ and we want to prove that $f(x) \pare{H_{c}-z}^{-1} g_{n}$ goes to $0$ strongly in $L^{2}$. We introduce $u_{n} = \pare{H_{c}-z}^{-1} g_{n}$ and extend $g_{n}$ and $u_{n}$ into $\tilde{g_{n}}$ and $\tilde{u_{n}}$ as before. Consequently, $\tilde{g_{n}} \rightharpoonup 0$ in $L^{2}\pare{\R}$ and $\tilde{u_{n}} = \pare{\tilde{H}^{c}-z}^{-1} \tilde{g_{n}}$. We mention here a consequence of theorem $IX.29$ in \cite{Reedsimon} which say that if $f,g \in L^{\infty}\pare{\R^{n}}$ and:
\begin{equation*}
\underset{\abs{x} \to \infty}{\lim} f\pare{x} = 0, \hspace{3mm} \underset{\abs{\xi} \to \infty}{\lim} g\pare{\xi} = 0,
\end{equation*}
then the operator $f\pare{x} g\pare{-i\nabla}$ is compact. Since $x \to \pare{x-z}^{-1} \in L^{\infty}$ and $\abs{x-z}^{-1} \underset{\abs{x} \to \infty}{\to} 0$, we deduce that:
\begin{equation*}
\tilde{f}(x) \pare{\tilde{H}^{c}-z}^{-1} \tilde{g_{n}} \underset{n \to \infty}{\overset{L^{2}\pare{\R}}{\to}} 0,
\end{equation*}
where $\tilde{f}$ is the extension of $f$ by symmetry on $\R_{+}$. Therefore, we have:
\begin{equation*}
\mathds{1}_{]-\infty,0[}(x) \tilde{f}(x) \pare{\tilde{H}^{c}-z}^{-1} \tilde{g_{n}} = \mathds{1}_{]-\infty,0[}(x) f(x)\tilde{u_{n}} = f(x) u_{n} = f(x) \pare{H_{c}-z}^{-1} g_{n}.
\end{equation*}
So $f(x) \pare{H_{c}-z}^{-1} g_{n} \underset{n\to \infty}{\overset{L^{2}\pare{\R^{*}_{-}}}{\to}} 0$ and the operator $f(x) \pare{H_{c}-z}^{-1}$ is compact.

Since $V_{-}$ goes to $0$ at $-\infty$ and $0$ and using the identity:
\begin{equation*}
f(x)\pare{H_{-}-z}^{-1} = - f(x) \pare{H_{-}-z}^{-1} V_{-}(x) \pare{H_{c}-z}^{-1} + f(x) \pare{H_{c}-z}^{-1},
\end{equation*}
we deduce that $\pare{H_{-}-z}^{-1}-\pare{H_{c}-z}^{-1}$ is compact and consequently that $f(x)\pare{H_{-}-z}^{-1}$ is also compact.
\end{proof}

\subsection{Compactness of \texorpdfstring{$\pare{H_{+}-z}^{-1}$}{(H+-z)-1}}

\begin{lem} \label{H+Comp}
The operator $\pare{H_{+}-z}^{-1}$ is compact.
\end{lem}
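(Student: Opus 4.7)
The plan is to prove compactness of $\pare{H_+-z}^{-1}$ by showing that $D\pare{H_+}$, equipped with the graph norm, embeds compactly into $\mathcal{H}_{s,n}$; equivalently, any bounded sequence in $D\pare{H_+}$ admits an $L^2$-convergent subsequence. Since $H_+$ differs from $H_c$ on a bounded interval by a bounded perturbation, the issue is controlling the two endpoints: the behaviour at $-\infty$ is handled via the confining potential $-x^2\gamma^0$ and the behaviour near $0$ via the AdS-side analysis already used in Lemma \ref{LemDomain} and Proposition \ref{PropHaa2mlGr}. I would therefore prove a priori estimates of the form
\begin{equation*}
\norme{D_x \varphi} + \norme{x^2 \varphi}_{L^2\pare{x \leq a}} + \norme{B(x) \varphi}_{L^2\pare{-\epsilon,0}} \lesssim \norme{\varphi} + \norme{H_+ \varphi}
\end{equation*}
and conclude by a standard Rellich argument: local $H^1$-compactness on bounded subintervals combined with uniform tightness at both endpoints.

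The crucial algebraic observation at $-\infty$ is that $\Gamma^1 = -\gamma^0\gamma^1$ anticommutes with $\gamma^0$, hence $\{\Gamma^1, V_+\}=0$ on $\{x\leq a\}$ where $V_+ = -x^2\gamma^0$. Expanding the square on this region gives
\begin{equation*}
\norme{H_+ \varphi}^2_{L^2\pare{x\leq a}} = \norme{\Gamma^1 D_x \varphi}^2 + \norme{V_+ \varphi}^2 + \prods{\{\Gamma^1 D_x, V_+\} \varphi}{\varphi},
\end{equation*}
and a short computation reduces the anticommutator to $\{\Gamma^1 D_x, V_+\} = -i\Gamma^1 V_+' = 2ix\gamma^1$, which is of order $\abs{x}$ and hence absorbs into $\norme{V_+\varphi}^2$ and $\norme{\varphi}^2$ by Cauchy-Schwarz together with Young's inequality. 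This delivers $\norme{x^2 \varphi}_{L^2\pare{x\leq a}}$ and $\norme{D_x \varphi}_{L^2\pare{x\leq a}}$ controlled by the graph norm.

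Near $x=0$, I would apply the unitary transform of Section $3.1.3$ and a cutoff $\chi\in C_0^\infty\pare{]-2\epsilon, 0]}$ with $\chi\equiv 1$ on $]-\epsilon, 0]$ to reduce, as in the proof of Lemma \ref{LemDomain}, to Bachelot's operator on a neighbourhood of the AdS boundary; the Hardy-type inequality used at the end of the proof of Proposition \ref{PropHaa2mlGr} then yields $\norme{B(x)\chi\varphi} + \norme{D_x \chi\varphi} \lesssim \norme{\varphi} + \norme{H_+ \varphi}$. In the regime $ml<\frac{1}{2}$, the MIT boundary condition in $D\pare{H_+}$ is essential here because it kills the $\pare{-x}^{-ml}$ component in the asymptotic expansion, making the boundary term in the integration by parts vanish; the argument thereby also produces the symmetry estimates needed to justify self-adjointness of $H_+$ as stated in Proposition \ref{H+aa}. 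Once the three weighted estimates are in hand, a bounded sequence in $D\pare{H_+}$ is uniformly $H^1_{\mathrm{loc}}$ and uniformly tight at $-\infty$ (via the weight $x^2$) and at $0$ (via the weight $B(x)\sim l/(-x)$), so by Rellich's theorem it is relatively compact in $\mathcal{H}_{s,n}$. I expect the main technical obstacle to be the Hardy estimate near $0$ in the low-mass case, where the interplay between the singular potential, the MIT boundary condition, and the borderline $L^2$ behaviour of the asymptotic profile has to be handled with care.
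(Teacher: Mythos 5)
Your strategy is genuinely different from the paper's. The paper proves compactness directly, by writing $H_+\varphi_\nu = f_\nu$ as four explicit first-order ODEs, solving them with integrating factors $e^{\pm\int W}$, and upgrading weak convergence of $f_\nu$ to strong $L^2$ convergence of $\varphi_\nu$ via pointwise convergence plus dominated convergence. You instead propose weighted a priori estimates followed by a Rellich argument. Your treatment of $x\to-\infty$ is sound and pleasantly clean: $\Gamma^1 = -\gamma^0\gamma^1$ does anticommute with $\gamma^0$, so $\{\Gamma^1 D_x, V_+\} = -i\Gamma^1 V_+' = 2ix\gamma^1$ as you compute, and expanding $\norme{H_+\varphi}^2$ on $\{x\leq a\}$ does deliver control on $\norme{x^2\varphi}$ and $\norme{D_x\varphi}$ there after absorbing the $O(\abs{x})$ cross term. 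The paper does not use this structure; it would be a legitimate simplification of that half of the argument.

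However, the claimed estimate near $x=0$ fails in the low-mass regime $2ml<1$, and this is a genuine gap. First, you have the boundary condition backwards: the generalized MIT condition (see \eqref{ExprPsi2mlPetit} and the definition of $D(\tilde{H}^{MIT}_{s,n})$) sets $\psi^{s}_{+,n}=\chi^{s}_{+,n}=0$, i.e., it kills the $\pare{-x}^{ml}$ coefficient and \emph{keeps} the $\pare{-x}^{-ml}$ profile; the description \eqref{Dom2mlPetit} of $D\pare{H_{m}^{s,n}}$ for $2ml<1$ makes this explicit. Consequently an element of $D(H_+)$ behaves like $\pare{-x}^{-ml}$ near $0$, and then neither $B(x)\chi\varphi\sim\pare{-x}^{-ml-1}$ nor $D_x(\chi\varphi)\sim\pare{-x}^{-ml-1}$ lies in $L^2$ near $0$ for any $ml>0$. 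So the inequality $\norme{D_x\varphi}+\norme{B(x)\varphi}_{L^2(-\epsilon,0)}\lesssim\norme{\varphi}+\norme{H_+\varphi}$ cannot hold, and the Hardy-type bound from Proposition \ref{PropHaa2mlGr} (which is proved only under $2ml>1$, precisely because then $D(H_m^{s,n})=H^1_0$ and the $\pare{-x}^{-ml}$ profile is excluded) does not extend. The compactness argument near $0$ must instead exploit tightness in a weaker sense: one needs $\sup_{\varphi\in K}\int_{-\delta}^0\abs{\varphi}^2dx\to 0$ as $\delta\to 0$, which does follow from the asymptotic expansion in Theorem \ref{5.1} provided the coefficients $\Psi^s_{-,n}$ and the remainder are controlled by the graph norm — but this is not a Hardy estimate, and it requires invoking Bachelot's quantitative estimates rather than a weighted $L^2$ bound. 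The paper sidesteps all of this by integrating the ODE explicitly and checking dominated convergence term by term, which is considerably less structural but works uniformly in $m>0$.
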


\begin{proof}
We follow the proof of the compactness result in \cite{Bachelot}. Let us show that the set:
\begin{equation}
K = \left \{ \varphi \in D(H_{+}) ; \hspace{1mm} \norme{\varphi}+ \norme{H_{+} \varphi} \leq 1 \right \}
\end{equation}
is compact. We consider a sequence $(\varphi_{n})_{n \in \N} \in K^{\N}$. Using the Banach-Alaoglu theorem and distributions, we obtain the existence of a sub-sequence $(\varphi_{\nu})$ such that:
\begin{equation*}
\varphi_{\nu} \underset{\nu \to \infty}{\rightharpoonup} \varphi ; \hspace{5mm} f_{\nu} =: H_{+} \varphi_{\nu} \underset{\nu \to \infty}{\rightharpoonup} H_{+} \varphi := f.
\end{equation*} 
Let:
\begin{equation*}
W(x) = \begin{cases}
m B(x) = - \frac{ml}{x} + O\pare{x}, & \hspace{1mm} for \hspace{1mm} x \geq b. \\
x^{2}, & \hspace{1mm} for \hspace{1mm} x \leq a,
\end{cases} 
\end{equation*}
so that $W$ is smooth on $]a,b[$. The equation $H_{+} \varphi_{\nu} = f_{\nu}$ can be written:
\begin{equation*}
\pare{\Gamma^{1} D_{x} - \gamma^{0} W\pare{x}} \varphi_{\nu} = -\gamma^{0} \gamma^{2} \pare{s+\frac{1}{2}} A\pare{x} \varphi_{\nu} + f_{\nu}.
\end{equation*}
We denote $g_{\nu}$ the right hand side of this equation. Then $g_{\nu}$ is in $L^{2}(]-\infty,0[)$ and $g_{\nu} \rightharpoonup g$ where $g$ is defined by replacing $\varphi_{\nu},f_{\nu}$ by $\varphi,f$ respectively.
We thus obtain four differential equations:
\begin{equation}
\begin{cases}
\partial_{x} \pare{\varphi^{1}_{\nu} + \varphi^{3}_{\nu}} + W(x) \pare{\varphi^{1}_{\nu} + \varphi^{3}_{\nu}} = i \pare{g^{1}_{\nu}-g^{3}_{\nu}} \\
\partial_{x} \pare{\varphi^{2}_{\nu} - \varphi^{4}_{\nu}} + W(x) \pare{\varphi^{2}_{\nu} - \varphi^{4}_{\nu}} = -i \pare{g^{2}_{\nu} + g^{4}_{\nu}} \\
\partial_{x} \pare{\varphi^{1}_{\nu} - \varphi^{3}_{\nu}} - W(x) \pare{\varphi^{1}_{\nu} - \varphi^{3}_{\nu}} = i \pare{g^{1}_{\nu}+g^{3}_{\nu}} \\
\partial_{x} \pare{\varphi^{2}_{\nu} + \varphi^{4}_{\nu}} - W(x) \pare{\varphi^{2}_{\nu} + \varphi^{4}_{\nu}} = i \pare{g^{4}_{\nu} - g^{2}_{\nu}}
\end{cases}
\end{equation}
For some constants $\lambda^{j}_{\nu}$, $j=1,\cdots,4$, the solutions are:
\begin{flalign}
\hphantom{A} & \pare{\varphi^{1}_{\nu}+\varphi^{3}_{\nu}}(x)= \lambda^{1}_{\nu} e^{-\int_{-1}^{x} W(u) \mathrm du} + i \ds \int_{-\infty}^{x} \pare{g^{1}_{\nu}-g^{3}_{\nu}} e^{\int_{-1}^{t} W(u) \mathrm du - \int_{-1}^{x} W(u) \mathrm du} \mathrm dt \label{3.10}\\
& \pare{\varphi^{2}_{\nu} - \varphi^{4}_{\nu}} (x) = \lambda^{2}_{\nu}  e^{-\int_{-1}^{x} W(u) \mathrm du} - i \ds \int_{-\infty}^{x} \pare{g^{2}_{\nu} + g^{4}_{\nu}} e^{\int_{-1}^{t} W(u) \mathrm du - \int_{-1}^{x} W(u) \mathrm du} \mathrm dt \label{3.11} \\
& \pare{\varphi^{1}_{\nu} - \varphi^{3}_{\nu}} (x) = \lambda^{3}_{\nu}  e^{\int_{-1}^{x} W(u) \mathrm du} + i \ds \int_{0}^{x}  \pare{g^{1}_{\nu}+g^{3}_{\nu}} e^{-\int_{-1}^{t} W(u) \mathrm du + \int_{-1}^{x} W(u) \mathrm du} \mathrm dt \label{3.12}\\
& \pare{\varphi^{2}_{\nu} + \varphi^{4}_{\nu}} (x) = \lambda^{4}_{\nu}  e^{\int_{-1}^{x} W(u) \mathrm du} + i \ds \int_{0}^{x} \pare{g^{4}_{\nu} - g^{2}_{\nu}} e^{-\int_{-1}^{t} W(u) \mathrm du + \int_{-1}^{x} W(u) \mathrm du} \mathrm dt \label{3.13}
\end{flalign}
\underline{Proof of the pointwise convergence of the integral terms.}\\
We have:
\begin{equation}
\ds \int_{-1}^{x} W(u) \mathrm du = \begin{cases} 
- ml \ln(-x) + \ds \int_{-1}^{x} O(u) \mathrm du & , \hspace{1mm} for \hspace{1mm} x \geq b. \\
\frac{x^{3}}{3} - \frac{a^{3}}{3} + \int_{-1}^{a} W(u) \mathrm du & , \hspace{1mm} for \hspace{1mm} x \leq a.
\end{cases}
\end{equation}
where $\ds \int_{-1}^{x} O(u) \mathrm du$ is bounded on $[b;0[$. We obtain:
\begin{flalign*}
\hphantom{A} & e^{\int_{-1}^{x} W(u) \mathrm du} = \begin{cases} 
\pare{-x}^{-ml} e^{\int_{-1}^{x} O(u) \mathrm du} &,  \hspace{1mm} for \hspace{1mm} x \geq b. \\
C_{1} e^{\frac{x^{3}}{3}} & , \hspace{1mm} for \hspace{1mm} x \leq a.
\end{cases} \\
& e^{-\int_{-1}^{x} W(u) \mathrm du} = \begin{cases}
\pare{-x}^{ml} e^{ - \int_{-1}^{x} O(u) \mathrm du} & , \hspace{1mm} for \hspace{1mm} x \geq b.\\
C_{2} e^{-\frac{x^{3}}{3}} & , \hspace{1mm} for \hspace{1mm} x \leq a.
\end{cases}
\end{flalign*}
where $C_{1},C_{2}$ are positive constants. We thus see that $e^{\int_{-1}^{t} W(u) \mathrm du}$ is square integrable on $]-\infty,x[$ and that $e^{-\int_{-1}^{t} W(u) \mathrm du}$ is square integrable on $]x,0[$. Consequently, since $g_{\nu}$ is weakly convergent, we deduce that:
\begin{equation*}
\ds \int_{-\infty}^{x} \pare{g^{1}_{\nu}-g^{3}_{\nu}} e^{\int_{-1}^{t} W(u) \mathrm du - \int_{-1}^{x} W(u) \mathrm du} \mathrm dt \underset{\nu \to \infty}{\to} \ds \int_{-\infty}^{x} \pare{g^{1}-g^{3}} e^{\int_{-1}^{t} W(u) \mathrm du - \int_{-1}^{x} W(u) \mathrm du} \mathrm dt
\end{equation*}
when $\nu \to \infty$. The same is true for the integral with $g^{1}_{\nu}+g^{3}_{\nu}$. \\
\underline{Majorations of integral terms by $L^{2}$ functions independent of $\nu$.}\\
In the following, we will only treat $\pare{\varphi^{1}_{\nu}+\varphi^{3}_{\nu}}$ and $\pare{\varphi^{1}_{\nu}-\varphi^{3}_{\nu}}$. The other functions can be treated in the same way. When $a \leq x \leq b$, the functions are smooth hence integrable. We study the other cases:
\begin{enumerate}
\item[a)] First, using the Cauchy-Schwarz inequality and that $g^{1}_{\nu}+g^{3}_{\nu}$ is bounded in $L^{2}$, we obtain:
\begin{equation}
\abs{\ds \int_{0}^{x}  \pare{g^{1}_{\nu}+g^{3}_{\nu}} e^{\int_{t}^{x} W(u) \mathrm du} \mathrm dt}^{2} \lesssim 
 \abs{\ds \int_{0}^{x} e^{-2\int_{-1}^{t} W(u) \mathrm du + 2\int_{-1}^{x} W(u) \mathrm du} \mathrm dt}
\end{equation}
Therefore, we prove that the right hand side is integrable:
\begin{enumerate}
\item[i)] If $x \geq b$, using the expression of $W$, the right hand side is integrable since:
\begin{equation*}
\abs{\ds \int_{x}^{0} e^{-2\int_{-1}^{t} W(u) \mathrm du + 2\int_{-1}^{x} W(u) \mathrm du} \mathrm dt}  \leq e^{2C}\abs{\ds \int_{x}^{0} \pare{-\frac{1}{x}}^{2ml} \pare{-t}^{2ml} \mathrm dt} = e^{2C} \frac{-x}{1+2ml}.
\end{equation*}
\item[ii)] If $x \leq a$, we have:
\begin{equation*}
\abs{\ds \int_{x}^{0} e^{ 2\int_{t}^{x} W(u) \mathrm du} \mathrm dt} = (C_{1})^{2} e^{2\frac{x^{3}}{3}} \pare{ \ds \int_{x}^{a} (C_{2})^{2} e^{-2\frac{t^{3}}{3}} \mathrm dt + \ds \int_{a}^{0} e^{-2\int_{-1}^{t} W(u) \mathrm du} \mathrm dt}.
\end{equation*}
The function $(C_{1})^{2} e^{2\frac{x^{3}}{3}}\pare{\ds \int_{a}^{0} e^{-2\int_{-1}^{t} W(u) \mathrm du} \mathrm dt}$ is integrable on $]-\infty,a]$ and:
\begin{equation*}
\ds \int_{x}^{a} e^{-2\frac{t^{3}}{3}} \mathrm dt  \leq - \frac{1}{2a^{2}}  e^{-2\frac{a^{3}}{3}} + \frac{1}{2x^{2}}  e^{-2\frac{x^{3}}{3}} - \frac{1}{a^{3}} \ds \int_{x}^{a} e^{-2\frac{t^{3}}{3}} \mathrm dt,
\end{equation*}
by integration by parts. Choosing $a$ such that $1 + \frac{1}{a^{3}}>0$, we deduce that $e^{2\frac{x^{3}}{3}} \int_{x}^{a} e^{-2\frac{t^{3}}{3}} \mathrm dt$ is integrable on $]-\infty,a]$ and goes to $0$ at $-\infty$.
\end{enumerate}

\item[b)] Secondly, as above, we study the integrability of $\int_{-\infty}^{x} e^{2\int_{-1}^{t} W(u) \mathrm du -2 \int_{-1}^{x} W(u) \mathrm du} \mathrm dt$:
\begin{enumerate}
\item[i)] If $x\geq b$, using the expression of $W$ and separating the integral from $-\infty$ to $b$ and from $b$ to $x$, we have to study
$\pare{-x}^{2ml} e^{-2\int_{-1}^{x} T(u) \mathrm du} \ds \int_{-\infty}^{b} e^{2\int_{-1}^{t} W(u) \mathrm du } \mathrm dt$ and $ \pare{-x}^{2ml} e^{-2\int_{-1}^{x} T(u) \mathrm du} \ds \int_{b}^{x} \pare{-\frac{1}{t}}^{2ml} e^{2\int_{-1}^{t} T(u) \mathrm du} \mathrm dt$. The first term is clearly integrable and since $e^{2\int_{-1}^{t} T(u) \mathrm du}$ is bounded on $[b,0[$, we can perform the second integral to see that it is also integrable.
\item[ii)] If $x\leq a$, since $\ds \int_{-\infty}^{x} \frac{1}{t^{3}} e^{\frac{2t^{3}}{3}} \mathrm dt \leq 0$, by integration by part, we have:
\begin{equation*}
\ds \int_{-\infty}^{x} e^{2\int_{-1}^{t} W(u) \mathrm du -2 \int_{-1}^{x} W(u) \mathrm du} \mathrm dt \leq C_{2}^{2} C_{1}^{2}\frac{1}{2x^{2}}.
\end{equation*}
\end{enumerate}
This ends the proof of the integrability.
\end{enumerate}

\underline{Convergence in $L^{2}$ of integral terms.}\\
We can use the dominate convergence theorem to obtain:
\begin{equation} \label{3.19}
\ds \int_{0}^{x}  \pare{g^{1}_{\nu}+g^{3}_{\nu}} e^{-\int_{-1}^{t} W(u) \mathrm du + \int_{-1}^{x} W(u) \mathrm du} \mathrm dt  \underset{\nu \to \infty}{\overset{L^{2}}{\to}} \ds \int_{0}^{x}  \pare{g^{1}+g^{3}} e^{-\int_{-1}^{t} W(u) \mathrm du + \int_{-1}^{x} W(u) \mathrm du} \mathrm dt.
\end{equation}
and the same for the integral with $g^{1}_{\nu} - g^{3}_{\nu}$.\\
\underline{Study of the sequences $\lambda^{i}_{\nu}$, $i=1,\cdots,4$.}
\begin{enumerate}
\item[a)] Let us study the convergence of $\lambda^{3}_{\nu}$ in \eqref{3.12} (we can do the same for $\lambda^{4}_{\nu}$). 
\begin{enumerate}
\item[-] If $ml<\frac{1}{2}$, using that $e^{\int_{-1}^{x} W(u) \mathrm du} \in L^{2}$, $\varphi_{\nu} \rightharpoonup \varphi$ and $\eqref{3.19}$, the term:
\begin{flalign*}
\hphantom{A} &\pare{\lambda^{3}_{\nu} - \lambda^{3}} \norme{e^{\int_{-1}^{x} W(u) \mathrm du}}_{L^{2}}^{2} = \\
& \prods{\pare{\pare{\varphi^{1}_{\nu} - \varphi^{3}_{\nu}}- \ds \int_{0}^{x}  \pare{g^{1}_{\nu}+g^{3}_{\nu}} e^{-\int_{-1}^{t} W(u) \mathrm du + \int_{-1}^{x} W(u) \mathrm du} \mathrm dt}}{e^{\int_{-1}^{x} W(u) \mathrm du}}_{L^{2}}\\
& -\prods{\pare{\pare{\varphi^{1} - \varphi^{3}} - \ds \int_{0}^{x}  \pare{g^{1}+g^{3}} e^{-\int_{-1}^{t} W(u) \mathrm du + \int_{-1}^{x} W(u) \mathrm du} \mathrm dt}}{e^{\int_{-1}^{x} W(u) \mathrm du}}_{L^{2}}
\end{flalign*}
goes to $0$ as $\nu \to -\infty$. We deduce that $\lambda^{3}_{\nu} \underset{\nu \to \infty}{\to} \lambda^{3}$. 
\item[-] If $ml \geq \frac{1}{2}$, $e^{\int_{-1}^{x} W(u) \mathrm du} \notin L^{2}$ and $\lambda^{3}_{\nu}=0$.
\end{enumerate}

\item[b)] We then study the convergence of $\lambda^{1}_{\nu}$ and $\lambda^{2}_{\nu}$. \\
Since $\varphi^{1}_{\nu}+\varphi^{3}_{\nu} \in L^{2}$, $e^{-\int_{-1}^{x} W(u) \mathrm du} \notin L^{2}$ and the other terms are in $L^{2}$, we deduce that $\lambda^{1}_{\nu}=\lambda^{2}_{\nu}=0$ for all $\nu \in \N$. 
\end{enumerate}
\underline{Convergence in $L^{2}$ of the sequences $\varphi^{1}_{\nu} - \varphi^{3}_{\nu}$, $\varphi^{2}_{\nu} + \varphi^{4}_{\nu}$, $\varphi^{1}_{\nu}+\varphi^{3}_{\nu}$, $\varphi^{2}_{\nu}-\varphi^{4}_{\nu}$.} \\
Using the dominate convergence theorem, we deduce that $\varphi^{1}_{\nu} - \varphi^{3}_{\nu} \underset{\nu \to \infty}{\overset{L^{2}}{\to}} \varphi^{1}- \varphi^{3}$. The same is true for the other functions. Thus, the sequence $\pare{\varphi_{n}}_{n\in \N}$ admits a converging sub-sequence which proves that $K$ is compact. Consequently, $\pare{H_{+}+i}^{-1}$ is compact and so is $\pare{H_{+}-z}^{-1}$ for all $z \in \rho(H_{+})$ using a resolvent identity.
\end{proof}

\subsection{Proof of proposition \ref{3.1}}

\begin{proof}
Let $j_{-}, j_{+} \in C^{\infty}$ such that $j_{-}^{2} + j_{+}^{2} = 1$, $\supp(j_{-}) \subset ]-\infty,c[$ and $\supp(j_{+}) \subset ]b,0[$. We define:
\begin{equation*}
Q(z) = j_{-}(x) \pare{H_{-}-z}^{-1} j_{-}(x) + j_{+}(x) \pare{H_{+}-z}^{-1} j_{+}(x).
\end{equation*}
Since $H_{m}^{s,n}-z = H_{-}-z$ on $]-\infty,c[$ and $H_{m}^{s,n}-z = H_{+} - z$ on $]b,0[$, we have:
\begin{equation*}
\pare{H_{m}^{s,n}-z} Q(z) = 1 - w(z)
\end{equation*}
where:
\begin{equation*}
w(z) =  -\pare{\left [ \pare{H_{m}^{s,n}-z}, j_{-}(x) \right ]\pare{H_{-}-z}^{-1} j_{-}(x) + \left [ \pare{H_{m}^{s,n}-z}, j_{+}(x) \right ]\pare{H_{+}-z}^{-1} j_{+}(x)}.
\end{equation*}
Since $ \left [ \pare{H_{m}^{s,n}-z}, j_{-}(x) \right ] = i \gamma^{0} \gamma^{1} j_{-}'(x)$ and $\left [ \pare{H_{m}^{s,n}-z}, j_{+}(x) \right ] = i \gamma^{0} \gamma^{1} j_{+}'(x)$ and $j_{-}',j_{+}'$ have compact support, we deduce that $w(z)$ is compact for all $z\in \rho\pare{H}$ using the last two sections. Moreover, $w: \rho \pare{H} \to \mathcal{L}\pare{L^{2}}$ is analytic.\\
\indent Since $j_{-}',j_{+}',j_{-},j_{+}$ are bounded, for some constant $C>0$, we have:
\begin{equation*}
\norme{w(z) \varphi}_{2} \leq \frac{C}{\abs{\Im z}}\norme{\varphi}_{2},
\end{equation*}
for all $\varphi \in L^{2}$. We then choose $z$ such that the imaginary part satisfies $\frac{C}{\abs{\Im z}} <1$. Therefore, $1-w(z)$ is invertible. Using the analytic Fredholm theorem, we have that $1-w(z)$ is invertible for all $z \in \rho \pare{H} \smallsetminus S$ where $S$ is a discrete set without accumulation points.\\
For these $z$, we deduce that:
\begin{equation}
\pare{H_{m}^{s,n}-z}^{-1} = Q(z) \pare{1-w(z)}^{-1}.
\end{equation}
Let $f$ be a continuous function going to $0$ at $-\infty$ and admitting a finite limit at $0$. Then $f(x) Q(z)$ is compact. Thus for $z \in \rho \pare{H} \smallsetminus S$, $f(x)\pare{H_{m}^{s,n}-z}^{-1}$ is compact. Using the analyticity of $z \to \pare{H_{m}^{s,n}-z}^{-1}$, we obtain the compactness for all $z \in \rho \pare{H_{m}^{s,n}}$.
\end{proof}

\section{Mourre estimates}

\subsection{Mourre theory}

We recall here some facts about the Mourre theory. Let $\mathcal{A}$ be a self-adjoint operator. We say that the pair $(\mathcal{A},H)$ satisfies the Mourre conditions if
\begin{flalign} 
\hphantom{A} & D(\mathcal{A}) \cap D(H) \hspace{2mm} \text{is dense in} \hspace{2mm} D(H)\label{M1} \\
& e^{it\mathcal{A}} \hspace{2mm} \text{preserves} \hspace{2mm} D(H) \hspace{2mm} \text{for t>0,} \hspace{2mm} \underset{\abs{t}\leq 1}{\sup} \norme{H e^{it\mathcal{A}}u}<\infty, \hspace{2mm} \forall u \in D(H) \label{M2} \\
& [iH,\mathcal{A}] \hspace{2mm} \text{defined as quadratic form on} \hspace{2mm} D(H) \cap D(\mathcal{A}) \notag \\
&\text{extend to a bounded operator from} \hspace{2mm} D(H) \hspace{2mm} \text{into} \hspace{2mm} \mathcal{H}.\label{M3}
\end{flalign}
The Mourre conditions are stronger than $C^{1}(\mathcal{A})$ regularity. We recall the definition of $C^{k}(\mathcal{A})$:
\begin{defi}
We say that $H \in C^{k}(\mathcal{A})$ if there exists $z \in \C \setminus \sigma(H)$ such that
\begin{equation}
\R \ni t \mapsto e^{it\mathcal{A}}\pare{z-H}^{-1} e^{-it\mathcal{A}}
\end{equation}
is $C^{k}$ for the strong topology of $\mathcal{L}(\mathcal{H})$.
\end{defi}
We then have the following lemma (see \cite[Proposition~5.1.2,~Theorem~6.3.4]{ABG}):
\begin{lem} \label{52}
Suppose that $(H,\mathcal{A})$ satisfies the Mourre conditions. Then $H \in C^{1}(\mathcal{A})$.
\end{lem}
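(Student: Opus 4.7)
The plan is to exhibit an explicit candidate for the derivative at $t=0$ of the map $t\mapsto e^{it\mathcal{A}}R(z)e^{-it\mathcal{A}}$, where $R(z):=(z-H)^{-1}$ for a fixed $z\in\C\setminus\sigma(H)$, and then to verify strong convergence of the difference quotient. By \eqref{M3} the quadratic form $[iH,\mathcal{A}]$ on $D(H)\cap D(\mathcal{A})$ extends to a bounded operator $B\colon D(H)\to\mathcal{H}$. Since $R(z)$ maps $\mathcal{H}$ into $D(H)$, the product
\[
D_0 := -R(z)\,B\,R(z)
\]
defines a bounded operator on $\mathcal{H}$ and is the natural candidate for the derivative at $0$.

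First I would verify convergence on the dense subspace $D(\mathcal{A})\cap D(H)$, which is dense in $\mathcal{H}$ by \eqref{M1}. For $u,v\in D(\mathcal{A})\cap D(H)$, condition \eqref{M2} guarantees that $e^{-is\mathcal{A}}u\in D(H)$ with $\sup_{|s|\leq 1}\|He^{-is\mathcal{A}}u\|<\infty$, and $e^{-is\mathcal{A}}u\in D(\mathcal{A})$ since $e^{-is\mathcal{A}}$ commutes with $\mathcal{A}$. Differentiating the scalar map $s\mapsto\langle e^{-is\mathcal{A}}v,R(z)e^{-is\mathcal{A}}u\rangle$ (which is licit thanks to these inclusions) yields
\[
\langle v,\bigl(e^{it\mathcal{A}}R(z)e^{-it\mathcal{A}}-R(z)\bigr)u\rangle = \int_0^t \langle e^{-is\mathcal{A}}v,\,[i\mathcal{A},R(z)]\,e^{-is\mathcal{A}}u\rangle\,ds,
\]
and the quadratic-form resolvent identity $[i\mathcal{A},R(z)] = -R(z)\,B\,R(z) = D_0$ on $D(H)\cap D(\mathcal{A})$ (valid since $R(z)$ sends $\mathcal{H}$ into $D(H)$ and $B$ is the bounded extension from \eqref{M3}) identifies the integrand as the quadratic form of $e^{is\mathcal{A}}D_0 e^{-is\mathcal{A}}$. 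Letting $t\to 0$ and using strong continuity of the unitary group gives the strong derivative at $0$ equal to $D_0$ on this dense subspace.

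To conclude, I would extend strong convergence from $D(\mathcal{A})\cap D(H)$ to all of $\mathcal{H}$ by a standard $\varepsilon/3$ argument, which is legitimate because the integral representation, combined with \eqref{M2} and \eqref{M3}, produces a $t$-uniform operator-norm bound on the difference quotient for small $t$. Strong continuity of $t\mapsto e^{it\mathcal{A}}D_0 e^{-it\mathcal{A}}$ is immediate from strong continuity of the unitary group; a conjugation argument then shows the derivative exists at every $t$ and depends strongly continuously on $t$, yielding the $C^1(\mathcal{A})$ property.

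The main obstacle will be the rigorous handling of the commutators as quadratic forms, since \emph{a priori} the group $e^{it\mathcal{A}}$ is only strongly continuous and not a $C^1$ family on $D(H)$. Condition \eqref{M2} is tailor-made to license the differentiation of $s\mapsto e^{-is\mathcal{A}}u$ inside the quadratic form, while \eqref{M3} upgrades the purely formal commutator $[iH,\mathcal{A}]$ to a bounded operator $D(H)\to\mathcal{H}$ so that $D_0$ is genuinely a bounded operator on $\mathcal{H}$; together these two conditions render the formal computation of the derivative legitimate and produce the desired $C^1$ regularity.
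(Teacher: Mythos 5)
Your strategy --- produce the candidate derivative $D_0=-R(z)BR(z)$, differentiate the scalar map $s\mapsto\langle e^{-is\mathcal{A}}v,R(z)e^{-is\mathcal{A}}u\rangle$, and extend by density --- is the right one and matches the reference the paper cites. But there is a genuine gap at the pivotal step: the quadratic-form identity $[i\mathcal{A},R(z)]=-R(z)BR(z)$ on $D(H)\cap D(\mathcal{A})$, which you declare ``valid since $R(z)$ sends $\mathcal{H}$ into $D(H)$ and $B$ is the bounded extension from \eqref{M3}.'' That reasoning only shows that $D_0$ is a well-defined bounded operator on $\mathcal{H}$; it does not establish the identity. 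The formal manipulation $[i\mathcal{A},R(z)]=-R(z)\,[i\mathcal{A},z-H]\,R(z)$ requires pushing $\mathcal{A}$ through $R(z)$, hence some $\mathcal{A}$-regularity of $R(z)$ --- typically $R(z)D(\mathcal{A})\subset D(\mathcal{A})$ --- which is not among the Mourre hypotheses \eqref{M1}--\eqref{M3}. In fact, the assertion that the form $[i\mathcal{A},R(z)]$ on $D(\mathcal{A})$ is realized by a bounded operator is essentially equivalent to the conclusion $H\in C^1(\mathcal{A})$ (this is one of the standard characterizations of $C^1(\mathcal{A})$ in \cite{ABG}), so the step you wave through is the theorem itself; without proving it, the argument is circular.

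The way to close the gap is precisely where \eqref{M2} earns its keep, and it is the real content of the cited \cite[Prop.~5.1.2, Thm.~6.3.4]{ABG}. One introduces the bounded regularizations $\mathcal{A}_\tau:=(i\tau)^{-1}\bigl(e^{i\tau\mathcal{A}}-\mathds{1}\bigr)$. Because $\mathcal{A}_\tau$ is bounded, the resolvent commutation $[\mathcal{A}_\tau,R(z)]=-R(z)\,[\mathcal{A}_\tau,H]\,R(z)$ holds exactly as an operator identity; because $e^{i\tau\mathcal{A}}$ preserves $D(H)$ by \eqref{M2}, the operator $[\mathcal{A}_\tau,H]$ is genuinely defined on $D(H)$; and the uniform bound $\sup_{|\tau|\leq1}\|He^{i\tau\mathcal{A}}u\|<\infty$ together with \eqref{M3} permits passage to the limit $\tau\to 0$, yielding the form identity. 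Only after this regularization do your integral representation and $\varepsilon/3$ density argument deliver $C^1(\mathcal{A})$. Note also that the differentiation of $s\mapsto\langle e^{-is\mathcal{A}}v,R(z)e^{-is\mathcal{A}}u\rangle$ needs only $u,v\in D(\mathcal{A})$ because $R(z)$ is bounded, so \eqref{M2} is idle at the point where you invoke it; its actual role is entirely in the identification step you left unproved.
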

We also recall a lemma concerning the $C^{2}(\mathcal{A})$ regularity:
\begin{lem} \label{53}
Suppose that $H\in C^{1}(\mathcal{A})$ and that the commutator $[i\mathcal{A},H]$ extends to a bounded operator from $D(H)$ into $\mathcal{H}$. We denote $[i\mathcal{A},H]_{0}$ this extension. If, in addition, the commutator $[i\mathcal{A},[i\mathcal{A},H]_{0}]$ defined as a quadratic form on $D(\mathcal{A}) \cap D(H)$ extends to a bounded operator from $D(H)$ into $D(H)^{*}$, then $H\in C^{2}(\mathcal{A})$.
\end{lem}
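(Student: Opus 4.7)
The plan is to verify that the map $F : \mathbb{R} \to \mathcal{L}(\mathcal{H})$ defined by $F(t) = e^{it\mathcal{A}}(z-H)^{-1}e^{-it\mathcal{A}}$ is $C^{2}$ in the strong operator topology, for some (equivalently, every) $z \in \mathbb{C}\setminus\sigma(H)$. Since the hypothesis already gives $H \in C^{1}(\mathcal{A})$, we know that $F$ is $C^{1}$ and that
\begin{equation*}
F'(t) = e^{it\mathcal{A}} \bigl[i\mathcal{A},(z-H)^{-1}\bigr] e^{-it\mathcal{A}}.
\end{equation*}
The first step is to establish the fundamental commutator identity
\begin{equation*}
\bigl[i\mathcal{A},(z-H)^{-1}\bigr] = (z-H)^{-1}\,[i\mathcal{A},H]_{0}\,(z-H)^{-1},
\end{equation*}
which follows from $C^{1}$ regularity together with the fact that $[i\mathcal{A},H]_{0}$ is a bounded operator from $D(H)$ to $\mathcal{H}$ and that $(z-H)^{-1}$ is a topological isomorphism $\mathcal{H} \to D(H)$.

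Next, I would conjugate this identity by $e^{it\mathcal{A}}$ and differentiate formally to obtain the candidate for $F''(t)$. Using the Leibniz rule for commutators, the formal computation gives, at $t=0$,
\begin{equation*}
F''(0) = \bigl[i\mathcal{A},(z-H)^{-1}\,[i\mathcal{A},H]_{0}\,(z-H)^{-1}\bigr]
\end{equation*}
which splits into three pieces: two outer pieces of the form $[i\mathcal{A},(z-H)^{-1}]\,[i\mathcal{A},H]_{0}\,(z-H)^{-1}$ and $(z-H)^{-1}\,[i\mathcal{A},H]_{0}\,[i\mathcal{A},(z-H)^{-1}]$, and a middle piece
\begin{equation*}
(z-H)^{-1}\,\bigl[i\mathcal{A},[i\mathcal{A},H]_{0}\bigr]\,(z-H)^{-1}.
\end{equation*}
The two outer pieces are bounded on $\mathcal{H}$ by the previous identity (they involve only the bounded objects $[i\mathcal{A},H]_{0}:D(H)\to\mathcal{H}$ and $(z-H)^{-1}:\mathcal{H}\to D(H)$). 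The middle piece is where the second hypothesis enters: by assumption $[i\mathcal{A},[i\mathcal{A},H]_{0}]$ extends to a bounded map $D(H)\to D(H)^{*}$, and since $(z-H)^{-1}:\mathcal{H}\to D(H)$ is bounded, its adjoint (under the natural pivot identification) gives a bounded map $D(H)^{*}\to\mathcal{H}$. Sandwiching the double commutator yields a bounded operator on $\mathcal{H}$.

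It then remains to justify rigorously that the formal derivative is the actual strong derivative. The standard way is to introduce a regularization of $\mathcal{A}$, such as $\mathcal{A}_{\varepsilon} := \mathcal{A}(1+i\varepsilon\mathcal{A})^{-1}$, for which all commutators with $(z-H)^{-1}$ make sense as bona fide operators, verify the identity for $\mathcal{A}_{\varepsilon}$ in place of $\mathcal{A}$ by direct manipulation, and then pass to the limit $\varepsilon\to 0$ using the uniform bounds obtained from the two hypotheses. The main obstacle will be this limiting argument: the double commutator is only given as a quadratic form a priori, so one has to be careful when inserting it between the resolvents and controlling the convergence in the appropriate dual topology, exploiting that $\mathcal{A}_{\varepsilon}$ converges strongly to $\mathcal{A}$ on $D(\mathcal{A})$ and that $[i\mathcal{A}_{\varepsilon},\,\cdot\,]$ is uniformly bounded in $\varepsilon$ on the relevant spaces.
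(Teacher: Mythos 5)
The paper does not prove this lemma: it is simply recalled as a known result from the Mourre-theory literature (the immediately preceding lemma is cited to \cite{ABG}, and this one is presented in the same spirit without proof). So there is no in-paper argument to compare against. Your outline is nonetheless a correct sketch of the standard proof of this statement. The commutator identity $[i\mathcal{A},(z-H)^{-1}] = (z-H)^{-1}[i\mathcal{A},H]_{0}(z-H)^{-1}$ is the right starting point, and your splitting of the formal second derivative into the two outer terms (bounded via the chain $[i\mathcal{A},H]_{0}:D(H)\to\mathcal{H}$ and $(z-H)^{-1}:\mathcal{H}\to D(H)$) and the middle sandwich $(z-H)^{-1}[i\mathcal{A},[i\mathcal{A},H]_{0}](z-H)^{-1}$ (bounded via the hypothesis $D(H)\to D(H)^{*}$ and the dual extension $(z-H)^{-1}:D(H)^{*}\to\mathcal{H}$) is exactly how each piece is controlled. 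You correctly identify that the substance of a rigorous proof lies in the regularization argument via $\mathcal{A}_{\varepsilon}=\mathcal{A}(1+i\varepsilon\mathcal{A})^{-1}$ and the passage to the limit; that step is indeed the only nontrivial part, and since you acknowledge it rather than claim it is routine, the sketch is sound. One small remark: what you are really establishing is that the formal candidate for $F''(0)$ is bounded and that $t\mapsto F'(t)$ is strongly differentiable with that derivative, which is precisely the criterion for $C^{2}(\mathcal{A})$; spelling out the norm-resolvent continuity of $t\mapsto F''(t)$ is not needed since $C^{k}(\mathcal{A})$ is defined via strong $C^{k}$-ness.
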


\subsection{Mourre estimate} \label{Cas2mlptt1Mou}

We will use $\mathcal{A} = \Gamma x$ as conjugate operator where $\Gamma = -\gamma^{0} \gamma^{1} = diag\pare{1,-1,-1,1}$. The operator $\mathcal{A}$ is self-adjoint when equipped with domain
\begin{equation}
D(\mathcal{A}) = \{ \varphi \in \mathcal{H}_{s,n}; \hspace{1mm} \mathcal{A} \varphi \in \mathcal{H}_{s,n} \}.
\end{equation}

\begin{lem} \label{LemMourCond2mlpetit}
For all $m>0$, the pair $\pare{H^{s,n}_{m},\mathcal{A}}$ satisfies the Mourre conditions. Consequently, $H^{s,n}_{m} \in C^{1}(\mathcal{A})$ 
\end{lem}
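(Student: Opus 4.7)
The plan is to verify the three Mourre conditions \eqref{M1}--\eqref{M3} directly, then apply Lemma~\ref{52} to conclude $H_{m}^{s,n} \in C^{1}(\mathcal{A})$. The central algebraic observation is that $\Gamma$ is a constant involution ($\Gamma^{2} = \mathrm{Id}$) that commutes with both $D_{x}$ and multiplication by $x$, so $[i\Gamma D_{x},\Gamma x] = i\Gamma^{2}[D_{x},x] = \mathrm{Id}$. A direct Dirac-algebra computation gives $[\gamma^{0}\gamma^{2},\Gamma] = -2\gamma^{1}\gamma^{2}$ and $[\gamma^{0},\Gamma] = -2\gamma^{1}$, so that with $V(x) = (s+\tfrac12)A(x)\gamma^{0}\gamma^{2} - m B(x)\gamma^{0}$ one obtains
\begin{equation*}
[iH_{m}^{s,n},\mathcal{A}] = \mathrm{Id} - 2i(s+\tfrac12)\, xA(x)\, \gamma^{1}\gamma^{2} + 2im\, xB(x)\, \gamma^{1}.
\end{equation*}
Condition \eqref{M3} then follows from the boundedness of $xA(x)$ and $xB(x)$ on $]-\infty,0[$, which is immediate from \eqref{CompAsymptA}--\eqref{CompAsymptB}: both potentials decay exponentially at $-\infty$, while near $x=0$ one has $A = 1/l + O(x^{2})$ (so $xA \to 0$) and $B = l/(-x) + O(1)$ (so $xB \to -l$).

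For \eqref{M2}, the one-parameter group $e^{it\mathcal{A}} = e^{it\Gamma x}$ is pointwise multiplication by the diagonal unitary with entries $e^{\pm itx}$. The Duhamel formula
\begin{equation*}
[H_{m}^{s,n}, e^{it\mathcal{A}}] = \int_{0}^{t} e^{is\mathcal{A}}\, [H_{m}^{s,n}, i\mathcal{A}]\, e^{i(t-s)\mathcal{A}}\, ds
\end{equation*}
together with \eqref{M3} yields $\|[H_{m}^{s,n}, e^{it\mathcal{A}}]\| \leq |t| \cdot \|[iH_{m}^{s,n},\mathcal{A}]\|$, which simultaneously shows that $e^{it\mathcal{A}}$ preserves $D(H_{m}^{s,n})$ (via $H_{m}^{s,n} e^{it\mathcal{A}} u = e^{it\mathcal{A}} H_{m}^{s,n} u + [H_{m}^{s,n}, e^{it\mathcal{A}}] u$) and provides the required supremum estimate. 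In the regime $2ml < 1$ there is the extra task of preserving the MIT boundary condition \eqref{Dom2mlPetit}: applying $e^{it\Gamma x}$ pointwise to the singular vector $v = (\psi_{-,n}^{s}, i\chi_{-,n}^{s}, -\psi_{-,n}^{s}, i\chi_{-,n}^{s})^{T}$ leaves the leading term $v$ intact, while the first-order correction $(e^{it\Gamma x} - \mathrm{Id}) v = O(x)$ multiplied by the weight $(-x)^{-ml}$ produces an $O((-x)^{1-ml})$ perturbation, which is $o(\sqrt{-x})$ precisely because $ml < 1/2$; the remainder $\phi_{n}^{s}$ keeps its pointwise $\mathcal{W}^{0}$ norm by unitarity and hence remains $o(\sqrt{-x})$.

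For \eqref{M1}, the case $2ml > 1$ is immediate since Proposition~\ref{PropHaa2mlGr} provides $C_{0}^{\infty}(]-\infty,0[) \subset D(\mathcal{A})\cap D(H_{m}^{s,n})$ as a core. In the remaining cases I use dyadic truncations: for $\chi \in C^{\infty}(\R)$ equal to $1$ on $[-1,0]$ and to $0$ on $]-\infty,-2]$, setting $\chi_{n}(x) = \chi(x/n)$ produces $\chi_{n} \psi \in D(\mathcal{A})\cap D(H_{m}^{s,n})$ for every $\psi \in D(H_{m}^{s,n})$ (the MIT condition, if any, is untouched because $\chi_{n} \equiv 1$ near $x = 0$, while compact support in $]-\infty,0[$ supplies membership in $D(\mathcal{A})$). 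Graph-norm convergence $\chi_{n} \psi \to \psi$ follows from $[H_{m}^{s,n},\chi_{n}] = -i\Gamma \chi_{n}'$ with $\|\chi_{n}'\|_{\infty} = O(1/n)$ combined with dominated convergence.

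I expect the main technical obstacle to be the MIT-preservation step within \eqref{M2} in the regime $2ml < 1$, which is the only place that exploits quantitative information about the singular profile rather than a generic bounded-commutator estimate; everything else reduces either to Dirac-algebra manipulations or to standard truncation arguments. Once the three Mourre conditions are established, the $C^{1}(\mathcal{A})$ regularity follows directly from Lemma~\ref{52}.
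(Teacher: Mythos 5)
Your proof is correct and follows essentially the same three-step structure as the paper, with one genuinely different ingredient and a few minor repartitionings.

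The point of divergence is \eqref{M2}. The paper proves invariance of the domain via the explicit formula $H_{m}^{s,n}e^{it\mathcal{A}}\varphi = e^{it\mathcal{A}}H_{m}^{s,n}\varphi + te^{it\mathcal{A}}\varphi$, and then checks the MIT condition by directly Taylor-expanding $\|(\gamma^{1}+i)e^{it\mathcal{A}}\varphi(x,\cdot)\|_{L^{2}(S^{2})}$. Your Duhamel route replaces the explicit formula by the abstract bound $\|[H_{m}^{s,n},e^{it\mathcal{A}}]\|\leq |t|\,\|[iH_{m}^{s,n},\mathcal{A}]\|$. This is actually the safer route: the paper's displayed identity silently drops the contribution from the potential, which does \emph{not} commute with $e^{it\Gamma x}$ — since $\gamma^{0}\gamma^{2}$ and $\gamma^{0}$ anticommute with $\Gamma=-\gamma^{0}\gamma^{1}$, one has $Ve^{it\Gamma x}=e^{-it\Gamma x}V$, so the correct identity is $H_{m}^{s,n}e^{it\mathcal{A}}\varphi = te^{it\mathcal{A}}\varphi + e^{it\mathcal{A}}\Gamma D_{x}\varphi + e^{-it\mathcal{A}}V\varphi$. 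The conclusion is the same, but the paper's intermediate step as written is off. For your Duhamel argument to go through cleanly you should note explicitly that $[iH_{m}^{s,n},\mathcal{A}] = \mathrm{Id} + 2i(s+\tfrac12)xA(x)\gamma^{2}\gamma^{1} + 2im\,xB(x)\gamma^{1}$ is bounded on all of $\mathcal{H}_{s,n}$ (not merely $D(H_{m}^{s,n})\to\mathcal{H}_{s,n}$), which follows from the boundedness of $xA$ and $xB$; this strengthening is what licenses the Duhamel integral. Your handling of the MIT condition via the $(-x)^{-ml}$ profile is equivalent to the paper's $(\gamma^{1}+i)$-norm Taylor argument, and it exploits $ml<\tfrac12$ in exactly the same way. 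For \eqref{M1} you split the cases as $2ml>1$ (core) versus $2ml\leq 1$ (truncation), whereas the paper uses $2ml\geq 1$ versus $2ml<1$; both partitions are fine since Proposition~\ref{PropHaa2mlGr} covers the equality case and the truncation argument works regardless. Condition \eqref{M3} and the concluding application of Lemma~\ref{52} match the paper exactly.
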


\begin{proof}

\indent Let us check \eqref{M1}: \\
\indent Case $2ml <1$:\\
\indent Let $\chi$ be a $C^{\infty}$ function such that $\chi = 1$ on $[-1,0]$, $\supp{\chi} \subset ]-2,0]$. We set $\chi_{k}(x) = \chi\pare{\frac{x}{k}}$ for all $k \in \N \smallsetminus \{0\}$. This implies that $\supp{\chi_{k}(x)} = 1$ on $]-k,0]$. We have $\chi_{k}'(x)=\frac{1}{k} \chi'\pare{\frac{x}{k}}$ so that it is bounded. Using these facts, we see that $\chi_{k} \varphi \in D(\mathcal{A}) \cap D\pare{H^{s,n}_{m}}$ if $\varphi \in D\pare{H^{s,n}_{m}}$. \\
\indent We now show that $\chi_{k} \varphi \underset{k \to \infty}{\to} \varphi$ for the norm: $\norme{\varphi}_{H^{s,n}_{m}} = \norme{\varphi}_{\mathcal{H}^{s,n}} + \norme{H^{s,n}_{m} \varphi}_{\mathcal{H}_{s,n}}$. By the dominate convergence theorem we have $\chi_{k} \varphi \overset{\mathcal{H}_{s,n}}{\underset{k \to \infty}{ \longrightarrow }} \varphi$. Moreover, $\abs{\chi_{k}'(x)} \leq \frac{1}{k} C$, so:
\begin{equation*}
\norme{H^{s,n}_{m} \varphi - H^{s,n}_{m} \chi_{k} \varphi} \leq \frac{C_{0}}{k} \norme{\varphi} + \norme{H^{s,n}_{m} \varphi - \chi_{k} H^{s,n}_{m} \varphi}.
\end{equation*}
which gives the desired result when $k$ goes to infinity for $\varphi \in D\pare{H^{s,n}_{m}}$. We deduce \eqref{M1}. 

We denote $D\pare{H^{s,n}_{m}}_{c} = \left \{ \chi_{k}\varphi; \hspace{1mm} \varphi \in D\pare{H^{s,n}_{m}}, \hspace{1mm} k \in \N \smallsetminus \{0\} \right \}$.

~\\
\indent Case $2ml \geq 1$:\\
In this case, $C^{\infty}_{0} \pare{]-\infty,0[}$ is a subset of $D\pare{\mathcal{A}} \cap D\pare{H_{m}^{s,n}}$ and is dense in $D\pare{H_{m}^{s,n}}$.

~\\
\indent Let us check \eqref{M2}: \\
For all $t>0$,
\begin{equation*}
e^{it\mathcal{A}} = \text{diag}(e^{itx},e^{-itx},e^{-itx},e^{itx}).
\end{equation*}
Let $\varphi \in D\pare{H^{s,n}_{m}}$, then:
\begin{enumerate}
\item[-] $e^{it\mathcal{A}} \varphi \in \mathcal{H}_{s,n}$.
\item[-] $H^{s,n}_{m} e^{it\mathcal{A}} \varphi = e^{it\mathcal{A}} H^{s,n}_{m} \varphi + te^{it\mathcal{A}} \varphi$. So $H^{s,n}_{m} e^{it\mathcal{A}} \varphi \in \mathcal{H}_{s,n}$ and $\underset{\abs{t}\leq 1}{\sup} \norme{H^{s,n}_{m} e^{it\mathcal{A}} \varphi} < \infty$.
\end{enumerate}
We need to check the boundary condition in the case $2ml <1$. We have:
\begin{equation*}
\norme{\pare{\gamma^{1} + i} e^{it\mathcal{A}} \varphi (x,.)}_{\mathcal{W}^{0}} = \norme{\begin{pmatrix}
ie^{itx} \varphi_{1} + i e^{-itx} \varphi_{3} \\
i e^{-itx} \varphi_{2} - i e^{itx} \varphi_{4} \\
i e^{itx} \varphi_{1} + i e^{-itx} \varphi_{3} \\
-i e^{-itx} \varphi_{2} + i e^{itx} \varphi_{4}
\end{pmatrix}}_{[L^{2}\pare{S^{2}}]^{4}}.
\end{equation*}
Let's consider: $\norme{ie^{itx} \varphi_{1} + i e^{-itx} \varphi_{3}}_{L^{2}\pare{S^{2}}}$ when x goes to $0$. By Taylor expansion, we must check that $-x \pare{\norme{\varphi_{1}(x,.)}_{L^{2}\pare{S^{2}}} + \norme{\varphi_{3}(x,.)}_{L^{2}\pare{S^{2}}}}$ is $o\pare{\pare{-x}^{\frac{1}{2}}}$. Since $\varphi \in D\pare{H_{m}^{s,n}}$, there exists functions $\psi_{-} \in W^{\frac{1}{2}}_{-}$, $\chi_{-} \in W^{\frac{1}{2}}_{+}$ and a function $\phi \in C^0 \left([0,\frac{\pi}{2}]_x; L^2(S^2;\C^4) \right)$, such that $\norme{\phi^{s}_{n}(r_{*},\theta,\varphi)}_{\mathcal{W}^{0}} = o \pare{\sqrt{\pare{-x}}}$ as $x \to 0$, satisfying:
\begin{equation*}
 \psi_{s,n}  = \pare{{-x}^{-ml}} \begin{pmatrix}
\psi^{s}_{-,n} (\theta,\varphi) \\
\chi^{s}_{-,n} (\theta,\varphi) \\
-i\psi^{s}_{-,n} (\theta,\varphi)\\
i\chi^{s}_{-,n} (\theta,\varphi)
\end{pmatrix} + \phi^{s}_{n}(r_{*},\theta,\varphi).
\end{equation*}
We thus obtain:
\begin{equation*}
-x \norme{\varphi_{1}(x,.)}_{L^{2}\pare{S^{2}}} \leq C_{s,n}\pare{-x}^{1-ml} -x \pare{o\pare{\pare{-x}^{\frac{1}{2}}}}.
\end{equation*}
Since $1-ml > \frac{1}{2}$ when $ml<\frac{1}{2}$, we have that $-2x \norme{\varphi_{1}(x,.)}_{L^{2}\pare{S^{2}}} = o\pare{\pare{-x}^{\frac{1}{2}}}$. Since $\varphi \in D\pare{H^{s,n}_{m}}$, this proves that the boundary condition is fulfilled and then \eqref{M2}.

~\\
Let us check \eqref{M3}: \\
First, we see that $xA(x)$ and $xB(x)$ are bounded functions on $]- \infty,0[$. Let $u,v \in D\pare{H^{s,n}_{m}}_{c}$ in the case $2ml<1$ and $u,v \in C^{\infty}_{0}\pare{]-\infty,0[}$ in the case $2ml \geq 1$, we have:
\begin{equation} \label{517}
\left [ H^{s,n}_{m},i\mathcal{A} \right ] (u,v) = \prods{u + 2i\pare{s+\frac{1}{2}} xA(x) \gamma^{2}\gamma^{1}u + 2i m xB(x) \gamma^{1}u}{v}.
\end{equation}
This shows that:
\begin{equation*}
\abs{\left [ H^{s,n}_{m},i\mathcal{A} \right ](u,v)} \leq C_{1} \norme{u}_{\mathcal{H}_{s,n}}\norme{v}_{\mathcal{H}_{s,n}}
\end{equation*}
for some constant $C_{1}$ and consequently, \eqref{M3} is satisfied.
\end{proof}

We then have the following:
\begin{prop} \label{54}
Recall that $\mathcal{A} = \Gamma x$. Let $I \subset \R$ be a compact non-empty interval. Then, for all $m>0$, we have:
\begin{equation}
\mathds{1}_{I}\pare{H^{s,n}_{m}} \left [ H^{s,n}_{m},i\mathcal{A} \right ] \mathds{1}_{I}\pare{H^{s,n}_{m}} \geq \mathds{1}_{I}^{2}\pare{H^{s,n}_{m}} + \mathds{1}_{I}\pare{H^{s,n}_{m}}K\mathds{1}_{I}\pare{H^{s,n}_{m}}
\end{equation}
where $\mathds{1}_{I}$ is the characteristic function of $I$ and $K$ is a compact operator.
\end{prop}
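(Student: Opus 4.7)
My plan is to exploit the explicit commutator formula obtained in the proof of Lemma \ref{LemMourCond2mlpetit} together with the compactness result Proposition \ref{3.1}. By equation \eqref{517}, read as an identity of bounded operators from $D(H^{s,n}_m)$ into $\mathcal{H}_{s,n}$, one has
\begin{equation*}
[H^{s,n}_m, i\mathcal{A}] = \mathrm{Id} + R(x), \qquad R(x) := 2i\pare{s+\tfrac{1}{2}}\, xA(x)\,\gamma^{2}\gamma^{1} + 2im\, xB(x)\,\gamma^{1}.
\end{equation*}
Write $R(x)=r_{1}(x)\gamma^{2}\gamma^{1}+r_{2}(x)\gamma^{1}$ with $r_{1}(x)=2i(s+\tfrac{1}{2})xA(x)$ and $r_{2}(x)=2imxB(x)$. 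Thanks to \eqref{CompAsymptA}--\eqref{CompAsymptB} and to the shape of $A_{0},B_{0}$, one has $xA(x)=x/l+O((-x)^{3})\to 0$ and $xB(x)=-l+O((-x)^{2})\to -l$ as $x\to 0$, while both $xA$ and $xB$ decay exponentially as $x\to-\infty$. Hence $r_{1},r_{2}\in C(]-\infty,0])$, go to $0$ at $-\infty$, and admit finite limits at $0$; in particular each $r_{j}$ fits the hypotheses of Proposition \ref{3.1}.

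Sandwiching the commutator identity between $\mathds{1}_{I}(H^{s,n}_m)$ yields
\begin{equation*}
\mathds{1}_{I}(H^{s,n}_m)\,[H^{s,n}_m,i\mathcal{A}]\,\mathds{1}_{I}(H^{s,n}_m) = \mathds{1}_{I}^{2}(H^{s,n}_m) + K,
\end{equation*}
where $K:=\mathds{1}_{I}(H^{s,n}_m)\,R(x)\,\mathds{1}_{I}(H^{s,n}_m)$, so the whole task is to prove $K$ compact. Fix $z\in\rho(H^{s,n}_m)$. Since the constant matrices $\gamma^{2}\gamma^{1}$ and $\gamma^{1}$ commute with the scalar multiplications by $r_{j}(x)$, Proposition \ref{3.1} gives that $r_{j}(x)(H^{s,n}_m-z)^{-1}$ is compact for $j=1,2$, whence so is $R(x)(H^{s,n}_m-z)^{-1}$. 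Factoring
\begin{equation*}
K = \mathds{1}_{I}(H^{s,n}_m)\cdot\bigl[R(x)(H^{s,n}_m-z)^{-1}\bigr]\cdot\bigl[(H^{s,n}_m-z)\mathds{1}_{I}(H^{s,n}_m)\bigr],
\end{equation*}
and noting that the last bracket is bounded by the functional calculus ($I$ compact, so $\lambda\mapsto(\lambda-z)\mathds{1}_{I}(\lambda)$ is bounded on $\R$), we present $K$ as a product bounded-compact-bounded, hence compact. This delivers the claimed estimate with equality, a fortiori with $\geq$.

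The only really substantive point is the compactness of $R(x)(H^{s,n}_m-z)^{-1}$, and the crux is the behaviour of $R(x)$ at the Anti-de Sitter infinity $x=0$: the coefficient $r_{2}$ does not vanish at $0$ but merely admits the finite limit $-2iml$, and the whole point of Proposition \ref{3.1} (as stressed in the remark preceding its statement) is precisely to allow this. The choice $\mathcal{A}=\Gamma x$ is tailored to this situation: it turns the singular $B(x)\sim l/(-x)$ near $0$ into the bounded $xB(x)\to -l$, which is what makes the Mourre estimate work despite the singular potential at AdS infinity.
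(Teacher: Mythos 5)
Your proposal is correct and follows essentially the same line as the paper: decompose the commutator as identity plus a remainder $R(x)$ whose coefficients $xA(x)$, $xB(x)$ extend continuously to $]-\infty,0]$ and vanish at $-\infty$, then invoke Proposition~\ref{3.1} to get compactness. The only technical difference is how the sandwich with spectral projections is handled: the paper inserts a smooth cutoff $\varsigma\in C_0^\infty$ with $\varsigma\equiv 1$ on $I$ and appeals to the Helffer--Sj\"ostrand formula to see that $\varsigma(H_m^{s,n})\cdot R(x)\cdot\varsigma(H_m^{s,n})$ is compact, then multiplies by $\mathds{1}_I(H_m^{s,n})$ on both sides; you instead factor directly through the resolvent, writing $\mathds{1}_I(H_m^{s,n})R(x)\mathds{1}_I(H_m^{s,n})=\mathds{1}_I(H_m^{s,n})\bigl[R(x)(H_m^{s,n}-z)^{-1}\bigr]\bigl[(H_m^{s,n}-z)\mathds{1}_I(H_m^{s,n})\bigr]$ and observing that the last bracket is bounded because $I$ is compact. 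Both routes are sound; yours dispenses with the Helffer--Sj\"ostrand machinery and obtains the estimate as an equality.
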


\begin{proof}
We remark that $x A(x) \underset{x \to -\infty,0}{\to} 0$, that $x B(x) \underset{x \to -\infty}{\to} 0 $ and that $x B(x) \underset{x \to 0}{\to} -l $ using the asymptotic behavior of $A$ and $B$ described in \eqref{CompAsymptA} and \eqref{CompAsymptB}. We obtain
\begin{equation*}
\left [ H^{s,n}_{m},i\mathcal{A} \right ] \geq \text{Id} - \pare{2s+1} x A(x) \gamma^{2}\gamma^{1} -  2m xB(x) \gamma^{1}.
\end{equation*}
Consider a compact non-empty interval $I \subset \R$ and $\tilde{I}$ a compact interval strictly containing $I$. Let $\varsigma \in C^{\infty}_{0}\pare{\tilde{I}}$ such that $\varsigma \equiv 1$ on $I$. We have:
\begin{equation}
\varsigma\pare{H^{s,n}_{m}} \left [ H^{s,n}_{m},i\mathcal{A} \right ]\varsigma\pare{H^{s,n}_{m}}  \geq \varsigma^{2}\pare{H^{s,n}_{m}} + K.
\end{equation}
where $K =  \varsigma\pare{H^{s,n}_{m}} \pare{-\pare{2s+1} x A(x) \gamma^{2}\gamma^{1} -  2m xB(x) \gamma^{1}} \varsigma\pare{H^{s,n}_{m}}$ is compact. Indeed, by proposition \ref{3.1} and the use of Helffer-Sjöstrand formula, we see that $\varsigma\pare{H^{s,n}_{m}}$ multiplied by a good function will be compact. The asymptotic behavior of $A$ and $B$ gives that $x A\pare{x}$ and $xB\pare{x}$ are bounded near $0$ and goes to $0$ at $-\infty$. This gives the compacity of K. Multiplying both sides by $\mathds{1}_{I}\pare{H^{s,n}_{m}}$, this gives the desired result since $\mathds{1}_{I}\varsigma = \mathds{1}_{I}$.
\end{proof}
Using the absence of eigenvalues, we deduce the following corollary:
\begin{cor} \label{corKless}
For all $m>0$, all $\lambda \in \R$ and all $0<\epsilon<1$, there exists a compact non-empty interval $I' \subset \R$ containing $\lambda$ such that:
\begin{equation}
\mathds{1}_{I'}\pare{H^{s,n}_{m}} \left [ H^{s,n}_{m},i\mathcal{A} \right ] \mathds{1}_{I'}\pare{H^{s,n}_{m}} \geq \pare{1-\epsilon}\mathds{1}_{I'}^{2}\pare{H^{s,n}_{m}}.
\end{equation}
Recall that $\mathds{1}_{I'}$ is the characteristic function of $I'$.
\end{cor}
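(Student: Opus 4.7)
The plan is to deduce the strict Mourre estimate from Proposition~\ref{54} by shrinking the interval around $\lambda$ and using the absence of eigenvalues (Proposition~\ref{27}) to absorb the compact remainder. Start with a nested sequence of compact intervals $I_k = [\lambda - 1/k, \lambda + 1/k]$ containing $\lambda$, and apply Proposition~\ref{54} to each $I_k$ to obtain
\begin{equation*}
\mathds{1}_{I_k}\pare{H^{s,n}_{m}} \left[H^{s,n}_{m},i\mathcal{A}\right] \mathds{1}_{I_k}\pare{H^{s,n}_{m}} \geq \mathds{1}^{2}_{I_k}\pare{H^{s,n}_{m}} + \mathds{1}_{I_k}\pare{H^{s,n}_{m}} K \mathds{1}_{I_k}\pare{H^{s,n}_{m}},
\end{equation*}
with the same compact operator $K$ (it does not depend on the precise choice of interval as long as it is contained in a fixed $\tilde I$, which we arrange by taking $I_1$ inside $\tilde I$). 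Thus the only thing to prove is that the norm of $\mathds{1}_{I_k}\pare{H^{s,n}_{m}} K \mathds{1}_{I_k}\pare{H^{s,n}_{m}}$ tends to $0$ as $k \to \infty$.

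For this, observe that by the spectral theorem the projections $\mathds{1}_{I_k}\pare{H^{s,n}_{m}}$ converge strongly, as $k \to \infty$, to the spectral projection of $H^{s,n}_{m}$ onto $\{\lambda\}$, i.e.\ the orthogonal projection onto $\ker\pare{H^{s,n}_{m} - \lambda}$. By Proposition~\ref{27}, $H^{s,n}_{m}$ has no eigenvalues, so this projection is $0$. Hence $\mathds{1}_{I_k}\pare{H^{s,n}_{m}} \to 0$ strongly. A standard fact then implies that for any compact operator $K$, the product $\mathds{1}_{I_k}\pare{H^{s,n}_{m}} K \mathds{1}_{I_k}\pare{H^{s,n}_{m}}$ converges to $0$ in operator norm: indeed strong convergence to $0$ composed with a compact operator yields norm convergence to $0$ (this is where compactness is crucial, since strong convergence alone is not enough), and taking adjoints handles the second factor.

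Given $\epsilon \in (0,1)$, we can therefore choose $k$ large enough so that $\norme{\mathds{1}_{I_k}\pare{H^{s,n}_{m}} K \mathds{1}_{I_k}\pare{H^{s,n}_{m}}} \leq \epsilon$, which gives the operator inequality
\begin{equation*}
\mathds{1}_{I_k}\pare{H^{s,n}_{m}} K \mathds{1}_{I_k}\pare{H^{s,n}_{m}} \geq -\epsilon \, \mathds{1}^{2}_{I_k}\pare{H^{s,n}_{m}}.
\end{equation*}
Setting $I' = I_k$ and inserting this into the Mourre estimate from Proposition~\ref{54} yields the desired conclusion.

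The main (mild) obstacle is justifying the norm convergence $\mathds{1}_{I_k}\pare{H^{s,n}_{m}} K \mathds{1}_{I_k}\pare{H^{s,n}_{m}} \to 0$, which rests on two ingredients that are already in hand: the compactness of $K$ (from Proposition~\ref{54}), and the absence of eigenvalues for $H^{s,n}_{m}$ (Proposition~\ref{27}) which guarantees that the strong limit of the spectral projections is zero. Everything else is routine functional calculus.
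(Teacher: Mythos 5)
Your proof is correct and follows essentially the same route as the paper: shrink the interval around $\lambda$, use the absence of eigenvalues (Proposition~\ref{27}) to get strong convergence of the spectral projections to $0$, and then use compactness of $K$ to upgrade this to norm convergence of the sandwiched term, which can then be absorbed into $\mathds{1}^{2}_{I'}$. Your observation that $K$ can be taken independent of $I_k$ (once $I_1 \subset \tilde I$ is fixed) makes explicit a point the paper handles by simply multiplying its single Mourre estimate on both sides by $\mathds{1}_{I'}(H^{s,n}_m)$; both formulations are fine.
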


\begin{proof}
We have the Mourre estimate with $I$ such that $\lambda \in I$. Let $I' \subset I$ such that $\lambda \in I'$. We can multiply both sides by $\mathds{1}_{I'}\pare{H^{s,n}_{m}}$ to obtain the same inequality with $I$ replaced by $I'$. Since $\lambda$ is not an eigenvalue of $H^{s,n}_{m}$, $\mathds{1}_{I'}\pare{H^{s,n}_{m}}$ tends strongly to $0$ when the size of $I'$ decreases. Then $\mathds{1}_{I'}\pare{H^{s,n}_{m}}K\mathds{1}_{I'}\pare{H^{s,n}_{m}}$ goes to $0$ in the operator norm ($K$ is compact). We can thus choose $I'$ sufficiently small such that the desired inequality holds.
\end{proof}

\section{Propagation estimates}

In this section, we first present abstract results about propagation estimates and the minimal velocity estimate. Then, we apply this to prove that our minimal and maximal velocity is $1$. This will be useful in the proof of asymptotic completeness.

\subsection{Abstract propagation estimates} \label{Estpropaabs}

We present the abstract theory of propagation estimates. Proofs can be found in \cite{DeGe}.\\
\indent Consider a Hilbert space $\mathcal{H}$ and $\pare{H,D\pare{H}}$ a self-adjoint operator on $\mathcal{H}$. Let $\Phi\pare{t}$ be a $C^{1}$ uniformly bounded function with values in $\mathcal{L}\pare{\mathcal{H}}$ defined on $\R^{+}$. We define the Heisenberg derivative of $\Phi$ by:
\begin{equation*}
\mathbb{D} \Phi\pare{t} := \frac{d}{dt} \Phi\pare{t} + i \left[H,\Phi\pare{t} \right].
\end{equation*}

\subsubsection{Basic principle} \label{princbase}
\begin{lem} \cite[Lemma~B.4.1,~B.4.2]{DeGe}
Let $\Phi\pare{t}$ be a $C^{1}$ uniformly bounded function with values in $\mathcal{L}\pare{\mathcal{H}}$ and defined on $\R^{+}$.
\begin{enumerate}
\item[i)] If there exists measurables functions with values in $\mathcal{L}\pare{\mathcal{H}}$ $B\pare{t},B_{i}\pare{t}$, $i=1,\cdots,n$ with
\begin{equation*}
\mathbb{D} \Phi\pare{t}\geq C_{0} B^{*}\pare{t} B\pare{t} - \underset{i=1}{\overset{n}{\sum}} B_{i}^{*}\pare{t} B_{i}\pare{t}
\end{equation*}
such that for all $i\in \left \{1,\cdots,n \right \}$
\begin{equation*}
\ds \int_{1}^{\infty} \norme{B_{i}\pare{t} e^{-itH}u}^{2} dt \leq C \norme{u}^{2}, \hspace{2mm} \forall u \in \mathcal{H}
\end{equation*}
then there exists a constant $C_{1}>0$ such that 
\begin{equation*}
\ds \int_{1}^{\infty} \norme{B\pare{t} e^{-itH}u}^{2} dt \leq C_{1} \norme{u}^{2}, \hspace{2mm} \forall u \in \mathcal{H}.
\end{equation*}
\item[ii)] Suppose that $B_{2,i}\pare{t}$ and $B_{1,i}\pare{t}$ are mesurable functions with value in $\mathcal{L}\pare{\mathcal{H}}$ and that the function $\Phi$ satisfies 
\begin{equation*}
\abs{\prods{\psi_{2}}{\mathbb{D}\Phi\pare{t} \psi_{1}}} \leq \underset{i=1}{\overset{n}{\sum}} \norme{B_{2,i}\pare{t}\psi_{2}}\norme{B_{1,i}\pare{t}\psi_{1}},
\end{equation*}
for all $\psi_{1},\psi_{2} \in \mathcal{H}$, with
\begin{equation*}
\ds \int_{1}^{\infty} \norme{B_{2,i}\pare{t} e^{-itH}u}^{2} dt \leq C_{1} \norme{u}^{2}, \hspace{2mm} \forall u \in \mathcal{H}
\end{equation*}
and
\begin{equation*}
\ds \int_{1}^{\infty} \norme{B_{1,i}\pare{t} e^{-itH}u}^{2} dt \leq C_{1} \norme{u}^{2}, \hspace{2mm} \forall u \in \mathcal{D},
\end{equation*}
where $\mathcal{D}$ is a dense subset of $\mathcal{H}$. Then the limit
\begin{equation*}
s- \underset{t\to \infty}{\lim} e^{itH}\Phi\pare{t}e^{-itH}
\end{equation*}
exists.
\end{enumerate}

\end{lem}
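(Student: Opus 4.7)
The plan is to reduce both statements to the fundamental identity
\begin{equation*}
\frac{d}{dt}\prods{e^{-itH}\psi_2}{\Phi(t)e^{-itH}\psi_1} = \prods{e^{-itH}\psi_2}{\mathbb{D}\Phi(t)\, e^{-itH}\psi_1},
\end{equation*}
valid on a dense set of vectors in $D(H)$ by direct differentiation, and then extended to $\mathcal{H}$ by the uniform boundedness of $\Phi(t)$. Integrating this from $t_1$ to $t_2$ is what will connect the pointwise hypothesis on $\mathbb{D}\Phi$ to a time-integrated conclusion along the evolution.

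For part (i), I would set $\psi_1=\psi_2=u\in D(H)$ and integrate the identity from $1$ to $T$. The left-hand side becomes $\prods{e^{-iTH}u}{\Phi(T)e^{-iTH}u}-\prods{e^{-iH}u}{\Phi(1)e^{-iH}u}$, which is bounded by $2\sup_t\norme{\Phi(t)}\norme{u}^2$ thanks to uniform boundedness. Plugging in the lower bound $\mathbb{D}\Phi(t)\geq C_0 B^*(t)B(t)-\sum_i B_i^*(t)B_i(t)$ yields
\begin{equation*}
C_0\int_1^T\norme{B(t)e^{-itH}u}^2\,dt \leq 2\sup_t\norme{\Phi(t)}\norme{u}^2 + \sum_{i=1}^n\int_1^T\norme{B_i(t)e^{-itH}u}^2\,dt.
\end{equation*}
The right-hand side is uniformly bounded in $T$ by hypothesis, so we let $T\to\infty$ and conclude by density of $D(H)$ in $\mathcal{H}$.

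For part (ii), I would prove that $t\mapsto e^{itH}\Phi(t)e^{-itH}$ is strongly Cauchy on the dense set $\mathcal{D}$. Integrating the identity between $t_1<t_2$ and applying the hypothesized bilinear estimate gives
\begin{equation*}
\abs{\prods{\psi_2}{\pare{e^{it_2H}\Phi(t_2)e^{-it_2H}-e^{it_1H}\Phi(t_1)e^{-it_1H}}\psi_1}} \leq \sum_{i=1}^n\int_{t_1}^{t_2}\norme{B_{2,i}(t)e^{-itH}\psi_2}\norme{B_{1,i}(t)e^{-itH}\psi_1}\,dt.
\end{equation*}
Cauchy--Schwarz in the time variable then factorises each summand into two $L^2$-in-time norms; by the integral hypotheses (using $\mathcal{H}$ for $\psi_2$ and $\mathcal{D}$ for $\psi_1$), each factor is the tail of a convergent integral and hence tends to $0$ as $t_1\to\infty$. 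Taking supremum over $\norme{\psi_2}\leq 1$ shows $e^{itH}\Phi(t)e^{-itH}\psi_1$ is Cauchy in $\mathcal{H}$ for every $\psi_1\in\mathcal{D}$.

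The final step is the standard density argument: the operators $e^{itH}\Phi(t)e^{-itH}$ are uniformly bounded by $\sup_t\norme{\Phi(t)}$, so strong convergence on the dense subspace $\mathcal{D}$ upgrades to strong convergence on all of $\mathcal{H}$. The only mild subtlety is justifying the differentiation identity when $\Phi(t)$ is merely $C^1$ in the strong (not norm) sense and need not preserve $D(H)$; this is handled by a standard regularisation/approximation of $\psi_1,\psi_2\in D(H)$ together with the fact that $\mathbb{D}\Phi(t)$ is a priori defined as a quadratic form, which is precisely the structure used in the hypothesis of (ii). I expect this regularisation to be the only real obstacle, and it is purely technical; everything else reduces to the fundamental theorem of calculus and Cauchy--Schwarz.
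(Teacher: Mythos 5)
Your argument is correct and is essentially the proof given in Dereziński--Gérard (the paper itself simply cites [DeGe, Lemma B.4.1, B.4.2] without reproducing it): integrate the Heisenberg-derivative identity, bound the boundary terms by uniform boundedness of $\Phi$, and for (ii) apply Cauchy--Schwarz in $t$ followed by a density argument. The only imprecision is that in part (ii) only the $\psi_1$-factor needs to vanish as $t_1\to\infty$ while the $\psi_2$-factor is merely uniformly bounded over $\norme{\psi_2}\leq 1$ by $\sqrt{C_1}$, but this does not affect the validity of the argument.
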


\subsubsection{Abstract minimal velocity estimates}

\begin{prop}\cite[Proposition~A.1]{GeNi}
Let $H \in C^{1+\epsilon}\pare{\mathcal{A}}$ for $\epsilon >0$. Let $\Delta$ be an interval such that
\begin{equation*}
\mathbf{1}_{\Delta} \pare{H} \left[H,i\mathcal{A}\right] \mathbf{1}_{\Delta} \pare{H} \geq c_{0} \mathbf{1}_{\Delta} \pare{H}.
\end{equation*}
Then, for all $g \in C^{\infty}_{0} \pare{\R}$, $\supp g \subset \pare{-\infty,c_{0}}$ and for $f \in C^{\infty}_{0}\pare{\Delta}$, we have
\begin{flalign*}
\hphantom{A} & \ds \int_{1}^{\infty} \norme{g\pare{\frac{\mathcal{A}}{t}} f\pare{H} e^{-itH}u}^{2} \frac{dt}{t} \leq C \norme{u}^{2}, \hspace{2mm} \forall u \in \mathcal{H}, \\
& s- \underset{t\to \infty}{\lim} g\pare{\frac{\mathcal{A}}{t}} f\pare{H} e^{-itH} = 0.
\end{flalign*}
\end{prop}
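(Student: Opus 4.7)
The plan is to build a propagation observable from a primitive of $g^{2}$ and feed its Heisenberg derivative into the basic principle of Subsection~\ref{princbase}. Choose $F \in C^{\infty}(\R)$ bounded and nondecreasing with $F'(s) = g^{2}(s)$; because $\supp g \subset (-\infty, c_{0})$, $F$ vanishes at $-\infty$ and is constant on $[c_{0}, +\infty)$. Take
\begin{equation*}
\Phi(t) := f(H) F\pare{\tfrac{\mathcal{A}}{t}} f(H),
\end{equation*}
which is uniformly bounded on $[1,+\infty)$.

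I would then compute $\mathbb{D}\Phi(t)$. The explicit time derivative contributes $-t^{-2} f(H) \mathcal{A} g^{2}(\mathcal{A}/t) f(H)$, while the $C^{1+\epsilon}(\mathcal{A})$ regularity of $H$ allows one to expand
\begin{equation*}
i[H, F(\mathcal{A}/t)] = \tfrac{1}{t}\cdot \tfrac{1}{2}\left\{g^{2}\pare{\tfrac{\mathcal{A}}{t}}, i[H,\mathcal{A}]\right\} + R(t), \qquad \norme{R(t)} = O\pare{t^{-1-\epsilon}},
\end{equation*}
via a Helffer--Sj\"ostrand almost-analytic extension of $F$. Writing $g^{2} = g\cdot g$ and using that $g(\mathcal{A}/t)$ commutes with $\mathcal{A}$, the two contributions combine, after absorbing the higher-order double commutator $[g(\mathcal{A}/t),[g(\mathcal{A}/t),i[H,\mathcal{A}]]]$ into the integrable remainder, into
\begin{equation*}
\mathbb{D}\Phi(t) = \tfrac{1}{t} f(H) g\pare{\tfrac{\mathcal{A}}{t}} \left(i[H,\mathcal{A}] - \tfrac{\mathcal{A}}{t}\right) g\pare{\tfrac{\mathcal{A}}{t}} f(H) + O\pare{t^{-1-\epsilon}}.
\end{equation*}

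Since $\supp g$ is a compact subset of $(-\infty, c_{0})$, there exists $\delta > 0$ with $s \leq c_{0} - \delta$ on $\supp g$, hence $\mathcal{A}/t \leq c_{0}-\delta$ on the range of $g(\mathcal{A}/t)$. Combined with the Mourre estimate $f(H) i[H,\mathcal{A}] f(H) \geq c_{0} f(H)^{2}$ this yields
\begin{equation*}
\mathbb{D}\Phi(t) \geq \tfrac{\delta}{t} f(H) g^{2}\pare{\tfrac{\mathcal{A}}{t}} f(H) + O\pare{t^{-1-\epsilon}}.
\end{equation*}
Integrating $\tfrac{d}{dt}\prods{u_{t}}{\Phi(t) u_{t}} = \prods{u_{t}}{\mathbb{D}\Phi(t) u_{t}}$ with $u_{t} = e^{-itH}u$ from $1$ to $\infty$ and using that $\Phi$ is uniformly bounded delivers the integrated estimate via part (i) of the basic principle.

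For the strong limit I would set $\phi(t) := g(\mathcal{A}/t) f(H) e^{-itH} u$ and prove $\norme{\phi(t)} \to 0$. Differentiating $\norme{\phi(t)}^{2} = \prods{u_{t}}{f(H) g^{2}(\mathcal{A}/t) f(H) u_{t}}$ and expanding the Heisenberg derivative of $\Psi(t) := f(H) g^{2}(\mathcal{A}/t) f(H)$ in the same fashion produces a bound of the form $\tfrac{C}{t} \norme{g(\mathcal{A}/t) f(H) u_{t}}\,\norme{g'(\mathcal{A}/t) f(H) u_{t}}$ plus integrable errors; Cauchy--Schwarz together with the integrated estimate applied to both $g$ and $g'$ (whose supports lie inside $(-\infty,c_{0})$) shows that $\tfrac{d}{dt}\norme{\phi(t)}^{2}$ is $L^{1}$ in $t$. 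Thus $\norme{\phi(t)}^{2}$ has a finite limit at $+\infty$, and the integrated estimate forces that limit to vanish along a subsequence, hence to be $0$, yielding the desired strong convergence. The hardest step is the Helffer--Sj\"ostrand expansion of $i[H, F(\mathcal{A}/t)]$ under only $C^{1+\epsilon}(\mathcal{A})$ regularity and the control of the various commutators linking $f(H)$, $g(\mathcal{A}/t)$ and $i[H,\mathcal{A}]$; once these remainders are in place, the positivity of the leading term follows immediately from the Mourre estimate and the support condition on $g$.
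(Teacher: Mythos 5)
The paper does not prove this proposition --- it is quoted verbatim from \cite{GeNi} and used as a black box, so there is no in-paper proof to compare against. Your argument is the standard Sigal--Soffer/Derezi\'nski--G\'erard propagation-observable scheme, which is almost certainly the route taken in the cited reference, and it is sound. The choice $\Phi(t)=f(H)F(\mathcal{A}/t)f(H)$ with $F'=g^{2}$, the Helffer--Sj\"ostrand expansion of $i[H,F(\mathcal{A}/t)]$ producing the symmetrized leading term $\tfrac{1}{2t}\{g^{2}(\mathcal{A}/t),[iH,\mathcal{A}]_{0}\}$ with an $O(t^{-1-\epsilon})$ remainder, and the reduction of the anticommutator to the sandwiched form $g(\mathcal{A}/t)[iH,\mathcal{A}]_{0}\,g(\mathcal{A}/t)$ at the cost of the double commutator $\tfrac{1}{2}[g(\mathcal{A}/t),[g(\mathcal{A}/t),[iH,\mathcal{A}]_{0}]]$ --- which is indeed $O(t^{-\epsilon})$ in norm under $C^{1+\epsilon}(\mathcal{A})$ and hence contributes $O(t^{-1-\epsilon})$ after multiplication by $1/t$ --- are all correct.

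One step you leave implicit and should state: once you reach $\tfrac{1}{t}f(H)g(\mathcal{A}/t)\bigl(i[H,\mathcal{A}]-\mathcal{A}/t\bigr)g(\mathcal{A}/t)f(H)$, the Mourre inequality is available in the form $f(H)[iH,\mathcal{A}]_{0}f(H)\geq c_{0}f(H)^{2}$, i.e.\ with $f(H)$ on the outside and $g(\mathcal{A}/t)$ on the inside; you must therefore commute $f(H)$ past $g(\mathcal{A}/t)$ before invoking it. Under $C^{1}(\mathcal{A})$ one has $\norme{[f(H),g(\mathcal{A}/t)]}=O(1/t)$, which produces an extra error of order $t^{-2}$ --- still integrable, so the estimate $\mathbb{D}\Phi(t)\geq\tfrac{\delta}{t}f(H)g^{2}(\mathcal{A}/t)f(H)+O(t^{-1-\epsilon})$ holds, but this commutation is what makes the Mourre estimate applicable and should appear explicitly. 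Likewise, you use $\supp g\subset(-\infty,c_{0})$ to replace $c_{0}-\mathcal{A}/t$ by $\delta>0$ only after this commutation, so the order matters. The final passage from integrability of $t\mapsto\norme{\phi(t)}^{2}/t$ and $t\mapsto\tfrac{d}{dt}\norme{\phi(t)}^{2}$ to $\norme{\phi(t)}\to 0$ is exactly part (ii) of the basic principle in Subsection~\ref{princbase}, and it is clean once the $g'$-version of the integrated estimate (also with support in $(-\infty,c_{0})$) is in hand, as you note.
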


\subsection{Propagation estimates}

We have seen that $\left [H_{m}^{s,n},i\mathcal{A} \right ]$ admits a bounded extension from $D\pare{\mathcal{A}} \cap D\pare{H_{m}^{s,n}}$ to $D\pare{H_{m}^{s,n}}$. We denote this extension by $\left [H_{m}^{s,n},i\mathcal{A} \right ]_{0}$. We have:
\begin{equation}
\left [ \left [H_{m}^{s,n},i\mathcal{A} \right ]_{0}, i\mathcal{A} \right ]=4 \pare{\pare{s+\frac{1}{2}} x^{2} A\pare{x} \gamma ^{2} \gamma^{0} + m x^{2} B\pare{x} \gamma^{0}}
\end{equation}
so $\left [ \left [H_{m}^{s,n},i\mathcal{A} \right ]_{0}, i\mathcal{A} \right ]$ extends to a bounded operator to $D\pare{H_{m}^{s,n}}$ with values in $\mathcal{H}_{s,n}$. Using lemma \ref{53}, we deduce that $H \in C^{2}\pare{\mathcal{A}}$. Using the Mourre estimate and a partition of unity argument, this gives:
\begin{prop} \label{61}
For all $m>0$, $g\in C^{\infty}_{0}\pare{\R}$, $\supp\pare{g} \subset \pare{-\infty, 1-\delta}$ and $f \in C^{\infty}_{0}\pare{\R}$, we have:
\begin{flalign}
\hphantom{A} & \ds \int_{1}^{\infty} \norme{g\pare{\frac{\mathcal{A}}{t}} f\pare{H_{m}^{s,n}} e^{-itH_{m}^{s,n}}u}^{2} \frac{\mathrm{d}t}{t} \leq C \norme{u}^{2}, \hspace{2mm} \forall u \in \mathcal{H}_{s,n}, \\
& s- \underset{t\to \infty}{\lim} g\pare{\frac{\mathcal{A}}{t}} e^{-itH_{m}^{s,n}} = 0.
\end{flalign}
\end{prop}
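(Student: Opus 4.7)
The plan is to apply the abstract minimal velocity estimate recalled in Section 6.1.2 to the operator $H_m^{s,n}$ with conjugate operator $\mathcal{A}=\Gamma x$. Two hypotheses are required: the regularity $H_m^{s,n}\in C^{1+\epsilon}(\mathcal{A})$, and a strict Mourre estimate with a favorable constant. The first is already in hand: the computation preceding the statement gives an explicit bounded extension of $[[H_m^{s,n},i\mathcal{A}]_0,i\mathcal{A}]$, so Lemma~\ref{53} yields $H_m^{s,n}\in C^2(\mathcal{A})\subset C^{1+\epsilon}(\mathcal{A})$. The second is precisely Corollary~\ref{corKless}, with the caveat that the estimate there is local in energy: the interval on which it holds depends on the base energy $\lambda$.

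First I would fix $\epsilon\in(0,\delta)$, so that $\supp g\subset(-\infty,1-\delta)\subset(-\infty,1-\epsilon)$, matching the abstract result with constant $c_0=1-\epsilon$. Given $f\in C^\infty_0(\R)$, the set $\supp f$ is compact, so by Corollary~\ref{corKless} it can be covered by finitely many intervals $I'_{\lambda_1},\ldots,I'_{\lambda_N}$, on each of which the Mourre inequality with constant $1-\epsilon$ holds. I then choose a smooth partition $f=\sum_{j=1}^{N}f_j$ with $f_j\in C^\infty_0(I'_{\lambda_j})$. The abstract minimal velocity proposition applies to each $f_j$ separately, delivering both
\[
\int_{1}^{\infty}\norme{g\pare{\frac{\mathcal{A}}{t}}f_j(H_m^{s,n})e^{-itH_m^{s,n}}u}^{2}\frac{\mathrm{d}t}{t}\leq C_j\norme{u}^{2}
\]
and $\mathrm{s}\text{-}\lim_{t\to\infty}g(\mathcal{A}/t)f_j(H_m^{s,n})e^{-itH_m^{s,n}}=0$. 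Decomposing $f(H_m^{s,n})=\sum_j f_j(H_m^{s,n})$ and applying the triangle inequality (in integral form for the first bound) produces the first assertion.

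For the second statement, which drops the spectral cutoff, I would argue by density. Given $u\in\mathcal{H}_{s,n}$, pick $f_n\in C^\infty_0(\R)$ with $0\leq f_n\leq 1$ and $f_n\to 1$ pointwise, so $f_n(H_m^{s,n})u\to u$ in $\mathcal{H}_{s,n}$. Using that $f_n(H_m^{s,n})$ commutes with the propagator, I write
\[
g\pare{\tfrac{\mathcal{A}}{t}}e^{-itH_m^{s,n}}u=g\pare{\tfrac{\mathcal{A}}{t}}\bigl(1-f_n(H_m^{s,n})\bigr)e^{-itH_m^{s,n}}u+g\pare{\tfrac{\mathcal{A}}{t}}f_n(H_m^{s,n})e^{-itH_m^{s,n}}u.
\]
The first term is bounded in norm by $\norme{g}_{\infty}\norme{(1-f_n(H_m^{s,n}))u}$, which I make arbitrarily small by taking $n$ large, uniformly in $t$. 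For this fixed $n$, the second term tends to $0$ as $t\to\infty$ by the first part of the proposition. This gives the required strong limit.

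The only subtlety I foresee is ensuring that the partition of unity argument does not degrade the Mourre constant: one must choose $\epsilon$ \emph{once and for all} strictly less than $\delta$, and then use the same $\epsilon$ when invoking Corollary~\ref{corKless} at each base point $\lambda_j$. This is painless because $\epsilon$ in that corollary is a free parameter. Beyond this, the proof is a mechanical combination of the abstract velocity estimate, the Mourre estimate, and the standard functional-calculus density argument.
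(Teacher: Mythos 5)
Your proposal is correct and follows essentially the same route as the paper's proof: verify $H_m^{s,n}\in C^{2}(\mathcal{A})$ (established just before the proposition), invoke Corollary~\ref{corKless} to cover $\supp f$ by finitely many intervals where the strict Mourre estimate with constant $1-\epsilon$ holds, apply the abstract minimal-velocity estimate on each piece of a subordinate partition of unity, and sum, then remove the energy cutoff in the strong limit by a density argument. The one thing you spell out slightly more carefully than the paper — fixing $\epsilon<\delta$ once and for all so that the constant $c_0=1-\epsilon$ in the abstract estimate dominates $\supp g$ — is a genuine small improvement in rigor but not a different argument.
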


\begin{proof}[Proof of proposition \ref{61}] \label{proof61}
Using the corollary \ref{corKless} where we denote $I$ our interval, we obtain
\begin{flalign*}
\hphantom{A} & \ds \int_{1}^{\infty} \norme{g\pare{\frac{\mathcal{A}}{t}} f\pare{H_{m}^{s,n}} e^{-itH_{m}^{s,n}}u}^{2} \frac{\mathrm{d}t}{t} \leq C \norme{u}^{2}, \hspace{2mm} \forall u \in \mathcal{H}_{s,n}, \\
& s- \underset{t\to \infty}{\lim} g\pare{\frac{\mathcal{A}}{t}} f\pare{H_{m}^{s,n}} e^{-itH_{m}^{s,n}} = 0,
\end{flalign*}
for $f\in C^{\infty}_{0}\pare{I}$ by the abstract velocity estimate. For $f \in C^{\infty}_{0}\pare{\R}$, we can cover $\supp \pare{f}$ by a finite number of intervals $I_{1},\cdots,I_{n}$ where a Mourre estimate holds. Then, we consider a partition of unity subordinate to this cover $\eta_{1},\cdots,\eta_{n}$ and we note $f_{i} = \eta_{i} f$ for all $i=1,\cdots,n$. Then:
\begin{flalign*}
\ds \int_{1}^{\infty} \norme{g\pare{\frac{\mathcal{A}}{t}} f\pare{H_{m}^{s,n}} e^{-itH_{m}^{s,n}}u}^{2} \frac{\mathrm{d}t}{t} & \leq  \underset{i=1}{\overset{n}{\ds \sum}} \ds \int_{1}^{\infty} \norme{g\pare{\frac{\mathcal{A}}{t}} f_{i}\pare{H_{m}^{s,n}} e^{-itH_{m}^{s,n}}u}^{2} \frac{\mathrm{d}t}{t}  \\
& \leq C_{n} \norme{u}^{2}, \hspace{2mm} \forall u \in \mathcal{H}_{s,n},
\end{flalign*}
and:
\begin{equation*}
s- \underset{t\to \infty}{\lim} g\pare{\frac{\mathcal{A}}{t}} f\pare{H_{m}^{s,n}} e^{-itH_{m}^{s,n}}  = \underset{i=1}{\overset{n}{\ds \sum}} s- \underset{t\to \infty}{\lim} g\pare{\frac{\mathcal{A}}{t}} f_{i}\pare{H_{m}^{s,n}} e^{-itH_{m}^{s,n}}= 0.
\end{equation*}
Thanks to a density argument, we obtain the desired limit.
\end{proof}
Proposition \ref{61} allows us to obtain:
\begin{lem} \label{lemVitMin}
Let $J_{-} \in C^{\infty}$ such that $\supp \pare{J_{-}} \subset ]-\infty,1-\epsilon[$ and $J_{-}\pare{x} = 1 $ for all $x \in ]-\infty,1-2\epsilon[$ and let $\chi \in C^{\infty}_{0}$. Then, for all $m>0$, we have:
\begin{flalign}
\hphantom{A} & \int_{1}^{\infty} \norme{J_{-} \pare{\frac{\mathcal{A}}{t}} \chi\pare{H_{m}^{s,n}} e^{-itH_{m}^{s,n}} u }^{2} \frac{dt}{t} \leq C \norme{u}^{2}, \hspace{1mm} \forall u \in \mathcal{H}_{s,n} \\
& \lim_{t\to \infty} J_{-} \pare{\frac{\mathcal{A}}{t}}e^{-itH_{m}^{s,n}} u = 0, \hspace{1mm} \forall u \in \mathcal{H}_{s,n}.
\end{flalign}
\end{lem}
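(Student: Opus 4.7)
The strategy is to reduce Lemma 6.2 to Proposition 6.1 by decomposing $J_-$ into a compactly supported piece, on which Proposition 6.1 applies directly, and a tail supported in the region of very negative $\mathcal{A}/t$, which must be shown to contribute nothing asymptotically. Since $J_-$ equals $1$ on $\pare{-\infty,1-2\epsilon}$ it is not compactly supported, so a direct application of Proposition 6.1 is not available; however the tail corresponds to phase-space regions that the evolution $e^{-itH_m^{s,n}}$ cannot populate, because of the combination of finite propagation speed and the $L^2$-tail behavior of $\chi\pare{H_m^{s,n}}u$.

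Concretely, I fix $R > 1$ and write $J_- = g + h$ with $g \in C^\infty_0\pare{\R}$, $\supp\pare{g} \subset \pare{-\infty,1-\epsilon}$, $g = J_-$ on $[-R,+\infty)$, and $h := J_- - g$ smooth, taking values in $[0,1]$, supported in $\pare{-\infty,-R]$ and equal to $1$ on $\pare{-\infty,-R-1]$. For the $g$-piece, Proposition 6.1 applied with this $g$ and $f = \chi$ immediately yields both the integral bound and the strong limit $\mathrm{s}{-}\lim_{t\to\infty} g\pare{\mathcal{A}/t} e^{-itH_m^{s,n}}u = 0$.

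For the $h$-piece, observe that $\mathcal{A} = \Gamma x$ with $\Gamma = \mathrm{diag}\pare{1,-1,-1,1}$ and $x<0$, so the condition $\mathcal{A}/t \leq -R$ forces $x \leq -Rt$ on spinor components $1,4$ and is empty on components $2,3$ for $t \geq 1$. Thus $h\pare{\mathcal{A}/t}$ acts as multiplication by $h\pare{x/t}\leq \mathbf{1}_{\{x \leq -Rt\}}$ on components $1,4$ only. The control of $\norme{h\pare{\mathcal{A}/t}\chi\pare{H_m^{s,n}} e^{-itH_m^{s,n}}u}$ rests on two ingredients: first, a commutator bound $\norme{[h\pare{\mathcal{A}/t},\chi\pare{H_m^{s,n}}]} = O(1/t)$ obtained via the Helffer--Sj\"ostrand formula from $\norme{[h\pare{\mathcal{A}/t},H_m^{s,n}]} = O(1/t)$ (since $\Gamma^1 D_x$ produces a factor $1/t$ when commuted with $h\pare{\mathcal{A}/t}$); second, the $L^2$-tail fact that $\int_{x \leq -Rt}\norme{w\pare{x,\cdot}}^2 dx \to 0$ for any fixed $w \in \mathcal{H}_{s,n}$. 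Combined with an approximation by compactly supported vectors---for which the first-order hyperbolic nature of $H_m^{s,n}$, with characteristic speeds $\pm 1$, ensures that $\supp\pare{e^{-itH_m^{s,n}}w}$ cannot enter $\{x \leq -Rt\}$ when $R > 1$ and $t$ is large---this produces decay of $\norme{h\pare{\mathcal{A}/t}\chi\pare{H_m^{s,n}} e^{-itH_m^{s,n}}u}$ fast enough to integrate against $dt/t$.

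For part (ii), since $\norme{J_-\pare{\mathcal{A}/t}}_{\mathrm{op}} \leq 1$ uniformly in $t$, it suffices to prove the strong limit on the dense subset $\bigcup_{\tilde\chi \in C_0^\infty}\mathrm{Range}\pare{\tilde\chi\pare{H_m^{s,n}}}$; for $u = \tilde\chi\pare{H_m^{s,n}}v$ in this subset the $g$-piece vanishes by Proposition 6.1(ii) applied without energy cutoff, and the $h$-piece is treated by the same tail-plus-commutator argument as above (with $\chi$ replaced by $\tilde\chi$), which even gives a quantitative rate. The main technical obstacle, and the only point where one must work harder than invoking Proposition 6.1, is precisely the treatment of $h$: one must make quantitative the intuition that a wave governed by a hyperbolic equation of propagation speed $1$ cannot populate the region $\{x \leq -Rt\}$ for $R>1$, and match this decay to the weight $dt/t$ in the integral estimate.
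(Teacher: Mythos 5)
Your decomposition $J_- = g + h$ into a compactly supported piece (handled by Proposition 6.1) and a tail $h$ supported in $\pare{-\infty,-R]$ is essentially the same splitting the paper performs, and your observation that the region $\mathcal{A}/t \leq -R$ touches only the spinor components on which $\mathcal{A}$ acts as $+x$ is correct. Your finite-propagation-speed argument for part (ii) is also sound: approximating $v := \chi\pare{H_m^{s,n}}u$ by compactly supported vectors $v_\delta$, for which $e^{-itH_m^{s,n}}v_\delta$ cannot enter $\{x\leq -Rt\}$ once $t$ exceeds a fixed multiple of the support radius, gives $\limsup_{t\to\infty}\norme{h\pare{\mathcal{A}/t}e^{-itH_m^{s,n}}v}\leq\norme{v-v_\delta}$, and letting $\delta\to0$ and then using density finishes part (ii).

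The gap is in part (i). The strongest quantitative bound finite propagation speed plus truncation can give is $\norme{h\pare{\mathcal{A}/t}e^{-itH_m^{s,n}}v}\leq\norme{\mathds{1}_{\{x<-(R-1)t\}}\,v}$. Inserting this into the integral and applying Fubini,
\begin{equation*}
\int_1^\infty\norme{\mathds{1}_{\{x<-(R-1)t\}}\,v}^2\frac{dt}{t} = \int_{-\infty}^{-(R-1)}\abs{v\pare{x}}^2\,\ln\!\pare{\frac{\abs{x}}{R-1}}dx,
\end{equation*}
which is \emph{not} controlled by $C\norme{v}^2$ uniformly over $v\in L^2$ (take $\abs{v\pare{x}}^2\sim \abs{x}^{-1}\pare{\ln\abs{x}}^{-1-\delta}$ near $-\infty$ for small $\delta>0$). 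Your Helffer--Sj\"ostrand commutator bound $\norme{\left[h\pare{\mathcal{A}/t},\chi\pare{H_m^{s,n}}\right]}=O\pare{1/t}$ only controls an error that is $t^{-3}$-integrable and does not repair the main term. So the assertion that the decay of the $h$-piece is ``fast enough to integrate against $dt/t$'' is not established, and in fact the density-plus-propagation-speed route cannot supply the required uniformity in $u$.

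What the paper does for the tail is a Sigal--Soffer propagation observable rather than a propagation-speed argument. It takes $K$ smooth, supported in $]-\infty,-1-\epsilon/2[$ and equal to $1$ on $]-\infty,-1-\epsilon[$, sets $F\pare{s}=\int_s^\infty K^2\pare{\tau}\,d\tau$ and $\Phi\pare{t}=\chi\pare{H_m^{s,n}}F\pare{\mathcal{A}/t}\chi\pare{H_m^{s,n}}$, and computes the Heisenberg derivative. The decisive point is that the explicit time derivative produces a factor $-\mathcal{A}/t\geq 1+\epsilon/2$ on $\supp K^2$, while the commutator contributes $\left[H_m^{s,n},i\mathcal{A}\right]\approx\mathds{1}$ with the \emph{same} sign; the exponentially small potential corrections are $O\pare{t^{-2}}$. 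This gives $-\mathbb{D}\Phi\pare{t}\geq\frac{2+\epsilon/2}{t}\,\chi\pare{H_m^{s,n}}K^2\pare{\mathcal{A}/t}\chi\pare{H_m^{s,n}}+O\pare{t^{-2}}$, and the abstract propagation lemma of Section 6.1.1 then delivers the $dt/t$-integrability. It is this positivity in the far-left region --- not finite propagation speed --- that furnishes the uniform integral bound, and some version of it is needed to close part (i).
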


\begin{proof}
\begin{enumerate}
\item[1)] Let $\theta_{1},\theta_{2} \in C^{\infty}$ such that $\supp \pare{\theta_{1}} \subset ]-\infty, -1 -\frac{\epsilon}{2}[$, $ \supp \pare{\theta_{2}} \subset ]-1 - \epsilon, 1-\epsilon[$ and $\theta_{1}+\theta_{2} = 1$. Then, using the triangular inequality and the minimal velocity estimate, we only need to prove the integral estimate for $\theta_{1} J_{-}$. \\
\indent So suppose that $K \in C^{\infty}$ such that $\supp \pare{K} \subset ]-\infty, -1 - \frac{\epsilon}{2}[$ and $K\pare{x} =1$ for all $x \in ]-\infty, -1 - \epsilon [$. We define $F\pare{s} = \int_{s}^{\infty} K^{2}\pare{t} dt$ and 
\begin{equation*}
\Phi\pare{t} = \chi\pare{H_{m}^{s,n}} F\pare{\frac{\mathcal{A}}{t}} \chi\pare{H_{m}^{s,n}}
\end{equation*}
such that $\Phi$ is $C^{1}$ uniformly bounded. We have:
\begin{equation*}
\mathbb{D} \Phi \pare{t} = \frac{1}{t} \chi\pare{H_{m}^{s,n}} \frac{\mathcal{A}}{t} K^{2}\pare{\frac{\mathcal{A}}{t}} \chi\pare{H_{m}^{s,n}} + i \chi\pare{H_{m}^{s,n}} \left [ H_{m}^{s,n} , F\pare{\frac{\mathcal{A}}{t}} \right ] \chi\pare{H_{m}^{s,n}},
\end{equation*}
where
\begin{flalign*}
 \left [ H_{m}^{s,n} , F\pare{\frac{\mathcal{A}}{t}} \right ]& = \frac{i}{t} K^{2}\pare{\frac{\mathcal{A}}{t}} + \pare{s+\frac{1}{2}} A\pare{x} \pare{F\pare{-\frac{x}{t}} - F\pare{\frac{x}{t}}} \gamma^{1}\gamma^{2} \\
& \quad - m B\pare{x} \pare{F\pare{-\frac{x}{t}} - F\pare{\frac{x}{t}}} \gamma^{1},
\end{flalign*}
with
\begin{equation*}
\abs{F\pare{\frac{-x}{t}} - F\pare{\frac{x}{t}}} \leq - \frac{2x}{t} \sup_{y\in \left [ \frac{x}{t}, - \frac{x}{t} \right ]} K^{2} \pare{y} \leq  - \frac{2x}{t} \mathds{1}_{\left \{x\leq \pare{-1 - \frac{\epsilon}{2}} t \right \}},
\end{equation*}
where $\mathds{1}$ is the characteristic function and $\sup_{y\in \left [ \frac{x}{t}, - \frac{x}{t} \right ]}K^{2} \pare{y}$ is thought as a function depending on the variables $x$ and $t$. We know that for $x<0$ and $\abs{x}$ sufficiently large, the functions $A$ and $B$ are exponentially decaying. If we fix $T$ sufficiently large, then, since $e^{x} \leq \frac{1}{-x^{3}}$ for $x$ sufficiently small, for all $t \geq T$, we have:
\begin{equation*}
\abs{A\pare{x} \pare{F\pare{-\frac{x}{t}} - F\pare{\frac{x}{t}}}} \leq \frac{C}{t^{2}} \zeta_{\left \{x\leq \pare{-1 - \frac{\epsilon}{2}} T \right \}}.
\end{equation*}
We can do the same thing with $B$. We obtain:
\begin{flalign*}
-\mathbb{D} \Phi \pare{t} & = \frac{1}{t} \chi\pare{H_{m}^{s,n}} \pare{1-\frac{\mathcal{A}}{t}} K^{2}\pare{\frac{\mathcal{A}}{t}} \chi\pare{H_{m}^{s,n}} + O\pare{t^{-2}} \notag \\
& \geq  \frac{2+\frac{\epsilon}{2}}{t} \chi\pare{H_{m}^{s,n}} K^{2}\pare{\frac{\mathcal{A}}{t}} \chi\pare{H_{m}^{s,n}} + O\pare{t^{-2}},
\end{flalign*}
since $\frac{\mathcal{A}}{t} \leq -1-\frac{\epsilon}{2}$ on the support of $K^{2}$. By lemma \ref{princbase}, this shows that: 
\begin{equation} \label{EstK}
\int_{1}^{\infty} \norme{K \pare{\frac{\mathcal{A}}{t}} \chi\pare{H_{m}^{s,n}} e^{-itH_{m}^{s,n}} u }^{2} \frac{dt}{t} \leq C \norme{u}^{2}
\end{equation}
for all $u \in \mathcal{H}_{s,n}$. This proves the first statement of the lemma.
\item[2)]
We next set: 
\begin{equation*}
\Phi\pare{t} = \chi\pare{H_{m}^{s,n}} J_{-}^{2}\pare{\frac{\mathcal{A}}{t}} \chi\pare{H_{m}^{s,n}}.
\end{equation*}
So, we have:
\begin{equation*}
\mathbb{D} \Phi\pare{t} \leq \frac{4\epsilon}{t} \chi\pare{H_{m}^{s,n}} \pare{J_{-}^{'}J_{-}}\pare{\frac{\mathcal{A}}{t}} \chi\pare{H_{m}^{s,n}} + O\pare{t^{-2}}
\end{equation*}
where $\supp \pare{J_{-}' J_{-}} \subset ]1-2\epsilon,1-\epsilon[$ so it is integrable by the minimal velocity estimate. Using lemma \ref{princbase} and the integrability in \ref{EstK},  this gives
\begin{equation*}
\lim_{t \to \infty} e^{itH_{m}^{s,n}} \chi\pare{H_{m}^{s,n}} J_{-}^{2} \pare{\frac{\mathcal{A}}{t}} \chi\pare{H_{m}^{s,n}} e^{-itH_{m}^{s,n}} u = 0, \hspace{1mm} \forall u \in \mathcal{H}_{s,n}.
\end{equation*}
Using the last lemma, we obtain the desired limit by a density argument.
\end{enumerate}
\end{proof}

\begin{prop} \label{prop62}
Let $g \in C^{\infty}$ such that $\supp \pare{g} \subset ]1+\epsilon,\infty[$ with $\epsilon > 0$ and such that $g\pare{x} = 1$ for all $x\in ]1+2\epsilon,\infty[$. Let $\zeta \in C^{\infty}_{0}\pare{\R}$. Then, for all $m>0$, we have:
\begin{flalign}
\hphantom{A} & \ds \int_{1}^{\infty} \norme{g\pare{\frac{\mathcal{A}}{t}} e^{-it H_{m}^{s,n}} \zeta\pare{H_{m}^{s,n}} u }^{2} \frac{\mathrm{d}t}{t} \leq C \norme{u}^{2}, \hspace{2mm} \forall u \in \mathcal{H}_{s,n} \\
& s-\underset{t\to \infty}{\lim} g\pare{\frac{A}{t}} e^{-it H_{m}^{s,n}} = 0.
\end{flalign}
\end{prop}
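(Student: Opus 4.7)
The plan is to adapt the Heisenberg-derivative technique of Lemma \ref{lemVitMin} to a propagation observable localized where $\mathcal{A}/t\gtrsim 1$, exploiting two structural features. First, since $\mathcal{A}=\Gamma x=\mathrm{diag}(x,-x,-x,x)$ with $x<0$, the components $1,4$ see $\mathcal{A}/t<0$ and are annihilated by $g(\mathcal{A}/t)$ for all $t>0$; only components $2,3$ contribute. Second, on components $2,3$, $g(\mathcal{A}/t)\neq 0$ forces $x\leq -(1+\epsilon)t$, and on this region the exponential decay of $A,B$ at $-\infty$ described by \eqref{CompAsymptA}-\eqref{CompAsymptB} makes the perturbation $B_1:=[H_m^{s,n},i\mathcal{A}]-1=2i(s+\tfrac12)xA(x)\gamma^2\gamma^1+2imxB(x)\gamma^1$ (cf.~\eqref{517}) uniformly exponentially small in $t$ on the relevant support.

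\textbf{Integral estimate.} I pick a bounded nondecreasing $F\in C^\infty(\R)$ with $\supp F'\subset[1+\epsilon/2,M]$ and $F'\geq g^2$ on $[1+\epsilon,M]$ for some large $M$, and set $\Phi(t):=-\zeta(H_m^{s,n})F(\mathcal{A}/t)\zeta(H_m^{s,n})$, a uniformly bounded observable. A second-order commutator expansion (justified by the boundedness of $[[H_m^{s,n},i\mathcal{A}],i\mathcal{A}]$ noted in the discussion preceding Proposition~\ref{61}) yields
\begin{equation*}
\mathbb{D}\Phi(t)=\frac{1}{t}\,\zeta(H_m^{s,n})\pare{\frac{\mathcal{A}}{t}-1-B_1}F'(\mathcal{A}/t)\,\zeta(H_m^{s,n})+O(t^{-2}).
\end{equation*}
On $\supp F'$ one has $\mathcal{A}/t-1\geq \epsilon/2$, while the localization to $x\leq -(1+\epsilon/2)t$ on components $2,3$ yields $\norme{B_1 F'(\mathcal{A}/t)}\lesssim e^{-\tau t}$ for some $\tau>0$. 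Hence
\begin{equation*}
\mathbb{D}\Phi(t)\geq \frac{\epsilon}{4t}\,\zeta(H_m^{s,n})F'(\mathcal{A}/t)\zeta(H_m^{s,n})+O(t^{-2}),
\end{equation*}
and the basic-principle lemma of subsubsection~\ref{princbase} gives
\begin{equation*}
\int_1^\infty\norme{\sqrt{F'}(\mathcal{A}/t)\,\zeta(H_m^{s,n})\,e^{-itH_m^{s,n}}u}^2\,\frac{dt}{t}\leq C\norme{u}^2.
\end{equation*}
Since $g\leq\sqrt{F'}$ on $[1+\epsilon,M]$, this covers the part $\mathcal{A}/t\in[1+\epsilon,M]$; the complementary region $\mathcal{A}/t\geq M$ is handled via the finite propagation speed of the Dirac equation (characteristic speed $1$)---on the dense subset of initial data supported in $|x|\leq R_0$ one has $\supp e^{-itH_m^{s,n}}u\subset\{|x|\leq R_0+t\}$, so $\mathcal{A}/t\leq 1+R_0/t<M$ for $t$ sufficiently large and the remaining contribution is finite.

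\textbf{Strong limit.} Following part~(2) of Lemma~\ref{lemVitMin}, I set $\Psi(t):=\zeta(H_m^{s,n})g^2(\mathcal{A}/t)\zeta(H_m^{s,n})$. Its Heisenberg derivative is
\begin{equation*}
\mathbb{D}\Psi(t)=\frac{2}{t}\,\zeta(H_m^{s,n})\pare{1-\frac{\mathcal{A}}{t}+B_1}(gg')(\mathcal{A}/t)\,\zeta(H_m^{s,n})+O(t^{-2}),
\end{equation*}
and since $\supp(gg')\subset[1+\epsilon,1+2\epsilon]$ is compact, Cauchy-Schwarz combined with the integral estimate just established controls $\int_1^\infty\abs{\prods{e^{-itH_m^{s,n}}u}{\mathbb{D}\Psi(t)\,e^{-itH_m^{s,n}}u}}\,dt<\infty$. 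This delivers the existence of $\lim_{t\to\infty}\prods{e^{-itH_m^{s,n}}u}{\Psi(t)\,e^{-itH_m^{s,n}}u}$; the integral estimate forces this limit to vanish, and a standard density argument extends the strong convergence to all of $\mathcal{H}_{s,n}$.

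\textbf{Main obstacle.} The principal difficulty lies in the matrix structure of $B=[H_m^{s,n},i\mathcal{A}]$: since $xB(x)\to-l\neq 0$ at the AdS boundary, the error term $B_1$ is not uniformly small on $\left]-\infty,0\right[$, and the argument essentially rests on the fact that $F'(\mathcal{A}/t)$ localizes (on the only surviving components $2,3$) to $x\leq -(1+\epsilon/2)t\to -\infty$, where the exponential decay of $A,B$ at the horizon takes over. A secondary subtlety is that $g$ is not compactly supported, requiring the finite-propagation-speed argument to handle the region $\mathcal{A}/t\geq M$ once $M$ is fixed.
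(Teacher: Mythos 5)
Your proof follows the paper's scheme exactly: the propagation observable $\Phi(t)=\zeta(H_m^{s,n})\,F(\mathcal{A}/t)\,\zeta(H_m^{s,n})$ with $F'$ supported to the right of $1+\epsilon/2$, the Heisenberg-derivative computation, the observation that the diagonal structure $\mathcal{A}=\mathrm{diag}(x,-x,-x,x)$ together with $x<0$ kills components $1,4$ and localizes components $2,3$ to $x\le-(1+\epsilon)t$, and the exponential decay of $A,B$ at $-\infty$ making the commutator error $O(e^{-\tau t})$. Your modification---truncating $F'$ to a compact support so that $\Phi(t)$ is genuinely a bounded operator---is in fact a legitimate correction: the paper chooses $F(s)=\int_{-\infty}^sJ^2$, which grows linearly at $+\infty$, so $\zeta F(\mathcal{A}/t)\zeta$ is not a uniformly bounded family and the abstract basic-principle lemma, as stated, does not directly apply.

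However, the truncation at $M$ forces you to control the region $\mathcal{A}/t\ge M$ separately, and the finite-propagation-speed argument you propose for this tail has a genuine gap. You apply finite propagation speed to $e^{-itH_m^{s,n}}u$ for $u$ compactly supported in $x$, but the vector appearing in the integral estimate is $e^{-itH_m^{s,n}}\zeta(H_m^{s,n})u$, and $\zeta(H_m^{s,n})$ does not preserve compact $x$-support---there is no dense set of $u$ for which $\zeta(H_m^{s,n})u$ is compactly supported. Moreover, even granting the localization, the remaining contribution you invoke is of order $\int_1^{R_0/(M-1)}\|u\|^2\,\mathrm{d}t/t\lesssim\|u\|^2\ln R_0$: it is finite for each fixed compactly supported $u$, but it is not of the form $C\|u\|^2$ with $C$ independent of $R_0$, so it cannot be propagated to all of $\mathcal{H}_{s,n}$ by density. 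As a consequence, the first (integral) estimate of the proposition is not established by your argument. The strong-limit claim is unaffected, since in part (2) the Heisenberg derivative of $\zeta J^2(\mathcal{A}/t)\zeta$ involves only the compactly supported factor $JJ'$, which lies in the range where your truncated $F'$ already gives the propagation estimate; but a rigorous proof of the full integral estimate would require either a tail bound uniform in $M$ (for instance by summing Heisenberg estimates over a dyadic decomposition of $[M,\infty)$) or a reformulation of the statement in terms of compactly supported localization functions.
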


\begin{proof}[Proof of the proposition \ref{prop62}]
Let $J \in C^{\infty}\pare{\R}$ such that $\supp \pare{J} \subset \pare{1+ \epsilon,+\infty}$ with $\epsilon>0$ and $J\pare{x} = 1$ for all $x \in]1+2\epsilon,+\infty[$. Let $\zeta \in C^{\infty}_{0}\pare{\R}$. We define 
\begin{equation*}
F\pare{s} = \ds \int_{-\infty}^{s} J^{2}\pare{u} \mathrm du
\end{equation*}
and 
\begin{equation*}
\Phi\pare{t} = \zeta\pare{H_{m}^{s,n}} F\pare{\frac{\mathcal{A}}{t}} \zeta\pare{H_{m}^{s,n}}
\end{equation*}
so that $\Phi$ is $C^{1}$ uniformly bounded. As in the last proof, we calculate the Heisenberg derivative of $\Phi$ and thanks to the support of $J$, we obtain:
\begin{flalign}
- \mathbb{D}\Phi\pare{t} & \geq \frac{\epsilon}{t} \zeta\pare{H_{m}^{s,n}} J^{2}\pare{\frac{\mathcal{A}}{t}} \zeta\pare{H_{m}^{s,n}}  + \zeta\pare{H_{m}^{s,n}} \left (i\pare{s+\frac{1}{2}} A\pare{x} \right . \notag \\
& \quad \left . \pare{F\pare{\frac{-x}{t}}-F\pare{\frac{x}{t}}} \gamma^{2} \gamma^{1}  + i m B\pare{x} \pare{F\pare{\frac{-x}{t}}-F\pare{\frac{x}{t}}} \gamma^{1} \right )\zeta\pare{H_{m}^{s,n}},
\end{flalign}
and we have:
\begin{equation*}
\abs{F\pare{\frac{-x}{t}}-F\pare{\frac{x}{t}}} \leq \frac{-2x}{t} \underset{y\in \left [ \frac{x}{t},\frac{-x}{t} \right ]}{\sup} J^{2}\pare{y} \mathds{1}_{\left \{1+\epsilon \leq \frac{-x}{t} \right \}}.
\end{equation*}
Using the exponential decay of $A$ and $B$, we obtain:
\begin{flalign}
\hphantom{A} & \zeta\pare{H_{m}^{s,n}} \left (i\pare{s+\frac{1}{2}} A\pare{x} 
 \pare{F\pare{\frac{-x}{t}}-F\pare{\frac{x}{t}}} \gamma^{2} \gamma^{1} \right . \notag \\
 & \left . + i m B\pare{x} \pare{F\pare{\frac{-x}{t}}-F\pare{\frac{x}{t}}} \gamma^{1} \right )\zeta\pare{H_{m}^{s,n}} = O\pare{e^{-\frac{\kappa}{2}t}}
\end{flalign}
for $t$ sufficiently large. We deduce that:
\begin{equation} \label{617}
\ds \int_{1}^{\infty} \norme{J\pare{\frac{\mathcal{A}}{t}} e^{-it H_{m}^{s,n}} \zeta\pare{H_{m}^{s,n}} u }^{2} \frac{\mathrm{d}t}{t} \leq C \norme{u}^{2}, \hspace{2mm} \forall u \in \mathcal{H}_{s,n}.
\end{equation}
Next, we use:
\begin{equation*}
\Phi\pare{t} = \zeta\pare{H_{m}^{s,n}} J^{2}\pare{\frac{\mathcal{A}}{t}} \zeta\pare{H_{m}^{s,n}},
\end{equation*}
and obtain:
\begin{flalign*}
\mathbb{D}\Phi\pare{t} & = \frac{2}{t} \zeta\pare{H_{m}^{s,n}} \frac{-\mathcal{A}}{t} J\pare{\frac{\mathcal{A}}{t}} J'\pare{\frac{\mathcal{A}}{t}} \zeta\pare{H_{m}^{s,n}} + \frac{2}{t} \zeta\pare{H_{m}^{s,n}}  J\pare{\frac{\mathcal{A}}{t}} J'\pare{\frac{\mathcal{A}}{t}} \zeta\pare{H_{m}^{s,n}}  \\
& \quad + \zeta\pare{H_{m}^{s,n}} \left (i\pare{s+\frac{1}{2}} A\pare{x} 
 \pare{J^{2}\pare{\frac{-x}{t}}-J^{2}\pare{\frac{x}{t}}} \gamma^{2} \gamma^{1} \right .  \\
 & \left . \quad + i m B\pare{x} \pare{J^{2}\pare{\frac{-x}{t}}-J^{2}\pare{\frac{x}{t}}} \gamma^{1} \right )\zeta\pare{H_{m}^{s,n}}.
\end{flalign*}
The first two terms are integrable due to the support of $J$ and \eqref{617}. The last two are also integrable using the support of $J$. Consequently:
\begin{equation*}
s-\underset{t\to \infty}{\lim} J\pare{\frac{\mathcal{A}}{t}} e^{-it H_{m}^{s,n}} \zeta\pare{H_{m}^{s,n}} 
\end{equation*}
exists and is zero by \eqref{617}. The proposition follows by density.
\end{proof}

\section{Asymptotic completeness}

\subsection{Comparison operator}
Our comparison operator will be $H_{c}$ defined by:
\begin{equation}
H_{c} = i \gamma^{0} \gamma^{1} \partial_{x}
\end{equation}
where $\gamma^{0} \gamma^{1} = \text{diag}\pare{-1,1,1,-1}$ and with domain:
\begin{equation}
D\pare{H_{c}}= \left \{ \varphi \in \mathcal{H}_{s,n}; H_{c} \varphi \in \mathcal{H}_{s,n}, \hspace{1mm} \varphi_{1} \pare{0} = - \varphi_{3} \pare{0}, \hspace{1mm} \varphi_{2} \pare{0} = \varphi_{4} \pare{0} \right \}
\end{equation}
By proposition \ref{propH0aa}, this is a self-adjoint operator on its domain.

\subsection{Asymptotic completeness}

Recall that $\mathcal{A} = \Gamma x$ where $\Gamma = - \gamma^{0} \gamma^{1}$. We have:

\begin{theo}[Asymptotic completeness for fixed harmonics]
For all $m>0$ and all $\varphi \in \mathcal{H}_{s,n}$, the limits
\begin{flalign}
\hphantom{A} & \lim_{t \to \infty} e^{it H_{c}} e^{-itH_{m}^{s,n}} \varphi \\
& \lim_{t \to \infty} e^{itH_{m}^{s,n}} e^{-itH_{c}} \varphi
\end{flalign}
exist. If we denote them by:
\begin{flalign}
\hphantom{A} & \Omega_{s,n} \varphi =\lim_{t \to \infty} e^{it H_{c}} e^{-itH_{m}^{s,n}} \varphi \\
& W_{s,n} \varphi = \lim_{t \to \infty} e^{itH_{m}^{s,n}} e^{-itH_{c}} \varphi
\end{flalign}
for all $\varphi \in \mathcal{H}_{s,n}$, we have $\Omega_{s,n}^{*} = W_{s,n}$.

\end{theo}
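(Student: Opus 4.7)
The plan is to combine Cook's method with the propagation estimates of Section~6. By density, it suffices to establish both limits on the subspace of vectors of the form $\varphi = \chi(H_m^{s,n})\psi$ with $\chi \in C_0^\infty(\mathbb{R})$ (which lies in $D(H_m^{s,n})$), and the analogous spectrally cut-off vectors for $H_c$; on these vectors the differentiations below are legitimate.

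For the existence of $\Omega_{s,n}\varphi$ I would compute
\begin{equation*}
\frac{d}{dt}\pare{e^{itH_c}e^{-itH_m^{s,n}}\varphi} = -i\,e^{itH_c}\,V(x)\,e^{-itH_m^{s,n}}\varphi,
\end{equation*}
where $V(x) = H_m^{s,n} - H_c = (s+\tfrac12)A(x)\gamma^0\gamma^2 - mB(x)\gamma^0$, and aim to show the right-hand side is $L^1$ in $t$. Writing $1 = J_-(\mathcal{A}/t) + J_+(\mathcal{A}/t)$ with cutoffs as in Lemma~\ref{lemVitMin}, the matrix structure $\mathcal{A} = \Gamma x$ with $\Gamma = \operatorname{diag}(1,-1,-1,1)$ and $x<0$ forces $J_+(\mathcal{A}/t)$ to vanish identically on the $\Gamma = +1$ components (the condition $x/t > 1-\epsilon > 0$ is empty) and to localize on the $\Gamma = -1$ components to $x < -(1-\epsilon)t$. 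The exponential decay of $A-A_0$ and $B-B_0$ from $T^{\vartheta,2}$, $T^{\beta,1}$ with $A_0 = B_0 = 0$ on $x \leq -2$ then yields $\norme{V J_+(\mathcal{A}/t)}_{\mathcal{L}(\mathcal{H}_{s,n})} \leq C e^{-c(1-\epsilon)t}$, which is clearly integrable.

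The $J_-$ piece is the core difficulty. The Hardy-type inequality from the proof of Proposition~\ref{PropHaa2mlGr} shows that $V\chi(H_m^{s,n})$ extends to a bounded operator, so that
\begin{equation*}
\norme{V J_-(\mathcal{A}/t)e^{-itH_m^{s,n}}\varphi} \lesssim \norme{J_-(\mathcal{A}/t)e^{-itH_m^{s,n}}\varphi} + (\text{commutator terms}).
\end{equation*}
To pass from the $L^2(dt/t)$ bound of the minimal velocity estimate to the $L^1(dt)$ integrability needed in Cook's method, I would run a second propagation argument with observable $\Phi(t) = \chi(H_m^{s,n})F(\mathcal{A}/t)\chi(H_m^{s,n})$ where $F$ is a primitive of $-2J_-^2$; the Heisenberg derivative modulo integrable errors gives enough pointwise decay of $\norme{J_-(\mathcal{A}/t)e^{-itH_m^{s,n}}\varphi}$ to close the Cook estimate by Cauchy--Schwarz. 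The existence of $W_{s,n}$ is parallel but considerably easier: the explicit form of $e^{-itH_c}$ as translation on each component (with reflection at $x=0$ enforced by the boundary condition $\varphi_1(0) = -\varphi_3(0)$, $\varphi_2(0) = \varphi_4(0)$) allows one to localize a dense set of $\psi$ so that $e^{-itH_c}\psi$ is supported where both $A$ and $B$ are exponentially small, making $V e^{-itH_c}\psi$ exponentially small in $t$.

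Finally, the identity $\Omega_{s,n}^* = W_{s,n}$ follows at once from the existence of both limits and the unitarity of $e^{itH_c}$ and $e^{-itH_m^{s,n}}$: for all $\varphi,\psi \in \mathcal{H}_{s,n}$ and every $t$,
\begin{equation*}
\prods{e^{itH_c}e^{-itH_m^{s,n}}\varphi}{\psi} = \prods{\varphi}{e^{itH_m^{s,n}}e^{-itH_c}\psi},
\end{equation*}
and letting $t \to \infty$ on both sides yields $\prods{\Omega_{s,n}\varphi}{\psi} = \prods{\varphi}{W_{s,n}\psi}$. The main obstacle throughout is securing $L^1(dt)$ integrability of the Cook integrand in the presence of the singularity $B(x) \sim l/(-x)$ at the anti-de Sitter boundary; this is precisely where the combination of the minimal velocity estimate, the Hardy-type control inherited from the domain description of Section~3, and an additional propagation-estimate iteration must be used together, and where the bulk of the work lies.
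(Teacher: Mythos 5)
Your closing duality argument is correct and is exactly the paper's. The existence proof, however, contains a genuine gap, because it rests on Cook's method and the paper deliberately avoids Cook. The paper partitions with three cutoffs $J_- + J_0 + J_+ = 1$: the $J_\pm$ pieces die by the minimal/maximal velocity estimates (Lemma~\ref{lemVitMin} and Proposition~\ref{prop62}) together with unitarity of $e^{itH_c}$, and for the remaining piece it proves the existence of $\lim_{t\to\infty} e^{itH_c}J_0(\mathcal{A}/t)e^{-itH_m^{s,n}}\varphi$ by the abstract Lemma~\ref{princbase}(ii) applied to the two-Hamiltonian observable $\Phi(t)=\chi(H_c)J_0(\mathcal{A}/t)\chi(H_m^{s,n})$. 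The reason $J_0$ is tractable is the very geometry you noticed for $J_+$: since $x<0$, $J_0(\mathcal{A}/t)$ kills the $\Gamma=+1$ components and localizes the $\Gamma=-1$ components to $-(1+2\epsilon)t\leq x\leq -(1-2\epsilon)t$, where $A$ and $B$ are exponentially small, so the $V$-contribution to the Heisenberg derivative is $O(e^{-ct})$ and the $J_0'J_0$-term is integrable by the velocity estimates. Crucially, Lemma~\ref{princbase}(ii) only asks for $L^2(dt/t)$-type bounds, not an $L^1(dt)$ rate.

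Your Cook strategy, by contrast, requires $\int_1^\infty \norme{V\,J_-(\mathcal{A}/t)\,e^{-itH_m^{s,n}}\varphi}\,dt <\infty$, and that is where the argument breaks. Unlike $J_0$ or your $J_+$, the region $\{\abs{x}/t<1-\epsilon\}$ does not avoid the singularity of $B$ at $x=0$; the Hardy inequality can absorb $V$ against the graph norm, but it supplies no decay in $t$. What the propagation estimates give for $J_-$ is only $\int_1^\infty\norme{J_-(\mathcal{A}/t)\chi(H_m^{s,n})e^{-itH_m^{s,n}}u}^2\,\tfrac{dt}{t}\leq C\norme{u}^2$ and the strong limit $J_-(\mathcal{A}/t)e^{-itH_m^{s,n}}u\to 0$. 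Neither is an $L^1(dt)$ bound: there is no Cauchy--Schwarz passing from $L^2(dt/t)$ to $L^1(dt)$ on $[1,\infty)$ (the latter measure is infinite), and strong convergence carries no rate. The ``second propagation argument with observable $\chi(H_m^{s,n})F(\mathcal{A}/t)\chi(H_m^{s,n})$'' you invoke is precisely the mechanism already used in the proof of Lemma~\ref{lemVitMin}, and it only yields existence of a strong limit, not a quantitative pointwise decay of $\norme{J_-(\mathcal{A}/t)e^{-itH_m^{s,n}}\varphi}$. As written this step does not close, and it is exactly the difficulty the paper's non-Cook approach is designed to bypass.
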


\begin{proof}
Let $J_{-},J_{0},J_{+} \in C^{\infty}$ such that $J_{-} + J_{0} + J_{+} = 1$, the supports of $J_{-},J_{+}$ are as in \ref{prop62} and \ref{lemVitMin}, and $J_{0}=1$ on $]1-\epsilon,1+\epsilon[$, $\supp \pare{J_{0}} \subset ]1-2\epsilon,1+2\epsilon[$ with $\epsilon >0$. Using proposition \ref{prop62} and lemma \ref{lemVitMin}, it suffices to prove that, for all $\varphi \in \mathcal{H}_{s,n}$, the limit:
\begin{equation*}
\lim_{t \to \infty} e^{itH_{c}} J_{0}\pare{\frac{\mathcal{A}}{t}} e^{-itH_{m}^{s,n}} \varphi
\end{equation*}
exists. We remark that $J_{0}\pare{\frac{x}{t}} \neq 0$ if and only if $ x \geq \pare{1-2\epsilon}t >0$. Since $x<0$, $J_{0}\pare{\frac{x}{t}} = 0$, for all $t>0$ and $x<0$. We thus have:
\begin{equation*}
J_{0}\pare{\frac{\mathcal{A}}{t}} = J_{0}\pare{\frac{-x}{t}} M_{0}
\end{equation*}
where $M_{0}= \text{diag} \pare{0,1,1,0}$.  We then define:
\begin{equation*}
\Phi\pare{t} = \chi \pare{H_{c}} J_{0}\pare{\frac{\mathcal{A}}{t}} \chi \pare{H_{m}^{s,n}},
\end{equation*}
and, denoting $V\pare{x} = \pare{s + \frac{1}{2}} A\pare{x} \gamma^{1}\gamma^{2} - m B\pare{x} \gamma^{0}$, we have:
\begin{flalign*}
\mathbb{D} \Phi\pare{t}& = \frac{d}{dt} \Phi\pare{t} + i \pare{H_{c} \Phi\pare{t} - \Phi\pare{t} H_{m}^{s,n}} \\
&= \frac{2}{t} \chi\pare{H_{c}} \pare{\frac{x}{t} +1} \pare{J_{0}'J_{0}}\pare{\frac{-x}{t}} M_{0} \chi\pare{H_{m}^{s,n}}  - i \chi\pare{H_{c}} J_{0}^{2}\pare{\frac{-x}{t}} M_{0} V\pare{x} \chi\pare{H_{m}^{s,n}}.
\end{flalign*}
On the support of $J_{0}'J_{0}$, we have $\frac{x}{t} + 1 \leq 2\epsilon$. Moreover, $J_{0}\pare{\frac{-x}{t}} \neq 0$ if and only if $-\pare{1+2\epsilon} t \leq x \leq - \pare{1-2\epsilon}t$. Since $A,B$ are exponentially decreasing at $-\infty$, we obtain:
\begin{equation*}
\mathbb{D} \Phi \pare{t} \leq \frac{4\epsilon}{t} \chi\pare{H_{c}}  \pare{J_{0}'J_{0}}\pare{\frac{\mathcal{A}}{t}}  \chi\pare{H_{m}^{s,n}} + O\pare{t^{-2}}.
\end{equation*}
Using the support of $J_{0}'J_{0}$, minimal and maximal velocity estimates, the right hand side is integrable. Hence the limit exists. We can show that the second limit exists in the same way. Finally, for all $t>0$ and $\varphi,\psi \in \mathcal{H}_{s,n}$, we have $\prods{e^{itH_{c}}e^{-itH_{m}^{s,n}}\varphi}{\psi} = \prods{\varphi}{e^{itH_{m}^{s,n}}e^{-itH_{c}}\psi}$ which proves the last statement.
\end{proof}
Therefore, we obtain:
\begin{theo}[Asymptotic completeness] \label{Théorème6.5}
For all $m>0$ and all $\varphi \in \mathcal{H}$, the limits:
\begin{flalign}
\hphantom{A} & \lim_{t \to \infty} e^{it H_{c}} e^{-itH_{m}} \varphi \\
& \lim_{t \to \infty} e^{itH_{m}} e^{-itH_{c}} \varphi
\end{flalign}
exist. If we denote these limits by $\Omega \varphi$ and $W \varphi$ respectively, we have $\Omega^{*} = W$.
\end{theo}

\begin{proof}
We can decompose $\varphi = \underset{\pare{s,n}\in I}{\sum} \varphi_{s,n}$ where $\varphi_{s,n} \in \mathcal{H}_{s,n}$ and $\underset{\pare{s,n}\in I}{\sum} \norme{\varphi_{s,n}}^{2}_{\mathcal{H}_{s,n}} < \infty$. We have:
\begin{equation*}
e^{it H_{c}} e^{-itH_{m}} \varphi = \underset{\pare{s,n}\in I}{\sum} e^{it H_{c}} e^{-itH_{m}^{s,n}}\varphi_{s,n}.
\end{equation*}
Since $\lim_{t \to \infty} e^{it H_{c}} e^{-itH_{m}^{s,n}}\varphi_{s,n} = \Omega_{s,n} \varphi_{s,n}$ exists for all $\pare{s,n} \in I$ and $e^{it H_{c}} e^{-itH_{m}^{s,n}}$ is unitary, we deduce, using the dominate convergence theorem, that the limit in the theorem exists. We can do the same for the other limit. The last statement follows as in the last proof.
\end{proof}

\section{Asymptotic velocity}

\subsection{Abstract theory}
In this section, we follow the appendix $B.2$ in \cite{DeGe}. We consider a sequence $\pare{B_{n}}_{n\in \N}$ of vectors of self-adjoint operators which commute in a Hilbert space $\mathcal{H}$. More precisely:
\begin{equation*}
B_{n}= \pare{B^{1}_{n},\cdots,B^{m}_{n}},\hspace{5mm} \left [ B^{i}_{n}, B^{j}_{n} \right] = 0, \hspace{5mm} 0\leq i,j \leq m, \hspace{5mm} n=1,2,\cdots .
\end{equation*}
We have the following proposition:
\begin{prop}
Suppose that, for all $g \in C_{\infty}\pare{\R^{m}}$, there exists
\begin{equation}\label{B.2.1DEGE}
\mathrm{s}-\underset{n \to \infty}{\lim} g \pare{B_{n}}.
\end{equation}
Then there exists a unique vector of self-adjoint operators 
\begin{equation}
B= \pare{B^{1},\cdots,B^{m}}
\end{equation}
such that \eqref{B.2.1DEGE} is equal to $g\pare{B}$. $B$ is densely defined if, for some $g\in C_{\infty}\pare{\R^{m}}$ such that $g\pare{0}=1$, we have:
\begin{equation}
\mathrm{s}-\underset{R \to \infty}{\lim}\pare{\mathrm{s}- \underset{t \to \infty}{\lim} g\pare{R^{-1}B_{n}}} = \mathds{1}.
\end{equation}
\end{prop}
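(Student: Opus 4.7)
The plan is to realize the assignment $\Phi: g \mapsto \mathrm{s}-\underset{n\to\infty}{\lim}\, g(B_n)$ as a $*$-homomorphism from the commutative $C^*$-algebra $C_{\infty}(\R^m)$ into $\mathcal{L}(\mathcal{H})$, and then invoke the standard correspondence between such representations and joint projection-valued measures on $\R^m$ (spectral theorem / Gelfand--Naimark).

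First I would check that $\Phi$ is a well-defined $*$-homomorphism. Linearity and the contraction bound $\|\Phi(g)\| \leq \|g\|_{\infty}$ follow from the uniform estimate $\|g(B_n)\| \leq \|g\|_{\infty}$ and the hypothesis that the strong limit exists for every $g$. Multiplicativity uses the commutation of the components of each $B_n$: one has $g(B_n) h(B_n) = (gh)(B_n)$, and strong convergence together with the uniform bound on $g(B_n)$ yields $\Phi(g)\Phi(h) = \Phi(gh)$. Compatibility with the involution, $\Phi(\bar g) = \Phi(g)^*$, is obtained by passing to the weak limit in the identity $\prods{g(B_n)^{*}\varphi}{\psi} = \prods{\varphi}{g(B_n)\psi}$.

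Second I would extract the spectral data. Any $*$-homomorphism of $C_\infty(\R^m)=C_0(\R^m)$ into $\mathcal{L}(\mathcal{H})$ arises from a unique projection-valued measure $E$ on the Borel sets of $\R^m$ with $\Phi(g) = \int_{\R^m} g(\lambda)\, dE(\lambda)$. Setting
\begin{equation*}
B^{i} := \int_{\R^m} \lambda_{i}\, dE(\lambda), \qquad i=1,\ldots,m,
\end{equation*}
on the natural maximal domains, one obtains mutually commuting self-adjoint operators satisfying $g(B) = \Phi(g)$ for all $g \in C_\infty(\R^m)$. Uniqueness of the tuple $B$ follows from Stone--Weierstrass, since $C_\infty(\R^m)$ separates points of $\R^m$ and therefore determines the joint spectral measure $E$, hence the $B^{i}$, uniquely.

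For the density claim, take $g \in C_\infty(\R^m)$ with $g(0)=1$ as in the hypothesis and set $g_R(\lambda) := g(R^{-1}\lambda) \in C_\infty(\R^m)$. By construction
\begin{equation*}
\mathrm{s}-\underset{n\to\infty}{\lim}\, g(R^{-1}B_n) = \Phi(g_R) = \int_{\R^m} g(R^{-1}\lambda)\, dE(\lambda),
\end{equation*}
which converges strongly to $E(\R^m)$ as $R\to\infty$ by dominated convergence (pointwise $g(R^{-1}\lambda)\to 1$, uniformly bounded by $\|g\|_{\infty}$). The hypothesis forces $E(\R^m)=\mathds{1}$, so each $B^{i}$ is densely defined, its domain containing the dense union $\bigcup_{k} \mathrm{Ran}\,E(\{|\lambda_{i}|\leq k\})$. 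The main subtlety sits in the $*$-compatibility of $\Phi$, where strong convergence of $g(B_n)$ must be promoted to weak convergence of the adjoints so that the identity $g(B_n)^{*}=\bar g(B_n)$ survives in the limit; the remainder of the argument is classical commutative $C^*$-algebra theory.
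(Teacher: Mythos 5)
The paper itself does not prove this proposition: it is a recalled abstract result, stated without proof and attributed to Appendix B.2 of Dereziński--Gérard \cite{DeGe}, with the rest of Section 8 simply applying it. Your argument is therefore a reconstruction rather than a comparison, and it is essentially the standard one: promote $g\mapsto \mathrm{s}\text{-}\lim g(B_n)$ to a $*$-homomorphism $\Phi\colon C_\infty(\R^m)\to\mathcal{L}(\mathcal{H})$ (using uniform boundedness to get multiplicativity through the strong limit, and the weak convergence of adjoints to get $*$-compatibility), identify it with a projection-valued measure $E$ on $\R^m$, define $B^i=\int\lambda_i\,dE$, and read off uniqueness from the fact that $C_\infty(\R^m)$ separates points and generates the spectral measure. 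The density claim via dominated convergence and $E(\R^m)=\mathds{1}$ is correct.

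The one thing you pass over a bit quickly is the case $E(\R^m)\neq\mathds{1}$, which is the generic situation when the density hypothesis is \emph{not} assumed: the representation $\Phi$ may be degenerate, so $E(\R^m)=P$ is a proper subprojection, and $B^i=\int\lambda_i\,dE$ is then self-adjoint only as an operator on $P\mathcal{H}$ --- in particular it is not densely defined on all of $\mathcal{H}$, and ``natural maximal domain'' does not repair this. The clean statement, and the one \cite{DeGe} actually make, is that the $B^i$ are self-adjoint in the extended sense of operators with spectrum in $[-\infty,+\infty]$ (equivalently, a PVM on a compactification of $\R^m$ with $E(\{\pm\infty\})=\mathds{1}-P$). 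Your treatment of the second half of the proposition already shows you understand that $E(\R^m)=\mathds{1}$ is exactly what the density hypothesis buys; it would be worth stating explicitly that in its absence the $B^i$ are only self-adjoint on $P\mathcal{H}$, so the word ``self-adjoint'' in the first half is being used in this extended sense. With that caveat made precise, the argument is sound and matches the cited source in spirit.
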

We then define:
\begin{defi}
Under the hypotheses of the preceding proposition, we will write:
\begin{equation}
B= \mathrm{s}-C_{\infty}- \underset{n \to \infty}{\lim}B_{n}.
\end{equation}
\end{defi}

\subsection{Asymptotic velocity for \texorpdfstring{$H_{c}$}{H0}}

\begin{theo}[Asymptotic velocity for $H_{c}$]
Let $J \in C_{\infty} \pare{\R}$. Then the limit:
\begin{equation}
\mathrm{s}-\underset{t \to \infty}{\lim} e^{itH_{c}} J\pare{\frac{\mathcal{A}}{t}} e^{-itH_{c}}
\end{equation}
exists and is equal to $J\pare{1} \mathds{1}$ where $\mathds{1}$ is the identity. Moreover, if $J\pare{0} = 1$, then
\begin{equation}
\mathrm{s}- \underset{R\to \infty}{\lim} \pare{\mathrm{s} - \underset{t \to \infty}{\lim} e ^{itH_{c}} J\pare{\frac{\mathcal{A}}{Rt}} e^{-itH_{c}}} = \mathds{1}.
\end{equation}
If we define
\begin{equation}
\mathrm{s}- \mathrm{C_{\infty}}-\underset{t \to \infty}{\lim} e^{itH_{c}} \frac{\mathcal{A}}{t} e^{-itH_{c}} =: P^{+}_{c},
\end{equation}
then the self-adjoint operator $P^{+}_{c}$ is densely defined and it commutes with $H_{c}$. $P^{+}_{c}$ is called the asymptotic velocity.
\end{theo}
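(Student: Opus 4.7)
The plan is to reduce the whole statement to the canonical translation identity between position and momentum on $\R$. The starting observation is the commutator relation
\begin{equation*}
[H_c, i\mathcal{A}] = \mathds{1},
\end{equation*}
which follows directly from $H_c = -i\Gamma\partial_x$, $\mathcal{A} = \Gamma x$, and $\Gamma^{2} = \mathds{1}$. From this I would first extract the Heisenberg evolution
\begin{equation*}
e^{itH_c}\mathcal{A} e^{-itH_c} = \mathcal{A} + t\mathds{1}.
\end{equation*}
The cleanest way to make this rigorous is to observe that the two boundary conditions in $D(H_c)$, namely $\varphi_1(0) = -\varphi_3(0)$ and $\varphi_2(0) = \varphi_4(0)$, are exactly the coupling needed to unfold each pair of components onto the whole real line via a unitary $U:\mathcal{H}_{s,n}\to L^{2}(\R;\C^{2})$; a direct check shows that under $U$ the operator $H_c$ becomes $\pm i\partial_y$ on the two blocks and $\mathcal{A}$ becomes $\pm y$, so that the sought identity is nothing but the standard Weyl relation on $L^{2}(\R)$.

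With the translation identity in hand, the functional calculus yields
\begin{equation*}
e^{itH_c} J\pare{\frac{\mathcal{A}}{t}} e^{-itH_c} = J\pare{\frac{\mathcal{A}}{t}+\mathds{1}}
\end{equation*}
for every $J\in C_\infty(\R)$. Since $\mathcal{A}/t$ tends strongly to $0$ on the core $D(\mathcal{A})$, it converges to $0$ in the strong resolvent sense, so the right-hand side converges strongly to $J(1)\mathds{1}$, which is the first assertion. For the second, the same computation gives $e^{itH_c}J(\mathcal{A}/(Rt))e^{-itH_c} = J(\mathcal{A}/(Rt)+1/R)$; letting $t\to\infty$ yields the strong limit $J(1/R)\mathds{1}$, and then $R\to\infty$ together with $J(0)=1$ produces $\mathds{1}$.

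The hypotheses of the abstract proposition of Section 8.1 are then met, so $P_c^+$ is a well-defined, densely defined self-adjoint operator characterized by $J(P_c^+)$ being equal to the first strong limit. Since $J(P_c^+) = J(1)\mathds{1}$ for every $J\in C_\infty(\R)$, the spectral measure of $P_c^+$ is the point mass at $1$, so $P_c^+ = \mathds{1}$; in particular it commutes with $H_c$. The real technical issue is the Heisenberg evolution for $\mathcal{A}$: since $\mathcal{A}$ is unbounded and $D(H_c)$ carries non-trivial boundary conditions, the formal identity $\frac{d}{dt}\pare{e^{itH_c}\mathcal{A} e^{-itH_c}} = e^{itH_c} i[H_c,\mathcal{A}] e^{-itH_c} = \mathds{1}$ has to be controlled either by the unitary unfolding above, or by checking invariance of a suitable core under the flow and applying a uniqueness argument for the operator-valued ODE $B'(t) = \mathds{1}$, $B(0) = \mathcal{A}$.
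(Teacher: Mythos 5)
Your argument is correct, and it takes a genuinely different route than the paper. The paper proves the theorem by writing out an explicit formula for $e^{-itH_c}\psi^0$ (translations of the initial data plus reflection of the outgoing pairs $(\psi_1,\psi_3)$ and $(\psi_2,\psi_4)$ at $x=0$), verifies that this formula solves the evolution equation with the boundary conditions, and then computes $e^{itH_c}J(\mathcal{A}/t)e^{-itH_c}\psi^0$ pointwise and passes to the limit by dominated convergence. You instead identify the abstract structure: the boundary conditions $\varphi_1(0)=-\varphi_3(0)$, $\varphi_2(0)=\varphi_4(0)$ are precisely what is needed to unfold $H_c$ and $\mathcal{A}$ onto the free pair $(-i\partial_y,\,y)$ on $L^2(\R;\C^2)$, so the Heisenberg translation relation $e^{itH_c}\mathcal{A}e^{-itH_c}=\mathcal{A}+t\mathds{1}$ holds rigorously, and the theorem then follows from functional calculus and strong resolvent convergence of $\mathcal{A}/t+1\to 1$. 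This is cleaner and more conceptual — it makes visible exactly why the reflecting boundary does not disturb the ballistic behaviour of $\mathcal{A}$ — and it gives directly the exact identity $e^{itH_c}J(\mathcal{A}/t)e^{-itH_c}=J(\mathcal{A}/t+1)$ rather than just its $t\to\infty$ limit. What the paper's approach buys is self-containment: the explicit propagator is written down anyway and is reused elsewhere, whereas your proof requires spelling out the unfolding unitary $U$ and checking that it maps $D(H_c)$ onto $H^1(\R;\C^2)$ and $D(\mathcal{A})$ onto $D(y)$. (Minor slip: with a consistent choice of unfolding both blocks become $(-i\partial_y,\,y)$, not $(\pm i\partial_y,\,\pm y)$; but even with the opposite sign on one block the conjugation still yields $\mathcal{A}+t\mathds{1}$, so the conclusion is unaffected.) You correctly flagged the Heisenberg identity as the one technical point to control, and the unfolding you propose does control it.
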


\begin{proof}
Recall that $\mathcal{A} = - \gamma^{0} \gamma^{1} x$ where $ - \gamma^{0} \gamma^{1} = \text{diag}\pare{1,-1,-1,1}$. Thus, for $J \in C_{\infty}\pare{\R}$, we have $J\pare{\frac{\mathcal{A}}{t}} = \text{diag}\pare{J\pare{\frac{x}{t}}, J\pare{- \frac{x}{t}}, J\pare{-\frac{x}{t}}, J\pare{\frac{x}{t}}}$. Moreover, we have $H_{c} = i \gamma^{0} \gamma^{1} \partial_{x}$. Let $\psi^{0} \in D\pare{H_{c}}$, we wish to solve the equation 
\begin{flalign*}
\hphantom{A} & \partial_{t} \psi\pare{t,x} = i H_{c} \psi \pare{t,x},\\
& \psi\pare{0,.} = \psi^{0}\pare{.} = \pare{\psi^{0}_{1}\pare{.}, \psi^{0}_{2}\pare{.}, \psi^{0}_{3}\pare{.} , \psi^{0}_{4}\pare{.}}
\end{flalign*}
where $i H_{c} = \text{diag}\pare{1,-1,-1,1} \partial_{x}$. We will prove that the formula:
\begin{equation*}
\psi\pare{t,x} = \begin{pmatrix}
\psi_{1}^{0}\pare{x+t} \mathds{1}_{\R^{-}}\pare{x+t} - \psi_{3}^{0}\pare{-\pare{x+t}} \mathds{1}_{\R^{+}}\pare{x+t} \\
\psi_{2}^{0}\pare{x-t} \mathds{1}_{\R^{-}}\pare{x-t} + \psi_{4}^{0}\pare{-x +t} \mathds{1}_{\R^{+}}\pare{x-t} \\
\psi_{3}^{0}\pare{x-t} \mathds{1}_{\R^{-}}\pare{x-t} - \psi_{1}^{0}\pare{-x+t} \mathds{1}_{\R^{+}}\pare{x-t} \\
\psi_{4}^{0}\pare{x+t} \mathds{1}_{\R^{-}}\pare{x+t} + \psi_{2}^{0}\pare{-\pare{x+t}} \mathds{1}_{\R^{+}}\pare{x+t}
\end{pmatrix}.
\end{equation*}
gives an explicit solution for this problem. Since $x<0$ in our case, $\mathds{1}_{\R^{+}}\pare{x-t} = 0$ for all $t>0$, but we need this term for the group property of this solution. \\
We first prove that our formula gives a solution of the desired equation. Indeed, for all $t>0$, we see that $\psi_{3}\pare{t,0} = \psi_{3}^{0}\pare{-t}$ and $\psi_{1}\pare{t,0} = - \psi_{3}^{0}\pare{-t}$ since $\mathds{1}_{\R^{-}}\pare{t} = 0$ for $t>0$. Thus $\psi_{3}\pare{t,0}= - \psi_{1}\pare{t,0}$. On the other hand, we have $\psi_{2}\pare{t,0} = \psi_{2}^{0}\pare{-t}$ and $\psi_{4}\pare{t,0} = \psi_{2}^{0}\pare{-t}$ which gives us $\psi_{2}\pare{t,0} = \psi_{4}\pare{t,0}$. The boundary conditions are thus satisfied. It remains to prove that it satisfies the equation. For the first component of our formula, using the boundary consition and the derivation in the distributional sense, we obtain:
\begin{equation*}
\partial_{t} \psi_{1}\pare{t,x}  = \psi_{1}^{0\hspace{1mm}'}\pare{x+t}\mathds{1}_{\R^{-}}\pare{x+t} + \psi_{3}^{0\hspace{1mm}'}\pare{-\pare{x+t}} \mathds{1}_{\R^{+}}\pare{x+t}.
\end{equation*}
We also have:
\begin{equation*}
\partial_{x} \psi_{1}^{0} \pare{t,x}  = \psi_{1}^{0\hspace{1mm}'}\pare{x+t}\mathds{1}_{\R^{-}}\pare{x+t} + \psi_{3}^{0\hspace{1mm}'}\pare{-\pare{x+t}} \mathds{1}_{\R^{+}}\pare{x+t}
\end{equation*}
which gives $\partial_{t} \psi_{1}\pare{t,x} = \partial_{x} \psi_{1} \pare{t,x}$. For the second and third components, $\mathds{1}_{\R^{-}}\pare{x-t}$ is constant so its derivative is $0$ and we can check that $\partial_{t} \psi_{2}\pare{t,x} = - \partial_{x} \psi_{2}\pare{t,x}$ and $\partial_{t} \psi_{3}\pare{t,x} = - \partial_{x} \psi_{3}\pare{t,x}$. For the fourth component, we obtain:
\begin{equation*}
\partial_{t} \psi_{4}\pare{t,x} = \psi_{4}^{0\hspace{1mm}'}\pare{x+t} \mathds{1}_{\R^{-}}\pare{x+t} - \psi_{2}^{0\hspace{1mm}'}\pare{-\pare{x+t}}.
\end{equation*}
We have the same for $ \partial_{x} \psi_{4}\pare{t,x}$ so that $\partial_{t} \psi_{4}\pare{t,x} = \partial_{x} \psi_{4}\pare{t,x}$. So $\partial_{t} \psi \pare{t,x} = iH_{c}\psi \pare{t,x}$ in the sense of distribution. Since $\psi^{0} \in D\pare{H_{c}}$, the derivatives are, in fact, well defined in $\mathcal{H}_{s,n}$ and the equality is satisfied in $\mathcal{H}_{s,n}$. We thus have a solution.

~\\
We then turn our attention to the asymptotic velocity. We have:
\begin{flalign*}
\hphantom{A}& e^{itH_{c}}J\pare{\frac{\mathcal{A}}{t}} e^{-itH_{c}} \psi^{0} \\
& = \begin{pmatrix}
J\pare{\frac{x}{t}+1}\pare{\psi_{1}^{0}\pare{x} \mathds{1}_{\R^{-}}\pare{x} \mathds{1}_{\R^{-}}\pare{x+t} + \psi_{1}^{0}\pare{x} \mathds{1}_{\R^{+}}\pare{-x} \mathds{1}_{\R^{+}}\pare{x+t}} \\
J\pare{-\frac{x}{t}+1}\pare{\psi_{2}^{0}\pare{x} \mathds{1}_{\R^{-}}\pare{x} \mathds{1}_{\R^{-}}\pare{x-t}} \\
J\pare{-\frac{x}{t}+1}\pare{\psi_{3}^{0}\pare{x} \mathds{1}_{\R^{-}}\pare{x} \mathds{1}_{\R^{-}}\pare{x-t}} \\
J\pare{\frac{x}{t}+1}\pare{\psi_{4}^{0}\pare{x} \mathds{1}_{\R^{-}}\pare{x} \mathds{1}_{\R^{-}}\pare{x+t} + \psi_{4}^{0}\pare{x} \mathds{1}_{\R^{+}}\pare{-x} \mathds{1}_{\R^{+}}\pare{x+t}}
\end{pmatrix}.
\end{flalign*}
This last term converges pointwise to $J\pare{1} \psi^{0}\pare{x}$ as $t \to  \infty$. Since $J$, $\mathds{1}_{\R^{-}}$, $\mathds{1}_{\R^{+}}$, $\mathds{1}_{\R^{-}}$ are bounded and $\psi^{0} \in \mathcal{H}_{s,n}$, we can use the dominate convergence theorem to conclude that: 
\begin{equation*}
\lim_{t\to \infty} e^{itH_{c}}J\pare{\frac{\mathcal{A}}{t}} e^{-itH_{c}} \psi^{0} = J\pare{1} \psi^{0}.
\end{equation*}
If $J\in C_{\infty}\pare{\R}$ with $J\pare{0} =1$, then
\begin{equation*}
\lim_{t\to \infty} e^{itH_{c}}J\pare{\frac{\mathcal{A}}{Rt}} e^{-itH_{c}} \psi^{0} = J\pare{\frac{1}{R}} \psi^{0},
\end{equation*}
and the last term goes to $J\pare{0} \psi^{0} = \psi^{0}$. So
\begin{equation*}
\mathrm{s}- \underset{R\to \infty}{\lim} \pare{\mathrm{s} - \underset{t \to \infty}{\lim} e ^{itH_{c}} J\pare{\frac{\mathcal{A}}{Rt}} e^{-itH_{c}}} = \mathds{1}.
\end{equation*}
The last part of the theorem follows from the abstract theory.
\end{proof}

We can know study the spectrum of $P^{+}_{c}$:
\begin{prop}
$\sigma \pare{P^{+}_{c}} = \{1 \}$
\end{prop}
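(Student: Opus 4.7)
The plan is to leverage the identification of $J(P_c^+)$ provided by the previous theorem and then invoke functional calculus for self-adjoint operators. The key observation is that the previous theorem computed explicitly
\[
\mathrm{s}-\lim_{t\to\infty} e^{itH_c} J\!\left(\frac{\mathcal{A}}{t}\right) e^{-itH_c} = J(1)\,\mathds{1}, \qquad J\in C_\infty(\R),
\]
and by the very definition of $\mathrm{s}-C_\infty-\lim$ recalled in the abstract theory subsection, this limit is by construction $J(P_c^+)$. Hence
\[
J(P_c^+) = J(1)\,\mathds{1} \qquad \text{for every } J \in C_\infty(\R).
\]

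From this identity I would deduce $\sigma(P_c^+)\subset\{1\}$ by contradiction. Suppose $\lambda\in\sigma(P_c^+)$ with $\lambda\neq 1$; choose $J\in C_\infty(\R)$ with $J(\lambda)=1$ and support disjoint from $\{1\}$ (a small bump around $\lambda$). By the spectral theorem for self-adjoint operators, $\norme{J(P_c^+)}\geq |J(\lambda)| = 1$, whereas the identity above gives $J(P_c^+) = J(1)\mathds{1} = 0$, a contradiction. Hence $\sigma(P_c^+)\subset\{1\}$.

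To conclude, since the previous theorem states that $P_c^+$ is densely defined and self-adjoint, its spectrum is a nonempty closed subset of $\R$, so $\sigma(P_c^+)=\{1\}$.

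There is essentially no obstacle here: the heavy lifting was already done when computing the strong limit explicitly in the proof of the asymptotic velocity theorem. The only subtlety is to remember that the symbolic functional calculus $J\mapsto J(P_c^+)$ for a self-adjoint operator is norm-decreasing with equality $\norme{J(P_c^+)}=\sup_{\sigma(P_c^+)}|J|$, which is exactly what converts the identity $J(P_c^+)=J(1)\mathds{1}$ into the spectral statement.
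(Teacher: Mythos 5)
Your proof is correct and is arguably cleaner than the paper's. You exploit directly the conclusion of the preceding theorem, namely that
$\mathrm{s}$-$\lim_{t\to\infty} e^{itH_c} J(\mathcal{A}/t) e^{-itH_c} = J(1)\,\mathds{1}$ for every $J\in C_\infty(\R)$, which by definition of the $\mathrm{s}$-$C_\infty$-$\lim$ is exactly $J(P_c^+)$. Combining $J(P_c^+)=J(1)\mathds{1}$ with the norm identity $\norme{J(P_c^+)} = \sup_{\sigma(P_c^+)}\abs{J}$ and the nonemptiness of the spectrum of a self-adjoint operator gives $\sigma(P_c^+)=\{1\}$ immediately.

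The paper takes a slightly different route: rather than reusing the explicit constant $J(1)\mathds{1}$, it argues again from scratch. For $J$ vanishing near $1$ it reduces by density to $J\in C^\infty_0$ supported away from $1$ and then invokes the minimal and maximal velocity estimates to conclude $J(P_c^+)=0$; then for $J$ constant nonzero near $1$ it uses the same vanishing statement on $J-J(1)$ to conclude $J(P_c^+)=J(1)\mathds{1}\neq 0$. This has the flavor of the argument later used for $P_m^+$, where no explicit formula is available and one must rely on propagation estimates; however, the velocity estimates in section 6 are stated for $H_m^{s,n}$, not for $H_c$, so the paper's proof implicitly assumes analogous estimates for $H_c$. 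Your argument avoids this entirely by using the fully explicit computation already performed for $H_c$, which is a genuinely simpler route for this particular proposition, at the cost of not generalizing to cases where such an explicit solution formula is unavailable.
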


\begin{proof}
Let $J \in C_{\infty}\pare{\R}$ such that $J\pare{1} = 0$. We can approach $J$ by a sequence $\pare{J_{n}}_{n\in \N}$ of $C^{\infty}_{0}\pare{\R}$ functions which are zero in a neighbourhood of $1$ in $L^{\infty}$. By density, we can suppose that $J \in C^{\infty}_{0}\pare{\R}$ and $J$ is zero in a neighbourhood of $1$. Using minimal and maximal velocity estimates, we obtain:
\begin{equation}
J\pare{P^{+}_{c}} = \mathrm{s}-\underset{t \to \infty}{\lim} e^{itH_{c}} J\pare{\frac{\mathcal{A}}{t}} e^{-itH_{c}} = 0
\end{equation}
Now, if we have $J\pare{1} \neq 0$, we can suppose that $J \in C^{\infty}_{0}\pare{\R}$ is constant, non zero, in a neighbourhood of $1$. Then, for all $\varphi \in \mathcal{H}$, we have:
\begin{equation*}
J\pare{P^{+}_{c}} \varphi - J\pare{1} \varphi = \mathrm{s}-\underset{t \to \infty}{\lim} e^{itH_{c}} \pare{J\pare{\frac{\mathcal{A}}{t}}-J\pare{1}} e^{-itH_{c}} \varphi.
\end{equation*}
Since $J\pare{x} - J\pare{1}$ is zero in a neighbourhood of $1$, we obtain $J\pare{P^{+}_{c}} \varphi = J\pare{1} \varphi \neq 0$. This ends the proof.
\end{proof}
The following consequence is immediate:
\begin{cor}
$P^{+}_{c}= \mathds{1}$
\end{cor}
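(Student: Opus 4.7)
The plan is to extract this corollary directly from the preceding proposition via the spectral theorem. By the theorem defining $P^{+}_{c}$ we already know that $P^{+}_{c}$ is a densely defined self-adjoint operator, and the previous proposition provides $\sigma(P^{+}_{c}) = \{1\}$. Since the spectrum of a self-adjoint operator is a closed subset of $\R$, and such an operator is bounded exactly when its spectrum is bounded (with operator norm equal to the spectral radius), it follows at once that $P^{+}_{c}$ extends uniquely to a bounded self-adjoint operator defined on all of $\mathcal{H}$, with $\norme{P^{+}_{c}} = 1$.

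Setting $Q := P^{+}_{c} - \mathds{1}$, the operator $Q$ is bounded self-adjoint with $\sigma(Q) = \sigma(P^{+}_{c}) - 1 = \{0\}$, hence $\norme{Q} = \sup\{|\lambda| : \lambda \in \sigma(Q)\} = 0$, which gives $Q = 0$ and therefore $P^{+}_{c} = \mathds{1}$. Equivalently, one can apply the spectral theorem directly: write $P^{+}_{c} = \int_{\R} \lambda \, dE(\lambda)$ for the projection-valued spectral measure $E$ associated to $P^{+}_{c}$; its support equals $\sigma(P^{+}_{c}) = \{1\}$, so $E(\{1\}) = \mathds{1}$ and the spectral integral collapses to $1 \cdot \mathds{1} = \mathds{1}$. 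There is no genuine obstacle here — the statement is an immediate consequence of the preceding proposition combined with the spectral theorem, and requires no new computation involving the dynamics or the velocity estimates.
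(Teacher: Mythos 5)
Your proof is correct, and it simply spells out the spectral-theorem argument that the paper treats as "immediate": a self-adjoint operator whose spectrum is the singleton $\{1\}$ must be bounded and equal to $\mathds{1}$ via its spectral measure. There is no difference in method, only in the level of detail you provide.
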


\subsection{Asymptotic velocity for \texorpdfstring{$H_{m}$}{H0}}

\begin{theo}[Asymptotic velocity for $H_{m}$]
Let $J \in C_{\infty} \pare{\R}$. Then, for all $m >0$, the limit:
\begin{equation}
\mathrm{s}-\underset{t \to \infty}{\lim} e^{itH_{m}} J\pare{\frac{\mathcal{A}}{t}} e^{-itH_{m}}
\end{equation}
exists. Moreover, if $J\pare{0} = 1$, then
\begin{equation}
\mathrm{s}- \underset{R\to \infty}{\lim} \pare{\mathrm{s} - \underset{t \to \infty}{\lim} e ^{itH_{m}} J\pare{\frac{\mathcal{A}}{Rt}} e^{-itH_{m}}} = 1
\end{equation}
If we define
\begin{equation}
\mathrm{s}- \mathrm{C_{\infty}}-\underset{t \to \infty}{\lim} e^{itH_{m}} \frac{\mathcal{A}}{t} e^{-itH_{m}} =: P^{+}_{m},
\end{equation}
then the self-adjoint operator $P^{+}_{m}$ is densely defined and commutes with $H_{m}$. The operator $P^{+}_{m}$ is called the asymptotic velocity.
\end{theo}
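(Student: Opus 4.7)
The plan is to reduce everything to the asymptotic velocity for $H_{c}$ already established above, using the wave operators $\Omega$ and $W$ constructed in the asymptotic completeness theorem. The key identity is the three-factor decomposition
\begin{equation*}
e^{itH_{m}} J\pare{\frac{\mathcal{A}}{t}} e^{-itH_{m}} = \bigl[e^{itH_{m}} e^{-itH_{c}}\bigr] \cdot \bigl[e^{itH_{c}} J\pare{\frac{\mathcal{A}}{t}} e^{-itH_{c}}\bigr] \cdot \bigl[e^{itH_{c}} e^{-itH_{m}}\bigr].
\end{equation*}
As $t \to +\infty$, the outer factors converge strongly to $W$ and $\Omega$ by Theorem \ref{Th�or�me6.5}, while the middle factor converges strongly to $J(1)\,\mathds{1}$ by the $H_{c}$ asymptotic velocity theorem. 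All three families are uniformly norm-bounded (by $\norme{J}_{\infty}$ and $1$), so a standard $\varepsilon/3$ argument yields strong convergence of the product to $W \cdot J(1)\mathds{1} \cdot \Omega = J(1)\, W\Omega$.

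The next step is to identify $W\Omega$. Each of $\Omega$ and $W$ is an isometry as a strong limit of compositions of unitaries; combined with $\Omega^{*}=W$, this makes $\Omega$ simultaneously an isometry and a co-isometry, hence unitary, so $W\Omega = \Omega^{*}\Omega = \mathds{1}$. Consequently
\begin{equation*}
\mathrm{s}-\underset{t \to \infty}{\lim}\, e^{itH_{m}} J\pare{\frac{\mathcal{A}}{t}} e^{-itH_{m}} = J(1)\, \mathds{1}.
\end{equation*}

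For the second limit, replacing $J(\cdot)$ by the function $J(\cdot/R) \in C_{\infty}(\R)$ and running the same argument produces, for each fixed $R>0$, the inner limit $J(1/R)\,\mathds{1}$. Continuity of $J$ together with $J(0)=1$ gives $J(1/R) \to 1$ as $R\to\infty$, which establishes the double-limit statement. The existence, self-adjointness and dense definition of $P^{+}_{m}$ then follow from the abstract asymptotic-velocity machinery of Section~8.1 applied to $B_{t} := e^{itH_{m}}(\mathcal{A}/t)e^{-itH_{m}}$. To identify $P^{+}_{m}=\mathds{1}$ I would argue exactly as in the corollary for the $H_{c}$ case: since $J(P^{+}_{m}) = J(1)\,\mathds{1}$ for every $J\in C_{\infty}(\R)$, picking $J$ supported away from $1$ forces $J(P^{+}_{m})=0$, so the spectral theorem gives $\sigma(P^{+}_{m})\subseteq\{1\}$; commutation with $H_{m}$ is then trivial.

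I do not anticipate any genuine analytic difficulty: the substantive ingredients (asymptotic completeness, asymptotic velocity for $H_{c}$, the abstract strong $C_{\infty}$-limit framework) are all in place. The only point that warrants care is the passage to the strong limit in the three-factor product, which requires the uniform bound $\norme{J}_{\infty}$ on the middle term; this is exactly the place where the boundedness of $J\in C_{\infty}(\R)$ is used.
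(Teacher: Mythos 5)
Your proof is correct and follows essentially the same route as the paper: the three-factor decomposition through $e^{\pm itH_{c}}$, strong convergence of the outer factors to $W$ and $\Omega$ via asymptotic completeness, strong convergence of the middle factor via the $H_{c}$ asymptotic-velocity theorem, and the abstract $\mathrm{s}$-$C_{\infty}$-limit machinery for the final identification. (The paper's displayed decomposition contains an obvious sign typo, $e^{itH_{m}}e^{itH_{c}}$ instead of $e^{itH_{m}}e^{-itH_{c}}$; your version is the correct one, and your explicit remark that $W\Omega=\mathds{1}$ via the isometry/co-isometry argument is exactly what the paper uses in the follow-up proposition to deduce $\sigma(P_{m}^{+})=\{1\}$.)
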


\begin{proof}
We can write
\begin{equation*}
e^{itH_{m}} J\pare{\frac{\mathcal{A}}{t}} e^{-itH_{m}} = e^{itH_{m}} e^{itH_{c}} e^{itH_{c}} J\pare{\frac{\mathcal{A}}{t}} e^{-itH_{c}}e^{itH_{c}} e^{-itH_{m}}
\end{equation*}
Using uniform boundedness of our operators and introducing $\Omega$ and $W$ at the right place, this limit is equal to $W J\pare{P^{+}_{c}} \Omega$ where $W,\Omega$ are defined in theorems \ref{Théorème6.5}. We can use the same argument for the second limit and the existence of $P^{+}_{m}$ follows by the abstract theory and we have:
\begin{equation}
J\pare{P^{+}_{m}} = W J\pare{P^{+}_{c}} \Omega
\end{equation}

\end{proof}
We deduce:
\begin{prop}
For all $m>0$, $\sigma \pare{P^{+}_{m}} = \{1\}$
\end{prop}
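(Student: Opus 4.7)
The strategy is to transfer the spectral information already obtained for $P^{+}_{c}$ to $P^{+}_{m}$ via the intertwining relation $J(P^{+}_{m}) = W\, J(P^{+}_{c})\, \Omega$ established in the previous theorem, and then to apply the functional calculus criterion for the spectrum: $\lambda\in\sigma(P^{+}_{m})$ if and only if $J(P^{+}_{m})\neq 0$ for every $J\in C_{\infty}(\R)$ with $J(\lambda)\neq 0$.

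First I would record that the wave operators are unitary. Both limits $\Omega$ and $W$ are strong limits of products of unitary operators, so each is an isometry, and Theorem \ref{Th�or�me6.5} gives $W=\Omega^{*}$. Thus $W\Omega = \Omega^{*}\Omega = \mathds{1}$ and $\Omega W = \Omega\Omega^{*} = \mathds{1}$. Combined with the fact $P^{+}_{c}=\mathds{1}$ (the corollary just proved), the functional calculus gives $J(P^{+}_{c}) = J(1)\mathds{1}$ for every $J\in C_{\infty}(\R)$. Feeding this into the intertwining formula yields
\begin{equation*}
J(P^{+}_{m}) \;=\; W\, J(1)\, \mathds{1}\, \Omega \;=\; J(1)\, W\Omega \;=\; J(1)\, \mathds{1}.
\end{equation*}

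From this identity the statement follows at once. If $\lambda\neq 1$, pick $J\in C_{\infty}(\R)$ with $J(\lambda)\neq 0$ and $J(1)=0$ (e.g.\ a bump supported in a neighbourhood of $\lambda$ disjoint from $1$); then $J(P^{+}_{m})=0$, so $\lambda\notin\sigma(P^{+}_{m})$. Conversely, taking any $J\in C_{\infty}(\R)$ with $J(1)\neq 0$ gives $J(P^{+}_{m})=J(1)\mathds{1}\neq 0$, so $1\in\sigma(P^{+}_{m})$. Since $P^{+}_{m}$ is densely defined and non-empty spectrum is guaranteed for self-adjoint operators, we conclude $\sigma(P^{+}_{m})=\{1\}$.

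I do not expect any real obstacle here: the only point that requires care is the unitarity of the wave operators, which is immediate from asymptotic completeness together with the fact that $\Omega$ and $W$ are strong limits of unitaries. Once that is in hand, the proof is a direct functional calculus argument. As a bonus, the identity $J(P^{+}_{m})=J(1)\mathds{1}$ shows that $P^{+}_{m}=\mathds{1}$ itself, consistent with the interpretation of the asymptotic velocity as the speed of light along the radial direction.
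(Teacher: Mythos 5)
Your proof is correct and follows essentially the same route as the paper: both invoke the intertwining identity $J(P^{+}_{m}) = W\,J(P^{+}_{c})\,\Omega$ together with the unitarity of $W$ and $\Omega$ (with $W = \Omega^{*} = \Omega^{-1}$) to transfer the spectrum $\{1\}$ of $P^{+}_{c}$ to $P^{+}_{m}$. You merely spell out the functional-calculus step that the paper leaves implicit, and you note the stronger conclusion $P^{+}_{m} = \mathds{1}$, which the paper records as the subsequent corollary.
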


\begin{proof}
Using the last proof, we have:
\begin{equation*}
J\pare{P^{+}_{m}} = W J\pare{P^{+}_{c}} \Omega
\end{equation*}
for all $J \in C_{\infty}\pare{\R}$ where  $\Omega,W$ are unitary and $\Omega^{-1} = W$.
\end{proof}
We then have the following consequence:
\begin{cor}
For all $m>0$, $P^{+}_{m} = \mathds{1}$.
\end{cor}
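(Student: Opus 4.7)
The plan is to observe that this corollary is essentially an immediate consequence of the preceding proposition, which already establishes $\sigma(P^{+}_{m}) = \{1\}$, combined with the self-adjointness of $P^{+}_{m}$ coming from its definition as a strong $C_{\infty}$-limit. The key tool is the spectral theorem: a self-adjoint operator whose spectrum is a single point $\{\lambda\}$ must equal $\lambda\, \mathds{1}$.

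More concretely, I would argue as follows. First, recall the identity obtained in the proof of the existence theorem for $P^{+}_{m}$, namely
\begin{equation*}
J(P^{+}_{m}) = W\, J(P^{+}_{c})\, \Omega, \qquad J \in C_{\infty}(\R),
\end{equation*}
together with the fact that $\Omega^{*} = W$ and that both $\Omega$ and $W$ are in fact unitary on $\mathcal{H}$ (the two-sided existence of the wave operators shown in Theorem \ref{Th�or�me6.5} implies $W\Omega = \Omega W = \mathds{1}$). Since the preceding corollary gives $P^{+}_{c} = \mathds{1}$, we have $J(P^{+}_{c}) = J(1)\,\mathds{1}$, and therefore
\begin{equation*}
J(P^{+}_{m}) = J(1)\, W\Omega = J(1)\, \mathds{1}, \qquad \forall J \in C_{\infty}(\R).
\end{equation*}

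To conclude $P^{+}_{m} = \mathds{1}$, I would invoke the uniqueness part of the functional calculus: applying this identity to a sequence $J_{n} \in C_{\infty}(\R)$ with $J_{n} \to \mathbf{1}_{\{1\}}$ pointwise and $|J_{n}| \leq 1$, the spectral theorem yields $\mathbf{1}_{\{1\}}(P^{+}_{m}) = \mathds{1}$. Hence the spectral measure of $P^{+}_{m}$ is concentrated at $\{1\}$, so $P^{+}_{m} = \int \lambda\, dE(\lambda) = \mathds{1}$. Alternatively, since $\sigma(P^{+}_{m}) = \{1\}$ is bounded the operator $P^{+}_{m}$ is bounded, and then taking $J(x) = x \chi(x)$ for a cutoff $\chi$ equal to $1$ on a neighbourhood of $\sigma(P^{+}_{m})$ gives directly $P^{+}_{m} = J(P^{+}_{m}) = J(1)\, \mathds{1} = \mathds{1}$.

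There is no serious obstacle here: all the substantive work has been carried out in the existence theorem for $P^{+}_{m}$ (which supplies the intertwining identity with $P^{+}_{c}$) and in the preceding corollary identifying $P^{+}_{c}$. The only point that requires minor care is verifying that both $W$ and $\Omega$ are genuinely unitary (not merely partial isometries), so that the intertwining relation can be read as a unitary conjugation and the spectra of $P^{+}_{m}$ and $P^{+}_{c}$ coincide — but this is already contained in Theorem \ref{Th�or�me6.5}.
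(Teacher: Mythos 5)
Your proposal is correct and follows the same route as the paper's (implicit) argument: the corollary is read off from the preceding proposition $\sigma(P^{+}_{m})=\{1\}$ together with self-adjointness, via the spectral theorem (a self-adjoint operator with single-point spectrum $\{\lambda\}$ is $\lambda\,\mathds{1}$). Your alternative phrasing through $J(P^{+}_{m})=WJ(P^{+}_{c})\Omega=J(1)\,\mathds{1}$ and $P^{+}_{c}=\mathds{1}$ is an equally valid and essentially equivalent unwinding of the same unitary-conjugation identity the paper already established.
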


\renewcommand{\refname}{References}
\bibliographystyle{plain}
\bibliography{BiblioSadSDirac2}
\nocite{*}

Université de Grenoble, Institut Fourier, 100, rue des maths, BP 74, 38402 Saint-Martin d'Hères Cedex (France)

~\
E-mail address: guillaume.idelon-riton@ujf-grenoble.fr

\end{document}